\DeclareMathAlphabet{\mathpzc}{OT1}{pzc}{m}{it}
\begin{document}
\theoremstyle{plain}
\newtheorem{thm}{Theorem}[section]
\newtheorem{lem}[thm]{Lemma}
\newtheorem{cor}[thm]{Corollary}
\newtheorem{prop}[thm]{Proposition}
\newtheorem{rem}[thm]{Remark}
\newtheorem{defn}[thm]{Definition}
\newtheorem{ex}[thm]{Example}
\newtheorem{ques}[thm]{Question}
\newtheorem{fact}[thm]{Fact}
\newtheorem{conj}[thm]{Conjecture}
\numberwithin{equation}{section}
\def\theequation{\thesection.\arabic{equation}}
\newcommand{\mc}{\mathcal}
\newcommand{\mb}{\mathbb}
\newcommand{\surj}{\twoheadrightarrow}
\newcommand{\inj}{\hookrightarrow}
\newcommand{\zar}{{\rm zar}}
\newcommand{\an}{{\rm an}} 
\newcommand{\red}{{\rm red}}
\newcommand{\codim}{{\rm codim}}
\newcommand{\rank}{{\rm rank}}
\newcommand{\Ker}{{\rm Ker \,}}
\newcommand{\Pic}{{\rm Pic}}
\newcommand{\Div}{{\rm Div}}
\newcommand{\Hom}{{\rm Hom}}
\newcommand{\im}{{\rm im \,}}
\newcommand{\Spec}{{\rm Spec \,}}
\newcommand{\Sing}{{\rm Sing}}
\newcommand{\Char}{{\rm char}}
\newcommand{\Tr}{{\rm Tr}}
\newcommand{\Gal}{{\rm Gal}}
\newcommand{\Min}{{\rm Min \ }}
\newcommand{\Max}{{\rm Max \ }}
\newcommand{\CH}{{\rm CH}}
\newcommand{\pr}{{\rm pr}}
\newcommand{\cl}{{\rm cl}}
\newcommand{\gr}{{\rm Gr }}
\newcommand{\Coker}{{\rm Coker \,}}
\newcommand{\id}{{\rm id}}
\newcommand{\Rep}{{\bold {Rep} \,}}
\newcommand{\Aut}{{\rm Aut}}
\newcommand{\GL}{{\rm GL}}
\newcommand{\Bl}{{\rm Bl}}
\newcommand{\Jab}{{\rm Jab}}
\newcommand{\alb}{\rm Alb}
\newcommand{\NS}{\rm NS}
\newcommand{\sA}{{\mathcal A}}
\newcommand{\sB}{{\mathcal B}}
\newcommand{\sC}{{\mathcal C}}
\newcommand{\sD}{{\mathcal D}}
\newcommand{\sE}{{\mathcal E}}
\newcommand{\sF}{{\mathcal F}}
\newcommand{\sG}{{\mathcal G}}
\newcommand{\sH}{{\mathcal H}}
\newcommand{\sI}{{\mathcal I}}
\newcommand{\sJ}{{\mathcal J}}
\newcommand{\sK}{{\mathcal K}}
\newcommand{\sL}{{\mathcal L}}
\newcommand{\sM}{{\mathcal M}}
\newcommand{\sN}{{\mathcal N}}
\newcommand{\sO}{{\mathcal O}}
\newcommand{\sP}{{\mathcal P}}
\newcommand{\sQ}{{\mathcal Q}}
\newcommand{\sR}{{\mathcal R}}
\newcommand{\sS}{{\mathcal S}}
\newcommand{\sT}{{\mathcal T}}
\newcommand{\sU}{{\mathcal U}}
\newcommand{\sV}{{\mathcal V}}
\newcommand{\sW}{{\mathcal W}}
\newcommand{\sX}{{\mathcal X}}
\newcommand{\sY}{{\mathcal Y}}
\newcommand{\sZ}{{\mathcal Z}}
\newcommand{\A}{{\mathbb A}}
\newcommand{\B}{{\mathbb B}}
\newcommand{\C}{{\mathbb C}}
\newcommand{\D}{{\mathbb D}}
\newcommand{\E}{{\mathbb E}}
\newcommand{\F}{{\mathbb F}}
\newcommand{\G}{{\mathbb G}}
\renewcommand{\H}{{\mathbb H}}
\newcommand{\I}{{\mathbb I}}
\newcommand{\J}{{\mathbb J}}
\newcommand{\M}{{\mathbb M}}
\newcommand{\N}{{\mathbb N}}
\renewcommand{\P}{{\mathbb P}}
\newcommand{\Q}{{\mathbb Q}}
\newcommand{\R}{{\mathbb R}}
\newcommand{\T}{{\mathbb T}}
\newcommand{\V}{{\mathbb V}}
\newcommand{\W}{{\mathbb W}}
\newcommand{\X}{{\mathbb X}}
\newcommand{\Y}{{\mathbb Y}}
\newcommand{\Z}{{\mathbb Z}}
\newcommand{\Nwt}{{\rm Nwt}}
\newcommand{\Hdg}{{\rm Hdg}}
\newcommand{\ind}{{\rm ind \,}}
\newcommand{\Br}{{\rm Br}}
\newcommand{\inv}{{\rm inv}}
\newcommand{\Nm}{{\rm Nm}}
\newcommand{\Griff}{{\rm Griff}}
\newcommand{\Image}{\rm Im \,}
\newcommand{\Ev}{\rm Ev \,}
\title[Stable $\A^1$-homology sheaves]{Embedding of category of twisted Chow-Witt motives into geometric stable $\A^1$-derived category over a field}
\author{{Nguyen Le Dang Thi}}
\email{nguyen.le.dang.thi@gmail.com}
\date{28. 11. 2013}          
\subjclass{14F22, 14F42}
\keywords{stable $\A^1$-homotopy, stable $\A^1$-cohomology, Chow-Witt correspondences, Milnor-Witt $K$-theory}
\begin{abstract}
We introduce in this note the notion of the category of twisted Chow-Witt correspondences $CHW(k)$ over a field $k$ of characteristic different from $2$, which leads to the category of twisted Chow-Witt motives $\widetilde{CHW}(k)$. Moreover, we show that over an infinite perfect field this category $\widetilde{CHW}(k)$ admits a fully faithful embedding into the geometric stable $\A^1$-derived category $D_{\A^1,gm}(k)$ after taking $\Q$-localization. We also prove a conjecture of F. Morel about the rational splitting of stable $\A^1$-cohomology over an essentially smooth scheme $S$ over a field $k$ of $char(k) \neq 2$.   
\end{abstract}
\maketitle
\tableofcontents
\section{Introduction}
One of the main motivations for this work is the embedding theorem of Voevodsky \cite{Voe00}, which asserts that there is a fully faithful embedding of the category of Grothendieck-Chow pure motives $\underline{Chow}(k)$ into the category of geometric motives $\bold{DM}_{gm}(k)$, hence also into the category of motives $\bold{DM}^-_{Nis}(k)$
$$\underline{Chow}(k)^{op} \rightarrow \bold{DM}_{gm}(k),$$ 
if $k$ is a perfect field, which admits resolution of singularities (see e.g. \cite[Prop. 20.1 and Rem. 20.2]{MVW06}, the assumption on resolution of singularities can be removed by using Poincar\'e duality). 
In this note, we construct a category $CHW(k)$, which we call the category of Chow-Witt correspondences over a field $k$ of characteristic different from $2$ and show that $\widetilde{CHW}(k)_{\Q}$ admits a fully faithful embedding into the geometric $\P^1$-stable $\A^1$-derived category $D_{\A^1,gm}(k)_{\Q}$ rationally, where $\widetilde{CHW}(k)$ is the category of Chow-Witt motives obtained from $CHW(k)$ by taking pseudo-abelian completion and formally inverting $\mathbb{L}$, the Lefschetz object. Our work can be viewed as an $\A^1$-version for Voevodsky's embedding theorem. The advantage here is that by using duality formalism for $\P^1$-stable $\A^1$-derived category $D_{\A^1}(k)$ established by A. Asok and C. Haesemeyer in \cite{AH11} (see \cite[App. A]{Hu05} for stable $\A^1$-homotopy categories), we do not have to assume the resolution of singularities. However, unlike in motivic setting, the main problem here is that we don't have cancellation theorem for the effective $\A^1$-derived category in general, see \cite[Rem. 3.2.4]{AH11}, that is the reason why we can prove the embedding result only for $\Q$-coefficient. F. Morel conjectured in general that: 
\begin{conj}\label{conjst}\cite{Mor04} 
Let $S$ be a regular Noetherian scheme of finite Krull dimension. One has a direct decomposition in the rationally motivic stable homotopy category $\mathbf{StHo}_{\A^1,\P^1}(S)$:  
$$[S^i,\G_m^{\wedge j}]_{\P^1}\otimes \Q = H^{j-i}_B(S,\Q(j)) \oplus H^{-i}_{Nis}(S,\underline{\bold{W}}\otimes \Q), $$
where $H^*_B(-,\Q(*))$ denotes the Beilinson motivic cohomology, $\underline{\bold{W}}$ is the unramified Witt sheaf and $[-,-]_{\P^1} \otimes \Q$ denotes $\Hom_{\mathbf{StHo}_{\A^1,\P^1}(S)}(-,-)_{\Q}$. 
\end{conj}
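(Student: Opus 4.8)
The plan is to reduce the statement to three ingredients: the rational splitting of the $\P^1$-sphere spectrum, the identification of its ``plus'' part with Beilinson motivic cohomology, and the $\eta$-periodic Witt-sheaf description of its ``minus'' part, so that the whole argument becomes an $\A^1$-homotopical repackaging of Morel's computation of $\underline{\pi}_0$ of the sphere together with the rational structure of Milnor--Witt $K$-theory. First I would recall that in $\mathbf{StHo}_{\A^1,\P^1}(S)$ the switch automorphism $\epsilon = -\langle -1\rangle$ of $\G_m^{\wedge 2}$ satisfies $\epsilon^2 = \id$, so that after inverting $2$ --- in particular after $\otimes\,\Q$ --- the elements $e_{\pm} = \tfrac{1\mp\epsilon}{2}$ are orthogonal idempotents of the unit $\mathbf{1}$. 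They induce a decomposition of ring spectra $\mathbf{1}_{\Q}\simeq \mathbf{1}_{\Q}^{+}\oplus\mathbf{1}_{\Q}^{-}$ and hence a product decomposition $\mathbf{StHo}_{\A^1,\P^1}(S)_{\Q}\simeq \mathbf{StHo}_{\A^1,\P^1}(S)^{+}_{\Q}\times\mathbf{StHo}_{\A^1,\P^1}(S)^{-}_{\Q}$, so $[S^i,\G_m^{\wedge j}]_{\P^1}\otimes\Q$ splits canonically as the direct sum of a plus summand and a minus summand, and it then suffices to identify the two.

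For the plus summand I would invoke the theorem of Morel (respectively of Cisinski--D\'eglise over a general regular base) that $\mathbf{StHo}_{\A^1,\P^1}(S)^{+}_{\Q}$ is equivalent to the category $DM_B(S)_{\Q}$ of Beilinson motives, carrying $\mathbf{1}_{\Q}^{+}$ to the unit object $\Q_S$. Writing $\Sigma^{\infty}S^i = \mathbf{1}[i]$ and $\Sigma^{\infty}\G_m^{\wedge j} = \mathbf{1}(j)[j]$, it follows that the plus summand of $[S^i,\G_m^{\wedge j}]_{\P^1}\otimes\Q$ is
\[
\Hom_{DM_B(S)_{\Q}}\!\big(\Q_S,\,\Q_S(j)[j-i]\big) \;=\; H^{j-i}_B(S,\Q(j)).
\]

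For the minus summand I would use the theorem of Morel that $\eta$ is invertible on $\mathbf{1}_{\Q}^{-}$ (while on $\mathbf{1}_{\Q}^{+}$, where $\langle -1\rangle = 1$ and hence $h := \langle 1\rangle + \langle -1\rangle = 2$, the relation $h\eta = 0$ forces $\eta$ to vanish rationally, so $\eta$-inversion cuts out exactly the minus part). Multiplication by $\eta$ gives an isomorphism $\mathbf{1}_{\Q}^{-}(1)[1]\xrightarrow{\ \sim\ }\mathbf{1}_{\Q}^{-}$, trivializing all Tate twists, so the minus summand of $[S^i,\G_m^{\wedge j}]_{\P^1}\otimes\Q$ is $[\mathbf{1}[i],\mathbf{1}_{\Q}^{-}]$, independent of $j$. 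Next I would combine Morel's computation $\underline{\pi}_0(\mathbf{1})_* = \underline{K}^{MW}_*$ with the rational fiber-product identity $\underline{K}^{MW}_*\otimes\Q \simeq (\underline{K}^{M}_*\otimes\Q)\oplus(\underline{\bold{W}}\otimes\Q)$ --- the mod-$2$ part being torsion --- to identify the ($\eta$-periodic) minus summand of $\underline{\pi}_0$ with the unramified Witt sheaf; then, using stable $\A^1$-connectivity together with the vanishing of the strictly positive homotopy sheaves of the $\eta$-inverted rational sphere, I would conclude that $\mathbf{1}_{\Q}^{-}$ is the Eilenberg--MacLane spectrum $H(\underline{\bold{W}}\otimes\Q)$, whence $[\mathbf{1}[i],\mathbf{1}_{\Q}^{-}] = H^{-i}_{Nis}(S,\underline{\bold{W}}\otimes\Q)$. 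Adding the two summands yields the asserted decomposition.

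The step I expect to be the main obstacle is the last one: proving that the strictly positive stable $\A^1$-homotopy sheaves of the $\eta$-inverted rational sphere vanish, and then transporting the field case to an arbitrary essentially smooth base $S$ over a field of characteristic $\neq 2$. For the latter I would pass to the perfect closure and argue Nisnevich-locally, exploiting that the rational Witt sheaf and the sheaves $\underline{\pi}_n(\mathbf{1})\otimes\Q$ are strictly $\A^1$-invariant and unramified, so that their sections and cohomology are compatible with the relevant filtered limits. By contrast, the plus-part identification of Step 2, although deep, is available in the literature and would only be cited, and the idempotent splitting of Step 1 is formal.
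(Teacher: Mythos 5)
Your Steps 1 and 2 (the idempotent decomposition via $\epsilon$ and the identification of the plus part with Beilinson motives/Voevodsky motivic cohomology via Cisinski--D\'eglise) agree with the paper and are correctly treated as citations, and your use of $\eta$-periodicity and of the rational splitting $\mathbf{K}^{MW}_*\otimes\Q \simeq \mathbf{K}^M_*\otimes\Q \oplus \underline{\mathbf{W}}\otimes\Q$ to see that $\underline{\pi}_0$ of the minus part is the rational Witt sheaf is also consistent with it. The problem is your Step 3: you ``conclude'' that $\mathbf{1}_{\Q}^{-}$ is the Eilenberg--MacLane spectrum $H(\underline{\mathbf{W}}\otimes\Q)$ by invoking ``the vanishing of the strictly positive homotopy sheaves of the $\eta$-inverted rational sphere'' --- but that vanishing \emph{is} the theorem (granting the plus part), not an available input. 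Knowing $\underline{\pi}_0(\mathbf{1}_{\Q-})_*=\underline{\mathbf{W}}_\Q$ only gives the truncation map $\mathbf{1}_{\Q-}\to H\mathbf{W}_\Q$ from the Postnikov triangle $(\mathbf{1}_{\Q-})_{>0}\to \mathbf{1}_{\Q-}\to H\mathbf{W}_\Q$; nothing formal makes $(\mathbf{1}_{\Q-})_{>0}$ vanish, and at the level of this paper the statement was an unpublished announcement of Morel with no published proof to cite. You yourself flag it as ``the main obstacle,'' but you offer no mechanism to attack it, so the proposal has a genuine gap exactly at the heart of the minus-part identification.

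For comparison, the paper does not prove that vanishing directly either; it transposes the Cisinski--D\'eglise strategy for $\mathbf{1}_{\Q+}=H_B$ (\cite[\S 16.2]{CD10}) to the minus part: one shows every object of $SH(k)_{\Q-}$ carries an $H\mathbf{W}_\Q$-module structure (reducing to homotopy modules by the homotopy $t$-structure and its non-degeneracy), and that $\mathbf{Ho}(H\mathbf{W}_\Q\text{-}mod)$ is a thick subcategory of $SH(k)_\Q$, which reduces to proving that the multiplication $\mu\colon H\mathbf{W}_\Q\wedge H\mathbf{W}_\Q\to H\mathbf{W}_\Q$ is an isomorphism. That computation is the real work, and the paper obtains it from the stable cellularity of $\mathbf{KO}$ (hence of $\mathbf{KT}_\Q$ and of its summand $H\mathbf{W}_\Q$) together with the Dugger--Isaksen Tor/Ext spectral sequences for cellular spectra, plus a reduction to the ground field $\Q$ (in positive characteristic $\mathbf{1}_{\Q-}=0$), where $\underline{\mathbf{W}}_\Q(k)=\Q$ pins down the relevant projector. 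Your sketch of the passage from the field case to an essentially smooth base $S$ is roughly parallel to the paper's Lemma on pullback-stability of $\mathbf{1}_{\Q-}$ and $H\mathbf{W}_\Q$ (via continuity and connectivity), so that part is acceptable in outline; but without an actual argument replacing the asserted vanishing --- for instance something playing the role of the cellularity-plus-spectral-sequence computation of $\pi_{*,*}(H\mathbf{W}_\Q\wedge H\mathbf{W}_\Q)$ --- your proof does not go through.
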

In this note we will prove this conjecture (see Thm. \ref{proofofconjst}) in case $ S \rightarrow \Spec (k)$ is an essentially smooth $k$-scheme, where $k$ is a field of $char(k) \neq 2$. This result was previously announced by F. Morel, but the proof was never published. In fact, the proof of this conjecture is one of the main step to prove our embedding result. However, the most challenging part is to show $\mathbf{1}_{\Q+} = H_B$, which has been done in \cite[\S 16.2]{CD10}. Our work is to identify $\mathbf{1}_{\Q-} = H\mathbf{W}_\Q$. The proof follows the same strategy as in \cite{CD10}. On the other hand, our interest started originally from the study of the existence of $0$-cycles of degree one on algebraic varieties. More precisely, H\'el\`ene Esnault asked (cf. \cite{Lev10}): Given a smooth projective variety $X$ over a field $k$, such that $X$ has a zero cycle of degree one. Are there "motivic" explanations which give the (non)-existence of a $k$-rational point? In \cite{AH11}, A. Asok and C. Haesemeyer show that the existence of zero cycles of degree one over an infinite perfect field of $char(k) \neq 2$ is equivalent to the assertion that the structure map $\bold{H}^{st\A^1}_0(X) \rightarrow \bold{H}^{st \A^1}_0(\Spec k)$ is a split epimorphism, where $\bold{H}^{st\A^1}_i(X)$ denotes the $\P^1$-stable $\A^1$-homology sheaves, while in an earlier work \cite{AH11a} they also showed that the existence of a $k$-rational point over an arbitrary field $k$ is equivalent to the condition that the structure map $\bold{H}_0^{\A^1}(X) \rightarrow \bold{H}_0^{\A^1}(\Spec k)$ is split surjective. So roughly speaking, the obstruction to the lifting of a zero cycle of degree one to a rational point arises by passing from $S^1$-spectra to $\P^1$-spectra. As remarked by M. Levine, it is not to expect that the category of Chow-Witt correspondences $CHW(k)$ contains any information about the existence of rational points. Now we state our main theorem in this work:               
\begin{thm}\label{mainthm}
Let $k$ be a field of $char(k) \neq 2$. There exists a category $CHW(k)$, whose objects are such a pair $(X,\omega_{X/k})$, where $X \in SmProj/k$ and $\omega_{X/k}$ denotes its canonical line bundle. The  morphisms in $CHW(k)$ are given by 
$$\Hom_{CHW(k)}(X,Y) = \widetilde{\CH}^{\dim(X)}(X\times Y, p^{{XY}*}_X\omega_{X/k}).$$  
If $k$ is an infinite perfect field of $char(k) \neq 2$, then one has a fully faithful embedding 
$$\widetilde{CHW}(k)_{\Q}^{op} \rightarrow D_{\A^1,gm}(k)_{\Q}.$$
\end{thm}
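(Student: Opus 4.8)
The plan is to follow Voevodsky's construction of the embedding $\underline{Chow}(k)^{op}\to\bold{DM}_{gm}(k)$ from \cite{Voe00}, replacing Chow groups by twisted Chow-Witt groups throughout and supplying the orientation data the Milnor-Witt setting requires. First I would construct $CHW(k)$ by hand: its objects are the pairs $(X,\omega_{X/k})$ with $X$ smooth projective of dimension $d_X$, and the composition of $\alpha\in\widetilde{\CH}^{d_X}(X\times Y,p_X^*\omega_{X/k})$ with $\beta\in\widetilde{\CH}^{d_Y}(Y\times Z,p_Y^*\omega_{Y/k})$ is the usual correspondence formula
$$\beta\circ\alpha=(p_{XZ})_*\bigl(p_{XY}^*\alpha\cdot p_{YZ}^*\beta\bigr),$$
built from flat/lci pullback, the bilinear intersection product $\widetilde{\CH}^i(W,L_1)\otimes\widetilde{\CH}^j(W,L_2)\to\widetilde{\CH}^{i+j}(W,L_1\otimes L_2)$, and proper pushforward along the smooth projection, which twists the coefficient line bundle by the relative canonical sheaf. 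These operations, together with their projection formula and base change, are available on the Rost-Schmid complexes computing $H^*_{Nis}(-,\mathbf{K}^{MW}_*(-))$ by the work of Morel and Fasel. The point of carrying $\omega_{X/k}$ in the objects is exactly that the dualizing twist $\omega_{Y/k}$ produced by $(p_{XZ})_*$ is absorbed by the twist already on $\beta$, so $\beta\circ\alpha$ again lies in $\widetilde{\CH}^{d_X}(X\times Z,p_X^*\omega_{X/k})$; associativity and the unit property of the canonically oriented diagonal are then formal. The pseudo-abelian envelope, inverting $\mathbb{L}$, and tensoring Hom-groups with $\Q$ produce $\widetilde{CHW}(k)_{\Q}$.

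For the embedding I would use the rigidity of $D_{\A^1,gm}(k)$ from \cite{AH11}: for $X$ smooth projective of dimension $d$ the object $M(X):=\Sigma^\infty_{\P^1}X_+$ is strongly dualizable with dual $M(X)^\vee\simeq M(X)\otimes Th(-T_X)$, and $Th(-T_X)$ is, after the canonical orientation, the twist of $\mathbf{1}(-d)[-2d]$ by $\omega_{X/k}$. Define the functor on objects by $(X,\omega_{X/k})\mapsto M(X)$ (extended $\mathbb{L}$-stably and idempotent-completed), and on morphisms through the natural chain of isomorphisms of $\Hom$-groups in $D_{\A^1,gm}(k)_{\Q}$
$$\Hom\bigl(M(X),M(Y)\bigr)\ \cong\ \Hom\bigl(\mathbf{1},M(X)^\vee\otimes M(Y)\bigr)\ \cong\ \Hom\bigl(\mathbf{1},M(X\times Y)\otimes Th(-T_X)\bigr),$$
the last group being, via the Thom isomorphism, a twisted stable $\A^1$-cohomology group of $X\times Y$ in bidegree $(2d_X,d_X)$ with coefficients twisted by $p_X^*\omega_{X/k}$. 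It then remains to identify this with $\widetilde{\CH}^{d_X}(X\times Y,p_X^*\omega_{X/k})_{\Q}$ and to check that the bijection so obtained respects composition.

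The identification I would run through the rational plus/minus splitting. By Theorem \ref{proofofconjst} (our proof of Morel's Conjecture \ref{conjst} over an essentially smooth $k$-scheme) the category $D_{\A^1,gm}(k)_{\Q}$ is a product of a plus part equivalent to rational Beilinson motives $\bold{DM}_{gm}(k)_{\Q}$, with $\mathbf{1}_{\Q+}=H_B$ as in \cite{CD10}, and a minus part controlled by $\mathbf{1}_{\Q-}=H\mathbf{W}_{\Q}$. Applying this to the $\Hom$-group above, the plus part is, by Voevodsky's embedding theorem and Poincar\'e duality in $\bold{DM}$, the rational Chow group $\CH^{d_X}(X\times Y)_{\Q}$, while the minus part is, using the module structure over $H\mathbf{W}_{\Q}$ and the identification of $H\mathbf{W}_{\Q}$-cohomology with twisted Witt-sheaf cohomology, the group $H^{d_X}_{Nis}(X\times Y,\underline{\bold{W}}(p_X^*\omega_{X/k}))_{\Q}$. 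On the other side, the fibre-product presentation $\mathbf{K}^{MW}_*=\mathbf{K}^M_*\times_{\mathbf{K}^M_*/2}\mathbf{I}^*$ of Milnor-Witt $K$-theory degenerates rationally into a canonical direct sum, giving
$$\widetilde{\CH}^{d_X}\!\bigl(X\times Y,p_X^*\omega_{X/k}\bigr)_{\Q}\ \cong\ \CH^{d_X}(X\times Y)_{\Q}\ \oplus\ H^{d_X}_{Nis}\!\bigl(X\times Y,\underline{\bold{W}}(p_X^*\omega_{X/k})\bigr)_{\Q},$$
which matches the two pieces term by term. Assembled over all $X,Y$ and checked to be natural in both variables, this yields the asserted full faithfulness; the idempotent completion and $\mathbb{L}$-inversion on the source are harmless because $D_{\A^1,gm}(k)$ is already idempotent-complete and $\otimes$-invertible for $\P^1$.

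The hard part will be the compatibility of correspondence composition with the monoidal duality: one must show the dictionary above sends the convolution $(p_{XZ})_*(p_{XY}^*\alpha\cdot p_{YZ}^*\beta)$ to the composite $M(X)\to M(Y)\to M(Z)$, which requires tracking the canonical-bundle twists and orientation classes through the duality and Thom isomorphisms and recognizing proper pushforward and intersection product in Chow-Witt theory as the shadow of the six operations on $D_{\A^1}(k)$; as usual, it is enough to verify this on the diagonal. The restriction to $\Q$-coefficients is, as explained in the introduction, forced by the absence of a cancellation theorem for the effective $\A^1$-derived category, so the strategy does not extend to integral coefficients without new input.
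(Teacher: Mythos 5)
Your construction of $CHW(k)$ is essentially the paper's (Proposition \ref{propCW}); the only point you gloss over is that the intersection product on twisted Chow--Witt groups is \emph{not} commutative, so one has to fix the left product $\cdot_l$ once and for all and verify the unit and associativity axioms using Fasel's projection formula and base change -- routine, but not formal. The genuine gap is in your identification of the Hom-groups. The group $\Hom(\mathbf{1}_k,M(X)^{\vee}\otimes M(Y))$ is a homology-type group: maps out of the unit into the Thom spectrum of $-p_X^{*}T_X$ over $X\times Y$. The Thom isomorphism available here (Theorem \ref{thmthomiso}) computes \emph{cohomology} of Thom spaces, i.e.\ maps into the Milnor--Witt coefficient object, so it does not convert that group into $\widetilde{\CH}^{d_X}(X\times Y,p_X^{*}\omega_{X/k})_{\Q}$, and that claimed identification is in fact false. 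To express the Hom-group as Nisnevich sheaf cohomology one must dualize the \emph{target}, $\Hom(C_*^{st\A^1}(X)\otimes C_*^{st\A^1}(Y)^{\vee},\mathbf{1}_k)$, and then Voevodsky's Theorem \ref{ThmVoeThom} plus the Thom isomorphism produce codimension $d_Y$ and the twist $p_Y^{*}\omega_{Y/k}$, i.e.\ $\widetilde{\CH}^{d_Y}(X\times Y,p_Y^{*}\omega_{Y/k})_{\Q}=\Hom_{CHW(k)}(Y,X)_{\Q}$ (Proposition \ref{propembedd}); this is precisely why the embedding is defined on the opposite category. Your formula already fails for $X=\Spec k$: $\Hom(\mathbf{1}_k,C_*^{st\A^1}(Y))_{\Q}$ is rationally the group of quadratic zero-cycles $\widetilde{\CH}^{d_Y}(Y,\omega_{Y/k})_{\Q}$, in line with \cite{AH11}, and not $\widetilde{\CH}^{0}(Y)_{\Q}$. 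The two groups $\widetilde{\CH}^{d_X}(X\times Y,p_X^{*}\omega_{X/k})$ and $\widetilde{\CH}^{d_Y}(X\times Y,p_Y^{*}\omega_{Y/k})$ live in different codimensions and are not isomorphic in general, so this is not a matter of conventions.

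Beyond this duality bookkeeping, the technical heart of the argument is missing from your plan. Whether or not one splits into plus and minus parts, one still has to prove that rational stable $\A^1$-cohomology of the relevant Thom spaces in bidegree $(2(d_Y+n_Y),d_Y+n_Y)$ agrees with Nisnevich cohomology with coefficients in $\mathbf{K}^{MW}_{d_Y+n_Y}$; in the paper this is Lemma \ref{lemmainthm1} and Corollary \ref{lemmainthm2}, proved via the hypercohomology spectral sequence, Morel's connectivity, the twisted Rost--Schmid complexes, and the rational vanishing in the range $i<d_Y+n_Y+m$ supplied by Theorem \ref{proofofconjst} -- this is the substitute for the unavailable cancellation theorem, and your proposal only gestures at it (``using the module structure over $H\mathbf{W}_{\Q}$''). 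Likewise the compatibility with composition, which you defer, is where the paper actually uses your plus/minus strategy: the splitting $\widetilde{CHW}(k)_{\Q}=\underline{Chow}(k)_{\Q}\times WM_{gm}(k)_{\Q}$ of Proposition \ref{propsplitCHW}, D\'eglise and Kerz for the plus part, and explicit comparisons of Fasel's complexes with Nisnevich cohomology of the Witt sheaf under flat/arbitrary pullback and proper pushforward (Propositions \ref{proppullback}, \ref{proppushforward}, \ref{propid}) together with the convolution formula for composition in $D_{\A^1,gm}(k)$. So the overall architecture is close to the paper's, but the duality step must be corrected (with the resulting contravariance) and the key comparison lemma actually proved rather than asserted.
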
 
In fact, one of the main steps in the work of \cite{AH11} is to exhibit a natural isomorphism $H_0^{st\A^1}(X)(L) \rightarrow \widetilde{\CH}_0(X_L)$ for any separable, finitely generated field extension $L/k$. So one may relate this step to our work as evaluating at a generic point, but much weaker than expected, since we can only prove the result for $\Q$-coefficient. Now our paper is organized as follows: we will review shortly $\A^1$-homotopy theory in section \S 2. Section \S 3 is devoted for $\A^1$-derived categories, in fact we will define the geometric $\P^1$-stable $\A^1$-derived category $D_{\A^1,gm}(k)$ over a field $k$ in \ref{defgmA1} at the end of \S 3. In these \S 2 and \S 3 we simply steal everything which is needed from the presentation of \cite{AH11}. For a complete treatment we strongly recommend the reader to \cite{Ay08}, \cite{CD10} and \cite{Mor12}. In section \S 4 we prove the Morel's conjecture \ref{conjst} and also give some applications, which are again unpublished results of Morel. More precisely, based on the slice spectral sequence of Voevodsky and the recent work of R\"ondigs and \O stv\ae r \cite{RO13} computing the slice of Hermitian $K$-theory and on the fact that rational motivic cohomology $H\Q$ is the motivic Landweber exact spectrum associated with the additive formal group law over $\Q$ of Naumann, Spitzweck and \O stv\ae r \cite{NSO09}, we obtain a splitting 
$$\mathbf{KO}_{\Q+} = \bigvee_{n \in \Z} \P^{1 \wedge 2n}_{\Q+},$$
in $SH(k)_\Q$, where $k$ is a perfect field of $char(k) \neq 2$ (see corollary \ref{corsssKO}). An interesting consequence of the theorem \ref{proofofconjst} is that we also have a splitting (see \ref{corsplitKT}) 
$$\mathbf{KO}_{\Q-} = \bigvee_{n \in \Z} S^{4n}_{s,\Q-}.$$
These together will imply the calculation for Hermitian K-theory of a smooth scheme over a perfect field $k$ of $char(k) \neq 2$:
$$KO^{p,q}(X)_\Q = \bigoplus_{n \in \Z} H^{p+4n,q+2n}_M(X,\Q) \bigoplus_{m \in \Z}H^{p+4m}_{Nis}(X,\underline{\mathbf{W}}_\Q).$$
Another result we also obtain from the theorem \ref{proofofconjst} is what we called motivic Serre's theorem, which roughly tells us that over a perfect field of $char(k) \neq 2$ all the rational homotopy groups of the motivic sphere spectrum are trivial (see corollary \ref{cormotivicserre}). The strategy of the proof of theorem \ref{proofofconjst} follows almost in the same line as the proof for the $\Q_+$-part in \cite[Part 4, \S 16]{CD10}. The proof of Theorem \ref{mainthm} is divided in two steps. Firstly, we construct $CHW(k)$ in section \S 5  (see Proposition \ref{propCW} and Def. \ref{defCHW}). The embedding functor is established then in section \S 6. Since we work with $\Q$-coefficient, we also have to point out that even our motivation comes from the existence of zero cycles of degree one of an algebraic varieties over a field, but we can not expect that our result can contain any precise information about this. We fix now some notations throughout this work. For a pair of adjoint functors $F: \sA \rightarrow \sB$ and $G: \sB \rightarrow \sA$, we will adopt the notation in \cite{CD10} 
$$F: \sA \rightleftarrows \sB : G,$$
where $F$ is left adjoint to $G$ and $G$ is right adjoint to $F$. The unit $1 \rightarrow GF$ will be denoted by $ad(F,G)$ and the counit $FG \rightarrow 1$ by $ad'(F,G)$. Given two smooth $k$-schemes $X,Y \in Sm/k$ and two vector bundles $\sE, \sE'$ over $X$ resp. $Y$, we write $\sE \times \sE'/ X\times Y$ for the external sum over $X \times_k Y$, or sometime just $\sE \boxplus \sE'$. The $\P^1$- stable homotopy category over a base scheme $S$ will be denoted by $\mathbf{StHo}_{\A^1,\P^1}(S)$ and we write $\mathbf{StHo}_{\A^1,S^1}(S)$ for the $S^1$-stable homotopy category. Sometime, when it is clear about which category we are talking about, we just abbreviate our $\P^1$-stable homotopy category by $SH(S)$.    
\section{$\A^1$-homotopy category}
\subsection{Unstable $\A^1$-homotopy category}
Let $Sm/k$ denote the category of separated smooth schemes of finite type over a field $k$. We write $Spc/k$ for the category $\Delta^{op}Sh_{Nis}(Sm/k)$ consisting of simplicial Nisnevich sheaves of sets on $Sm/k$. An object in $Spc/k$ is simply called a $k$-space, which is usually denoted by calligraphic letter $\sX$. The Yoneda embedding $Sm/k \rightarrow Spc/k$
is given by sending a smooth scheme $X \in Sm/k$ to the corresponding representable sheaf $\Hom_{Sm/k}(-,X)$ then by taking the associated constant simplicial object, where all face and degeneracy maps are the identity. We will identify $Sm/k$ with its essential image in $Spc/k$. Denote by $Spc_+/k$ the category of pointed $k$-space, whose objects are $(\sX,x)$, where $\sX$ is a $k$-space and $x: \Spec k \rightarrow \sX$ is a distinguished point. One has an adjoint pair 
$$ Spc/k \rightleftarrows Spc_+/k, $$
which means that the functor $Spc/k \rightarrow Spc_+/k$ sending $\sX \rightarrow \sX_+ = \sX \coprod \Spec k$ is left-adjoint to the forgetful functor $Spc_+/k \rightarrow Spc/k$. The category $Spc/k$ can be equipped with the injective local model structure $(C_s,W_s,F_s)$, where cofibrations are monomorphisms, weak equivalences are stalkwise weak equivalences of simplicial sets and fibrations are morphisms with right lifting property wrt. morphisms in $C_s \cap W_s$. Denote by $\bold{Ho}_s^{Nis}(k)$ the resulting unpointed homotopy category as constructed by Joyal-Jardine (cf. \cite[\S 2 Thm.  1.4]{MV01}). We will write $\bold{Ho}_{s,+}^{Nis}(k)$ for the pointed homotopy category. 
\begin{defn}\cite{MV01}\label{def2.1}
\begin{enumerate}
\item A $k$-space $\sZ \in Spc/k$ is called $\A^1$-local if and only for any object $\sX \in Spc/k$, the projection $\sX \times \A^1 \rightarrow \sX$ induces a bijection 
$$\Hom_{\bold{Ho}_s^{Nis}(k)}(\sX,\sZ) \stackrel{\simeq}{\rightarrow} \Hom_{\bold{Ho}_s^{Nis}(k)}(\sX \times \A^1,\sZ). $$
\item Let $\sX \rightarrow \sY \in Mor(Spc/k)$ be a morphism of $k$-spaces. It is an $\A^1$-weak equivalence if and only for any $\A^1$-local object $\sZ$, the induced map 
$$\Hom_{\bold{Ho}_s^{Nis}(k)}(\sY,\sZ) \rightarrow \Hom_{\bold{Ho}_s^{Nis}(k)}(\sX,\sZ) $$ 
is bijective.  
\end{enumerate}
\end{defn}       
In \cite[\S 2 Thm. 3.2]{MV01}, F. Morel and V. Voevodsky proved that $Spc/k$ can be endowed with the $\A^1$-local injective model structure $(C,W_{\A^1},F_{\A^1})$, where cofibrations are monomorphisms, weak equivalences are $\A^1$-weak equivalences. The associated homotopy category obtained from $Spc/k$ by inverting $\A^1$-weak equivalences is denoted by $\bold{Ho}_{\A^1}(k) \stackrel{def}{=} Spc/k[W_{\A^1}^{-1}] $. This category is called the unstable $\A^1$-homotopy category of smooth $k$-schemes. Let $\bold{Ho}_{s,\A^1-loc}^{Nis}(k) \subset \bold{Ho}_s^{Nis}(k)$ be the full subcategory consisting of $\A^1$-local objects. In fact, one has an adjoint pair (cf. \cite{MV01})
$$L_{\A^1}: \bold{Ho}_s^{Nis}(k) \rightleftarrows \bold{Ho}_{s,\A^1-loc}^{Nis}(k) : i, $$
where $L_{\A^1}$ is the $\A^1$-localization functor sending $\A^1$-weak equivalences to isomorphisms. $L_{\A^1}$ induces thus an equivalence of categories $\bold{Ho}_{\A^1}(k) \rightarrow \bold{Ho}_{s,\A^1-loc}^{Nis}(k)$. This will imply that if $\sX \in Spc/k$ is any object and $\sY$ is an $\A^1$-local object, then one has a canonical bijection 
$$\Hom_{\bold{Ho}_s^{Nis}(k)}(\sX,\sY) \stackrel{\simeq}{\rightarrow} \Hom_{\bold{Ho}_{\A^1}(k)}(\sX,\sY). $$  
We will write $\bold{Ho}_{\A^1,+}(k)$ for the unstable pointed $\A^1$-homotopy category of smooth $k$-schemes. Recall 
\begin{defn}\label{defThomsp} 
Let $X \in Sm/k$ and $E$ be a vector bundle over $X$. The Thom space of $E$ is the pointed sheaf 
$$Th(E/X) = E/E-s_0(X), $$
where $s_0: X \rightarrow E$ is the zero section of $E$.
\end{defn}
Let $T \in Spc_+/k$ be the quotient sheaf $\A^1/(\A^1- \{0\})$ pointed by the image of $\A^1-\{0\}$. Then $T \cong S^1_t \wedge S^1_s$ in $\bold{Ho}_{\A^1,+}(k)$ (\cite[Lem. 2. 15]{MV01}). For a pointed space $\sX \in Spc_+/k$, we denote by $\Sigma_T(\sX,x) = T \wedge (\sX,x)$. Remark that $\P^n/\P^{n-1} \cong T^n \stackrel{def}{=} T^{\wedge n}$ is an $\A^1$-equivalence. In particular, we have $(\P^1,*) \cong T$ (\cite[Cor. 2.18]{MV01}). Recall 
\begin{prop}\cite[\S 3 Prop. 2. 17]{MV01}\label{propthom}
Let $X, Y \in Sm/k$ and $E, E'$ be vector bundles on $X$ and $Y$ respectively. One has 
\begin{enumerate} 
\item There is a canonical isomorphism of pointed sheaves 
$$Th(E \times E' / X \times Y) = Th(E/X) \wedge Th(E'/Y). $$
\item There is a canonical isomorphism of pointed sheaves 
$$Th(\sO_X^n) = \Sigma^n_T X_+$$
\item The canonical morphism of pointed sheaves 
$$\P(E \oplus  \sO_X)/\P(E) \rightarrow Th(E)$$ 
is an $\A^1$-weak equivalence. 
\end{enumerate}
\end{prop}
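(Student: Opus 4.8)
The plan is to dispatch (1) and (2) by unwinding the definitions and to concentrate the work on (3).

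For (1), recall that the Thom space $Th(E/X) = E/(E - s_0(X))$ is, as an unpointed $k$-space, the pushout of $\Spec k \leftarrow (E - s_0 X) \hookrightarrow E$, and that both the smash product of pointed $k$-spaces and the formation of such quotients are colimits; since the sheaf topos $Spc/k$ is cartesian closed, finite products preserve colimits, so these constructions commute in the expected way. I would then observe that the external sum $E \times E'$ has total space $E \times_k E'$ over $X \times_k Y$, with zero section $s_0(X\times Y) = s_0(X)\times_k s_0(Y)$ equal to the intersection of the closed subschemes $s_0(X)\times_k E'$ and $E\times_k s_0(Y)$; since the complement of an intersection is the union of the complements, the complement of the zero section of $E\times E'$ is $\bigl((E - s_0 X)\times_k E'\bigr)\cup\bigl(E\times_k(E' - s_0 Y)\bigr)$, and feeding this into the colimit identity gives $Th(E\times E'/X\times Y) = Th(E/X)\wedge Th(E'/Y)$. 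Part (2) is then the special case in which $X$ is replaced by $\Spec k$ and $E$ by $\A^1$, iterated $n$ times, together with the zero bundle $E' = 0$ over $X$ (for which $Th(0/X) = X/\varnothing = X_+$) and the identification $T = \A^1/(\A^1 - \{0\}) = Th(\A^1/\Spec k)$; iterating (1) yields $Th(\sO_X^n) = T^{\wedge n}\wedge X_+ = \Sigma^n_T X_+$.

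For (3), set $W := \P(E\oplus\sO_X) - s_0(X)$. Then $\{E, W\}$ is a Zariski open cover of $\P(E\oplus\sO_X)$, where $E \hookrightarrow \P(E\oplus\sO_X)$ is the complement of the sub-bundle $\P(E)$ and $E\cap W = E - s_0(X)$ (the zero section $s_0(X)$ and $\P(E)$ being disjoint). The quotient map $E \to Th(E) = E/(E - s_0 X)$ and the constant map $W \to \ast$ agree on the overlap $E - s_0(X)$, hence glue, by Nisnevich descent, to a morphism of pointed $k$-spaces $\P(E\oplus\sO_X)\to Th(E)$; it collapses all of $W$, in particular $\P(E)$, so it descends to the canonical map $\bar q\colon \P(E\oplus\sO_X)/\P(E)\to Th(E)$, and it factors as
$$\P(E\oplus\sO_X)/\P(E)\ \longrightarrow\ \P(E\oplus\sO_X)/W\ \longrightarrow\ Th(E).$$
The right-hand arrow is, up to homotopy, the excision isomorphism for the cover $\{E, W\}$: the square with vertices $E - s_0 X$, $E$, $W$, $\P(E\oplus\sO_X)$ is homotopy cocartesian in $\bold{Ho}_s^{Nis}(k)$, because $\{E, W\}$ is a Zariski and a fortiori Nisnevich open cover, so the induced map on cofibres $Th(E) = E/(E - s_0 X)\to\P(E\oplus\sO_X)/W$ is an isomorphism there, and the right-hand arrow is its inverse — hence an $\A^1$-weak equivalence. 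For the left-hand arrow, I would check that the projection $E\oplus\sO_X\to E$ induces a morphism $W\to\P(E)$ exhibiting $W$ as a line bundle over $\P(E)$ with zero section the inclusion $\P(E)\hookrightarrow W$; this inclusion is therefore an $\A^1$-weak equivalence, and since $\P(E)\hookrightarrow W\hookrightarrow\P(E\oplus\sO_X)$ are monomorphisms, hence cofibrations for the injective model structure, the gluing lemma shows $\P(E\oplus\sO_X)/\P(E)\to\P(E\oplus\sO_X)/W$ is an $\A^1$-weak equivalence. Composing the two gives the statement.

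Parts (1) and (2) are purely formal; the genuine content is in (3). I expect the two points that need care to be (i) the excision (Brown--Gersten) property of the Nisnevich-local model structure that makes the displayed square homotopy cocartesian, and (ii) the verification that $W = \P(E\oplus\sO_X) - s_0(X)\to\P(E)$ is a line bundle; everything else is bookkeeping with cofibre sequences and the gluing lemma.
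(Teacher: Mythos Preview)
The paper does not give its own proof of this proposition: it is quoted verbatim from Morel--Voevodsky \cite[\S 3, Prop.~2.17]{MV01} as a background result, with no argument supplied. Your proof is correct and is essentially the argument one finds in the cited source. Parts (1) and (2) are indeed formal consequences of the fact that finite products in a sheaf topos commute with colimits, together with the identification of the complement of the zero section in $E\times E'$ as the union $(E-s_0X)\times E'\cup E\times(E'-s_0Y)$; your reduction of (2) to iterated applications of (1) using $Th(0/X)=X_+$ and $Th(\A^1/\Spec k)=T$ is the standard one. For (3), your factorisation through $\P(E\oplus\sO_X)/W$ with $W=\P(E\oplus\sO_X)\setminus s_0(X)$, the use of the Nisnevich (indeed Zariski) distinguished square $\{E,W\}$ to identify $\P(E\oplus\sO_X)/W$ with $Th(E)$, and the observation that projection from $s_0(X)$ exhibits $W$ as (the total space of) a line bundle over $\P(E)$ so that $\P(E)\hookrightarrow W$ is an $\A^1$-weak equivalence, is precisely the route taken in \cite{MV01}. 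There is nothing to correct.
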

The following theorem due to Voevodsky will play an essential role for our purpose. 
\begin{thm}\cite[Thm. 2.11]{Voe03}\label{ThmVoeThom}
Let $X \in SmProj/k$ a smooth projective variety of pure dimension $d_X$ over a field $k$. There exists an integer $n_X$ and a vector bundle $V_X$ over $X$ of rank $n_X$, such that  
$$ V_X \oplus T_X = \sO_X^{n_X+d_X} \in K_0(X),$$
where $T_X$ denotes the tangent bundle of $X$. Moreover, there exists a morphism $T^{\wedge n_X + d_X} \rightarrow Th(V_X)$ in $\bold{Ho}_{\A^1,+}(k)$, such that the induced map $H^{2d_X}_{\sM}(X,\Z(d_X)) \rightarrow \Z$ coincides with the degree map $\deg: \CH_0(X) \rightarrow \Z$, where $T = S^1_s \wedge \G_m$.
\end{thm}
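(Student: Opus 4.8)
This is Voevodsky's \cite[Thm.~2.11]{Voe03}; I sketch the route one would take. Begin with the bundle $V_X$. Fix a very ample line bundle $L$ on $X$ together with the resulting closed immersion $\iota\colon X\hookrightarrow\P^N$, $N+1=\dim_k H^0(X,L)$, so that $L=\iota^*\sO_{\P^N}(1)$. Restricting the Euler sequence of $\P^N$ to $X$ gives $[T_{\P^N}|_X]=(N+1)[L]-[\sO_X]$ in $K_0(X)$; the normal bundle sequence of $\iota$ gives $[T_{\P^N}|_X]=[T_X]+[N_\iota]$, where $N_\iota:=N_{X/\P^N}$ has rank $N-d_X$; and the evaluation surjection $\sO_X^{\oplus(N+1)}\twoheadrightarrow L$, whose kernel $K$ is locally free of rank $N$, gives $[L]=(N+1)[\sO_X]-[K]$. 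Combining the three identities yields $[N_\iota\oplus K^{\oplus(N+1)}]+[T_X]=[\sO_X^{\oplus((N+1)^2-1)}]$ in $K_0(X)$, so one takes $V_X:=N_\iota\oplus K^{\oplus(N+1)}$, a genuine vector bundle of rank $n_X:=(N+1)^2-1-d_X$; then $V_X\oplus T_X=\sO_X^{n_X+d_X}$ in $K_0(X)$, with $n_X+d_X=(N+1)^2-1$. Note that in general $V_X\oplus T_X$ cannot be trivialized as a genuine bundle --- this already fails on $\P^1$ by the Grothendieck splitting --- only in $K_0$; this is why the morphism below cannot be built from a naive Pontryagin--Thom collapse along an embedding into affine space, and the point of keeping $N$ variable is that $n_X$ can be made arbitrarily large.

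The heart of the matter is the construction of the morphism, which I would obtain from the motivic form of Atiyah duality: $\Sigma^\infty_T X_+$ is strongly dualizable in the $\P^1$-stable homotopy category $SH(k)$, with Spanier--Whitehead dual the Thom spectrum of the virtual cotangent bundle, hence --- using that $-[T_X]=[V_X]-[\sO_X^{n_X+d_X}]$ --- equal to $\Sigma_T^{-(n_X+d_X)}\Sigma^\infty_T Th(V_X)$. This I would set up by applying Morel--Voevodsky homotopy purity \cite{MV01} both to $\iota$ and to the diagonal $\Delta_X\hookrightarrow X\times X$, whose normal bundle is $T_X$, following the treatment in \cite{Hu05} and \cite{AH11}; it is here that the $\sO(1)$-twists from the projective embedding get reabsorbed into $V_X$. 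Dualizing the structure map $X\to\Spec k$ and applying $\Sigma_T^{n_X+d_X}$ then produces a stable map $\Sigma^\infty_T T^{n_X+d_X}\to\Sigma^\infty_T Th(V_X)$. Since $Th(V_X)$ is $\A^1$-$(n_X-1)$-connected, being the Thom space of a rank-$n_X$ bundle, while $T^{n_X+d_X}$ is an $\A^1$-sphere, for $N$ sufficiently large this stable map is the suspension of a morphism $T^{n_X+d_X}\to Th(V_X)$ in $\bold{Ho}_{\A^1,+}(k)$, by the $\A^1$-Freudenthal suspension theorem \cite{Mor12}.

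It then remains to compute the induced map on motivic cohomology. Since $H\Z$ is an oriented ring spectrum, the Thom isomorphism identifies $\widetilde{H}^{2(n_X+d_X)}_{\sM}(Th(V_X),\Z(n_X+d_X))\cong H^{2d_X}_{\sM}(X,\Z(d_X))=\CH_0(X)$, while $\widetilde{H}^{2(n_X+d_X)}_{\sM}(T^{n_X+d_X},\Z(n_X+d_X))=\Z$; under these identifications the collapse map induces the proper pushforward $p_{X*}\colon\CH_0(X)\to\CH_0(\Spec k)=\Z$, which by definition is the degree map. This identification is the compatibility of the homotopical transfer coming from Atiyah duality with the geometric pushforward on Chow groups, which follows from the uniqueness of pushforwards in the oriented theory $H\Z$; cf.\ \cite{CD10} and \cite{Mor12}.

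I expect the main obstacle to be the middle step: setting up the motivic Atiyah duality in precisely the form needed, together with the destabilization. The duality naturally lives in $SH(k)$, and extracting an honest morphism in the unstable category $\bold{Ho}_{\A^1,+}(k)$ requires Morel's $\A^1$-connectivity estimates and some bookkeeping to check that the relevant homotopy class lies in the stable range once $N$ is large; the compatibility of transfers in the last step is by now routine in the oriented-theory formalism but still demands some care.
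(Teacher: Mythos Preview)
The paper does not prove this theorem: it is stated with attribution to Voevodsky \cite[Thm.~2.11]{Voe03} and used as a black box throughout. There is therefore no proof in the paper to compare your proposal against.

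As a reconstruction your sketch is broadly sound, but it diverges from Voevodsky's actual argument in \cite{Voe03}. Voevodsky builds the unstable map directly: after embedding $X\hookrightarrow\P^N$ he passes to the Jouanolou affine torsor $\widetilde{\P^N}\to\P^N$, so that the pullback $\widetilde{X}$ sits as a closed subscheme of an affine space; homotopy purity applied to that embedding then gives an honest unstable map $T^{\wedge m}\to Th(N_{\widetilde{X}/\A^m})$, and one identifies the normal bundle with (the pullback of) $V_X$. No destabilization is needed. Your route --- set up Atiyah duality in $SH(k)$, then invoke $\A^1$-Freudenthal to descend to $\mathbf{Ho}_{\A^1,+}(k)$ --- is in the spirit of the later treatments in \cite{Hu05} and \cite{AH11}, and it does work, but the destabilization step you flag as the main obstacle is genuinely delicate (one needs precise $\A^1$-connectivity bounds on $Th(V_X)$ and the relevant stable-range estimate from \cite{Mor12}), whereas Voevodsky's construction sidesteps it entirely. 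Your $K_0$-computation for $V_X$ is correct; the remark that $V_X\oplus T_X$ need not be trivial as a bundle is well taken and is exactly why Jouanolou's trick, rather than a naive embedding of $X$ in affine space, is the right move.
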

\subsection{Stable $\A^1$-homotopy category} 
Let $\bold{Spect}^{\Sigma}(Spc/k)$ be the category of symmetric spectra in $k$-spaces, which can be viewed as category of Nisnevich sheaves of symmetric spectra. By applying the construction in \cite[Def. 4.4.40, Cor. 4.4.42, Prop. 4.4.62]{Ay08}, $\bold{Spect}^{\Sigma}(Spc/k)$ has the structure of a monoidal model category. Let $\bold{StHo}_{S^1}(k)$ be the resulting homotopy category. The stable $\A^1$-homotopy category of $S^1$-spectra $\bold{StHo}_{\A^1,S^1}(k)$ is obtained from $\bold{StHo}_{S^1}(k)$ by Bousfield localization. Equivalently, the category $\bold{Spect}^{\Sigma}(Spc/k)$ can be equipped with an $\A^1$-local model structure (cf. \cite[Def. 4.5.12]{Ay08}). The homotopy category of this $\A^1$-local model structure is $\bold{StHo}_{\A^1,S^1}(k)$, which is also known to be equivalent to the category $\bold{StHo}^{S^1}_{\A^1-loc}(Sm/k)$ constructed by F. Morel in \cite[Def. 4.1.1]{Mor05}. The $\A^1$-local symmetric sphere spectrum is defined by taking the functor 
$$\underline{n} \mapsto L_{\A^1}(S_s^{1 \wedge n}) $$
with an action of symmetric groups, where $L_{\A^1}$ denotes the $\A^1$-localization functor. For a pointed space $(\sX,x)$, its $\A^1$-local symmetric suspension spectrum is defined as the symmetric sequence 
$$\underline{n} \mapsto L_{\A^1}(S_s^{1 \wedge n} \wedge \sX) $$
together with symmetric groups actions. Let $\sE$ be an $\A^1$-local symmetric
spectrum in $Spc/k$. One defines (\cite[Def. 2.1.11]{AH11}) the $i$-th
$S^1$-stable $\A^1$-homotopy sheaf $\pi_i^{st\A^1,S^1}(\sE)$ of $\sE$ as the
Nisnevich sheaf on $Sm/k$ associated to the presheaf 
$$U \mapsto \Hom_{\bold{StHo}_{\A^1,S^1}(k)}(S^{1 \wedge i}_s \wedge \Sigma^{\infty}_s U_+,\sE). $$
Now we consider the symmetric $T$-spectra or $\P^1$-spectra. $\P^1$ is pointed
with $\infty$ and $\P^{1 \wedge n}$ has a natural action of $\Sigma_n$ by
permutation of the factors, so the association $\underline{n} \mapsto \P^{1
\wedge n}$ is a symmetric sequence. A symmetric $\P^1$-spectrum is a symmetric
sequence with a module structure over the sphere spectrum $\mathbf{S}^0$. Denote
by $\bold{Spect}_{\P^1}^{\Sigma}(Spc/k)$ the full subcategory of the category of
symmetric sequence in $k$-spaces $\bold{Fun}(\mathpzc{Sym},Spc_+/k)$
consisting of
symmetric $\P^1$-spectra, which also has a model structure \cite[Def.
4.5.21]{Ay08}. Here we denote by $\mathpzc{Sym}$ the groupoid, whose objects
are $\underline{n}$ and morphisms are given by bijections. Let
$\bold{StHo}_{\A^1,\P^1}(k)$ be the resulting homotopy
category, which is called $\P^1$-stable $\A^1$-homotopy category. For a pointed
space $(\sX,x)$, we will write $\Sigma^{\infty}_{\P^1}(\sX,x)$ for the
suspension symmetric $\P^1$-spectrum, i.e., it is given by the functor
$\underline{n} \mapsto \P^{1 \wedge n} \wedge \sX$ equipped with an action of
symmetric group by permuting the first $n$-factors. Let $\bold{S}^i$ be a
suspension symmetric $\P^1$-spectrum of $S^i_s$. If $\sE$ is a symmetric
$\P^1$-spectrum, then the $i$-th $\P^1$-stable $\A^1$-homotopy sheaf
$\pi_i^{st\A^1,\P^1}(\sE)$ is defined as the Nisnevich sheaf on $Sm/k$
associated to the presheaf (cf. \cite[Def. 2.1.14]{AH11}) 
$$U \mapsto \Hom_{\bold{StHo}_{\A^1,\P^1}(k)}(\bold{S}^i \wedge
\Sigma^{\infty}_{\P^1}U_+,\sE).$$ 
\begin{thm}\label{thmMorS1}\cite[Thm. 6.1.8 and Cor. 6.2.9]{Mor05}
 Let $\sE$ be an $\A^1$-local symmetric $S^1$-spectrum. The homotopy
sheaves $\pi_i^{st\A^1,S^1}(\sE)$ are strictly $\A^1$-invariant.    
\end{thm}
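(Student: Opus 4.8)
The plan is to deduce the statement from the two foundational pillars of Morel's theory of strongly and strictly $\A^1$-invariant sheaves. Recall that a Nisnevich sheaf of abelian groups $M$ on $Sm/k$ is \emph{strongly $\A^1$-invariant} if the cohomology presheaves $H^i_{Nis}(-,M)$ are $\A^1$-invariant for $i=0,1$, and \emph{strictly $\A^1$-invariant} if this holds for all $i\geq 0$. The argument splits into two parts: (A) show that each $\pi_i^{st\A^1,S^1}(\sE)$ is a strongly $\A^1$-invariant sheaf of abelian groups; (B) invoke Morel's theorem that over a perfect field every strongly $\A^1$-invariant sheaf of abelian groups is automatically strictly $\A^1$-invariant.

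For (A), since $\sE$ is $\A^1$-local the presheaf $U\mapsto \Hom_{\bold{StHo}_{\A^1,S^1}(k)}(S^{1\wedge i}_s\wedge\Sigma^\infty_s U_+,\sE)$ coincides with the one computed in the plain stable homotopy category $\bold{StHo}_{S^1}(k)$, and the latter is the filtered colimit $\mathrm{colim}_n\,\pi_{i+n}^{\A^1}(\sE_n)$ taken along the bonding maps, where $\sE_n$ is the $n$-th ($\A^1$-local) space of a fibrant model of $\sE$. For $i+n\geq 2$ each $\pi_{i+n}^{\A^1}(\sE_n)$ is, by Morel's unstable results (the $\A^1$-connectivity theorem together with the structure theorem for higher $\A^1$-homotopy sheaves), a strongly $\A^1$-invariant sheaf of abelian groups. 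Because $\A^1$-invariance of $H^0_{Nis}$ and $H^1_{Nis}$ over the finite-dimensional Noetherian schemes involved is preserved under filtered colimits, the sheaf $\pi_i^{st\A^1,S^1}(\sE)$ is again strongly $\A^1$-invariant.

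Part (B) is the technical core, and one follows Morel's proof. Given a strongly $\A^1$-invariant sheaf of abelian groups $M$, one first shows that $M$ is \emph{unramified}: for $X$ smooth irreducible with function field $F$, the map $M(X)\to M(F)$ is injective with image $\bigcap_{x\in X^{(1)}}M(\sO_{X,x})$; one then produces a Gersten-type resolution of $M$, using a Gabber-style geometric presentation lemma to reduce the Nisnevich-local analysis to affine spaces over fields. Next one introduces the Rost--Schmid complex $C^\ast(X,M)$, assembled from the fibres of $M$ and its contractions $M_{-j}$ at the points of $X$ together with the residue and transfer maps, and shows that it computes $H^\ast_{Nis}(X,M)$. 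Finally one proves that $C^\ast(X,M)\to C^\ast(X\times\A^1,M)$ is a quasi-isomorphism, via an explicit contracting chain homotopy modelled on Rost's computation for cycle modules and using only the homotopy-invariance input $M(\A^1_E)=M(E)$ and the formal properties of contractions. This gives $H^\ast_{Nis}(X\times\A^1,M)\cong H^\ast_{Nis}(X,M)$ for all $\ast$, i.e. strict $\A^1$-invariance, which combined with (A) finishes the proof.

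The main obstacle is Part (B), and inside it the construction of the Gersten/Rost--Schmid resolution for an \emph{arbitrary} strongly $\A^1$-invariant sheaf: unlike for a cycle module or a homotopy module one has neither transfers nor a Gersten complex at the outset, and supplying them is exactly Morel's delicate argument (the Gabber presentation lemma over an infinite perfect field, plus a separate treatment of finite residue fields using norm maps on Milnor--Witt $K$-theory). Once that resolution and its homotopy invariance are established, the reduction of the $S^1$-stable homotopy sheaves to the unstable ones in Part (A) is essentially formal.
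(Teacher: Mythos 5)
The paper itself contains no proof of this statement: it is quoted verbatim with a citation to \cite[Thm. 6.1.8, Cor. 6.2.9]{Mor05}, so the only meaningful comparison is with Morel's argument there. Your sketch is essentially correct, but it follows a genuinely different route from the cited source. Morel's proof in \cite{Mor05} stays entirely stable: the stable $\A^1$-connectivity theorem (whose engine is Gabber's presentation lemma) shows that the truncations of an $\A^1$-local $S^1$-spectrum for the homotopy $t$-structure are again $\A^1$-local; hence for each $i$ the Eilenberg--MacLane spectrum on $\pi_i^{st\A^1,S^1}(\sE)$ is $\A^1$-local, and $\A^1$-locality of an Eilenberg--MacLane spectrum $HM$ is literally the statement that $H^j_{Nis}(-,M)$ is $\A^1$-invariant for all $j$, i.e. strict $\A^1$-invariance --- no unramifiedness, transfers, or Rost--Schmid complexes enter. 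You instead pass through the unstable theory of \cite{Mor12}: identify $\pi_i^{st\A^1,S^1}(\sE)$ with a (filtered colimit of) unstable $\A^1$-homotopy sheaves $\pi_{i+n}^{\A^1}(\sE_n)$ of the $\A^1$-local levels, quote that these are strongly $\A^1$-invariant sheaves of abelian groups, and then invoke the strong-implies-strict theorem, whose proof is the Rost--Schmid machinery you outline. This is a legitimate proof given the literature, and your part (A) bookkeeping (fibrant $\Omega$-spectrum model, cohomology commuting with filtered colimits over finite-dimensional Noetherian schemes) is fine; but be aware of two points. First, the unstable inputs you use are chronologically later and at least as deep as the stable theorem itself, whereas Morel's stable argument needs only the connectivity theorem plus the $t$-structure, so your route is heavier without buying generality. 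Second, the strong-implies-strict theorem of \cite{Mor12} is proved over a \emph{perfect} field, so your proof nominally carries that extra hypothesis (harmless in this paper, where $k$ is perfect wherever the theorem is used, but not present in the statement as quoted from \cite{Mor05}).
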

One has a canonical isomorphism \cite[Prop. 2.1.16]{AH11}
\begin{multline*}
colim_n\Hom_{\bold{StHo}_{\A^1,S^1}(k)}(\Sigma^{\infty}_s \G_m^{\wedge n}
\wedge \Sigma^{\infty}_s(U_+), \Sigma^{\infty}_s \G_m^{\wedge n} \wedge
\Sigma^{\infty}_s(\sX,x)) \stackrel{\cong}{\rightarrow} \\
\Hom_{\bold{StHo}_{\A^1,\P^1}(k)}(\Sigma^{\infty}_{\P^1}(U_+),\Sigma^{\infty}
_{\P^1}(\sX,x)).
\end{multline*}
So one may view that $\bold{StHo}_{\A^1,\P^1}(k)$ is obtained from
$\bold{StHo}_{\A^1,S^1}(k)$ by formally inverting the $\A^1$-localized
suspension spectrum of $\G_m$. So from \ref{thmMorS1}, we see that for a
pointed $k$-space $(\sX,x)$, the homotopy sheaves $\pi_i^{st\A^1,\P^1}(\sX)$
are also strictly $\A^1$-invariant. By the computation of F. Morel, one can
identify the Milnor-Witt $K$-theory sheaves with stable homotopy sheaves of
spheres 
$$\bold{K}^{MW}_n \stackrel{def}{=}
\pi_0^{st\A^1,\P^1}(\Sigma^{\infty}_{\P^1}(\G_m^{\wedge n})).$$
This identification allows us to conclude that $\bold{K}^{MW}_n$ are strictly
$\A^1$-invariant sheaves.  
\section{$\A^1$-homological algebra}
\subsection{Effective $\A^1$-derived category}
Let $Ch_{-}(\sA b_k)$ be the category of chain complexes over the category $\sA
b_k$ of abelian Nisnevich sheaves. Denote by $Ch_{\geq 0}(\sA b_k)$ the
category of chain complexes of abelian Nisnevich sheaves, whose homoglocial
degree $\geq 0$. The sheaf-theoretical Dold-Kan correspondence 
$$N: \Delta^{op}\sA b_k \rightleftarrows Ch_{\geq 0}(\sA b_k) : K,$$
where $\Delta^{op} \sA b_k$ is the cateogry of simplicial abelian Nisnevich
sheaves, gives us via the inclusion functor $Ch_{\geq 0}(\sA b_k) \inj
Ch_{-}(\sA b_k)$, a functor 
$$\Delta^{op}(\sA b_k) \rightarrow Ch_{-}(\sA b_k).$$
By applying this functor on the Eilenberg-Maclane spectrum $H\Z$, we obtain a
ring spectrum $\widetilde{H\Z}$ in $\bold{Fun}(\mathpzc{Sym},Ch_{-}(\sA b_k))$.
Let $\bold{Spect}^{\Sigma}(Ch_{-}(\sA b_k))$ be the full subcategory of the
category $\bold{Fun}(\mathpzc{Sym},Ch_{-}(\sA b_k))$ consisting of modules over
$\widetilde{H\Z}$. On the other hand, by composing with the free abelian group
functor 
$$\Z (-): Spc/k \rightarrow \Delta^{op}(\sA b_k),$$
one obtains a functor 
$$\bold{Fun}(\mathpzc{Sym},Spc_+/k) \rightarrow
\bold{Fun}(\mathpzc{Sym},Ch_{-}(\sA b_k)),$$
which sends the sphere symmetric sequence to $\widetilde{H\Z}$. This induces
then a functor between categories of symmetric spectra 
$$\bold{Spect}^{\Sigma}(Spc/k) \rightarrow \bold{Spect}^{\Sigma}(Ch_{-}(\sA
b_k)).$$ 
In fact, by \cite[Thm. 9.3]{Hov01}, this induces a Quillen functor, which one refers as Hurewicz functor 
$$\mathfrak{H}^{ab}: \bold{StHo}_{S^1}(k) \rightarrow D_{-}(\sA b_k). $$
Now the effective $\A^1$-derived category $D^{eff}_{\A^1}(k)$ is constructed by applying $\A^1$-localization on the category $\bold{Spect}^{\Sigma}(Ch_{-}(\sA b_k))$. By the work of Cisinski and D\'eglise (cf. \cite[\S 5]{CD10}), this category is equivalent to the $\A^1$-derived category constructed by F. Morel in \cite{Mor12}. Let $(\sX,x) \in Spc_+/k$ be a pointed space, and $\Sigma^{\infty}_s(\sX,x)$ its suspension symmetric spectrum. We apply the Hurewicz functor on $\Sigma^{\infty}_s(\sX,x)$ and then $L_{\A^1}^{ab}(-)$, so we may define a functor 
$$\widetilde{C}_*^{\A^1}: \bold{StHo}_{S^1}(k) \rightarrow D^{eff}_{\A^1}(k), \quad \Sigma^{\infty}_s(\sX,x) \mapsto L_{\A^1}^{ab}(\mathfrak{H}^{ab}(\Sigma^{\infty}_s(\sX,x))). $$
Here we write $L_{\A^1}^{ab}$ for the $\A^1$-localization functor on chain complexes to distinguish from the $\A^1$-localization $L_{\A^1}$ on spaces.  If $\sX \in Spc/k$ is not pointed, then we write $C^{\A^1}_*(\sX) \stackrel{def}{=} \widetilde{C}^{\A^1}_*(\sX_+)$. Define $\Z[n] = \mathfrak{H}^{ab}(\Sigma^{\infty}_s S^{n}_s)$. 
\begin{defn}\label{defA1homology}
Let $\sX \in Spc/k$ be a $k$-space. Its $i$-th $\A^1$-homology sheaf is the Nisnevich sheaf $\bold{H}^{\A^1}_i(\sX)$ associated to the presheaf 
$$U \mapsto \Hom_{D^{eff}_{\A^1}(k)}(C_*^{\A^1}(U)[i],C_*^{\A^1}(\sX)) \stackrel{def}{=} \Hom_{D^{eff}_{\A^1}(k)}(C_*^{\A^1}(U) \otimes \Z[i],C_*^{\A^1}(\sX)). $$ 
\end{defn} 
Consider $(\P^1,\infty)$ pointed by $\infty$. According to \cite[Cor. 2.18]{MV01}, we have $\P^1 = S^1_s \wedge \G_m$, so we have an identification $\widetilde{C}_*^{\A^1}(\P^1) = \widetilde{C}^{\A^1}_*(S^1_s \wedge \G_m)$. We define the $\A^1$-Tate complex (called enhanced Tate (motivic) complex by A. Asok and C. Haesemeyer \cite[Def. 2.1.25 and Def. 3.2.1 and Lem. 3.2.2]{AH11}) as 
$$\Z_{\A^1}(n) \stackrel{def}{=} \widetilde{C}_*^{\A^1}(\P^{1 \wedge n})[-2n] = \Z_{\A^1}(1)^{\otimes n}. $$
\begin{defn}\label{defA1unstcoh}
Let $\sX \in Spc/k$ be a $k$-space. The bigraded unstable $\A^1$-cohomology group $H^{p,q}_{\A^1}(\sX,\Z)$ is defined as 
$$H^{p,q}_{\A^1}(\sX,\Z) = \Hom_{D^{eff}_{\A^1}(k)}(C_*^{\A^1}(\sX),\Z_{\A^1}(q)[p]). $$
\end{defn} 
The relationship between unstable $\A^1$-cohomology and Nisnevich hypercohomology with coefficient $\Z_{\A^1}(n)$ is given by the following 
\begin{prop}\label{propunstcoh}\cite[Prop. 3.2.5]{AH11} 
Let $k$ be a field and $\sX \in Spc/k$ be a $k$-space. One has 
\begin{enumerate}
\item For any $p,q$, there is a canonical isomorphism 
$$\H^p_{Nis}(\sX,\Z_{\A^1}(q)) \stackrel{\simeq}{\rightarrow} H^{p,q}_{\A^1}(\sX,\Z). $$
\item The cohomology sheaves $\underline{H}^p(\Z_{\A^1}(q)) = 0$, if $p > q$. 
\item There is a canonical isomorphism $\underline{H}^p(\Z_{\A^1}(p)) \cong \bold{K}^{MW}_p$, for all $p >0$. 
\end{enumerate}
\end{prop}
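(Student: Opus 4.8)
The plan is to establish the three assertions separately, each reducing to a formal property of the $\A^1$-localization functor together with one of Morel's computations.

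For (1), the point is that $D^{eff}_{\A^1}(k)$ is the Bousfield localization of $D_{-}(\sA b_k)$ at the $\A^1$-weak equivalences, and that the complex $\Z_{\A^1}(q) = \widetilde{C}_*^{\A^1}(\P^{1\wedge q})[-2q]$ is $\A^1$-local by construction. The canonical map $\Z(\sX) \to C_*^{\A^1}(\sX)$ is an $\A^1$-weak equivalence, hence induces a bijection on morphism groups in $D_{-}(\sA b_k)$ targeting any $\A^1$-local object. Therefore
$$\Hom_{D^{eff}_{\A^1}(k)}(C_*^{\A^1}(\sX), \Z_{\A^1}(q)[p]) \;\cong\; \Hom_{D_{-}(\sA b_k)}(\Z(\sX), \Z_{\A^1}(q)[p]),$$
and, writing $\sX$ as a colimit of its representables and unwinding the definition of Nisnevich hypercohomology of a bounded-above complex of sheaves, the right-hand side is $\H^p_{Nis}(\sX, \Z_{\A^1}(q))$; naturality is clear throughout, giving the canonical isomorphism.

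For (2) and (3), I would first rewrite $\P^{1\wedge q} \cong (S^1_s \wedge \G_m)^{\wedge q} = S^q_s \wedge \G_m^{\wedge q}$ using \cite[Cor. 2.18]{MV01}. Since $\widetilde{C}_*^{\A^1}$ is monoidal and carries the simplicial suspension $S^1_s\wedge(-)$ to the shift $(-)[1]$, one gets $\widetilde{C}_*^{\A^1}(\P^{1\wedge q}) \simeq \widetilde{C}_*^{\A^1}(\G_m^{\wedge q})[q]$, so that in cohomological indexing
$$\underline{H}^p(\Z_{\A^1}(q)) \;=\; \bold{H}^{\A^1}_{2q-p}(\P^{1\wedge q}) \;=\; \bold{H}^{\A^1}_{q-p}(\G_m^{\wedge q}).$$
Now $\G_m^{\wedge q}$ is a discrete (constant simplicial) pointed sheaf, so its reduced free abelian sheaf is concentrated in homological degree $0$; by Morel's $\A^1$-connectivity theorem, $\A^1$-localization keeps it concentrated in non-negative homological degrees, i.e. $\bold{H}^{\A^1}_i(\G_m^{\wedge q}) = 0$ for $i<0$. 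Taking $i = q-p < 0$ proves (2). For (3), the same formula at $p = q$ gives $\underline{H}^q(\Z_{\A^1}(q)) = \bold{H}^{\A^1}_0(\G_m^{\wedge q})$, and Morel's identification of the free strictly $\A^1$-invariant sheaf on $\G_m^{\wedge q}$ — equivalently of $\bold{H}^{\A^1}_0(\G_m^{\wedge q})$ — with $\bold{K}^{MW}_q$ (the unstable shadow of the identification $\bold{K}^{MW}_n = \pi_0^{st\A^1,\P^1}(\Sigma^{\infty}_{\P^1}(\G_m^{\wedge n}))$ recalled in \S 2) finishes the proof for $p = q > 0$.

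The genuinely hard input is the one hidden in (3): Morel's theorem identifying $\bold{H}^{\A^1}_0(\G_m^{\wedge q})$ with Milnor-Witt $K$-theory, together with the $\A^1$-connectivity theorem feeding into (2); by contrast, (1) and the shift bookkeeping are formal. A secondary point requiring care is to pin down the indexing conventions — the precise relation between the cohomology sheaves $\underline{H}^p$ of an object of $D^{eff}_{\A^1}(k)$ and the $\A^1$-homology sheaves $\bold{H}^{\A^1}_i$, and the sign of the suspension isomorphism $\bold{H}^{\A^1}_i(S^1_s\wedge-) \cong \bold{H}^{\A^1}_{i-1}(-)$ after $\A^1$-localization — so that the index ranges in (2) and (3) come out exactly as stated.
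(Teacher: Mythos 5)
The paper gives no proof of this proposition at all — it is quoted directly from \cite[Prop.\ 3.2.5]{AH11} — so the only comparison available is with the argument of that reference, and your proof reproduces it: $\A^1$-locality of $\Z_{\A^1}(q)$ reduces (1) to Nisnevich hypercohomology, and the identification $\underline{H}^p(\Z_{\A^1}(q)) \cong \bold{H}^{\A^1}_{q-p}(\G_m^{\wedge q})$ via $\P^{1\wedge q}\simeq S^q_s\wedge\G_m^{\wedge q}$, combined with Morel's $\A^1$-connectivity theorem for (2) and his computation $\bold{H}^{\A^1}_0(\G_m^{\wedge q})\cong \bold{K}^{MW}_q$ for (3), is exactly the intended argument. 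Your write-up is correct, with the right inputs isolated as the nontrivial ones.
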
 
\begin{rem}\label{remHI}{\rm
One observes that the complex $\Z_{\A^1}(n)$ is $\A^1$-local, hence by definition (cf. \cite[Def. 5.17]{Mor12}) one has immediately that the sheaves $\underline{H}^p(\Z_{\A^1}(q))$ are strictly $\A^1$-invariant. 
}
\end{rem}
\subsection{$\P^1$-stable $\A^1$-derived category}
Having $\A^1$-Tate complex, the way that we stabilize the category $D^{eff}_{\A^1}(k)$ is to invert formally the $\A^1$-Tate complex to obtain the $\P^1$-stable $\A^1$-derived category $D_{\A^1}(k)$. This can be done by \cite[\S 5]{CD10}. As before, we take $D_{\A^1}(k)$ as the resulting homotopy category of the model category $\bold{Spect}^{\Sigma}_{\P^1}(Ch_{-}(\sA b_k))$ consisting of modules over the $\A^1$-localization of the normalized chain complex of the free abelian group on the sphere symmetric $\P^1$-spectrum. For a pointed space $(\sX,x) \in Spc_+/k$, the stable $\A^1$-complex $\widetilde{C}_*^{st\A^1}(\sX)$ of $(\sX,x)$ is defined as $L_{\A^1}^{ab}(N\Z(\Sigma^{\infty}_{\P^1}(\sX,x)))$ and if $\sX \in Spc/k$ is an unpointed $k$-space, then we write $C_*^{st\A^1}(\sX) $ for $\widetilde{C}_*^{st\A^1}(\sX_+)$. The category $D_{\A^1}(k)$ has an unit object, denoted by $\bold{1}_k$, which is the complex $\widetilde{C}^{st\A^1}_*(\bold{S}^0)$. Define $\bold{1}_k[n] = \bold{1}_k \otimes \widetilde{C}_*^{st\A^1}(S^n_s) $ and $\widetilde{C}_*^{st\A^1}(\sX)[n] = \widetilde{C}_*^{st\A^1}(\sX) \otimes \bold{1}_k[n]$ for a $k$-space $(\sX,x) \in Spc_+/k$. 
\begin{defn}\label{defA1sthomology}
Let $\sX \in Spc/k$ be a $k$-space. The $i$-th $\P^1$-stable $\A^1$-homology sheaf $\bold{H}^{st\A^1}_i(\sX)$ is the Nisnevich sheaf associated to the presheaf 
$$U \mapsto \Hom_{D_{\A^1}(k)}(C_*^{st\A^1}(U)[i],C_*^{st\A^1}(\sX)). $$
\end{defn}     
Just like in case of stable $\A^1$-homotopy categories, one has the following result 
\begin{prop}\label{propstabilize}\cite[Prop. 2.1.29]{AH11}
Let $U \in Sm/k$ and $(\sX,x) \in Spc_+/k$. One has a canonical isomorphism 
\begin{multline}\label{eqstabilize} 
colim_n \Hom_{D^{eff}_{\A^1}(k)}(C_*^{\A^1}(U) \otimes \Z_{\A^1}(n) [i], \widetilde{C}_*^{\A^1}(\sX) \otimes \Z_{\A^1}(n)[i]) \stackrel{\cong}{\rightarrow} \\  \Hom_{D_{\A^1}(k)}(C_*^{st\A^1}(U),\widetilde{C}_*^{st\A^1}(\sX)).
\end{multline}
\end{prop}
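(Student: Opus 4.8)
This is the homological counterpart of the $S^1$-versus-$\P^1$-stable comparison \cite[Prop.~2.1.16]{AH11} recalled above, and I would prove it along the lines of \cite[Prop.~2.1.29]{AH11}. The starting point is that $D_{\A^1}(k)$ is the homotopy category of $\mathbf{Spect}^{\Sigma}_{\P^1}(Ch_{-}(\sA b_k))$, obtained from $D^{eff}_{\A^1}(k)$ by formally inverting the $\A^1$-Tate complex; writing (by abuse) $\Sigma^{\infty}_{\P^1}$ also for the induced suspension functor on derived categories, one has a Quillen adjunction
$$\Sigma^{\infty}_{\P^1}\colon D^{eff}_{\A^1}(k) \rightleftarrows D_{\A^1}(k) \colon \Omega^{\infty}_{\P^1},$$
and, after the standard identifications, $C^{st\A^1}_*(U) = \Sigma^{\infty}_{\P^1}C^{\A^1}_*(U)$ and $\widetilde{C}^{st\A^1}_*(\sX) = \Sigma^{\infty}_{\P^1}\widetilde{C}^{\A^1}_*(\sX)$. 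By adjunction the right-hand side of \eqref{eqstabilize} equals $\Hom_{D^{eff}_{\A^1}(k)}(C^{\A^1}_*(U), \Omega^{\infty}_{\P^1}\Sigma^{\infty}_{\P^1}\widetilde{C}^{\A^1}_*(\sX))$ — the auxiliary homological shift $[i]$ present on both entries of the left-hand side is invertible in $D^{eff}_{\A^1}(k)$ and may be dropped — so the task reduces to computing the derived functor $\Omega^{\infty}_{\P^1}\Sigma^{\infty}_{\P^1}$ and then commuting a $\Hom$-group past a filtered colimit.

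For the first part I would identify, for $B \in D^{eff}_{\A^1}(k)$, the object $\Omega^{\infty}_{\P^1}\Sigma^{\infty}_{\P^1}B$ with the homotopy colimit of the tower
$$B \longrightarrow \Omega_{\P^1}\bigl(B \otimes \widetilde{C}^{\A^1}_*(\P^1)\bigr) \longrightarrow \Omega^2_{\P^1}\bigl(B \otimes \widetilde{C}^{\A^1}_*(\P^{1\wedge 2})\bigr) \longrightarrow \cdots,$$
with transition maps coming from the unit of $-\otimes\widetilde{C}^{\A^1}_*(\P^1) \dashv \Omega_{\P^1}$. Two simplifications then occur. First, since $\P^1 = S^1_s \wedge \G_m$ we have $\widetilde{C}^{\A^1}_*(\P^{1\wedge n}) = \Z_{\A^1}(n)[2n]$ and $\Omega_{\P^1} = \underline{\Hom}(\Z_{\A^1}(1)[2],-)$ (internal Hom in $D^{eff}_{\A^1}(k)$), so the degree shifts cancel level by level and the tower becomes $B \to \underline{\Hom}(\Z_{\A^1}(1), B\otimes\Z_{\A^1}(1)) \to \underline{\Hom}(\Z_{\A^1}(2), B\otimes\Z_{\A^1}(2)) \to \cdots$, whence $\Omega^{\infty}_{\P^1}\Sigma^{\infty}_{\P^1}B \simeq \mathrm{hocolim}_n\,\underline{\Hom}(\Z_{\A^1}(n), B\otimes\Z_{\A^1}(n))$. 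Second — the delicate point, as we are working with \emph{symmetric} $\P^1$-spectra — one must check that this sequential homotopy colimit really computes the stabilization, and not some more involved homotopy colimit reflecting the $\Sigma_n$-actions on the smash factors of $\P^{1\wedge n}$; this is precisely the cyclic-permutation hypothesis of Hovey's stabilization machinery \cite{Hov01}, valid here because the cyclic permutation of the three factors of $\P^{1\wedge 3}$ is $\A^1$-homotopic to the identity (cf.\ \cite{Ay08}), which allows one to replace symmetric $\P^1$-spectra by naive ones when computing morphism groups out of a compact object.

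For the second part I would use that $C^{\A^1}_*(U)$ is a compact object of $D^{eff}_{\A^1}(k)$: the suspension spectra $\Sigma^{\infty}_s U_+$ are compact in $\mathbf{StHo}_{S^1}(k)$, and both the Hurewicz functor $\mathfrak{H}^{ab}$ and the $\A^1$-localization $L^{ab}_{\A^1}$ (a left Bousfield localization at a set) preserve compactness, so $C^{\A^1}_*(U) = L^{ab}_{\A^1}\mathfrak{H}^{ab}(\Sigma^{\infty}_s U_+)$ is compact. Hence $\Hom_{D^{eff}_{\A^1}(k)}(C^{\A^1}_*(U),-)$ commutes with the filtered colimit above, and combining this with the tensor--internal-Hom adjunction $\Hom(C^{\A^1}_*(U), \underline{\Hom}(\Z_{\A^1}(n),-)) = \Hom(C^{\A^1}_*(U)\otimes\Z_{\A^1}(n),-)$ identifies the right-hand side of \eqref{eqstabilize} with $\mathrm{colim}_n\,\Hom_{D^{eff}_{\A^1}(k)}(C^{\A^1}_*(U)\otimes\Z_{\A^1}(n), \widetilde{C}^{\A^1}_*(\sX)\otimes\Z_{\A^1}(n))$, which is its left-hand side; unwinding the identifications shows the comparison map is the canonical one induced by $\Sigma^{\infty}_{\P^1}$. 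The main obstacle is the symmetric-spectrum point of the middle paragraph — that the naive level-wise colimit indeed computes $\Omega^{\infty}_{\P^1}\Sigma^{\infty}_{\P^1}$; this should not be confused with the failure of the cancellation theorem recorded in \cite[Rem.~3.2.4]{AH11}, which is the separate (and, for this proposition, irrelevant) question of whether $-\otimes\Z_{\A^1}(1)$ is fully faithful on $D^{eff}_{\A^1}(k)$.
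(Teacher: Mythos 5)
The paper offers no proof of this statement at all: it is quoted verbatim from \cite[Prop.~2.1.29]{AH11}, so there is nothing internal to compare against. Your sketch is correct and is essentially the argument of the cited source: the adjunction between $\P^1$-stabilization and $\Omega^{\infty}_{\P^1}$, the identification of $\Omega^{\infty}_{\P^1}\Sigma^{\infty}_{\P^1}$ with the sequential homotopy colimit (legitimate because the cyclic permutation on $\P^{1\wedge 3}$ is $\A^1$-homotopic to the identity, so symmetric and naive $\P^1$-spectra give the same mapping groups), and compactness of $C^{\A^1}_*(U)$ in $D^{eff}_{\A^1}(k)$ to pull the $\Hom$ past the colimit; your closing remark correctly separates this from the cancellation issue of \cite[Rem.~3.2.4]{AH11}.
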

The Hurewicz formalism induces the following functors, which one still calls Hurewicz functors (or abelianization functors) 
\begin{align*}
\bold{StHo}_{\A^1,S^1}(k) \rightarrow D^{eff}_{\A^1}(k), \\ \bold{StHo}_{\A^1,\P^1}(k) \rightarrow D_{\A^1}(k),
\end{align*}
which give rise to morphisms of sheaves 
\begin{align*}
\pi_i^{st\A^1,S^1}(\Sigma^{\infty}_s(\sX_+)) \rightarrow \bold{H}_i^{\A^1}(\sX), \\ 
\pi_i^{st\A^1,\P^1}(\Sigma^{\infty}_{\P^1}(\sX_+)) \rightarrow \bold{H}_i^{st\A^1}(\sX). 
\end{align*}
\begin{defn}\label{defnstA1coh}
Let $\sX \in Spc/k$ be a $k$-space. The bigraded $\P^1$-stable $\A^1$-cohomology group $H^{p,q}_{st\A^1}(\sX,\Z)$ is defined as 
$$H^{p,q}_{st\A^1}(\sX,\Z) = \Hom_{D_{\A^1}(k)}(C_*^{st\A^1}(\sX),\Z_{\A^1}(q)[p]).$$ 
\end{defn}
The advantage of $\P^1$-stable $\A^1$-derived category $D_{\A^1}(k)$ is that one has duality formalism. In the context of stable $\A^1$-homotopy theory, it was done in \cite[App. A]{Hu05}. By using the canonical map  
$$\bold{1}_k \rightarrow \Sigma^{\infty}_{\P^1}Th(-T_X),$$ 
one can show
\begin{prop}\label{propdual}\cite[Prop. 3.5.2 and Lem. 3.5.3]{AH11}
Let $X \in SmProj/k$, then $C_*^{st\A^1}(X)$ is a strong dualizable object in $D_{\A^1}(k)$ and its dual is $C_*^{st\A^1}(X)^{\vee} = \widetilde{C}_*^{st\A^1}(Th(-T_X))$. Consequently, one has a canonical isomorphism 
\begin{equation}\label{eqdual}
\Hom_{D_{\A^1}(k)}(\bold{1}_k,C_*^{st\A^1}(X)) \stackrel{\cong}{\rightarrow} \Hom_{D_{\A^1}(k)}(C_*^{st\A^1}(X)^{\vee},\bold{1}_k). 
\end{equation}
\end{prop}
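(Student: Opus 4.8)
The plan is to obtain the isomorphism \eqref{eqdual} as a formal consequence of the strong dualizability of $C_*^{st\A^1}(X)$, and to establish that dualizability by transporting motivic Atiyah duality from the $\P^1$-stable homotopy category along the (symmetric monoidal) Hurewicz functor.

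First, granting that $C_*^{st\A^1}(X)$ is strong dualizable with dual $C_*^{st\A^1}(X)^\vee = \widetilde{C}_*^{st\A^1}(Th(-T_X))$, the second assertion is purely categorical. In any symmetric monoidal category, dualizability of an object $M$ means exactly that $(-)\otimes M$ admits $(-)\otimes M^\vee$ as a right adjoint; applying $\Hom(\mathbf{1}_k,-)$ and using the canonical identifications $M^{\vee\vee}\cong M$ and $\mathbf{1}_k^\vee\cong\mathbf{1}_k$ produces a natural isomorphism
\[
\Hom_{D_{\A^1}(k)}(\mathbf{1}_k, C_*^{st\A^1}(X)) \;\cong\; \Hom_{D_{\A^1}(k)}\bigl(C_*^{st\A^1}(X)^\vee, \mathbf{1}_k\bigr),
\]
and one checks directly that it is the map induced by the evaluation and coevaluation, i.e. the map \eqref{eqdual}. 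So the content lies entirely in the first assertion.

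For the strong dualizability, I would invoke motivic Atiyah (Spanier--Whitehead) duality in $\mathbf{StHo}_{\A^1,\P^1}(k)$: for $X\in SmProj/k$ the suspension spectrum $\Sigma^\infty_{\P^1}(X_+)$ is strong dualizable with dual the Thom spectrum $\Sigma^\infty_{\P^1}Th(-T_X)$, following \cite[App. A]{Hu05}. Here $Th(-T_X)$ is given meaning by Voevodsky's Theorem \ref{ThmVoeThom}: one chooses a vector bundle $V_X$ with $V_X\oplus T_X\cong\sO_X^{n_X+d_X}$ and sets $Th(-T_X):=\Sigma^\infty_{\P^1}Th(V_X)$ desuspended by $(\P^1)^{\wedge(n_X+d_X)}$; this is legitimate because $\P^1$ is $\otimes$-invertible in the stable category, and it is independent of the choice of $V_X$ since Thom spaces of stably isomorphic bundles become isomorphic after $\P^1$-suspension. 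The duality maps are then constructed geometrically: the evaluation $\widetilde{C}_*^{st\A^1}(Th(-T_X))\otimes C_*^{st\A^1}(X)\to\mathbf{1}_k$ is assembled from the diagonal $X\to X\times X$ and the homotopy purity (Thom) isomorphism for the diagonal embedding, whose normal bundle is $T_X$, while the coevaluation $\mathbf{1}_k\to C_*^{st\A^1}(X)\otimes\widetilde{C}_*^{st\A^1}(Th(-T_X))$ is the Pontryagin--Thom collapse attached to a closed embedding $X\hookrightarrow\A^m$ with normal bundle $\nu$ (so $\nu\oplus T_X\cong\sO_X^m$), using the Morel--Voevodsky homotopy purity isomorphism $\A^m/(\A^m\setminus X)\simeq Th(\nu)$. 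Finally, since the Hurewicz (abelianization) functor $\mathbf{StHo}_{\A^1,\P^1}(k)\to D_{\A^1}(k)$ is symmetric monoidal and sends $\Sigma^\infty_{\P^1}(X_+)\mapsto C_*^{st\A^1}(X)$ and $\Sigma^\infty_{\P^1}Th(-T_X)\mapsto\widetilde{C}_*^{st\A^1}(Th(-T_X))$, it carries this duality datum into $D_{\A^1}(k)$, which is precisely the first assertion; this last translation is what \cite[Prop. 3.5.2, Lem. 3.5.3]{AH11} records.

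The hard part will be verifying that the evaluation and coevaluation so constructed actually satisfy the triangle (zig--zag) identities, i.e. that they exhibit a genuine duality and not merely a pair of maps. This reduces to the compatibility of Pontryagin--Thom collapse maps under composition of closed embeddings and under the diagonal --- the motivic analogue of Atiyah's $S$-duality argument --- and is the technical heart, carried out in \cite[App. A]{Hu05}; one can alternatively organize the verification via deformation to the normal cone. A secondary, purely bookkeeping, difficulty is to keep track of the $\P^1$-twists throughout and to confirm that none of the choices (the bundle $V_X$, the embedding $X\hookrightarrow\A^m$) affects the outcome, which is again forced by the invertibility of the $\P^1$-suspension.
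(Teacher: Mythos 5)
Your overall route is the same as the paper's: the paper gives no independent argument here but cites \cite[Prop.~3.5.2 and Lem.~3.5.3]{AH11}, whose proof is exactly what you outline --- strong dualizability of $\Sigma^{\infty}_{\P^1}(X_+)$ in $\bold{StHo}_{\A^1,\P^1}(k)$ with dual $\Sigma^{\infty}_{\P^1}Th(-T_X)$ (resting on \cite[App.~A]{Hu05} and on Theorem \ref{ThmVoeThom} to make sense of $Th(-T_X)$), transported along the symmetric monoidal Hurewicz functor into $D_{\A^1}(k)$, followed by the purely formal adjunction step yielding \eqref{eqdual}. Both the formal step and the independence-of-choices remarks are fine as you state them.

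One concrete step in your sketch is wrong, though: you build the coevaluation from ``a closed embedding $X\hookrightarrow \A^m$ with normal bundle $\nu$'' and the purity equivalence $\A^m/(\A^m\setminus X)\simeq Th(\nu)$. A smooth \emph{projective} variety of positive dimension admits no closed embedding into affine space (a closed subscheme of an affine scheme is affine, and a $k$-variety that is both projective and affine is finite), so that Pontryagin--Thom collapse does not exist as described. The correct input is an embedding into $\P^N$ together with Voevodsky's construction: Theorem \ref{ThmVoeThom} provides $V_X$ with $V_X\oplus T_X=\sO_X^{n_X+d_X}$ in $K_0(X)$ and a morphism $T^{\wedge (n_X+d_X)}\rightarrow Th(V_X)$, which after desuspension is precisely the ``canonical map $\bold{1}_k \rightarrow \Sigma^{\infty}_{\P^1}Th(-T_X)$'' the paper invokes just before the proposition; composing with the Thom diagonal $Th(-T_X)\rightarrow X_+\wedge Th(-T_X)$ gives the coevaluation, and the triangle identities against the diagonal/purity evaluation are verified in \cite[App.~A]{Hu05}, to which you (like the paper and \cite{AH11}) rightly defer. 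With that substitution your argument coincides with the intended one.
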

We end up this section by a definition 
\begin{defn}\label{defgmA1}
Let $k$ be a field. One defines the geometric stable $\A^1$-derived category $D_{\A^1,gm}(k)$ over $k$ as the thick subcategory of $D_{\A^1}(k)$ generated by $C_*^{st\A^1}(X)$ for $X \in SmProj/k$.
\end{defn}
\section{Proof of Morel's conjecture}
\subsection{Homotopy $t$-structure and motivic cellular spectra}
We recall in this section the notion homotopy $t$-structure in terms of generators (cf. \cite{Ay08}) and prove then the conjecture of F. Morel \ref{conjst}. Let $S$ be a Noetherian scheme of finite Krull dimension, which we refer as base scheme. Let us abbreviate the $\P^1$-stable motivic homotopy category over $S$ simply by $SH(S)$ and identify its $\Q$-localization with $SH(S)_{\Q} \simeq D_{\A^1}(S)_{\Q}$ (\cite{CD10}). The subcategory $SH(S)_{\geq n}$ is generated under homotopy colimits and extensions by 
$$\{S^{p,q} \wedge \Sigma^{\infty}_{\P^1}(X_+) | X \in Sm/S, p-q \geq n \},$$
where $S^{p,q} = S^{p-q}_s \wedge S_t^{q}$ denotes the motivic spheres. We set 
$$SH(S)_{\leq n} = \{ E \in SH(S)| [F,E] = 0, \forall F \in SH(S)_{\geq n+1} \}$$ 
The bigraded motivic homotopy sheaves are defined as 
$$\underline{\pi}_{p,q}^{st\A^1}(E) = a_{Nis}(U \mapsto [S^{p,q}\wedge \Sigma^{\infty}_{\P^1}(U_+),E].$$
The following description of the homotopy $t$-structure is a consequence of F. Morel's stable $\A^1$-connectivity result (see for instance \cite{Mor04a}): 
\begin{thm}[F. Morel]
Let $S = \Spec k$, where $k$ is a field. 
\begin{enumerate}
\item The triple $(SH(k),SH(k)_{\geq 0},SH(k)_{\leq 0})$ is a $t$-structure on $SH(k)$.
\item The heart of the homotopy $t$-structure $\pi_*^{\A^1}(k) = SH(k)_{\geq 0} \cap SH(k)_{\leq 0}$ is identified with the category of homotopy modules. 
\item The homotopy $t$-structure is non-degenerated in the sense that for any $U \in Sm/k$ and any $E \in SH(k)$, one has the morphism 
$$[\Sigma^{\infty}_{\P^1}(U_+),E_{\geq n}] \rightarrow [\Sigma^{\infty}_{\P^1}(U_+),E]$$ 
is an isomorphism for $n \leq 0$ and the morphism 
$$[\Sigma^{\infty}_{\P^1}(U_+),E] \rightarrow [\Sigma^{\infty}_{\P^1}(U_+),E_{\leq n}]$$ 
is an isomorphism for $n > \dim (U)$.
\end{enumerate}
\end{thm}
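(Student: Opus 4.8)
The plan is to deduce all three parts from F. Morel's stable $\A^1$-connectivity theorem (\cite{Mor04a}; compare Theorem \ref{thmMorS1} for the $S^1$-stable incarnation): $\A^1$-localization does not decrease Nisnevich connectivity, so that for every $E \in SH(k)$ the bigraded homotopy sheaves $\underline{\pi}_{p,q}^{st\A^1}(E)$ are strictly $\A^1$-invariant and vanish in the expected range. I would arrange the argument so that this is the only non-formal ingredient, the rest being the standard machinery of $t$-structures generated by compact objects in a compactly generated triangulated category (cf. \cite{Ay08} for the homotopy $t$-structure formalism).

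For (1): $SH(k)$ is compactly generated by the spectra $S^{p,q}\wedge\Sigma^{\infty}_{\P^1}(X_+)$ ($X \in Sm/k$, $p,q \in \Z$), all of which are compact; hence the full subcategory $SH(k)_{\geq 0}$ described above is an aisle, the truncation triangle $\tau_{\geq 0}E \to E \to \tau_{\leq -1}E$ exists by the small object argument, and $SH(k)_{\leq 0}$ is the associated coaisle, so that the triple is a $t$-structure. This much is formal once the generators are fixed; the substance supplied by the connectivity theorem is the intrinsic description: $E \in SH(k)_{\geq 0}$ iff $\underline{\pi}_{p,q}^{st\A^1}(E) = 0$ for all $p-q < 0$, and $E \in SH(k)_{\leq 0}$ iff $\underline{\pi}_{p,q}^{st\A^1}(E) = 0$ for all $p-q > 0$. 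One inclusion is immediate, since each generator with $p-q \geq 0$ has vanishing negative homotopy sheaves by the connectivity theorem and this property is preserved by coproducts, positive suspensions and extensions; the converse is obtained by rebuilding an object with vanishing negative homotopy sheaves as the homotopy colimit of its Postnikov stages, each of which is assembled from the generators.

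For (2): an object $E$ of the heart has $\underline{\pi}_{p,q}^{st\A^1}(E) = 0$ unless $p = q$, so it is determined by the $\Z$-graded sheaf $M_* = (\underline{\pi}_{n,n}^{st\A^1}(E))_{n \in \Z}$; each $M_n$ is strictly $\A^1$-invariant by the connectivity theorem, and the $\G_m$-suspension/loop adjunction endows $M_*$ with canonical contraction isomorphisms $M_n \cong (M_{n+1})_{-1}$, i.e. $M_*$ is a homotopy module. The resulting functor from the heart to the category of homotopy modules is exact and conservative; it is fully faithful because, by the universal property of truncation, morphisms in the heart are detected on $\underline{\pi}_{0,*}^{st\A^1}$; and it is essentially surjective by the Eilenberg--MacLane construction, which attaches to a homotopy module $M_*$ the $\P^1$-spectrum whose $n$-th term represents $M_n$ with structure maps the contraction isomorphisms, one then checking that this spectrum lies in the heart with the prescribed $\underline{\pi}_{0,*}^{st\A^1}$. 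This identifies $\pi_*^{\A^1}(k)$ with the category of homotopy modules.

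For (3): via the long exact sequences attached to the truncation triangles, both assertions reduce to vanishing of Hom-groups out of shifts of $\Sigma^{\infty}_{\P^1}(U_+)$. The first needs only that $\Sigma^{\infty}_{\P^1}(U_+) \in SH(k)_{\geq 0}$ (connectivity), hence $\Sigma^{\infty}_{\P^1}(U_+)[-1] \in SH(k)_{\geq -1}$, together with the orthogonality $[SH(k)_{\geq 0}, SH(k)_{\leq -1}] = 0$: this forces $[\Sigma^{\infty}_{\P^1}(U_+), \tau_{\leq n-1}E] = 0 = [\Sigma^{\infty}_{\P^1}(U_+), (\tau_{\leq n-1}E)[-1]]$ for $n \leq 0$, whence the isomorphism $[\Sigma^{\infty}_{\P^1}(U_+), E_{\geq n}] \to [\Sigma^{\infty}_{\P^1}(U_+), E]$. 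The second needs in addition that the Nisnevich cohomological dimension of $U$ is at most $\dim U$, which is finite because $k$ is a field: the descent spectral sequence computing $[\Sigma^{\infty}_{\P^1}(U_+)[j], F]$ from $H^{*}_{Nis}(U, \underline{\pi}_{*,*}^{st\A^1}(F))$ vanishes for $F \in SH(k)_{\geq n+1}$ and $j \in \{0,-1\}$ as soon as $n > \dim U$ --- the $\dim U + 1$ available cohomological degrees are exactly what gets consumed --- giving the isomorphism $[\Sigma^{\infty}_{\P^1}(U_+), E] \to [\Sigma^{\infty}_{\P^1}(U_+), E_{\leq n}]$. The one genuine obstacle is the stable $\A^1$-connectivity theorem itself, whose proof rests on the structure theory of strictly $\A^1$-invariant sheaves (unramifiedness, Rost--Schmid/Gersten resolutions, the behaviour of contractions) and which I would import here as a black box; granted it, everything above is a formal manipulation within compactly generated triangulated categories and Nisnevich-local homological algebra.
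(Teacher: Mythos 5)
The paper does not actually prove this statement: it is imported as Morel's theorem, with the stable $\A^1$-connectivity results (\cite{Mor04a}, \cite{Mor05}) cited as the source, so there is no in-text argument to compare yours against step by step. Your sketch reconstructs the standard (Morel's) route and is correct in outline: compact generation of $SH(k)$ gives the aisle/coaisle and truncation triangles formally; the connectivity theorem supplies the intrinsic description of $SH(k)_{\geq 0}$ and $SH(k)_{\leq 0}$ in terms of vanishing of $\underline{\pi}_{p,q}^{st\A^1}$; the heart is identified with homotopy modules via the $\G_m$-loops/contractions and the Eilenberg--MacLane construction (which itself leans on Morel's structure theory of strictly $\A^1$-invariant sheaves, as you acknowledge); and the two non-degeneracy statements follow, respectively, from the orthogonality $[SH(k)_{\geq 0},SH(k)_{\leq -1}]=0$ for $n\leq 0$, and from the Nisnevich descent spectral sequence together with the bound $\dim U$ on the Nisnevich cohomological dimension for $n>\dim U$; your bookkeeping of which two Hom-groups must vanish in each long exact sequence is accurate.

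The one step you state too loosely is the converse inclusion in the intrinsic description of $SH(k)_{\geq 0}$: ``rebuilding $E$ as a homotopy colimit of its Postnikov stages assembled from the generators'' is close to circular, since knowing that those stages are built from non-negatively shifted generators is essentially what is to be proved. The standard repair uses exactly the convergence input you invoke in (3): apply the truncation triangle $E_{\geq 0}\to E\to E_{\leq -1}$, note that every homotopy sheaf of $E_{\leq -1}$ vanishes (for $p-q\geq 0$ by the defining orthogonality against $S^{p,q}\wedge\Sigma^{\infty}_{\P^1}(U_+)\in SH(k)_{\geq 0}$, for $p-q<0$ from the long exact sequence, the hypothesis on $E$ and the connectivity of $E_{\geq 0}$), and then conclude $E_{\leq -1}=0$ because for coconnective objects the strongly convergent descent spectral sequence over the finite-dimensional Nisnevich sites (cf.\ \cite[Lem.~6.3.1, 6.3.2]{Mor05}) shows that vanishing of all homotopy sheaves kills all $[S^{p,q}\wedge\Sigma^{\infty}_{\P^1}(U_+),-]$, and these objects generate. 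With that adjustment, and with the connectivity theorem imported as a black box as you indicate, your outline matches the accepted proof.
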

From this one can deduce easily that $E \in SH(k)_{\geq n}$ iff $\pi_{p,q}^{st\A^1}(E) = 0$ for $p-q < n$ and $E \in SH(k)_{\leq n}$ iff $\pi_{p,q}^{st\A^1}(E) = 0$ for $p-q > n$. Now given a motivic spectrum $E \in SH(S)$, where $S$ is a base scheme, we may consider the Postnikov's co-tower 
$$\xymatrix{\cdots \ar[r] &  E_{\geq q+1} \ar[r] & E_{\geq q} \ar[r] \ar[d] & E_{\geq q-1} \ar[r] \ar[d] & \cdots \\ & & \Sigma_s^q H\pi_{q}^{st\A^1}(E)_* & \Sigma_s^{q-1}H\pi_{q-1}^{st\A^1}(E)_* }$$
By taking homotopy we obtain a long exact sequence 
$$\cdots \rightarrow \underline{\pi}^{st\A^1}_{p,n}(E_{\geq q+1}) \rightarrow \underline{\pi}^{st\A^1}_{p,n}(E_{\geq q}) \rightarrow \underline{\pi}^{st\A^1}_{p,n}(S^q_s \wedge H \pi^{st\A^1}_q(E)_*) \rightarrow \underline{\pi}^{st\A^1}_{p-1,n}(E_{\geq q+1}) \rightarrow \cdots $$
We set $D^1_{p,q,n} = \underline{\pi}_{p,n}^{st\A^1}(E_{\geq q})$ and $E^1_{p,q,n} = \underline{\pi}_{p,n}^{st\A^1}(S^q_s \wedge H \pi^{st\A^1}_q(E)_*)$. This gives rise to an exact couple 
$$\xymatrix{D^1_* \ar[rr]^{(0,-1,0)} && D^1_* \ar[ld]^{(0,0,0)} \\ & E^1_* \ar[lu]^{(-1,1,0)} }$$
Thus we obtain a spectral sequence for a smooth scheme $X/S$ 
\begin{equation}\label{eqssPostnikov}
E^1_{p,q,n} = \underline{\pi}^{st\A^1}_{p,n}(S^q_s \wedge H\pi_{q}^{st\A^1}(E)_*)(X) \Rightarrow \underline{\pi}^{st\A^1}_{p,n}(E)(X),  
\end{equation}
where the differential $d^1_{p,q,n}: E^1_{p,q,n} \rightarrow E^1_{p-1,q+1,n}$ of the spectral sequence is induced by the composition 
$$S^q_s \wedge H\pi^{st\A^1}_q(E)_* \rightarrow S^1_s \wedge E_{\geq q+1} \rightarrow S^1_s \wedge (S^{q+1}_s \wedge H\pi_{q+1}^{st\A^1}(E)_*).$$
The $r$-th differential of the spectral sequence \ref{eqssPostnikov} is given by $d^r_{p,q,n}: E^r_{p,q,n} \rightarrow E^r_{p-1,q+r,n}$.
\begin{prop}\label{propssPostnikov}
Let $S \rightarrow \Spec k$ be an essentially smooth $k$-scheme and $E \in SH(S)$ be a motivic spectrum. Then the spectral sequence \ref{eqssPostnikov} converges strongly.
\end{prop}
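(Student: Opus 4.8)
The plan is to reduce the strong convergence of \ref{eqssPostnikov} to two structural inputs. The first is Morel's stable $\A^1$-connectivity over $S$, which equips $SH(S)$ with the homotopy $t$-structure, identifies its heart with homotopy modules, and supplies the non-degeneracy estimates $\underline{\pi}^{st\A^1}_{a,b}(F)=0$ for $a-b<q$ whenever $F\in SH(S)_{\geq q}$ and for $a-b>q$ whenever $F\in SH(S)_{\leq q}$. The second is the observation that, $S$ being essentially smooth over $k$, every $X\in Sm/S$ is a Noetherian scheme of finite Krull dimension $d_X$, so that strictly $\A^1$-invariant Nisnevich sheaves on $X$ have Nisnevich cohomological dimension $\leq d_X$. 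The whole point will be that, after evaluating \ref{eqssPostnikov} on a fixed such $X$, the Postnikov co-tower $\{E_{\geq q}\}_{q\in\Z}$ induces a finite filtration on the abutment, so that the spectral sequence is bounded and therefore converges strongly.

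First I would record the Nisnevich descent spectral sequence, for $F\in SH(S)$ and $X\in Sm/S$,
$$H^{j}_{Nis}\big(X,\underline{\pi}^{st\A^1}_{p+j,n}(F)\big)\ \Longrightarrow\ [S^{p,n}\wedge\Sigma^{\infty}_{\P^1}(X_+),F],$$
which, since $X$ has Nisnevich cohomological dimension $\leq d_X$, is concentrated in the rows $0\leq j\leq d_X$; its abutment is the group denoted $\underline{\pi}^{st\A^1}_{p,n}(F)(X)$ in \ref{eqssPostnikov}. Feeding $F=E_{\geq q}$ and $F=E_{\leq q-1}$ into it and using the connectivity estimates recalled above yields $[S^{p,n}\wedge\Sigma^{\infty}_{\P^1}(X_+),E_{\geq q}]=0$ once $q>p-n+d_X$, and $[S^{p,n}\wedge\Sigma^{\infty}_{\P^1}(X_+),E_{\leq q-1}]=0$ — as well as its shift by one $s$-degree — once $q\leq p-n$. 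Running these vanishings through the truncation triangle $E_{\geq q}\to E\to E_{\leq q-1}\to E_{\geq q}[1]$ shows that $[S^{p,n}\wedge\Sigma^{\infty}_{\P^1}(X_+),E_{\geq q}]\to[S^{p,n}\wedge\Sigma^{\infty}_{\P^1}(X_+),E]$ is an isomorphism for $q\leq p-n$ and that its source vanishes for $q>p-n+d_X$. Thus the decreasing filtration of the abutment by the images of the tower is finite: it is the whole group for $q\leq p-n$ and it vanishes for $q>p-n+d_X$.

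Next I would pin down the $E^1$-page. Since $H\pi^{st\A^1}_q(E)_*$ lies in the heart of the homotopy $t$-structure, $\underline{\pi}^{st\A^1}_{a,b}$ of $S^q_s\wedge H\pi^{st\A^1}_q(E)_*$ vanishes unless $a-b=q$, so the descent spectral sequence degenerates and identifies $E^1_{p,q,n}(X)=[S^{p,n}\wedge\Sigma^{\infty}_{\P^1}(X_+),S^q_s\wedge H\pi^{st\A^1}_q(E)_*]$ with a single Nisnevich cohomology group $H^{n-p+q}_{Nis}(X,M)$, where $M$ is the strictly $\A^1$-invariant sheaf extracted from the homotopy module $\pi^{st\A^1}_q(E)_*$ in the appropriate weight; this group vanishes unless $0\leq n-p+q\leq d_X$, i.e.\ unless $p-n\leq q\leq p-n+d_X$. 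Hence, for fixed $n$ and fixed $p$, the $E^1$-page of \ref{eqssPostnikov} over $X$ has at most $d_X+1$ nonzero columns, so every differential into or out of $E^r_{p,q,n}(X)$ vanishes for $r$ large and the page stabilizes; combined with the finiteness of the filtration on the abutment obtained in the previous step, this gives the strong convergence $E^1_{p,q,n}\Rightarrow\underline{\pi}^{st\A^1}_{p,n}(E)(X)$.

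The one point that is not formal is the availability of the homotopy $t$-structure and of Morel's connectivity theorem over a base $S$ that is merely essentially smooth over $k$, rather than a field itself; this is where I expect the real work. I would handle it by writing $S=\varprojlim_\alpha S_\alpha$ as a cofiltered limit of smooth $k$-schemes with affine transition morphisms and invoking continuity of $SH(-)$ (as in \cite{CD10}) together with essentially smooth base change, thereby transporting the field-level statements of \cite{Mor05} — the homotopy $t$-structure, the identification of its heart with homotopy modules, and the non-degeneracy bounds — from $SH(k)$ to $SH(S)$; alternatively one may cite the versions of these results valid over general Noetherian bases of finite Krull dimension in \cite{Ay08}. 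One also needs $S$, and hence every $X\in Sm/S$, to have finite Krull dimension, so that the descent spectral sequence is genuinely bounded; this holds for the essentially smooth $k$-schemes under consideration. Granting these inputs, the argument above is independent of all choices and proves the proposition.
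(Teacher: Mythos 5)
Your proposal is correct and is essentially the paper's own proof written out in detail: the paper disposes of the statement in one line by invoking the non-degeneracy of the homotopy $t$-structure together with \cite[Rem. 2.6]{Hoy13} and \cite[Lem. 6.3.1, 6.3.2]{Mor05}, which are precisely the inputs you assemble explicitly (connectivity estimates, finite Nisnevich cohomological dimension of $X \in Sm/S$ giving a bounded filtration on each group $[S^{p,n}\wedge \Sigma^{\infty}_{\P^1}(X_+),E]$, and reduction to the field case for an essentially smooth $S$ via continuity). There is no gap; you have simply supplied the details the paper delegates to the references.
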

\begin{proof}
This follows from the non-degeneration of the homotopy $t$-structure (cf. \cite[Rem. 2.6]{Hoy13} and \cite[Lem. 6.3.1 and 6.3.2]{Mor05}). 
\end{proof}

\begin{defn}\label{defnunstcell} \cite{DI05}(unstably cellular){ \rm
Let $\sA = \{S^{p,q}| p \geq q \geq 0\}$ be the set of motivic spheres in $\mathbf{Ho}_{\A^1,+}(k)$. An object $X \in \mathbf{Ho}_{\A^1,+}(k)$ is called unstably, if it is $\sA$-cellular,
 which means: it belongs to the smallest class of objects of $\mathbf{Ho}_{\A^1,+}(k)$, which satisfies 
\begin{enumerate}
\item every object of $\sA$ is $\sA$-cellular;
\item if $X$ is weak equivalent to an $\sA$-cellular object, then $X$ is $\sA$-cellular;
\item if $D: I \rightarrow \mathbf{Ho}_{\A^1,+}(k)$ is a diagram such that each $D_i$ is $\sA$-cellular, then so is $hocolim D$.
\end{enumerate}  
} 
\end{defn}

\begin{lem}\label{lemunstcell}\cite[Lem. 2.2]{DI05}
Let $X \rightarrow Y \rightarrow Z$ be a homotopy cofiber sequence in $\mathbf{Ho}_{\A^1,+}(k)$. If $X$ and $Y$ are unstably cellular, then $Z$ is also unstably cellular. 
\end{lem}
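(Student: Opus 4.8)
The plan is to realize $Z$ as a homotopy colimit of a small diagram all of whose values are unstably cellular, and then to invoke clause (3) of Definition \ref{defnunstcell}.

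The one preliminary fact I would record is that the one-point pointed space $*$ is unstably cellular: it is the homotopy colimit of the empty diagram $\emptyset \to \mathbf{Ho}_{\A^1,+}(k)$, so the hypothesis of Definition \ref{defnunstcell}(3) is met vacuously (alternatively one may simply cite \cite{DI05}, where this is observed). From this together with clause (2) one also gets that every contractible pointed space is unstably cellular, which is convenient if one prefers to work with a mapping-cone model below.

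Next I would unwind the definition: saying that $X \xrightarrow{f} Y \to Z$ is a homotopy cofiber sequence in $\mathbf{Ho}_{\A^1,+}(k)$ means exactly that $Z$ is isomorphic, in $\mathbf{Ho}_{\A^1,+}(k)$, to the homotopy pushout of the span $* \leftarrow X \xrightarrow{f} Y$, i.e.\ to the homotopy colimit $\operatorname{hocolim} D$ of the diagram $D$ over the category $(b \leftarrow a \to c)$ given by $D(a)=X$, $D(b)=*$, $D(c)=Y$ (concretely $Z \simeq Y \cup_f CX$ with $CX$ the reduced cone, which is $\A^1$-weakly equivalent to $*$). By hypothesis $X = D(a)$ and $Y = D(c)$ are unstably cellular, and $* = D(b)$ is unstably cellular by the previous paragraph; hence every value of $D$ is unstably cellular, and clause (3) of Definition \ref{defnunstcell} gives that $Z \simeq \operatorname{hocolim} D$ is unstably cellular, which is the claim.

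The argument is purely formal, so there is no genuine obstacle; the only points needing a little care are the two bookkeeping matters — that the homotopy cofiber of a map legitimately presents as a homotopy colimit indexed by a fixed small category (so that clause (3) applies verbatim), and that the basepoint $*$ qualifies as unstably cellular — after which the lemma is a one-line consequence of the closure properties built into Definition \ref{defnunstcell}.
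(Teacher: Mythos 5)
Your argument is correct and coincides with the standard proof of the cited result: the paper itself offers no proof of this lemma, merely the citation to \cite[Lem.\ 2.2]{DI05}, whose argument is exactly yours — exhibit the cofiber as the homotopy pushout of $* \leftarrow X \rightarrow Y$, note that the point is cellular (homotopy colimit of the empty diagram), and invoke closure under homotopy colimits from Definition \ref{defnunstcell}. No gap; nothing further is needed.
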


\begin{defn}\label{defncell} \cite{DI05}(stably cellular) {\rm
Let $\sS = \{ S^{p,q}| p,q \in \Z \}$ the class of all motivic spheres in $SH(k)$. A spectrum $E \in SH(k)$ is called cellular, if $E$ is $\sS$-cellular, i.e. it belongs to the smallest class of objects of $SH(k)$, which satisfies
\begin{enumerate}
\item every object of $\sS$ is $\sS$-cellular;
\item if $X$ is weak equivalent to an $\sS$-cellular object, then $X$ is $\sS$-cellular;
\item if $D: I \rightarrow SH(k)$ is a diagram such that each $D_i$ is $\sS$-cellular, then so is $hocolim D$.
\end{enumerate}
}
\end{defn}
We need one more definition: 
\begin{defn}\label{defncell1}\cite{DI05}{ \rm
An object $X \in \mathbf{Ho}_{\A^1,+}(k)$ is called stably cellular, if $\Sigma^{\infty}_{\P^1}(X) \in SH(k)$ is cellular.

}
\end{defn}
The connection between these two notions of cellularity is the following:
\begin{lem}\label{lemconncell}\cite[Lem. 3.1]{DI05}
Let $X$ be an unstably cellular object in $\mathbf{Ho}_{\A^1,+}(k)$, then $\Sigma^{\infty}_{\P^1}(X) \in SH(k)$ is cellular. 
\end{lem}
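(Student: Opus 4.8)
The plan is to run the standard ``smallest class'' induction built into Definition \ref{defnunstcell}. Let $\mathcal{C} \subseteq \mathbf{Ho}_{\A^1,+}(k)$ denote the class of objects $X$ such that $\Sigma^{\infty}_{\P^1}(X) \in SH(k)$ is $\sS$-cellular in the sense of Definition \ref{defncell}. Since the unstably cellular objects form the \emph{smallest} class of objects of $\mathbf{Ho}_{\A^1,+}(k)$ containing $\sA = \{S^{p,q} \mid p \geq q \geq 0\}$ and closed under weak equivalence and under homotopy colimits, it suffices to check that $\mathcal{C}$ enjoys these three closure properties; then $\mathcal{C}$ contains every unstably cellular object, which is exactly the assertion of the lemma.

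The first property is immediate: for $S^{p,q} \in \sA$ the spectrum $\Sigma^{\infty}_{\P^1}(S^{p,q})$ is the motivic sphere $S^{p,q} \in SH(k)$, which belongs to $\sS$ and is $\sS$-cellular by clause (1) of Definition \ref{defncell}. The remaining two rest on one structural input: the $\P^1$-suspension spectrum functor $\Sigma^{\infty}_{\P^1}$ is left Quillen, being left adjoint to the $\P^1$-infinite loop functor $\Omega^{\infty}_{\P^1}$ for the model structures recalled in \S 2. Consequently its total left derived functor $\mathbf{L}\Sigma^{\infty}_{\P^1} \colon \mathbf{Ho}_{\A^1,+}(k) \rightarrow SH(k)$ carries $\A^1$-weak equivalences to isomorphisms and commutes with homotopy colimits. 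Closure of $\mathcal{C}$ under weak equivalence then follows from clause (2) of Definition \ref{defncell} together with the fact that $\Sigma^{\infty}_{\P^1}$ of a weak equivalence is an isomorphism in $SH(k)$. For closure under homotopy colimits, if $D \colon I \rightarrow \mathbf{Ho}_{\A^1,+}(k)$ is a small diagram with each $D_i \in \mathcal{C}$, then $\Sigma^{\infty}_{\P^1}(\mathrm{hocolim}\, D) \cong \mathrm{hocolim}\,(\Sigma^{\infty}_{\P^1} \circ D)$ is a homotopy colimit of $\sS$-cellular spectra, hence $\sS$-cellular by clause (3) of Definition \ref{defncell}.

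The point that deserves the most care is precisely the compatibility of $\Sigma^{\infty}_{\P^1}$ with homotopy colimits at the level of homotopy categories, matching the (model-categorical) notion of homotopy colimit used in the cellularity definitions. Concretely, one chooses an objectwise cofibrant replacement of the diagram $D$ so that applying $\Sigma^{\infty}_{\P^1}$ objectwise already computes $\mathbf{L}\Sigma^{\infty}_{\P^1}$, and then uses that a left Quillen functor commutes with the homotopy colimit of such a diagram --- e.g.\ via the explicit bar-construction model for $\mathrm{hocolim}$, or the general preservation statement for left Quillen functors. This is exactly the bookkeeping carried out in \cite[Lem. 3.1]{DI05}; granting it, the three verifications above close the induction and the lemma follows.
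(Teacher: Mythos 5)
Your argument is correct and is essentially the standard one: the paper itself gives no proof, citing \cite[Lem.~3.1]{DI05}, and your ``smallest class'' induction --- spheres map to stable spheres, and $\Sigma^{\infty}_{\P^1}$, being left Quillen, preserves weak equivalences and homotopy colimits --- is exactly the argument in that reference. The only point needing the care you already flag is that homotopy colimits must be taken of diagrams in the underlying model category rather than in $\mathbf{Ho}_{\A^1,+}(k)$ itself, which is the bookkeeping handled in \cite{DI05}.
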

We recollect some facts in \cite{DI05} about the cellularity of spectra 
\begin{prop}\label{propcell}\cite[Lem. 2.5, Lem. 3.4]{DI05}
The followings are true 
\begin{enumerate}
\item If $E' \rightarrow E \rightarrow E'' \stackrel{+1}{\rightarrow}$ is a homotopy cofiber sequence in $SH(k)$, such that any two of $E', E$ and $E''$ are cellular, then so is the third.  
\item If $E$ and $F$ are cellular objects in $SH(k)$, then so are $E \wedge F$ and $E \times F$.
\end{enumerate}
\end{prop}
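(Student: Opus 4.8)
The plan is to deduce both parts directly from the definition of $\sS$-cellularity (Definition~\ref{defncell}) via the standard device: to show that a class of objects of $SH(k)$ contains every $\sS$-cellular spectrum, it suffices to check that the class contains every motivic sphere $S^{p,q}$ and is closed under weak equivalence and under homotopy colimits. The structural facts about $SH(k)$ I will invoke are that it is a closed symmetric monoidal triangulated category in which each $S^{p,q}$ is invertible with $S^{p,q}\wedge S^{a,b}\cong S^{p+a,q+b}$; that for every object $E$ the functor $-\wedge E$ preserves homotopy colimits; and that the homotopy cofiber of a map $f\colon X\to Y$ is the homotopy pushout $\operatorname{hocolim}(\ast\leftarrow X\xrightarrow{f}Y)$, hence a homotopy colimit. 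From the first two I first record the auxiliary remark that each autoequivalence $S^{p,q}\wedge(-)$ preserves $\sS$-cellularity: it carries spheres to spheres and commutes with homotopy colimits, so the class of $E$ with $S^{p,q}\wedge E$ cellular contains the spheres and is closed under weak equivalence and homotopy colimits, hence is everything; in particular the shifts $[\pm1]$ preserve cellularity.

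For part (1) I would first observe that the homotopy cofiber of a map between two $\sS$-cellular spectra is again cellular, being a homotopy pushout of cellular spectra (the zero spectrum is cellular, e.g.\ as a homotopy colimit of spheres). Given a distinguished triangle $E'\to E\to E''\stackrel{+1}{\to}$, I then rotate it so as to present whichever object is unknown as such a cofiber: $E''=\operatorname{cofib}(E'\to E)$; $E=\operatorname{cofib}(E''[-1]\to E')$; and $E'[1]=\operatorname{cofib}(E\to E'')$, whence $E'=(E'[1])[-1]$. In each case the source and target of the relevant map are cellular by the auxiliary remark, so the cofiber is cellular, and since shifting is invertible this forces the third object to be cellular.

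For part (2) I would fix an $\sS$-cellular spectrum $F$ and consider the class $\mathcal{C}_F$ of $E\in SH(k)$ with $E\wedge F$ cellular. A sphere $S^{p,q}$ lies in $\mathcal{C}_F$ because $S^{p,q}\wedge F$ is the image of the cellular object $F$ under the cellularity-preserving autoequivalence $S^{p,q}\wedge(-)$. The class $\mathcal{C}_F$ is closed under weak equivalence trivially, and under homotopy colimits because $-\wedge F$ commutes with them: if $E=\operatorname{hocolim}_i E_i$ with each $E_i\in\mathcal{C}_F$, then $E\wedge F=\operatorname{hocolim}_i(E_i\wedge F)$ is a homotopy colimit of cellular spectra. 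Hence $\mathcal{C}_F$ contains every cellular spectrum, which gives the smash assertion. For the product I would use that $SH(k)$ is additive, so $E\times F\simeq E\vee F$ is the coproduct, i.e.\ the homotopy colimit over a two-object discrete diagram, which is cellular when $E$ and $F$ are; alternatively the split triangle $E\to E\vee F\to F\stackrel{+1}{\to}$ together with part (1) gives the same conclusion.

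The step I expect to be the real work is not the cellular bookkeeping but supplying the structural inputs cleanly: that $-\wedge E$ is (modelled by) a left Quillen functor and hence preserves homotopy colimits, and that the spheres $S^{p,q}$ are cofibrant and invertible up to weak equivalence. These are precisely what the ($\P^1$-stable) monoidal model structure on $\mathbf{Spect}^{\Sigma}_{\P^1}(Spc/k)$ recalled in \S~2 following \cite{Ay08} provides. Granting them, both assertions become the routine transfinite induction carried out carefully in \cite{DI05}.
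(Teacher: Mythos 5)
Your argument is correct and is essentially the proof of Dugger--Isaksen in \cite{DI05} that the paper simply cites for this proposition without reproducing it: closure of the cellular class under cofibers via the homotopy pushout description together with rotation of triangles and (de)suspension for (1), and the ``smallest class'' induction over spheres, weak equivalences and homotopy colimits applied to $\mathcal{C}_F=\{E : E\wedge F \text{ cellular}\}$, plus $E\times F\simeq E\vee F$, for (2). Nothing further is needed beyond the standard monoidal model-category inputs you correctly flag.
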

In the following we denote by $\mathbf{KO}$ the $\P^1$-spectrum representing the Hermitian $K$-theory constructed in \cite[\S 5]{Horn05}. It is defined by: for all $k \in \mathbb{N}$, $\mathbf{KO}_{4k} = a_{Nis}KO^{fib}$, $\mathbf{KO}_{4k+1} = a_{Nis-}U^{fib}$, $\mathbf{KO}_{4k+2} = a_{Nis-}KO^{fib}$ and $\mathbf{KO}_{4k+3} = a_{Nis}U^{fib}$, where $a_{Nis}$ denotes the Nisnevich sheafification functor, $(-)^{fib}$ a fibrant replacement wrt. the simplicial model structure defined by Jardine \cite{Jar87}. We need the following:
\begin{lem}\label{lemcell}\cite[Lem. 6.1]{DI05}
Let $E = \{E_n, \P^1 \wedge E_n \rightarrow E_{n+1}\}$ be a motivic spectrum. Then $E$ is weakly equivalent to 
$$hocolim \{\Sigma^{\infty}_{\P^1}E_0 \rightarrow \Sigma^{-2,-1}_{\P^1} \Sigma^{\infty}_{\P^1} E_1 \rightarrow \Sigma^{-4,-2}_{\P^1}\Sigma^{\infty}_{\P^1}E_2 \rightarrow \cdots \} $$ 
\end{lem}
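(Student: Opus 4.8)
The plan is to construct a canonical comparison morphism from the displayed sequential homotopy colimit to $E$ and then to check that it is an isomorphism in $SH(k) = \mathbf{StHo}_{\A^1,\P^1}(k)$ by computing homotopy sheaves. First I would rewrite the terms of the diagram: up to canonical equivalence one has $\Sigma^{-2n,-n}_{\P^1}\Sigma^{\infty}_{\P^1}E_n \simeq \Sigma^{\infty}_{\P^1,n}E_n$, the value at $E_n$ of the left adjoint $\Sigma^{\infty}_{\P^1,n}$ of the $n$-th evaluation functor $\mathrm{Ev}_n\colon \mathbf{Spect}^{\Sigma}_{\P^1}(Spc/k) \to Spc_+/k$; here I use that in the injective model structure every $k$-space is cofibrant, so no cofibrant replacement intervenes. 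The transition morphisms in the diagram are the composites $\Sigma^{\infty}_{\P^1,n}E_n \xleftarrow{\sim} \Sigma^{\infty}_{\P^1,n+1}(\P^1 \wedge E_n) \xrightarrow{\Sigma^{\infty}_{\P^1,n+1}(\sigma_n)} \Sigma^{\infty}_{\P^1,n+1}E_{n+1}$, where the first arrow is the standard shift equivalence (inverted in $SH(k)$) and $\sigma_n$ is the $n$-th structure map of $E$; the morphism $\Sigma^{\infty}_{\P^1,n}E_n \to E$ is the one adjoint to $\mathrm{id}_{E_n}\colon E_n \to \mathrm{Ev}_n(E)$. A diagram chase, using that the $\sigma_n$ are the structure maps of $E$, shows these are compatible over $n$ and produces $\phi\colon \operatorname{hocolim}_n \Sigma^{\infty}_{\P^1,n}E_n \to E$.

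To see that $\phi$ is an isomorphism I would apply the bigraded stable homotopy sheaves $\underline{\pi}^{st\A^1}_{p,q}$. Since the objects $S^{a,b} \wedge \Sigma^{\infty}_{\P^1}(U_+)$ ($U \in Sm/k$, $a,b \in \Z$) are compact generators of $SH(k)$ that corepresent the $\underline{\pi}^{st\A^1}_{a,b}$, the latter form a conservative family and commute with sequential homotopy colimits, whence $\underline{\pi}^{st\A^1}_{p,q}(\operatorname{hocolim}_n \Sigma^{\infty}_{\P^1,n}E_n) = \operatorname{colim}_n \underline{\pi}^{st\A^1}_{p+2n,q+n}(\Sigma^{\infty}_{\P^1}E_n)$. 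It then remains to identify the right-hand side with $\underline{\pi}^{st\A^1}_{p,q}(E)$ compatibly with $\phi$, which is in effect the $SH(k)$-analogue, for arbitrary spectra, of the stabilization isomorphism of Proposition \ref{propstabilize}. For this I would use the standard description of the stably fibrant ($\Omega_{\P^1}$-spectrum) replacement, $\mathrm{Ev}_0(E^{f}) \simeq \operatorname{hocolim}_m \Omega^{2m,m}_{\P^1} E_m$ (levelwise fibrant replacements understood), valid because $\P^1 \simeq S^{2,1}$ is a symmetric object; it yields $[\,S^{p,q} \wedge \Sigma^{\infty}_{\P^1}U_+,\ E\,]_{SH(k)} \cong \operatorname{colim}_m [\,S^{p+2m,q+m} \wedge U_+,\ E_m\,]$, and likewise with $\Sigma^{\infty}_{\P^1}E_n$ in place of $E$. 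Unwinding, $\operatorname{colim}_n \underline{\pi}^{st\A^1}_{p+2n,q+n}(\Sigma^{\infty}_{\P^1}E_n)(U)$ becomes a colimit, over the poset of pairs $(n,\ell)$ with $\ell \geq n$, of the groups $[\,S^{p+2\ell,q+\ell} \wedge U_+,\ \P^{1\wedge(\ell-n)} \wedge E_n\,]$; the diagonal $n = \ell$ is cofinal, along it the transition map is precisely ``smash with $\P^1$, then apply $\sigma_\ell$'', so the colimit is the presheaf that sheafifies to $\underline{\pi}^{st\A^1}_{p,q}(E)$, and one checks that $\phi$ induces exactly this identification. Conservativity of $\underline{\pi}^{st\A^1}_{*,*}$ then finishes the argument. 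One could also organise this as a localizing-subcategory argument built on the compact generators $\Sigma^{\infty}_{\P^1,j}U_+$, for which the diagram collapses via the shift equivalences to the constant diagram on $E$ with identity transition maps; but that route first requires knowing that $E \mapsto \operatorname{hocolim}_n \Sigma^{\infty}_{\P^1,n}E_n$ preserves stable weak equivalences, so I would keep the homotopy-sheaf computation as the main line.

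The only genuinely delicate point is the passage through the symmetric $\P^1$-spectrum model. One must invoke that the naive levelwise construction computes the stably fibrant replacement --- equivalently, that $\P^1$ (or $T$) is a symmetric object, the cyclic permutation of $\P^{1\wedge 3}$ being $\A^1$-homotopic to the identity --- and one must track the two nested colimits (over the spectrum level and over the diagram index $n$) with care, verifying that the expected diagonal is cofinal and that the transition maps along it are the expected ones, since an indexing slip there would propagate. Everything else --- the construction of $\phi$, the compatibility of the structure maps, and the compactness and conservativity of the generators --- is standard and can be quoted from \cite{Ay08}, \cite{Mor05} and \cite{CD10}.
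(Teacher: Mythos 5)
The paper itself gives no proof of this lemma --- it is quoted directly from Dugger--Isaksen \cite{DI05} --- so the only thing to measure your argument against is theirs, and your argument, while essentially correct, takes a genuinely different and more computational route. Dugger--Isaksen argue entirely at the point-set level via the layer (skeletal) filtration: let $L_nE$ be the spectrum $E_0,E_1,\dots,E_n,\ \P^1\wedge E_n,\ \P^{1\wedge 2}\wedge E_n,\dots$ with the given structure maps; then $E=\operatorname{colim}_n L_nE$ along levelwise cofibrations, so this colimit is a homotopy colimit, and the evident map $\Sigma^{-2n,-n}_{\P^1}\Sigma^{\infty}_{\P^1}E_n\to L_nE$ is an isomorphism in every level $\geq n$, hence a stable equivalence; this yields the displayed presentation with honest transition maps and no computation of homotopy sheaves at all. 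Your route instead constructs the comparison map $\phi$ and tests it on $\underline{\pi}^{st\A^1}_{*,*}$ using compact generators and the colimit description of the stably fibrant replacement, and that is where all the cost sits: (i) your transition maps are zig-zags through the shift equivalences, so the homotopy colimit must be taken as a Milnor telescope in the triangulated category, or the diagram must be strictified --- the layer filtration is exactly that strictification; (ii) the formula $[S^{p,q}\wedge\Sigma^{\infty}_{\P^1}U_+,E]\cong\operatorname{colim}_m[S^{p+2m,q+m}\wedge U_+,E_m]$ requires compactness of $\P^1$ and of the generators, and if you insist on working in symmetric $\P^1$-spectra it additionally requires a semistability or symmetric-versus-sequential comparison argument, since naive colimit homotopy of a symmetric spectrum does not compute maps in the stable category in general --- note the lemma as stated concerns sequential $\P^1$-spectra, where this issue disappears; (iii) the double-colimit cofinality bookkeeping you yourself flag. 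None of these is a gap --- each point is standard and you indicate where to find it --- but the filtration argument bypasses them entirely, which is why the original proof is a few lines. What your approach buys in return is the explicit identification $\underline{\pi}^{st\A^1}_{p,q}(E)\cong\operatorname{colim}_n\underline{\pi}^{st\A^1}_{p+2n,q+n}(\Sigma^{\infty}_{\P^1}E_n)$, an analogue for arbitrary spectra of Proposition \ref{propstabilize}, which is useful in its own right; if you keep your version, state that stabilization isomorphism as a separate lemma with a reference, and take the homotopy colimit over a strictified diagram so that $\phi$ is defined before passing to the homotopy category.
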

We now show:
\begin{thm}\label{thmKO}
Let $k$ be a field of $char(k) \neq 2$. The $\P^1$-spectrum $\mathbf{KO}$ is stable cellular.
\end{thm}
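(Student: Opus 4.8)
The plan is to reduce the cellularity of $\mathbf{KO}$ to the known cellularity of its defining spaces together with the structural closure properties of cellular spectra recalled above. By Lemma \ref{lemcell}, $\mathbf{KO}$ is weakly equivalent to the homotopy colimit
$$hocolim \{\Sigma^{\infty}_{\P^1}\mathbf{KO}_0 \rightarrow \Sigma^{-2,-1}_{\P^1}\Sigma^{\infty}_{\P^1}\mathbf{KO}_1 \rightarrow \Sigma^{-4,-2}_{\P^1}\Sigma^{\infty}_{\P^1}\mathbf{KO}_2 \rightarrow \cdots\},$$
so by the third axiom in Definition \ref{defncell} (closure of cellular objects under homotopy colimits) it suffices to show that each term $\Sigma^{-2n,-n}_{\P^1}\Sigma^{\infty}_{\P^1}\mathbf{KO}_n$ is cellular. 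Since $\Sigma^{-2,-1}_{\P^1}$ is smashing with the invertible object $S^{-2,-1}$, which is a motivic sphere, Proposition \ref{propcell}(2) shows it is enough to prove that $\Sigma^{\infty}_{\P^1}\mathbf{KO}_n$ is cellular for each $n$; and by Lemma \ref{lemconncell} this in turn reduces to showing that the spaces $\mathbf{KO}_n$ — i.e. (up to Nisnevich sheafification and fibrant replacement, which are $\A^1$-weak equivalences and hence preserve cellularity by axiom (2)) the spaces $KO$ and $U$ representing Hermitian $K$-theory — are unstably cellular in the sense of Definition \ref{defnunstcell}.

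Thus the problem is concentrated in one genuinely geometric input: the unstable cellularity of the Hermitian $K$-theory spaces. The approach here is to use the geometric/simplicial models for these spaces. Hermitian $K$-theory of a field $k$ with $\mathrm{char}(k)\neq 2$ is represented by spaces built, via group-completion and $+$-construction type arguments, out of the infinite orthogonal (and symplectic) Grassmannians $\mathrm{OGr}$, $\mathrm{SpGr}$ and out of the finite such Grassmannians, exactly in the way that algebraic $K$-theory is represented by $\mathbb{Z}\times \mathrm{Gr}$. One then invokes the fact — established by Dugger–Isaksen in \cite{DI05} — that the finite and infinite Grassmannians (ordinary, orthogonal, symplectic) are unstably cellular: this is proved by an explicit cell-structure argument, stratifying a Grassmannian by Schubert-type cells so that the successive quotients are Thom spaces of bundles over smaller Grassmannians, and then applying Lemma \ref{lemunstcell} inductively (a Thom space of a vector bundle over a cellular base is cellular, being an iterated cofiber of cellular maps via the zero section, using \ref{propthom}(3) to reduce to projective bundle quotients). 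Passing to the colimit over the rank and using axiom (3) gives cellularity of the infinite Grassmannians.

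The next step is to propagate cellularity through the homotopy-theoretic constructions defining $\mathbf{KO}_n$: the relevant spaces are obtained from (products of) these Grassmannians by standard operations — filtered homotopy colimits, finite homotopy limits/pullbacks realized as homotopy cofiber sequences, and deloopings coming from the additivity/localization structure of Hermitian $K$-theory (Karoubi periodicity, the fibre sequences relating $KO$, $U$ and $K$). Each such operation preserves unstable cellularity by Lemma \ref{lemunstcell} and the homotopy-colimit axiom, once one checks that the maps involved are cellular, which follows because they are assembled from maps between cellular objects. In particular the Bott/Karoubi periodicity identifications and the models $\mathbf{KO}_{4k}=a_{Nis}KO^{fib}$, $\mathbf{KO}_{4k+1}=a_{Nis-}U^{fib}$, etc., all present $\mathbf{KO}_n$ as an $\A^1$-local replacement of a manifestly cellular space. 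Assembling these, each $\mathbf{KO}_n$ is unstably cellular, hence $\Sigma^{\infty}_{\P^1}\mathbf{KO}_n$ is cellular, hence by the reduction above $\mathbf{KO}$ is stably cellular.

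The main obstacle I expect is precisely the unstable cellularity of the Hermitian $K$-theory \emph{spaces}, rather than the formal spectrum-level bookkeeping, which is routine given Proposition \ref{propcell} and Lemma \ref{lemcell}. Concretely, one must be careful that the group-completion and fibrant/sheafification steps do not destroy cellularity — this is handled by invariance of cellularity under $\A^1$-weak equivalence, but one should verify that the relevant comparison maps are genuine weak equivalences in $\mathbf{Ho}_{\A^1,+}(k)$ — and that the orthogonal and symplectic Grassmannians really do admit the cell filtrations asserted in \cite{DI05}, which is where the hypothesis $\mathrm{char}(k)\neq 2$ enters (it guarantees the good behaviour of quadratic forms and the Schubert-cell decomposition). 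Once these geometric inputs are in hand, the rest is a diagram chase through the closure properties.
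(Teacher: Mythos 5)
There is a genuine gap, and it sits exactly where the paper says the difficulty lies. Your reduction via Lemma \ref{lemcell} to the cellularity of the individual spaces $\mathbf{KO}_n$ agrees with the paper, but your key geometric input --- that the finite and infinite \emph{orthogonal} (and symplectic) Grassmannians are unstably cellular, ``established by Dugger--Isaksen'' via Schubert-type stratifications --- is not in \cite{DI05}. Dugger and Isaksen prove cellularity of the ordinary Grassmannians (hence of $KGL$), of $GL_n$, and of certain quadrics; they do not treat the orthogonal Grassmannian $GrO_{\bullet}$ appearing in the Schlichting--Tripathi model, which parametrizes \emph{non-degenerate} subbundles of the hyperbolic space and is not a projective homogeneous variety with a Schubert cell decomposition. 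Over a non-algebraically closed field of $char(k)\neq 2$ the existence of many isomorphism classes of quadratic forms obstructs any such stratification, and the paper explicitly flags that ``it is difficult to prove directly the cellularity of the orthogonal Grassmannians.'' So the step you describe as the ``genuinely geometric input'' is precisely the unproved (and, as far as \cite{DI05} goes, unavailable) claim. A secondary problem is your propagation step: you invoke ``finite homotopy limits/pullbacks'' and group-completion/delooping constructions as preserving cellularity, but Definitions \ref{defnunstcell}--\ref{defncell} only give closure under homotopy \emph{colimits} (and, stably, two-out-of-three in cofiber sequences via Proposition \ref{propcell}); unstable homotopy fibers and limits do not preserve cellularity in general, so this bookkeeping is not routine as stated.

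The paper circumvents both issues by a different route. It never proves cellularity of $GrO_{\bullet}$: instead it uses the homotopy fiber sequence $U \rightarrow KGL \stackrel{H}{\rightarrow} KO$ together with the Dugger--Isaksen cellularity of $KGL$ to reduce to the cellularity of the $U$-theory space $O/GL$. Cellularity of $GL=colim_n GL_n$ is \cite[Prop.\ 4.1]{DI05} plus closure under filtered colimits, and cellularity of $O$ (and of $Sp$) is proved by induction on the explicit cofiber sequences $O_{2n-2}\inj O_{2n}\surj Q_{2n}$ and $Sp_{2n-2}\inj Sp_{2n}\surj \A^{2n}\setminus\{0\}$, where the affine quadric $Q_{2n}$ fibers over $\A^{n}\setminus\{0\}$ with affine fibers and is therefore cellular; Lemma \ref{lemunstcell} then closes the induction and Lemma \ref{lemconncell} passes to spectra. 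If you want to salvage your outline, you would have to replace the Schubert-cell claim for $GrO_{\bullet}$ by an argument of this kind (or prove that claim independently, which is a substantially harder and currently unsubstantiated assertion).
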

\begin{proof} 
By lemma \ref{lemcell} above, we need to show that $\mathbf{KO}_i$ is cellular for every $i \in \N$. From the work of M. Schlichting and G. S. Tripathi \cite[Thm. 1.3]{ST13} we know that in the unstable $\A^1$-homotopy category $\mathbf{Ho}_{\A^1,+}(k)$ one has
$$\mathbf{KO}_i  =  \begin{cases} \Z \times GrO_{\bullet}, i = 0 \\ Sp/GL, i= 1 \\ \Z \times BSp, i = 2 \\ O/GL, i = 3  \end{cases}$$ 
By definition, the infinite orthogonal Grassmannian is the presheaf 
$$GrO_{\bullet} = colim_{V \subset H^{\infty}}GrO_{rk(V)}(V \perp H^{\infty}),$$
where $H^{\infty} = colim_n H^n$ denotes the infinite hyperbolic space and the colimit for the orthogonal Grassmannians is taken over the non-degenerated subbundles $V \subset H^{\infty}$. It is difficult to prove directly the cellularity of the orthogonal Grassmannians. However, we have a homotopy fiber sequence 
$$U \rightarrow KGL \stackrel{H}{\rightarrow} KO.$$
From the work of \cite[Prop. 4.4, Thm. 6.2]{DI05} we know $KGL$ is cellular. So with the homotopy fiber sequence above, we may reduce the problem to prove the cellularity of $U$-theory, which is $O/GL$. Again, by \cite[Prop. 4.1]{DI05}, the linear algebraic group $GL_n$ is cellular, so $GL = colim_n GL_n$ is also cellular, since this colimit is filtered colimit. So it is enough to see that $O$ is cellular. We have the following cofiber sequence 
$$O_{2n-2} \inj O_{2n} \surj Q_{2n},$$
where $Q_{2n}$ denotes the affine quadrics $\{x_1 \cdot y_1 + \cdots + x_n \cdot y_n  = 1\}$. Here we identify $Q_{2n}$ as homotopy cofiber of the canonical inclusion $O_{2n-2} \inj O_{2n}, \quad A \mapsto A \oplus 1$ in the homotopy pushout diagram 
$$\xymatrix{ O_{2n-2} \ar[d] \ar@{^{(}->}[r] & O_{2n} \ar[d] \\ \{pt\} \ar[r] & Q_{2n} }$$
But there is a canonical bundle structure 
$$Q_{2n} \rightarrow \A^n - \{ 0 \}, \quad (x_1,\cdots,x_n,y_1,\cdots ,y_n) \mapsto (y_1,\cdots, y_n) $$
with fiber $\A^{n-1}$. This proves the cellularity of $O$ in $\mathbf{Ho}_{\A^1,+}(k)$ by means of induction (use Lemma \ref{lemunstcell}). With the same argument, it remains to show the cellularity of $Sp = colim_n Sp_{2n}$ in $\mathbf{Ho}_{\A^1,+}(k)$, but this also follows from induction and the cofiber sequence 
$$Sp_{2n-2} \inj Sp_{2n} \surj \A^{2n} - \{ 0 \},$$
where we again identify $\A^{2n}-\{ 0\}$ with the homotopy cofiber of the inclusion $Sp_{2n-2} \inj Sp_{2n}, \quad M \mapsto M \oplus 1$ in the homotopy pushout diagram 
 $$\xymatrix{ Sp_{2n-2} \ar[d] \ar@{^{(}->}[r] & Sp_{2n} \ar[d] \\ \{pt\} \ar[r] & \A^{2n} -\{0 \} }$$
Apply now the Lemma \ref{lemconncell} we are done. \end{proof}
Let $S$ be any scheme. We recall now the rationally splitting of F. Morel in $SH(S)$ (see \cite[\S 16.2]{CD10}). The permutation isomorphism 
$$\tau: \Sigma^{\infty}_{\P^1,+} \G_{m,\Q} \wedge \Sigma^{\infty}_{\P^1,+}\G_{m,\Q} \rightarrow  \Sigma^{\infty}_{\P^1,+} \G_{m,\Q} \wedge \Sigma^{\infty}_{\P^1,+}\G_{m,\Q}$$
satisfies $\tau^2 = 1$. This defines an element $e \in End_{SH(S)_{\Q}}(\mathbf{1}_{\Q})$, such that $e^2 = 1$. So we may define 
$$e_+ = \frac{e-1}{2}, \quad e_- = \frac{e+1}{2}.$$
Then we define $\mathbf{1}_{\Q+} = \im(e_+)$ and $\mathbf{1}_{\Q-} = \im(e_-)$. For any spectrum $\sE \in SH(S)_{\Q}$, one defines $\sE_+ = \mathbf{1}_{\Q+} \wedge \sE$ and $\sE_- = \mathbf{1}_{\Q-} \wedge \sE$. This leads to a splitting of stable homotopy category 
$$SH(S)_{\Q+} \times SH(S)_{\Q-} \stackrel{\simeq}{\rightarrow} SH(S)_{\Q}, \quad (\sE_+,\sE_-) \mapsto \sE_+ \wedge \sE_-$$
\begin{lem}\label{lemcellsummand}
Let $E \in SH(S)$ be a cellular motivic spectrum. If $F$ is a direct summand of $E$, then $F$ is a also cellular.
\end{lem}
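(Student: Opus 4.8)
The plan is to exhibit $F$ as a countable homotopy colimit of copies of $E$, so that closure of the cellular class under homotopy colimits (Definition~\ref{defncell}(3)) together with cellularity of $E$ finishes the proof. Write the decomposition as $E \simeq F \oplus F'$ and let $e \colon E \to E$ be the idempotent that is the identity on $F$ and zero on $F'$.

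First I would form the mapping telescope
$T := \operatorname{hocolim}\bigl(E \xrightarrow{\,e\,} E \xrightarrow{\,e\,} E \xrightarrow{\,e\,} \cdots \bigr)$,
defined by the homotopy cofiber sequence $\bigoplus_{n\ge 0} E \xrightarrow{\ \mathrm{id}-\mathrm{sh}_e\ } \bigoplus_{n\ge 0} E \to T$, where $\mathrm{sh}_e$ carries the $n$-th summand to the $(n+1)$-st via $e$. Since $SH(S)$ is a stable model category it has all small coproducts, so $T$ exists, and it is a homotopy colimit of a diagram every term of which equals the cellular spectrum $E$; hence $T$ is cellular (using also Proposition~\ref{propcell}(1) for the cofiber sequence defining the telescope, or directly Definition~\ref{defncell}(3)).

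Next I would identify $T$ with $F$. Under the splitting $E \simeq F \oplus F'$ the whole telescope decomposes as the direct sum of the telescope of $F \xrightarrow{\mathrm{id}} F \xrightarrow{\mathrm{id}} \cdots$, whose homotopy colimit is $F$, and the telescope of $F' \xrightarrow{0} F' \xrightarrow{0} \cdots$; in the latter the structural map $\mathrm{id}-\mathrm{sh}_0 = \mathrm{id}$ on $\bigoplus_{n} F'$ is an isomorphism, so that telescope is $0$. Since the homotopy colimit functor (being a left adjoint) commutes with finite direct sums, we get $T \simeq F \oplus 0 \simeq F$, and therefore $F$ is cellular.

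The argument is essentially formal; the only point that needs a little care is the bookkeeping in the telescope cofiber sequence — in particular checking that "shift minus identity" becomes an isomorphism on $\bigoplus F'$ once all transition maps vanish, so that $T$ picks out exactly the summand $F$ and not a shift of it. An alternative phrasing, which avoids even this, is simply to observe that the full subcategory of cellular spectra is localizing in the idempotent-complete triangulated category $SH(S)$ (closed under coproducts by Definition~\ref{defncell} and under cofibers by Proposition~\ref{propcell}(1)), and that localizing subcategories of such categories are thick; the telescope above is just the proof of that general fact specialized to the present situation, so the hard part is really just recalling why idempotents split compatibly with homotopy colimits.
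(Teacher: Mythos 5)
Your proposal is correct and follows essentially the same route as the paper: the paper's proof is exactly the observation that $F = \operatorname{hocolim}(E \xrightarrow{p} E \xrightarrow{p} E \xrightarrow{p} \cdots)$ for the projector $p$ onto $F$, combined with closure of cellular objects under homotopy colimits. Your telescope bookkeeping (splitting the telescope into the identity-telescope on $F$ and the zero-telescope on $F'$) merely makes explicit the identification the paper states without proof.
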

\begin{proof}
Suppose $E = F \vee E'$. Let $p: E \rightarrow E$ be the projector onto $F$. The we have 
$$hocolim (E \stackrel{p}{\rightarrow} E \stackrel{p}{\rightarrow} E \stackrel{p}{\rightarrow} \cdots) = F.$$
So if $E$ is cellular, then $F$ is also cellular.
\end{proof}
\begin{lem}\label{lemcellHW}
Let $k$ be an abitrary field of $char(k) \neq 2$. Then the $\A^1$-Eilenberg-Maclane spectrum $H\mathbf{W}_{\Q}$ associated to the unramified Witt sheaf $\mathbf{W}_{\Q}$ is stable cellular.
\end{lem}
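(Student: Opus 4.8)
The plan is to show that $H\mathbf{W}_\Q$ arises as a direct summand of a spectrum already known to be cellular, and then invoke Lemma \ref{lemcellsummand}. The natural candidate is $\mathbf{KO}_\Q$, which is cellular by Theorem \ref{thmKO} (cellularity is preserved under $\Q$-localization since it is a filtered homotopy colimit). So the first step is to record the rational splitting $SH(k)_\Q \simeq SH(k)_{\Q+} \times SH(k)_{\Q-}$ coming from the involution $e$, which by Lemma \ref{lemcellsummand} shows that $\mathbf{KO}_{\Q+}$ and $\mathbf{KO}_{\Q-}$ are each cellular once $\mathbf{KO}_\Q$ is. It then suffices to identify $H\mathbf{W}_\Q$, up to a shift and a wedge of $\G_m$-twists, inside $\mathbf{KO}_{\Q-}$.

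For the minus part, I would use the slice/descent computation of R\"ondigs--\O stv\ae r \cite{RO13} for Hermitian $K$-theory together with the fact, recalled in the introduction, that on the minus side the higher slices of $\mathbf{KO}$ vanish rationally and $\mathbf{KO}_{\Q-}$ is built purely out of the Witt-sheaf Eilenberg--MacLane spectrum. Concretely, the strategy mirrors the argument that will appear for Corollary \ref{corsplitKT}: the spectrum $\mathbf{KO}_{\Q-}$ splits as $\bigvee_{n\in\Z} S^{4n}_{s,\Q-}$, and $S^0_{s,\Q-} = \mathbf{1}_{\Q-}$ is (by the identification $\mathbf{1}_{\Q-} = H\mathbf{W}_\Q$ that the paper establishes in \S 4) exactly $H\mathbf{W}_\Q$. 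Hence $H\mathbf{W}_\Q$ is a direct summand of $\mathbf{KO}_{\Q-}$, which is a direct summand of the cellular spectrum $\mathbf{KO}_\Q$; applying Lemma \ref{lemcellsummand} twice finishes the proof. Alternatively, and perhaps more cleanly, one observes that $H\mathbf{W}_\Q \simeq \mathbf{1}_{\Q-}$ is a direct summand of $\mathbf{1}_\Q = \widetilde{C}^{st\A^1}_*(\mathbf{S}^0)$, and the sphere spectrum is cellular essentially by definition (it lies in the class $\sS$), so Lemma \ref{lemcellsummand} applies directly to the rational sphere.

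I expect the main obstacle to be the identification $\mathbf{1}_{\Q-} = H\mathbf{W}_\Q$ itself, which is genuinely the technical heart of \S 4 and relies on Morel's conjecture \ref{conjst} (our Theorem \ref{proofofconjst}); but since that is proved independently before this lemma is used, here it can simply be cited. A secondary, more routine point to check is that $\Q$-localization does not destroy cellularity: this holds because $E_\Q = \mathrm{hocolim}(E \xrightarrow{2} E \xrightarrow{3} E \to \cdots)$ is a filtered homotopy colimit of copies of $E$, so if $E$ is $\sS$-cellular then so is $E_\Q$ by axiom (3) of Definition \ref{defncell}. With these two inputs in hand the proof is short: $H\mathbf{W}_\Q = \mathbf{1}_{\Q-}$ is a summand of the cellular spectrum $\mathbf{1}_\Q$, hence cellular by Lemma \ref{lemcellsummand}.
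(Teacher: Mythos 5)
Your core strategy (realize $H\mathbf{W}_\Q$ as a direct summand of a cellular spectrum, note that $\Q$-localization preserves cellularity as a filtered homotopy colimit, and invoke Lemma \ref{lemcellsummand}) is exactly the right one, and it is the paper's strategy too. But both of the concrete routes you propose are circular within the paper's architecture. Your main route goes through the splitting $\mathbf{KO}_{\Q-}=\bigvee_n S^{4n}_{s,\Q-}$ (Corollary \ref{corsplitKT}, resp. Corollary \ref{corHW}) and the identification $\mathbf{1}_{\Q-}=H\mathbf{W}_\Q$ (Theorem \ref{proofofconjst}); your ``cleaner'' alternative uses the identification $H\mathbf{W}_\Q\simeq\mathbf{1}_{\Q-}$ directly. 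However, in the paper the logical order is the reverse of what you assume: the proof of Corollary \ref{corHW} begins ``Since $H\mathbf{W}_\Q$ is cellular from the lemma \ref{lemcellHW}, we may apply [DI05, Prop.\ 7.7]\ldots'', and the proof of Theorem \ref{proofofconjst} likewise invokes Lemma \ref{lemcellHW} to run the Dugger--Isaksen K\"unneth/universal-coefficient spectral sequences. So your premise that Theorem \ref{proofofconjst} ``is proved independently before this lemma is used'' is false; citing it (or Corollary \ref{corsplitKT}) here would make the argument circular. Cellularity of $H\mathbf{W}_\Q$ is precisely the input that makes those later results available, so it must be established by other means.

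The non-circular input the paper uses is Morel's computation of $\underline{\pi}_0$ of the sphere spectrum (\cite[Cor.\ 25, Rem.\ 26]{Mor12}): rationally on the minus part this gives the truncation triangle $(\mathbf{1}_{\Q-})_{>0}\rightarrow\mathbf{1}_{\Q-}\rightarrow H\mathbf{W}_\Q\stackrel{+1}{\rightarrow}$, and this triangle splits, exhibiting $H\mathbf{W}_\Q$ as a direct summand of $\mathbf{1}_{\Q-}$ \emph{without} knowing the full identification $\mathbf{1}_{\Q-}=H\mathbf{W}_\Q$. Since $\mathbf{1}_{\Q-}$ is itself a summand of the cellular spectrum $\mathbf{1}_\Q$ (your filtered-hocolim observation), two applications of Lemma \ref{lemcellsummand} finish the proof. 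If you replace your appeal to Theorem \ref{proofofconjst} by this weaker summand statement coming from Morel's $\pi_0$-computation, your argument coincides with the paper's and is correct; note also that your $\mathbf{KO}$-route, besides being circular, is unnecessarily heavy, since Theorem \ref{thmKO} is never needed for this lemma.
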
 
\begin{proof}
From Morel's computation (cf. \cite[Cor. 25, Rem. 26]{Mor12}) we have a distinguished triangle 
$$(\mathbf{1}_{\Q-})_{> 0} \rightarrow \mathbf{1}_{\Q-} \rightarrow H\mathbf{W}_{\Q} \stackrel{+1}{\rightarrow}.$$
By construction we have $\mathbf{1}_{\Q-} = (\mathbf{1}_{\Q-})_{>0} \vee H\mathbf{W}_{\Q}$. So the lemma \ref{lemcellsummand} above shows that $H \mathbf{W}_{\Q}$ is also stably cellular.
\end{proof}
As a consequence of the theorem \ref{thmKO}, we obtain: 
\begin{cor}\label{corHW}
Let $k$ be a perfect field of $char(k) \neq 2$. The spectrum $\mathbf{KT}_{\Q}$, which represents the triangular Balmer's Witt group, is stably cellular. Moreover, there is a splitting 
$$\mathbf{KT}_\Q = \bigvee_{n \in \Z} S^{4n}_s \wedge H\mathbf{W}_\Q.$$ 
\end{cor}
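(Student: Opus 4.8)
\textbf{Proof plan for Corollary \ref{corHW}.}

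The plan is to deduce the statement about $\mathbf{KT}_\Q$ from the cellularity of $\mathbf{KO}$ (Theorem \ref{thmKO}), the cellularity of $H\mathbf{W}_\Q$ (Lemma \ref{lemcellHW}), and the minus-part splitting of the rational sphere spectrum (Morel's splitting recalled just before Lemma \ref{lemcellsummand}). Recall that $\mathbf{KT}$ is the $(8,4)$-periodic spectrum built from $\mathbf{KO}$ by inverting the Bott element, and that after rationalization the relevant periodicity simplifies: more precisely $\mathbf{KT}_\Q$ is obtained from $\mathbf{KO}_\Q$ by localization. First I would observe that the minus-part functor $(-)_- = \mathbf{1}_{\Q-}\wedge(-)$ applied to $\mathbf{KO}_\Q$ kills the motivic-cohomology contributions coming from the $\Q_+$-side and leaves precisely the Witt-theoretic part; concretely, by the known rational decomposition of Hermitian $K$-theory one has $\mathbf{KO}_{\Q-}\simeq\mathbf{KT}_\Q$ (this is essentially the statement that the $\Q_-$-part of Hermitian $K$-theory is Balmer--Witt theory, cf. the discussion around \cite{RO13}). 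So it suffices to identify $\mathbf{KO}_{\Q-}$ with $\bigvee_{n\in\Z}S^{4n}_s\wedge H\mathbf{W}_\Q$.

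The second step is to establish the splitting. Here I would invoke the slice spectral sequence of Voevodsky together with the computation of the slices of Hermitian $K$-theory by R\"ondigs--\O stv\ae r \cite{RO13}, combined with the fact that rational motivic cohomology is Landweber exact \cite{NSO09}. Taking the $\Q_-$-part of the slice tower of $\mathbf{KO}$: the even slices $s_{2n}\mathbf{KO}$ contribute, on the $\Q_-$-side, shifted copies of $H\mathbf{W}_\Q$ (the motivic-cohomology summands being annihilated by $\mathbf{1}_{\Q-}$), while the odd slices vanish rationally. The $\Q_-$-localized slice spectral sequence then degenerates — the differentials are torsion phenomena, or vanish for weight reasons after rationalization — and the extension problem is solved because over $\Q$ the tower splits (each stage is a retract, using that $H\mathbf{W}_\Q$-modules form a nice enough category, or alternatively applying Lemma \ref{lemcellsummand}-type retraction arguments). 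This yields $\mathbf{KO}_{\Q-}\simeq\bigvee_{n\in\Z}S^{4n}_s\wedge H\mathbf{W}_\Q$, hence $\mathbf{KT}_\Q\simeq\bigvee_{n\in\Z}S^{4n}_s\wedge H\mathbf{W}_\Q$ as claimed. (This is exactly the statement advertised as \ref{corsplitKT} in the introduction.)

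For cellularity: once the splitting is in hand, $\mathbf{KT}_\Q$ is a wedge (hence a homotopy colimit) of suspensions of $H\mathbf{W}_\Q$, each of which is stably cellular by Lemma \ref{lemcellHW}, and cellular spectra are closed under suspension and homotopy colimits by Definition \ref{defncell}; therefore $\mathbf{KT}_\Q$ is stably cellular. Alternatively — and this is the cleaner route that avoids any subtlety about whether $\Q$-localization preserves cellularity — one can argue directly: $\mathbf{KO}$ is stably cellular by Theorem \ref{thmKO}, its rationalization $\mathbf{KO}_\Q$ is a homotopy colimit of copies of $\mathbf{KO}$ (smashing with the rational sphere, which is a cellular operation), hence cellular; then $\mathbf{KT}_\Q=\mathbf{KO}_{\Q-}$ is a direct summand of $\mathbf{KO}_\Q$, so Lemma \ref{lemcellsummand} applies.

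The main obstacle I expect is the second step — controlling the $\Q_-$-part of the slice spectral sequence for $\mathbf{KO}$ and proving it degenerates and splits. The identification of the slices via \cite{RO13} is quotable, and the vanishing of the $\Q_+$-contributions under $\mathbf{1}_{\Q-}\wedge(-)$ is formal, but verifying that no nontrivial differentials survive rationalization and that the resulting filtration is split (rather than merely having the right associated graded) requires a genuine argument: one must either exhibit explicit splitting maps built from the Bott-periodicity operators on $\mathbf{KO}$, or exploit that $H\mathbf{W}_\Q$ is an idempotent-complete ring spectrum in $SH(k)_\Q$ whose module category is semisimple enough to force degeneration. This is precisely where the hypothesis $\operatorname{char}(k)\neq 2$ and the perfectness of $k$ enter (the former to make $\mathbf{W}$ and Hermitian $K$-theory well-behaved, the latter for the slice machinery and Morel's connectivity results).
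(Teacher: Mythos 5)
Your cellularity argument is fine (identifying $\mathbf{KT}_\Q\simeq\mathbf{KO}_{\Q-}$ as a direct summand of $\mathbf{KO}_\Q$ and invoking Theorem \ref{thmKO} together with Lemma \ref{lemcellsummand} is a legitimate variant of the paper's route, which instead writes $\mathbf{KT}=\mathbf{KO}[\eta^{-1}]$ as a homotopy colimit along $\eta$). The splitting argument, however, rests on a false premise. You propose to take the $\Q_-$-part of the slice tower of $\mathbf{KO}$ and claim that the even slices contribute shifted copies of $H\mathbf{W}_\Q$ there. By R\"ondigs--\O stv\ae r (Theorem \ref{thmRO1}) the slices of $\mathbf{KO}$ are wedges of copies of $H\Z$ and $H\Z/2$, and of $\mathbf{KT}$ wedges of $H\Z/2$; rationally these are $H\Q$-modules, i.e.\ entirely on the $\Q_+$-side, so smashing with $\mathbf{1}_{\Q-}$ kills them. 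Thus $s_q(\mathbf{KO}_{\Q-})=s_q(\mathbf{KT}_\Q)=0$ for all $q$ (this is exactly what the paper proves inside Theorem \ref{thmsssKO}, and it also remarks explicitly that the slice approach cannot work for the Witt-theoretic part for precisely this reason). The slice filtration is not complete on $\eta$-periodic spectra, so a spectral sequence with identically vanishing $E_1$-page tells you nothing about $\mathbf{KO}_{\Q-}$; there are no differentials to control and no filtration to split, because the object is invisible to the filtration. Your appeal to Landweber exactness of $H\Q$ belongs to the $\Q_+$-story ($\mathbf{KO}_{\Q+}=\bigvee_m\P^{1\wedge 2m}_{\Q+}$, Corollary \ref{corsssKO}), not to the minus part.

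What is actually needed, and what the paper does, is to replace the slice filtration by the homotopy $t$-structure. One runs the Postnikov-tower spectral sequence \ref{eqssPostnikov} for $\mathbf{KT}_\Q$, which converges by Proposition \ref{propssPostnikov}; since $\eta$ is invertible on the minus part, Hornbostel's representability gives $\pi_{p,n}(\mathbf{KT}_\Q)(X)=W^{n-p}_B(X)_\Q$, so the homotopy modules are $\underline{\mathbf{W}}_\Q$ in degrees $\equiv 0 \bmod 4$ and zero otherwise, and the $E_1$-terms are $H^{n-p}_{Nis}(X,\underline{\mathbf{W}}_\Q)$ in those degrees. The genuine content is then the vanishing of the groups $H\mathbf{W}_\Q^{r+1}(H\mathbf{W}_\Q)$ in which the differentials live: this is obtained from the Dugger--Isaksen universal-coefficient (Ext) spectral sequence, which applies because $H\mathbf{W}_\Q$ is stably cellular (Lemma \ref{lemcellHW}) and collapses because $H\mathbf{W}_\Q$ is a direct summand of $\mathbf{1}_{\Q-}$, making $\pi_{*,*}(H\mathbf{W}_\Q)$ projective over $\pi_{*,*}(\mathbf{1}_{\Q-})$. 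That vanishing kills all differentials and yields the splitting $\mathbf{KT}_\Q=\bigvee_{n\in\Z}S^{4n}_s\wedge H\mathbf{W}_\Q$; the further identification with $\bigvee S^{4n}_{s,\Q-}$ (Corollary \ref{corsplitKT}) only comes later, after Theorem \ref{proofofconjst}. So your plan needs to be rebuilt around the homotopy $t$-structure rather than the slice filtration; as written, its central step would fail.
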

\begin{proof}
Let $\mathbf{KT}$ denote the $\P^1$-spectrum constructed in \cite{Horn05}. One has 
$$\mathbf{KT} = \mathbf{KO}[\eta^{-1}] = hocolim \{\mathbf{KO} \stackrel{\eta}{\rightarrow} \mathbf{KO}\wedge \G_m^{-1} \stackrel{\eta}{\rightarrow} \mathbf{KO} \wedge \G_m^{\wedge -2} \rightarrow \cdots \}.$$
So theorem \ref{thmKO} implies that the spectrum $\mathbf{KT}$ is also cellular. Let $X \in Sm/k$ be a smooth $k$-scheme and consider the spectral sequence \ref{eqssPostnikov} for the presheaf $\pi_{*,*}(\mathbf{KT}_\Q)(X) \stackrel{defn}{=} [S^{*,*}\wedge \Sigma^{\infty}_{\P^1_\Q,+}(X), \mathbf{KT}_\Q]$, we have a strongly convergent spectral sequence
\begin{equation}\label{eqSSKTQ}
E^1_{p,q,n} = \pi_{p,n}(H\pi_q(\mathbf{KT}_\Q)_*)(X) \Rightarrow \pi_{p,n}(\mathbf{KT}_\Q)(X).   
\end{equation}
We know that the stable Hopf map $\eta: \G_{m,\frac{1}{2}-} \rightarrow \mathbf{1}_{\frac{1}{2}-}$ is an isomorphism so we have $\mathbf{KT}_\Q \cong \mathbf{KT}_{\Q-} \cong \mathbf{KO}_{\Q-}$ and $\pi_{p,n}(\mathbf{KT}_\Q)(X) = \pi_{p-n}(\mathbf{KT}_\Q)(X)$. By the work of \cite{Horn05} we know $\pi_{p-n}(\mathbf{KT}_\Q)(X) = W^{n-p}_B(X)_\Q$. So we have $H\pi_q(\mathbf{KT}_\Q) = H\mathbf{W}_\Q$ for $q \equiv 0 \mod 4$ and $0$ else. In this way, we obtain 
$$E^1_{p,q,n} = \begin{cases} H^{n-p}_{Nis}(X,\underline{\mathbf{W}}_\Q), \quad q \equiv 0 \mod 4, \\ 0, \quad \text{else}.  \end{cases}$$  
Clearly, for any $X \in Sm/k$ we have $d^r_{p,q,n}(X) = 0$ for $r \not \equiv 0 \mod 4$. The differentials $d^r_{p,q,n}(X)$ is induced by 
$$ S^q_s \wedge H\mathbf{W}_\Q \rightarrow S^1_s \wedge (S^{q+r}_s \wedge H\mathbf{W}_\Q),$$
which lives in $H\mathbf{W}_\Q^{r+1}(H \mathbf{W}_\Q)$. Since $H\mathbf{W}_\Q$ is cellular from the lemma \ref{lemcellHW}, we may apply \cite[Prop. 7.7]{DI05} to have a conditionally cohomological tri-graded spectral sequence 
$$E^{a,(b,c)}_2 = Ext_{\underline{\pi}_{*,*}(\mathbf{1}_{\Q-})(k)}^{a,(b,c)}(\underline{\pi}_{*,*}(H\mathbf{W}_\Q)(k),\underline{\pi}_{*,*}(H\mathbf{W}_\Q)(k)) \Rightarrow H\mathbf{W}^{a+b,c}(H\mathbf{W}_\Q).$$
As in the proof of the lemma \ref{lemcellHW} we have $H\mathbf{W}_{\Q}$ is a direct summand of $\mathbf{1}_{\Q-}$, so we have $E^{a,(b,c)}_2 = 0$ for $a > 0$. So all the terms $\lim^1 E^{***}_r$ vanish, so the spectral sequence is strongly convergent and degenerates, so we have $H\mathbf{W}_\Q^*(H\mathbf{W}_\Q) = 0$ for $* >0$. This implies that the spectral sequence \ref{eqSSKTQ} degenerates for any smooth $k$-scheme $X$ and hence we obtain the splitting of $\mathbf{KT}_{\Q}$.
\end{proof}
\begin{cor}\label{corssgwdeg}
Let $X \in Sm/k$ be a smooth $k$-scheme, where $k$ is a perfect field of $char(k) \neq 2$. The Gersten-Witt spectral sequence \cite[Thm. 7.2]{BW02} 
$$W_B^{p+q}(X) \Leftarrow E^{p,q}_1 = \begin{cases} \bigoplus_{x \in X^{(p)}}W(\kappa(x)), \quad 0 \leq p \leq \dim(X), \, q \equiv 0 \mod 4, \\ 0, \quad \text{otherwise},   \end{cases} $$ 
degenerates rationally at $E_2$-terms. 
\end{cor}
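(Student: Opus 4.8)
The plan is to obtain the degeneration as a formal consequence of the rational splitting $\mathbf{KT}_\Q=\bigvee_{n\in\Z}S^{4n}_s\wedge H\mathbf{W}_\Q$ established in Corollary \ref{corHW}, once the Gersten--Witt spectral sequence of \cite[Thm.\ 7.2]{BW02} is viewed as the coniveau (Gersten) spectral sequence of the $\P^1$-spectrum $\mathbf{KT}=\mathbf{KO}[\eta^{-1}]$ representing Balmer's Witt groups. First I would recall this identification: the codimension filtration of supports on $\Sigma^\infty_{\P^1}X_+$ produces, for every $E\in SH(k)$, the coniveau spectral sequence $E^{p,q}_1=\bigoplus_{x\in X^{(p)}}H^{p+q}_x(\mathcal{O}_{X,x},E)\Rightarrow E^{p+q}(X)$; for $E=\mathbf{KT}$, purity together with the $4$-periodicity of triangular Witt groups turns the local term at a codimension-$p$ point into $W(\kappa(x))$, and after the standard reindexing one recovers precisely the spectral sequence of \cite{BW02}. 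Rationally, the $d_1$-differential of this spectral sequence is, in each nonzero row $q\equiv 0\bmod 4$, the Rost--Schmid (Gersten) complex of the strictly $\A^1$-invariant sheaf $\underline{\mathbf{W}}_\Q$, which by Morel is a flasque resolution; hence $E_2^{p,q}=H^p_{Nis}(X,\underline{\mathbf{W}}_\Q)$ for $q\equiv 0\bmod 4$ and $0$ otherwise (Zariski and Nisnevich cohomology agreeing here).

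It remains to kill the differentials $d_r$ for $r\geq 2$, and here I would use the splitting of Corollary \ref{corHW}. Since $X$ is smooth of finite Krull dimension, $\Sigma^\infty_{\P^1}X_+$ is compact in $SH(k)$ and the coniveau filtration has length $\dim X$, so forming the coniveau spectral sequence commutes with the coproduct $\mathbf{KT}_\Q=\bigvee_{n\in\Z}S^{4n}_s\wedge H\mathbf{W}_\Q$; the rational Gersten--Witt spectral sequence thus decomposes as a direct sum over $n\in\Z$ of the coniveau spectral sequences of the summands $S^{4n}_s\wedge H\mathbf{W}_\Q$. But each summand is (a shift of) the Eilenberg--MacLane spectrum of $\underline{\mathbf{W}}_\Q$, so Morel's purity forces the local cohomology $H^j_x(\mathcal{O}_{X,x},\underline{\mathbf{W}}_\Q)$ to be concentrated in the single degree $j=\codim x$; equivalently, the $E_1$-page of the $n$-th summand's spectral sequence is supported in a single row (the row $q\equiv 0\bmod 4$ indexed by $n$), along which $d_1$ is the Rost--Schmid complex and for which there is no target for any $d_r$ with $r\geq 2$. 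Hence each summand degenerates at $E_2$, and so does their direct sum, which is the rational Gersten--Witt spectral sequence; its abutment is then $W_B^m(X)_\Q=\bigoplus_{n\in\Z}H^{m+4n}_{Nis}(X,\underline{\mathbf{W}}_\Q)$, consistently with Corollary \ref{corHW}.

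The work is entirely in the bookkeeping rather than in any new idea. The first point to get right is the dictionary between the Balmer--Walter construction and the coniveau spectral sequence of $\mathbf{KT}$: the precise degree conventions, the twists by normal/canonical bundles appearing in purity, and the $4$-periodicity must be tracked so that ``concentrated in one row'' is literally true in the chosen indexing. The second is the interchange of the coniveau spectral sequence with the infinite coproduct $\bigvee_n$, which is where compactness of $\Sigma^\infty_{\P^1}X_+$ and the finiteness of the coniveau filtration are needed, guaranteeing that only finitely many $n$ contribute in each total degree over a bounded range and that no $\lim^1$ terms appear. If one prefers to bypass the spectrum-level coniveau machinery, one can instead quote the Gersten conjecture for Witt groups to get the $E_2$-page directly and then observe that the decomposition of $\mathbf{KT}_\Q$ splits the coniveau filtration on $W_B^m(X)_\Q$ compatibly, which already forces $E_2=E_\infty$; this is the same argument, with the compatibility of the splitting with the filtration (a consequence of the naturality of the coniveau tower) taking the place of the coproduct interchange.
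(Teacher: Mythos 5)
Your proof is correct and follows essentially the same route as the paper: the corollary is deduced there directly from the splitting $\mathbf{KT}_\Q \cong \bigvee_{n\in\Z}S^{4n}_s\wedge H\mathbf{W}_\Q$ of Corollary \ref{corHW}, which is exactly the input you use. The coniveau/Rost--Schmid bookkeeping and the interchange with the infinite wedge that you spell out are left implicit in the paper's one-line argument, but they constitute the intended justification.
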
 
\begin{proof}
This follows immediately from the splitting $\mathbf{KT}_{\Q} \cong \bigvee_{n \in \Z}S^{4n}_s \wedge H\mathbf{W}_{\Q}$ in $SH(k)_{\Q}$.
\end{proof}
\subsection{Functoriality in motivic stable homotopy}
Following \cite{Ay08}, we recall that the stable homotopy category of schemes defines a $2$-functor from category of schemes $Sch$ to the category of symmetric monoidal closed triangulated categories. So we have a bunch of axioms: for any morphism of schemes $f: T \rightarrow S$, there is a pullback functor 
$$f^*: SH(S) \rightarrow SH(T),$$
such that $(f \circ g)^* = g^* \circ f^*$. Moreover, 
\begin{enumerate}
\item One has an adjunction for any morphism of schemes $f: T \rightarrow S$ 
$$f^*: SH(S) \leftrightarrows SH(T) : f_*.$$
If $f$ is smooth, then one has an adjunction 
$$f_{\#}: SH(T) \leftrightarrows SH(S) : f^* $$
\item Given a cartesian square 
\begin{equation*} \xymatrix{ Y \ar[r]^q \ar[d]_g & X \ar[d]^f \\ T \ar[r]_p & S}
\end{equation*}
and assume $f$ is smooth, then 
$$f_{\#}p^* \stackrel{\cong}{\rightarrow} g_{\#}q^*$$
\item Let $f: Y \rightarrow X$ be a smooth morphism, $\sE \in SH(Y)$ and $\sF \in SH(X)$, the natural transformation 
$$f_{\#}(\sE \wedge f^* \sF) \stackrel{\cong}{\rightarrow} f_{\#}\sE \wedge \sF$$ 
is an isomorphism.
\item Let $i: Z \inj X$ be a closed immersion with complement $j: U \inj X$, then there is a distinguished triangle 
$$j_{\#}j^* \rightarrow Id \rightarrow i_*i^* \stackrel{+1}{\rightarrow}$$
\item For any closed immersion $i: Z \inj X$, one has an adjunction 
$$i_*: SH(Z) \leftrightarrows SH(X) : i^! $$
\item Given a cartesian square 
\begin{equation*} \xymatrix{ T \ar[r]^k \ar[d]_g & Y \ar[d]^f \\ Z \ar[r]_i & X}
\end{equation*}
where $i: Z \inj X$ is a closed immersion, then one has an isomorphism 
$$f^*i_* \stackrel{\cong}{\rightarrow} k_*g^*$$ 
\item Let $i: Z \inj X$ be a closed immersion, $\sE \in SH(Z)$ and $\sF \in SH(X)$, the natural transformation 
$$i_*(\sE \wedge i^*\sF) \stackrel{\cong}{\rightarrow} i_* \sE \wedge \sF$$
is an isomorphism.
\end{enumerate}
\begin{lem}\label{lemstofHW}
Let $S$ be a Noetherian scheme of finite Krull dimension. Then $\mathbf{1}_{\Q-,S}$ is stable under pullback functor, i.e. for any morphism of schemes $f: T \rightarrow S$ one has $f^*(\mathbf{1}_{\Q-,S}) = \mathbf{1}_{\Q-,T}$. Assume moreover $f: S \rightarrow \Spec k$ is essentially smooth over a field $k$ of $char(k) \neq 2$, then $H\mathbf{W}_{\Q,S}$ is also stable under pullback functor $f^*$, where $H\mathbf{W}_{\Q,S}$ denotes the $\A^1$-Eilenberg-Maclane spectrum associated to the unramified Witt sheaf $\underline{\bold{W}}_{\Q}$ over $S$.  
\end{lem}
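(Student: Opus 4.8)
The plan is to prove the two stability statements separately, the first being formal and the second requiring essential smoothness in an essential way.

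\textbf{Step 1: Stability of $\mathbf{1}_{\Q-,S}$.} First I would observe that the idempotents $e_{\pm} \in \mathrm{End}_{SH(S)_{\Q}}(\mathbf{1}_{\Q})$ are defined entirely in terms of the permutation isomorphism $\tau$ on $\Sigma^{\infty}_{\P^1,+}\G_m \wedge \Sigma^{\infty}_{\P^1,+}\G_m$, and that pullback $f^*$ is a symmetric monoidal functor. Hence $f^*$ sends the unit to the unit, commutes with the smash square of $\G_m$, and carries $\tau_S$ to $\tau_T$; therefore $f^*(e_{\pm}) = e_{\pm}$ as elements of $\mathrm{End}_{SH(T)_{\Q}}(\mathbf{1}_{\Q})$. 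Since $f^*$ is additive and triangulated, it commutes with the formation of the image (the splitting $\mathbf{1}_{\Q} = \mathbf{1}_{\Q+} \oplus \mathbf{1}_{\Q-}$ is a direct sum decomposition, and a direct summand cut out by an idempotent is preserved by any additive functor). So $f^*(\mathbf{1}_{\Q-,S}) = \mathbf{1}_{\Q-,T}$. This part is purely formal, using only axiom (1) and the monoidality of $f^*$ recalled in \S4.2.

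\textbf{Step 2: Stability of $H\mathbf{W}_{\Q,S}$.} Now I would bring in the essential smoothness hypothesis. By Morel's computation recalled in the proof of Lemma~\ref{lemcellHW}, over any base there is a distinguished triangle $(\mathbf{1}_{\Q-})_{>0} \to \mathbf{1}_{\Q-} \to H\mathbf{W}_{\Q} \stackrel{+1}{\to}$, and $H\mathbf{W}_{\Q}$ is the $0$-truncation of $\mathbf{1}_{\Q-}$ for the homotopy $t$-structure, with $(\mathbf{1}_{\Q-})_{>0}$ a direct summand complement. The point is that $f^*$ for $f: S \to \Spec k$ essentially smooth is $t$-exact for the homotopy $t$-structures. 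Indeed, by definition $SH(k)_{\geq n}$ is generated under homotopy colimits and extensions by $S^{p,q} \wedge \Sigma^{\infty}_{\P^1}(X_+)$ with $p - q \geq n$, $X \in Sm/k$; since $f$ is (essentially) smooth, $f^*(S^{p,q} \wedge \Sigma^{\infty}_{\P^1}(X_+)) = S^{p,q} \wedge \Sigma^{\infty}_{\P^1}((X \times_k S)_+)$ with $X \times_k S \in Sm/S$, and $f^*$ preserves homotopy colimits and triangles, so $f^*(SH(k)_{\geq n}) \subset SH(S)_{\geq n}$; right-exactness of $f^*$ on the $\leq n$ part uses the non-degeneracy of the homotopy $t$-structure (valid over $S$ by Proposition~\ref{propssPostnikov} and Morel's stable connectivity) together with the fact that for essentially smooth $S$ over a field the stable connectivity theorem holds, so $f^*$ does not decrease connectivity and is $t$-exact. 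Applying the $t$-exact functor $f^*$ to the triangle and to the splitting $\mathbf{1}_{\Q-} = (\mathbf{1}_{\Q-})_{>0} \oplus H\mathbf{W}_{\Q}$, and using Step~1 that $f^*\mathbf{1}_{\Q-,S} = \mathbf{1}_{\Q-,T}$, we get $f^*(H\mathbf{W}_{\Q,S}) = \tau_{\leq 0}(f^*\mathbf{1}_{\Q-,S}) = \tau_{\leq 0}(\mathbf{1}_{\Q-,T}) = H\mathbf{W}_{\Q,T}$.

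\textbf{Main obstacle.} I expect the delicate point to be the $t$-exactness — more precisely the \emph{right}-exactness — of $f^*$ for essentially smooth $f$. Left $t$-exactness (preservation of connective objects) is the easy direction via the explicit generators. For right-exactness one really needs that $f^*$ of a truncated object stays truncated, and this is where Morel's stable $\A^1$-connectivity theorem over a base and the non-degeneracy of the $t$-structure enter; for a general base $f^*$ is only right $t$-exact up to a shift by the relative dimension, so the hypothesis that $S$ is essentially smooth over a field (forcing the relevant connectivity statements, cf. \cite[Rem. 2.6]{Hoy13}, \cite[Lem. 6.3.1, 6.3.2]{Mor05}) is doing real work. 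Once $t$-exactness is in hand, the rest is the formal manipulation of the triangle above, identifying $H\mathbf{W}_{\Q}$ as the heart-truncation of $\mathbf{1}_{\Q-}$ and transporting it along $f^*$.
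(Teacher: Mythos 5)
Your argument is correct, and for the substantive half (the statement about $H\mathbf{W}_{\Q}$) it is essentially the paper's own proof in different clothing: the paper also reduces to the truncation triangle $(\mathbf{1}_{\Q-})_{\geq 1}\rightarrow\mathbf{1}_{\Q-}\rightarrow H\pi_0(\mathbf{1}_{\Q-})_*$, invokes Morel's connectivity (\cite[Lem. 6.3.1, 6.3.2]{Mor05}) to know $\mathbf{1}$ is connective, and uses essential smoothness through the continuity computation of \cite[Appendix]{Hoy13} to see that the homotopy sheaves of $f^*\mathbf{1}$ are computed by those of $\mathbf{1}_k$, which is exactly the content of your claim that $f^*$ commutes with the truncation, i.e.\ $f^*H\pi_0(\mathbf{1}_{\Q-})_* = H\pi_0(f^*\mathbf{1}_{\Q-})_*$; your packaging of this as $t$-exactness of $f^*$ (easy left exactness via the generators, right exactness being where connectivity over an essentially smooth base does the work) is a faithful reformulation, and you correctly locate the delicate point. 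The only genuine divergence is in the first half: you prove stability of $\mathbf{1}_{\Q-}$ formally, by noting that the idempotent $e_-$ is built from the permutation $\tau$ on $\Sigma^{\infty}_{\P^1,+}\G_m^{\wedge 2}$ and is therefore preserved by the symmetric monoidal functor $f^*$, which then preserves the summand it cuts out; the paper instead deduces it from the (much deeper) theorem of Cisinski--D\'eglise that $\mathbf{1}_{\Q+}\cong H_B$ is stable under arbitrary pullback (\cite[Thm. 16.2.13]{CD10}) together with additivity of $f^*$. Your route is more elementary and self-contained for that step (and is in fact how the stability of the plus/minus decomposition is usually established), while the paper's route leans on a quoted result; both are valid, and nothing in your proposal constitutes a gap beyond the minor notational slip of writing $f^*(H\mathbf{W}_{\Q,S})$ where the intended meaning is the pullback along $f\colon S\rightarrow \Spec k$ of the spectrum over $k$.
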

\begin{proof}
For any Noetherian scheme of finite Krull dimension $S$ one has $\mathbf{1}_{\Q+,S} \cong H_{B,S}$, which is stable under pullback (\cite[Thm. 16.2.13]{CD10}) and pullback functor commutes with direct sum, this implies $\mathbf{1}_{\Q-,S}$ is stable under pullback functor. Assume now $f: S \rightarrow \Spec k$ is essentially smooth over a field $k$ of $char(k) \neq 2$. Since $f: S \rightarrow \Spec k$ is essentially smooth, so by \cite[Lem. 6.3.2, Lem. 6.3.1]{Mor05} we may conclude $\mathbf{1}_S$ is $-1$-connected. By definition, we have a distinguished triangle 
$$(\mathbf{1}_{\Q-,S})_{\geq 1} \rightarrow \mathbf{1}_{\Q-,S} \rightarrow H\pi_{0}(\mathbf{1}_{\Q-,S})_* \stackrel{+1}{\rightarrow}.$$
Now we have (see for instance \cite[Appendix]{Hoy13})
\begin{multline*}
[S^{p,q} \wedge \Sigma^{\infty}_{\P^1_S,+}(U),\mathbf{1}_S]_{SH(S)} = colim_{\alpha}[S^{p,q} \wedge \Sigma^{\infty}_{\P^1_{S_\alpha},+}(U_{\alpha}),f^*_{\alpha}\mathbf{1}_k] = \\ colim_{\alpha} [S^{p,q} \wedge (f_{\alpha})_{\#}\Sigma^{\infty}_{\P^1_{S_\alpha},+}(U_{\alpha}),\mathbf{1}_k]  = colim_{\alpha}[S^{p,q} \wedge \Sigma^{\infty}_{\P^1,+}(U_\alpha),\mathbf{1}_k]_{SH(k).}
\end{multline*} 
Now the result follows from $f^*H\pi_0(\mathbf{1}_{\Q-})_* = Hf^*\pi_0(\mathbf{1}_{\Q-})_* = H \pi_0(f^*\mathbf{1}_{\Q-})_* = H\pi_0(\mathbf{1}_{\Q-,S})_*$
\end{proof}
\begin{rem}{\rm
The proof of the lemma \ref{lemstofHW} doesn't work over a general base scheme, since it relies on stable $\A^1$-connectivity \cite{Mor05}, which is wrong over a general base scheme (see \cite{Ay06}). One may try to generalize the above lemma to a Noetherian regular base scheme $S$ of finite Krull dimension over $\Spec \Z[\frac{1}{2}]$ in a different way as follow: One has the geometric $\A^1$-representability of $\mathbf{KO}$ from the work of \cite{ST13} over $S$. The notion of cellularity in \cite{DI05} can be carried over a base scheme $S$. In fact, as in case of the algebraic $K$-theory spectrum $\mathbf{KGL}$ (see e.g. \cite[Prop. 3.8]{C13}), we may show $\mathbf{KO}$ is stable under pullback functor induced by any morphism of schemes. Of course, the Gersten-Witt spectral sequence in \cite{BW02} is available for any regular scheme $X$ with $\frac{1}{2} \in \Gamma(X,\sO_X)$. Hence, if we know that this spectral sequence degenerates rationally at $E_2$-pages, we may conclude $H\mathbf{W}_{\Q,S}$ is a direct summand of $\mathbf{KT}_{\Q,S} = \mathbf{KO}_{\Q,S}[\eta^{-1}]$, which must be stable under any fullback, without having to know $[\mathbf{1}_S \wedge \G_m^{\wedge *}, \mathbf{1}_S]_{SH(S)}$ explicitly. In case of $\mathbf{KGL}_\Q$, by using slice spectral sequence, one can prove the degeneration of the motivic Atiyah-Hirzebruch spectral sequence rationally at $E_2$-terms, at least over a perfect field $k$. This doesn't work for the Gersten-Witt spectral sequence, since all the slices $s_q(\mathbf{KT}_\Q)$ are trivial.         
}
\end{rem}
We will need the following result: 
\begin{prop}\label{propCDres}\cite[Prop. 7.2.7]{CD10}
Let $\sV$ be a stable perfect symmetric monoidal model category. Assume that $\mathbf{Ho}(\sV)$ admits a small family $\sG$ of compact generator as a triangulated category. (see \cite[Def. 7.2.3]{CD10} for the notion of perfectness). For any monoid $R$ in $\sV$, the triangulated category $\mathbf{Ho}(R-mod(\sV))$ admits the set $\{ R \otimes^{\mathbb{L}} E | E \in \sG \}$ as a family of compact generators.   
\end{prop}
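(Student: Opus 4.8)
The plan is to reduce everything to the free--forgetful Quillen adjunction between $\sV$ and the category $R\text{-mod}(\sV)$ of (left) $R$-modules. First I would invoke the hypothesis that $\sV$ is perfect: this provides, in the manner of the Schwede--Shipley monoid axiom, that $R\text{-mod}(\sV)$ carries the model structure transferred along the forgetful functor $U\colon R\text{-mod}(\sV)\to\sV$, with weak equivalences and fibrations exactly the maps sent by $U$ to weak equivalences, resp. fibrations, in $\sV$, and with $\{R\otimes i\}$ ($i$ a generating (trivial) cofibration of $\sV$) as generating (trivial) cofibrations. Since $\sV$ is stable, $R\text{-mod}(\sV)$ is again stable, and the Quillen pair $R\otimes -\colon \sV\rightleftarrows R\text{-mod}(\sV)\colon U$ induces an adjunction of exact functors $R\otimes^{\mathbb{L}}-\colon \mathbf{Ho}(\sV)\rightleftarrows \mathbf{Ho}(R\text{-mod}(\sV))\colon U$ between triangulated categories with arbitrary coproducts. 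I record two properties of the right adjoint that will be used below: $U$ is conservative and detects zero objects, because it creates weak equivalences; and $U$ commutes with arbitrary coproducts, since a coproduct of cofibrant $R$-modules is cofibrant and $U$ preserves coproducts at the underlying level, perfectness guaranteeing that this indeed computes the homotopy coproduct in $\sV$.

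The family $\{R\otimes^{\mathbb{L}}E \mid E\in\sG\}$ is a set because $\sG$ is. For compactness, fix $E\in\sG$ and a family $(M_\alpha)_\alpha$ in $\mathbf{Ho}(R\text{-mod}(\sV))$; the adjunction, the fact that $U$ commutes with coproducts, and compactness of $E$ in $\mathbf{Ho}(\sV)$ give
\begin{multline*}
\Hom\bigl(R\otimes^{\mathbb{L}}E,\ \textstyle\coprod_\alpha M_\alpha\bigr)\cong\Hom\bigl(E,\ \textstyle\coprod_\alpha UM_\alpha\bigr) \\
\cong\textstyle\coprod_\alpha\Hom(E,\ UM_\alpha)\cong\textstyle\coprod_\alpha\Hom\bigl(R\otimes^{\mathbb{L}}E,\ M_\alpha\bigr),
\end{multline*}
so $R\otimes^{\mathbb{L}}E$ is compact. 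For generation, let $M\in\mathbf{Ho}(R\text{-mod}(\sV))$ satisfy $\Hom(R\otimes^{\mathbb{L}}E, M[n])=0$ for all $E\in\sG$ and all $n\in\Z$; by adjunction and exactness of $U$ this says $\Hom(E,(UM)[n])=0$ for all such $E$, $n$. As $\sG$ is a family of compact generators of the triangulated category $\mathbf{Ho}(\sV)$, it follows that $UM\cong 0$, and then conservativity of $U$ forces $M\cong 0$. Hence $\{R\otimes^{\mathbb{L}}E \mid E\in\sG\}$ is a family of compact generators of $\mathbf{Ho}(R\text{-mod}(\sV))$.

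The only genuinely non-formal input is the first step: the existence of the transferred model structure on $R\text{-mod}(\sV)$, the assertion that $R\otimes -$ is left Quillen, and the homotopical compatibility of $U$ with coproducts. I expect this to be the main obstacle, and it is precisely where the perfectness hypothesis is used --- it controls pushouts along maps of the form $R\otimes j$ ($j$ a trivial cofibration) and the behaviour of coproducts with respect to weak equivalences, so that the small object argument yields the required factorizations. Once this is granted, as it is in \cite[\S 7.2]{CD10}, the compactness and generation assertions are the formal triangulated-category bookkeeping carried out above.
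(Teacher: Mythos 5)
The paper gives no proof of this proposition---it is quoted directly from \cite[Prop. 7.2.7]{CD10}---so there is nothing internal to compare against, and your argument is essentially the same adjunction argument used by Cisinski--D\'eglise: compactness and generation of the objects $R\otimes^{\mathbb{L}}E$ follow formally once one knows the derived forgetful functor $U$ is conservative, exact, and commutes with small coproducts. Your write-up is correct, and you rightly isolate the only non-formal input (the transferred model structure on $R$-modules, left Quillen-ness of $R\otimes-$, and the homotopical behaviour of coproducts under $U$) as precisely what the perfectness hypothesis of \cite[\S 7.2]{CD10} is designed to provide.
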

Before continuing, let's say a few words about the homotopy category of modules over the Eilenberg-Maclane spectrum $H \mathbf{W}$ associated to the unramified Witt sheaf $\underline{\mathbf{W}}$. Following F. Morel \cite{Mor04a}, we consider $\underline{\mathbf{W}}_*$ as a homotopy module, where $\underline{\mathbf{W}}_n = \underline{\mathbf{W}}, \forall n \in \Z$. This means $\underline{\Hom}(\G_m^{\wedge n},\underline{\mathbf{W}}) = \underline{\mathbf{W}}, \forall n \in \Z$. So we can consider the $\P^1$-spectrum $H \mathbf{W}$ as a $S_s^1 \wedge \G_m$-spectrum $H \mathbf{W}$, where the $i$-th term is the simplicial sheaf $K(\underline{\mathbf{W}},i) \in \Delta^{op}Sh_{\bullet}((Sm/S)_{Nis})$ and the bonding maps are given by the composition
\begin{multline*}
K(\underline{\mathbf{W}},i) \wedge S_s^1 \wedge \G_m = K(\underline{\mathbf{W}}_i,i) \wedge S_s^1 \wedge \G_m \rightarrow \\ \rightarrow K(\underline{\mathbf{W}}_i, i+1) \wedge \G_m \rightarrow K(\underline{\mathbf{W}}_{i+1},i+1) = K(\underline{\mathbf{W}},i+1).
\end{multline*}
The symmetric group $\Sigma_n$ acts on $(H\mathbf{W})_n$ by permuting the smash factors of $S^n_s = S^1_s \wedge \cdots \wedge S^1_s$. In this way, we obtain a symmetric ring spectrum $H\mathbf{W}$ in $\mathbf{Spect}^{\Sigma}_{\P^1}(S)$, so we can define $\mathbf{Ho}(H\mathbf{W}-mod)$ the homotopy category of modules over $H\mathbf{W}$. We are going to prove now 
\begin{thm}\label{proofofconjst} 
Let $k$ be a field of characteristic $char(k) \neq 2$ and $S \rightarrow \Spec k$ is an essentially smooth $k$-scheme. There is a splitting $\mathbf{1}_{\Q,S} = H_B \vee H\mathbf{W}_\Q$ in $SH(S)_\Q$. In particular, the rational stable $\A^1$-cohomology splits as  
$$H^{p}_{st\A^1}(S,\Q(q)) \simeq H^p_{\sM}(S,\Q(q)) \oplus H^{p-q}_{Nis}(S,\underline{\bold{W}}_{\Q}) $$
\end{thm}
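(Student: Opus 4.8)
The plan is to follow the strategy of \cite[\S 16.2]{CD10} for the $\Q_+$-part, replacing motivic cohomology by the Witt-theoretic spectrum. Recall from the discussion preceding the theorem that we have the rational decomposition $SH(S)_\Q \simeq SH(S)_{\Q+}\times SH(S)_{\Q-}$, and that $\mathbf{1}_{\Q+,S}\cong H_{B,S}$ is already known by \cite[Thm. 16.2.13]{CD10}. Thus it suffices to identify $\mathbf{1}_{\Q-,S}$ with $H\mathbf{W}_{\Q,S}$. I would first treat the base case $S=\Spec k$ and then descend to an essentially smooth $S$ by the pullback-stability established in Lemma \ref{lemstofHW}: since $f^*$ is monoidal and commutes with the idempotent decomposition, and since $f^*H\mathbf{W}_{\Q,k}=H\mathbf{W}_{\Q,S}$ by that lemma, an isomorphism $\mathbf{1}_{\Q-,k}\cong H\mathbf{W}_{\Q,k}$ pulls back to the desired isomorphism over $S$.

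For the base field, I would start from Morel's computation (\cite[Cor. 25, Rem. 26]{Mor12}) recalled in the proof of Lemma \ref{lemcellHW}: there is a distinguished triangle
$$(\mathbf{1}_{\Q-})_{>0}\rightarrow \mathbf{1}_{\Q-}\rightarrow H\mathbf{W}_\Q\stackrel{+1}{\rightarrow},$$
and in fact $\mathbf{1}_{\Q-}=(\mathbf{1}_{\Q-})_{>0}\vee H\mathbf{W}_\Q$, with $\pi_0(\mathbf{1}_{\Q-})_*=\underline{\mathbf{W}}_*$ as a homotopy module. So the content is to show that the positively-connected summand $(\mathbf{1}_{\Q-})_{>0}$ vanishes, equivalently that $\mathbf{1}_{\Q-}$ is concentrated in homotopy degree $0$. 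Here I would invoke Corollary \ref{corHW}, which gives the splitting $\mathbf{KT}_\Q=\bigvee_{n\in\Z}S^{4n}_s\wedge H\mathbf{W}_\Q$, together with the fact that $\mathbf{KT}_\Q\cong\mathbf{KO}_{\Q-}$ (the Hopf map $\eta$ being invertible rationally on the minus part). The rational homotopy sheaves of $\mathbf{1}_{\Q-}$ should then be controlled by comparing with those of $\mathbf{KT}_\Q$ via the unit map, using the slice/Postnikov spectral sequence \ref{eqssPostnikov} — whose strong convergence over an essentially smooth base is Proposition \ref{propssPostnikov} — and the cellularity results (Theorem \ref{thmKO}, Lemma \ref{lemcellHW}, the $\mathrm{Ext}$-computation $H\mathbf{W}_\Q^*(H\mathbf{W}_\Q)=0$ for $*>0$ carried out in the proof of Corollary \ref{corHW}). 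The latter vanishing is exactly what forces the higher homotopy of $\mathbf{1}_{\Q-}$ to die, giving $(\mathbf{1}_{\Q-})_{>0}=0$ and hence $\mathbf{1}_{\Q-,k}\cong H\mathbf{W}_{\Q,k}$.

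Putting the two halves together yields $\mathbf{1}_{\Q,S}=\mathbf{1}_{\Q+,S}\vee\mathbf{1}_{\Q-,S}=H_B\vee H\mathbf{W}_\Q$ in $SH(S)_\Q$. For the ``in particular'' clause, I would simply apply the functor $\Hom_{D_{\A^1}(S)_\Q}(C_*^{st\A^1}(S),-\otimes\Z_{\A^1}(q)[p])$ to this splitting: the $H_B$-summand computes rational motivic cohomology $H^p_{\sM}(S,\Q(q))$, while the $H\mathbf{W}_\Q$-summand, since $\underline{\mathbf{W}}$ is a homotopy module with $\underline{\mathbf{W}}_n=\underline{\mathbf{W}}$ for all $n$ (so the Tate twists act trivially), computes $H^{p-q}_{Nis}(S,\underline{\mathbf{W}}_\Q)$, the shift by $q$ coming from the weight normalization of $\Z_{\A^1}(q)=\widetilde{C}_*^{\A^1}(\P^{1\wedge q})[-2q]$.

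I expect the main obstacle to be the vanishing $(\mathbf{1}_{\Q-})_{>0}=0$ over the base field, i.e. establishing that the unit map $\mathbf{1}_{\Q-}\to\mathbf{KT}_\Q$ (or the truncation map to $H\mathbf{W}_\Q$) induces an isomorphism on all homotopy sheaves. This is where the argument is genuinely delicate: one must combine Morel's stable $\A^1$-connectivity, the explicit computation of $\underline{\pi}_{*,*}(\mathbf{1}_{\Q-})(k)$ in terms of $\underline{\mathbf{W}}_*$, the cellularity of $H\mathbf{W}_\Q$, and the degeneration of the relevant $\mathrm{Ext}$-spectral sequence, precisely mirroring the subtle part of \cite[Part 4, \S 16]{CD10}. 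The descent to essentially smooth $S$ and the cohomology reformulation are then formal.
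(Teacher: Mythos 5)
Your reduction to $S=\Spec k$ via Lemma \ref{lemstofHW} and the reformulation of the problem as identifying $\mathbf{1}_{\Q-}$ with $H\mathbf{W}_{\Q}$ agree with the paper, but there is a genuine gap at the decisive step. You claim that the splitting $\mathbf{KT}_\Q=\bigvee_{n}S^{4n}_s\wedge H\mathbf{W}_\Q$ of Corollary \ref{corHW}, the Postnikov spectral sequence, and the vanishing $H\mathbf{W}_\Q^{*}(H\mathbf{W}_\Q)=0$ for $*>0$ together ``force the higher homotopy of $\mathbf{1}_{\Q-}$ to die''. They do not. The vanishing of positive-degree $H\mathbf{W}_\Q$-cohomology of $H\mathbf{W}_\Q$ controls maps \emph{out of} $H\mathbf{W}_\Q$; in Corollary \ref{corHW} it is used only to kill the differentials in the Postnikov tower of $\mathbf{KT}_\Q$, whose homotopy sheaves are already known from Balmer's Witt groups. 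It gives no information about $\underline{\pi}_{n}(\mathbf{1}_{\Q-})_*$ for $n>0$. Likewise, comparison along the unit map $\mathbf{1}_{\Q-}\to\mathbf{KT}_\Q$ would require this map to be injective on higher homotopy sheaves, which is not known and is essentially equivalent to what you are trying to prove: the assertion $(\mathbf{1}_{\Q-})_{>0}=0$, i.e.\ the rational vanishing of the positive $\eta$-periodic stable stems, is precisely the hard content of the theorem and cannot be extracted from the structure of Hermitian $K$-theory alone. The Postnikov spectral sequence for $\mathbf{1}_{\Q-}$ is also of no help, since its $E^1$-terms are built from the very homotopy sheaves whose vanishing is in question.

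The paper closes this step by a different mechanism, namely the module-theoretic strategy of \cite[\S 16.2]{CD10} transposed to the minus part: after reducing further to the prime field $\Q$ (in positive characteristic $\mathbf{1}_{\Q-}=0$), one uses Morel's connectivity, the homotopy $t$-structure and its heart of homotopy modules to equip, by Postnikov induction, \emph{every} $E\in SH(k)_{\Q-}$ with an action $E\wedge H\mathbf{W}_{\Q}\to E$; one then proves that the multiplication $H\mathbf{W}_\Q\wedge H\mathbf{W}_\Q\to H\mathbf{W}_\Q$ is an isomorphism --- this is where the cellularity of $H\mathbf{W}_\Q$, the Tor and Ext spectral sequences of \cite[Prop. 7.7]{DI05}, the projectivity of $\pi_{*,*}(H\mathbf{W}_\Q)$ over $\pi_{*,*}(\mathbf{1}_{\Q-})$, and the computation $\underline{\mathbf{W}}_\Q(\Q)=\Q$ enter --- so that $\mathbf{Ho}(H\mathbf{W}_{\Q}-mod)$ is a thick subcategory and $SH(k)_{\Q-}=\mathbf{Ho}(H\mathbf{W}_{\Q}-mod)$. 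The identification $\mathbf{1}_{\Q-}\simeq H\mathbf{W}_\Q$, equivalently $(\mathbf{1}_{\Q-})_{>0}=0$, is then a \emph{consequence} of this equivalence, not an input obtained by comparing with $\mathbf{KT}_\Q$ (note that in the paper the splitting of $\mathbf{KO}_{\Q-}$ in Corollary \ref{corsplitKT} is deduced from the theorem, not the other way around). Your descent to an essentially smooth $S$ and the cohomological reformulation at the end are fine, but the core vanishing step must be replaced by an argument of this kind.
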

\begin{proof} 
Clearly, from the lemma \ref{lemstofHW} it is enough to prove the theorem in case $S = \Spec k$. Recall the Morel's splitting \cite{CD10} 
$$SH(k)[\frac{1}{2}] = SH(k)_{\frac{1}{2}+} \times SH(k)_{\frac{1}{2}-}. $$
From the work of Cisinski and Deglise \cite{CD10} one knows $SH(k)_{\Q+} \simeq DM_B(k)$, where the last category denotes the category of Beilinson's motives. This category $DM_B(k)$ is known to be equivalent to the category of Voevodsky's big motives $DM(k)_{\Q}$ (\cite{CD10}). Remark that $\G_{m,\frac{1}{2}-} = \bold{1}_{\frac{1}{2}-}$ in $SH(k)_{\frac{1}{2}-}$. Moreover, we even may assume our field $k$ is just the ground field $\Q$, since in case positive characteristic $\mathbf{1}_{\Q-} = 0$ (cf. \cite[Cor. 16.2.14]{CD10}) and in case $char(k)=0$, which means our field is perfect and we know that for any field extension $L/k$ over a perfect field the morphism $\Spec L \rightarrow \Spec k$ is essentially smooth (see e.g. \cite[Appendix]{Hoy13}). It remains to show 
$$SH(k)_{\Q-} = \bold{Ho}(H\bold{W}_{\Q}-mod),$$
where $\mathbf{Ho}(H\mathbf{W}_{\Q}-mod)$ denote the homotopy category of modules over the rational $\A^1$-Eilenberg-Maclane spectrum $H\mathbf{W}_{\Q}$ associated to the unramified Witt-sheaf $\mathbf{W}_{\Q}$. We will follow the same strategy of the proof for $\Q_+$-part in \cite[Thm. 16.2.13]{CD10}. We prove directly that for any $E \in SH(k)_{\Q}$, then $E_- \stackrel{defn}{=} E \wedge \bold{1}_- $ has an $H\bold{W}_{\Q}$-module structure, where $H\bold{W}_\Q$ denotes the Eilenberg-Maclane spectrum associated to the unramified Witt sheaf $\bold{W}_{\Q}$. As $SH(k)_{\Q-}$ is closed under extension, it is enough to show the assertion for homotopy modules $E \in \pi_*^{\A^1}(k)$, by means of the $\A^1$-Postnikov co-tower 
\[ \xymatrix{\cdots \ar[r] &  E_{\geq n+1} \ar[r] & E_{\geq n} \ar[r] \ar[d] & E_{\geq n-1} \ar[r] \ar[d] & \cdots \\ & & \Sigma_s^n H\pi_{n}^{\A^1}(E)_* & \Sigma_s^{n-1}H\pi_{n-1}^{\A^1}(E)_* }\] 
and the non-degeneration of the homotopy t-structure 
$$hocolim_{n \rightarrow -\infty}(E_{\geq n}) = E$$ and
$$holim_{n \rightarrow + \infty}(E_{\geq n}) = 0.$$
Thus we may assume $E \in \pi^{\A^1}_*(k)$ and the general assertion follows inductively. By Morel's stable $\A^1$-connectivity (\cite{Mor05}) one has $\bold{1}_- \in SH(k)_{\Q, \geq 0}$, so we have a distinguished triangle 
$$(\bold{1}_-)_{\geq 1} \rightarrow \bold{1}_- \rightarrow H\pi_{0}^{\A^1}(\bold{1}_-)_* \stackrel{+1}{\rightarrow}.$$   
By Morel's computation of $\pi_0$ of the motivic sphere spectrum (\cite{Mor04}) one obtains then a canonical map 
$$\bold{1}_- \rightarrow H\bold{W}_{\Q}.$$
Since $E \in \pi_*^{\A^1}(k) = SH(k)_{\geq 0} \cap SH(k)_{\leq 0}$ and $SH(k)_{\geq 0}$ is closed under smash product, so we obtain a canonical map 
$$E_- = E \wedge \bold{1}_- \rightarrow H\pi_{0}^{\A^1}(E \wedge \bold{1}_-)_* \rightarrow H \pi_{0}^{\A^1}(E \wedge H\bold{W}_{\Q})_* = E \otimes^{h} H\bold{W}_{\Q},$$
where $\otimes^h$ denotes the product induced in the category of homotopy modules. By equivalence between $\pi_{*}^{\A^1}(k)$ and the category of homotopy modules, we thus obtain a commutative diagram 
\[\xymatrix {E \wedge \bold{1}_- \ar[d]_{can} \ar[r] \ar[dr] & E \otimes^h \pi_{0}^{\A^1}(\bold{1}_-)_* \ar[d] \\ E \wedge H\bold{W}_{\Q} \ar[r]_{\exists} & E } \]
We now need to show that this map $E \wedge H\mathbf{W}_{\Q} \rightarrow E$ satisfies the axiom of an internal module. As in the proof of \cite[Lem. 16.2.8]{CD10}, we only need to know that $\mathbf{Ho}(H\mathbf{W}_{\Q}-mod)$ forms a thick subcategory of $SH(k)_{\Q}$. To do that, we have to prove that for any $H\mathbf{W}_{\Q}$-module $M$, the map 
$$H\mathbf{W}_{\Q} \wedge M \rightarrow M$$
is an isomorphism in $SH(k)_{\Q}$. This is a natural transformation between exact functors which commutes with small sums and since $SH(k)_{\Q}$ is a compactly generated triangulated category, it is enough to check this for the case $M = H\mathbf{W}_{\Q} \wedge E$ with $E$ a compact object of $SH(k)_{\Q}$ (see proposition \ref{propCDres}). To do that, we need to know that the multiplication map 
$$\mu: H\mathbf{W}_{\Q} \wedge H\mathbf{W}_{\Q} \rightarrow H\mathbf{W}_{\Q} $$
is an isomorphism. From the lemma \ref{lemcellHW}, we know $H\mathbf{W}_{\Q}$ is stable cellular. So we may apply \cite[Prop. 7.7]{DI05} to have a strongly convergent homological tri-graded spectral sequence 
\begin{equation}\label{spectseq1}
E^2_{a,(b,c)} = Tor_{a,(b,c)}^{\pi_{*,*}(\mathbf{1}_{\Q-})}(\pi_{*,*}(H\mathbf{W}_{\Q}),\pi_{*,*}(H\mathbf{W}_{\Q})) \Rightarrow \pi_{a+b,c}(H\mathbf{W}_{\Q} \wedge H\mathbf{W}_{\Q}). 
\end{equation}
As before in the proof of the lemma \ref{lemcellHW} one has a splitting $\mathbf{1}_{\Q-} = (\mathbf{1}_{\Q-})_{\geq 1} \vee H\mathbf{W}_{\Q}$, which means that $H\mathbf{W}_{\Q}$ is a direct summand of $\mathbf{1}_{\Q-}$, so $E^2_{a,(b,c)} = 0$ for $a \neq 0$. This shows that 
$$\pi_{*,*}(H\mathbf{W}_\Q \wedge H\mathbf{W}_\Q) \cong \pi_{*,*}(H\mathbf{W}_\Q) \otimes_{\pi_{*,*}(\mathbf{1}_{\Q-})} \pi_{*,*}(H\mathbf{W}_{\Q})$$ and $(\pi_{*,*}(H\mathbf{W}_\Q),\pi_{*,*}(H\mathbf{W}_\Q \wedge H\mathbf{W}_\Q))$ is a flat Hopf-algebroid, where the term Hopf-algebroid implies that the composition
$$\pi_{*,*}(H\mathbf{W}_\Q) \stackrel{\circ (\id \wedge \eta_{H\mathbf{W}_\Q})}{\longrightarrow} \pi_{*,*}(H\mathbf{W}_\Q \wedge H\mathbf{W}_\Q) \stackrel{\circ \mu_{H\mathbf{W}_\Q}}{\longrightarrow} \pi_{*,*}(H\mathbf{W}_\Q)$$
is the identity. It remains to show now that the projector 
$$\Phi: H\mathbf{W}_\Q \wedge H\mathbf{W}_\Q \stackrel{\mu_{H \mathbf{W}_\Q}}{\longrightarrow}H\mathbf{W}_\Q \stackrel{\id \wedge \eta_{H \mathbf{W}_\Q}}{\longrightarrow} H \mathbf{W}_\Q \wedge H\mathbf{W}_\Q$$
is also just the identity. Since $H\mathbf{W}_\Q$ is stable cellular, the spectrum $H\mathbf{W}_\Q \wedge H\mathbf{W}_\Q$ is also stable cellular. Hence we can apply \cite[Prop. 7.7]{DI05} to have a conditionally convergent cohomological tri-graded spectral sequence 
\begin{multline*}
E_2^{a,(b,c)} = Ext^{a,(b,c)}_{\underline{\pi}_{*,*}(\mathbf{1}_{\Q-})(k)}((H\mathbf{W}_\Q \wedge H\mathbf{W}_\Q)_{*,*},(H\mathbf{W}_\Q \wedge H\mathbf{W}_\Q)_{*,*}) \Rightarrow \\ (H\mathbf{W}_\Q \wedge H\mathbf{W}_\Q)^{a+b,c}(H\mathbf{W}_\Q \wedge H\mathbf{W}_\Q). 
\end{multline*} 
As before $E^{a,(b,c)}_2 = 0$ for $a > 0$, because we calculate above that $\pi_{*,*}(H\mathbf{W}_\Q \wedge H\mathbf{W}_\Q)$ is again a projective module over $\pi_{*,*}(\mathbf{1}_{\Q-})$. So this spectral sequence degenerates at its $E_2$-page, which implies $\lim^1 E^{***}_r = 0$. Hence the spectral sequence converges strongly. So we have now 
$$H\mathbf{W}_\Q \wedge H\mathbf{W}_\Q(H\mathbf{W}_\Q \wedge H\mathbf{W}_\Q) \cong \Hom_{\underline{\mathbf{W}}_\Q(k)}(\underline{\mathbf{W}}_\Q(k),\underline{\mathbf{W}}_\Q(k)) = \underline{\mathbf{W}}_\Q(k).$$
Now since $k$ is just the ground field $\Q$, we have $\mathbf{W}_\Q(k) = \Q$. So we have $\Phi = \lambda \cdot \id$ for $\lambda \in \Q$. The case $\lambda = 0$ can not happen because $\Phi$ is a projector on a non-trivial factor. This proves then our theorem.  
\end{proof}
\begin{cor}\label{corsplitKT}
Let $S \rightarrow \Spec k$ be an essentially smooth scheme over a field $k$ of $char(k) \neq 2$. There is a splitting in $SH(S)_\Q$
$$\mathbf{KT}_{\Q,S} = \mathbf{KT}_{\Q-,S} = \mathbf{KO}_{\Q-,S} = \bigvee_{n \in \Z} S^{4n}_{s,\Q-}.$$ 
\end{cor}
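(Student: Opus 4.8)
The plan is to reduce the statement to the case $S=\Spec k$ with $k$ perfect, where it follows by combining Corollary \ref{corHW} with Theorem \ref{proofofconjst}, and then to transport the resulting splitting to an arbitrary essentially smooth $S$ by pullback.

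First I would dispose of the case $char(k)>0$. There $\mathbf{1}_{\Q-,k}=0$ by \cite[Cor.~16.2.14]{CD10}, hence $\mathbf{1}_{\Q-,S}=f^{*}\mathbf{1}_{\Q-,k}=0$ by Lemma \ref{lemstofHW}; consequently $\mathbf{KO}_{\Q-,S}=\mathbf{KO}_{\Q,S}\wedge\mathbf{1}_{\Q-,S}=0$ and $\bigvee_{n\in\Z}S^{4n}_{s,\Q-}=0$, while $\mathbf{KT}_{\Q,S}=\mathbf{KO}_{\Q,S}[\eta^{-1}]$ coincides with its minus part $\mathbf{KT}_{\Q-,S}=\mathbf{KO}_{\Q-,S}$ because $\eta$ vanishes on the plus part and is an isomorphism on the minus part (as recorded in the proof of Corollary \ref{corHW}). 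So all four terms vanish and there is nothing to prove. I may therefore assume $char(k)=0$, in which case $k$ is automatically perfect and Corollary \ref{corHW} applies verbatim over $\Spec k$.

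Over such a $k$, Corollary \ref{corHW} supplies the identifications $\mathbf{KT}_{\Q}=\mathbf{KT}_{\Q-}=\mathbf{KO}_{\Q-}$ together with the splitting $\mathbf{KT}_{\Q}=\bigvee_{n\in\Z}S^{4n}_{s}\wedge H\mathbf{W}_{\Q}$ in $SH(k)_{\Q}$, while Theorem \ref{proofofconjst} identifies the minus summand of the rational unit with the $\A^{1}$-Eilenberg-Maclane spectrum of the unramified Witt sheaf, $\mathbf{1}_{\Q-,k}=H\mathbf{W}_{\Q}$. Smashing the latter equality with the simplicial spheres gives $S^{4n}_{s}\wedge H\mathbf{W}_{\Q}=S^{4n}_{s}\wedge\mathbf{1}_{\Q-,k}=S^{4n}_{s,\Q-}$, whence $\mathbf{KT}_{\Q,k}=\mathbf{KT}_{\Q-,k}=\mathbf{KO}_{\Q-,k}=\bigvee_{n\in\Z}S^{4n}_{s,\Q-}$. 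Since Theorem \ref{proofofconjst} is stated over an essentially smooth base, the identifications $\mathbf{1}_{\Q-,S}=H\mathbf{W}_{\Q,S}$ and $\mathbf{KT}_{\Q,S}=\mathbf{KT}_{\Q-,S}=\mathbf{KO}_{\Q-,S}$ also hold over $S$, so it only remains to establish $\mathbf{KT}_{\Q,S}=\bigvee_{n\in\Z}S^{4n}_{s}\wedge H\mathbf{W}_{\Q,S}$, which I would get by applying the pullback $f^{*}$ to the splitting over $k$: $f^{*}$ is symmetric monoidal, hence commutes with $S^{4n}_{s}\wedge(-)$; it is a left adjoint, hence commutes with $\bigvee_{n}$; it preserves $H\mathbf{W}_{\Q}$ by Lemma \ref{lemstofHW}; and it carries $\mathbf{KO}_{\Q,k}$ to $\mathbf{KO}_{\Q,S}$, hence $\mathbf{KT}_{\Q,k}=\mathbf{KO}_{\Q,k}[\eta^{-1}]$ to $\mathbf{KT}_{\Q,S}=\mathbf{KO}_{\Q,S}[\eta^{-1}]$ since it commutes with the filtered homotopy colimit defining $\eta$-inversion. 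Applying $f^{*}$ term by term then yields the asserted identity in $SH(S)_{\Q}$.

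The one ingredient that goes beyond formal manipulation of Corollary \ref{corHW} and Theorem \ref{proofofconjst} is the stability of $\mathbf{KO}$ (equivalently of $\mathbf{KT}=\mathbf{KO}[\eta^{-1}]$) under pullback along an essentially smooth morphism, and this is the point I expect to require the most care. It should follow from the geometric $\A^{1}$-representability of Hermitian $K$-theory \cite{ST13} together with a base-change argument of the type used for the algebraic $K$-theory spectrum $\mathbf{KGL}$ (compare \cite[Prop.~3.8]{C13}), exactly as indicated in the remark following Lemma \ref{lemstofHW}.
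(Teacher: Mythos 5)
Your proposal is correct and follows essentially the same route as the paper, whose own proof just combines the splitting $\mathbf{KT}_{\Q}=\bigvee_{n\in\Z}S^{4n}_{s}\wedge H\mathbf{W}_{\Q}$ and the identification $\mathbf{KT}_{\Q}=\mathbf{KT}_{\Q-}=\mathbf{KO}_{\Q-}$ from (the proof of) Corollary \ref{corHW} with the identification $H\mathbf{W}_{\Q}=\mathbf{1}_{\Q-}$ from Theorem \ref{proofofconjst}. The additional steps you make explicit --- disposing of $char(k)>0$ via $\mathbf{1}_{\Q-}=0$, and transporting the splitting from $\Spec k$ to $S$ along $f^{*}$ using Lemma \ref{lemstofHW} together with base-change invariance of $\mathbf{KO}$ (equivalently $\mathbf{KT}$) --- are precisely what the paper leaves implicit, the last ingredient being asserted only in the remark following Lemma \ref{lemstofHW}, so your treatment is, if anything, more careful than the paper's.
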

\begin{proof}
This follows immediately from the proof of corollary \ref{corHW} and theorem \ref{proofofconjst}.
\end{proof}
We also obtain the following corollary, which is a motivic analog to topology, saying that all higher stable homotopy groups of the sphere spectrum are torsion.  
\begin{cor}(Motivic Serre's theorem). \label{cormotivicserre}
Let $k$ be a perfect field of $char(k) \neq 2$. For every $n > 0$ the stable homotopy groups of the rationally motivic sphere spectrum $\pi_{n,0}(\mathbf{1}_\Q) = 0$ are trivial. 
\end{cor}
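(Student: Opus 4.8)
The plan is to read this off directly from Theorem \ref{proofofconjst}. Applying that theorem to the identity map $S=\Spec k\to\Spec k$ (which is trivially essentially smooth over $k$) gives a splitting $\mathbf{1}_\Q=H_B\vee H\mathbf{W}_\Q$ in $SH(k)_\Q$. Since $\pi_{n,0}(-)=[S^{n,0}\wedge\Sigma^\infty_{\P^1}(-_+),\,-]$ is additive, this yields
\[
\pi_{n,0}(\mathbf{1}_\Q)\;\cong\;\pi_{n,0}(H_B)\;\oplus\;\pi_{n,0}(H\mathbf{W}_\Q),
\]
so it suffices to check that both summands vanish for $n>0$.

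For the first summand I would use that, under the usual bigraded conventions, $[S^{n,0}\wedge\Sigma^\infty_{\P^1}(U_+),H_B]$ is the Beilinson motivic cohomology group $H^{-n}_B(U,\Q(0))$ in weight zero; as $\Q(0)=\Q$ is concentrated in degree $0$ this is $H^{-n}_{Nis}(U,\Q)$, which vanishes for every smooth $U$ and every $n>0$, in particular at $\Spec k$. For the second summand, recall from the discussion preceding Theorem \ref{proofofconjst} that $H\mathbf{W}_\Q$ is the Eilenberg--MacLane spectrum of the homotopy module $\underline{\mathbf{W}}_\Q$, all of whose weights are the unramified Witt sheaf; hence its represented cohomology theory is $H\mathbf{W}_\Q^{p,q}(U)=H^{p-q}_{Nis}(U,\underline{\mathbf{W}}_\Q)$, the $\G_m$-loops being controlled by the contraction $(\underline{\mathbf{W}}_\Q)_{-1}=\underline{\mathbf{W}}_\Q$ so that the weight drops out. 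Thus $\pi_{n,0}(H\mathbf{W}_\Q)(U)=H^{-n}_{Nis}(U,\underline{\mathbf{W}}_\Q)=0$ for $n>0$, again for degree reasons. Combining the two vanishings gives $\pi_{n,0}(\mathbf{1}_\Q)=0$ for all $n>0$, and the very same computation shows that the homotopy sheaves $\underline{\pi}_{n,0}(\mathbf{1}_\Q)$ themselves vanish.

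I do not expect a genuine obstacle here: the whole substance is already packaged in the identifications $\mathbf{1}_{\Q+}=H_B$ (Cisinski--D\'eglise) and $\mathbf{1}_{\Q-}=H\mathbf{W}_\Q$ (Theorem \ref{proofofconjst}), and the remaining input — the vanishing of weight-zero motivic cohomology and of Witt-sheaf Nisnevich cohomology in negative degree — is elementary and uses neither the cellularity results nor the homotopy $t$-structure. The only points that need a little care are the bigraded bookkeeping (so that $\pi_{n,0}$ of a spectrum $E$ really computes $E^{-n,0}$) and the passage between $H\mathbf{W}_\Q$ as a $\P^1$-spectrum and its presentation as an $S^1_s\wedge\G_m$-spectrum with $i$-th term $K(\underline{\mathbf{W}}_\Q,i)$, which is exactly what makes the weight in $H\mathbf{W}_\Q^{p,q}$ irrelevant.
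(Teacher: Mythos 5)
Your proposal is correct and follows essentially the same route as the paper: the paper's proof is exactly the splitting $\mathbf{1}_{\Q}=H_B\vee H\mathbf{W}_\Q$ of Theorem \ref{proofofconjst} applied at $S=\Spec k$, combined with the vanishing of weight-zero motivic cohomology in negative degrees and of negative-degree Nisnevich cohomology of $\underline{\mathbf{W}}_\Q$. Your write-up merely spells out the bigraded bookkeeping that the paper leaves implicit.
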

\begin{proof}
This follows from the theorem \ref{proofofconjst} and the vanishing of Voevodsky's motivic cohomology in weight $0$. 
\end{proof}
\subsection{Some applications of Morel's splitting}
Let $S$ be a Noetherian regular scheme of finite Krull dimension. We consider in this section always the rationalized stable homotopy category $SH(S)_{\Q}$. We always have a splitting $SH(S)_\Q = D_B(S) \times SH(S)_{\Q-}$, where $D_B(S)$ is the category of Beilinson's motives. $S$ is called geometrically unibranch if for any point $s \in S$ the scheme $\Spec \sO_{S,s}^{sh}$ is irreducible, where $\sO_{S,s}^{sh}$ denotes the strict Henselization of the local ring $\sO_{S,s}$. In this case we have an equivalence $D_B(S) \cong DM(S)_{\Q}$ (see \cite[Thm. 16.1.4]{CD10}), where $DM(S)_{\Q}$ denotes the category of Voevodsky's motives. Let $X \in Sch/S$ be a scheme. Following \cite[Ex. 2.4, \S 3]{MV01} we call $X$ $\A^1$-rigid, if for any smooth scheme $U \in Sm/S$ one has a bijection $\Hom_S(U,X) \cong \Hom_S(U \times_S \A^1_S, X)$. Let us warm up with the following: 
\begin{lem}\label{lemwedge}
For any natural number $n \geq 0$ the $\P^1$-spectrum $\Sigma^{\infty}_{\P^1,+}(\G_m^{\wedge n})$ is a direct summand of $\Sigma^{\infty}_{\P^1,+}(\G_m^{ \times n})$.
\end{lem}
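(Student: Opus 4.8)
The plan is to realize $\Sigma^{\infty}_{\P^1,+}(\G_m^{\wedge n})$ as one of the wedge summands in the classical stable (``binomial'') decomposition of the suspension spectrum of a finite product of pointed spaces; the whole argument is purely formal and uses nothing beyond the triangulated tensor structure of $\mathbf{StHo}_{\A^1,\P^1}(k)$. The building block is the following elementary observation. For any pointed space $Y$, the collapse map $Y_+ \to S^0$ onto the non-basepoint is split by the basepoint inclusion $S^0 \to Y_+$, and the cofiber of that inclusion is $Y_+/S^0 \cong Y$; applying $\Sigma^{\infty}_{\P^1}$ then produces a canonical stable splitting $\Sigma^{\infty}_{\P^1,+}(Y) \cong \mathbf{1}_k \vee \Sigma^{\infty}_{\P^1}(Y)$, where $\mathbf{1}_k$ is the sphere spectrum. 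Applied to $Y = \G_m$ (pointed by $1$), this gives $\Sigma^{\infty}_{\P^1,+}(\G_m) \cong \mathbf{1}_k \vee \Sigma^{\infty}_{\P^1}(\G_m)$.

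Next I would use that $(-)_+$ sends products to smash products, so $(\G_m^{\times n})_+ \cong (\G_m)_+^{\wedge n}$ and hence $\Sigma^{\infty}_{\P^1,+}(\G_m^{\times n}) \cong \Sigma^{\infty}_{\P^1}\big((\G_m)_+^{\wedge n}\big)$. Smashing $n$ copies of the split cofiber sequence above together and distributing $\wedge$ over $\vee$ in the triangulated tensor category $\mathbf{StHo}_{\A^1,\P^1}(k)$ yields the decomposition
$$\Sigma^{\infty}_{\P^1,+}(\G_m^{\times n}) \cong \bigvee_{I \subseteq \{1,\dots,n\}} \Sigma^{\infty}_{\P^1}(\G_m^{\wedge |I|}),$$
where the index set $I=\emptyset$ contributes $\mathbf{1}_k$ and $I=\{1,\dots,n\}$ contributes $\Sigma^{\infty}_{\P^1}(\G_m^{\wedge n})$. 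Since $\G_m^{\wedge n}$ is already pointed, $\Sigma^{\infty}_{\P^1,+}(\G_m^{\wedge n}) \cong \mathbf{1}_k \vee \Sigma^{\infty}_{\P^1}(\G_m^{\wedge n})$, which is precisely the wedge of the $I=\emptyset$ and $I=\{1,\dots,n\}$ summands on the right-hand side; hence it is a direct summand of $\Sigma^{\infty}_{\P^1,+}(\G_m^{\times n})$, as asserted. (If one reads $\Sigma^{\infty}_{\P^1,+}(\G_m^{\wedge n})$ simply as $\Sigma^{\infty}_{\P^1}(\G_m^{\wedge n})$, the single summand $I=\{1,\dots,n\}$ already does it.)

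I do not expect a genuine obstacle; the only point demanding care is the basepoint bookkeeping. One has to check that a smash product of $n$ split cofiber sequences is again split — this is automatic, since the factorwise idempotents $e_i = \mathrm{id} - s_i r_i$ (with $r_i\colon (\G_m)_+ \to S^0$ the $i$-th structure map and $s_i$ its basepoint section) smash together to idempotents cutting out the summands displayed above — and one must track the extra copy of $\mathbf{1}_k$ contributed by the ``$+$'' on $\G_m^{\wedge n}$, which is harmless because $\mathbf{1}_k$ is itself one of those summands. An explicit projector onto $\Sigma^{\infty}_{\P^1,+}(\G_m^{\wedge n})$ can be assembled as a suitable sum of $n$-fold smash products of the $e_i$ and the $s_i r_i$, but writing it out is routine and can be bypassed by the soft ``split cofiber sequences stay split under smashing'' argument just indicated. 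Finally, note that the statement holds over an arbitrary base and requires no hypothesis on $k$.
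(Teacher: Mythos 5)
Correct, and essentially the paper's own argument: the paper proves the lemma by induction on $n$, using at each step the splitting $\Sigma^{\infty}_{\P^1,+}(\G_m^{\times n}\times \G_m)\simeq \Sigma^{\infty}_{\P^1}\bigl((\G_m^{\times n})_+\wedge \G_m\bigr)\vee \Sigma^{\infty}_{\P^1,+}(\G_m^{\times n})\vee \Sigma^{\infty}_{\P^1}(\G_m)$, which is exactly your split cofiber sequence $S^0\to (\G_m)_+\to \G_m$ smashed in one factor at a time, so your all-at-once binomial decomposition is the same argument unwound. Your explicit handling of the basepoint bookkeeping and of the extra copy of $\mathbf{1}_k$ contributed by the ``$+$'' on $\G_m^{\wedge n}$ is fine, and if anything cleaner than the notation in the paper's proof.
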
  
\begin{proof}
We prove the lemma inductively. For $n = 0$, there is nothing to prove. We have $\Sigma^{\infty}_{\P^1,+}(\G_m^{\wedge n+1}) = \Sigma^{\infty}_{\P^1,+}(\G_m^{\wedge n}) \wedge \Sigma^{\infty}_{\P^1,+}(\G_m)$ which is by induction a direct summand of the wedge product $\Sigma^{\infty}_{\P^1,+}(\G_m^{\times n}) \wedge \Sigma^{\infty}_{\P^1,+}(\G_m) = \Sigma^{\infty}_{\P^1,+}(\G_m^{\times} \wedge \G_m)$. But we have a splitting 
$$\Sigma^{\infty}_{\P^1,+}(\G_m^{\times n} \times \G_m) = \Sigma^{\infty}_{\P^1,+}(\G_m^{\times n} \wedge \G_m) \vee \Sigma^{\infty}_{\P^1,+}(\G_m^{\times n}) \vee \Sigma^{\infty}_{\P^1,+}(\G_m),$$
which finishes the proof of the lemma. 
\end{proof}
We say $S = \Spec k$ satisfies the affine stable $\A^1$-Lefschetzt's vanishing condition if for any affine $k$-scheme $X \in Sm/k$ of dimension $n$ the rational stable $\A^1$-homology sheaves $\mathbf{H}^{st\A^1}_i(X)_{\Q} = 0$ for $i > \dim (X)$.
\begin{prop}
If $k$ is a perfect field and satisfies the affine stable $\A^1$-Lefschetzt's vanishing condition, then the Beilinson-Soule's vanishing conjecture holds over $k$. 
\end{prop}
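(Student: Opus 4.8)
The plan is to deduce the Beilinson--Soul\'e vanishing $H^p_{\sM}(\Spec k,\Q(q))=0$ for $p<0$ (all $q$) directly from the rational splitting of Theorem \ref{proofofconjst} applied to $S=\Spec k$, using the affine Lefschetz hypothesis only to kill the relevant stable $\A^1$-homology sheaves (throughout, $\Char(k)\neq 2$, so that Theorem \ref{proofofconjst} is available, and $\Spec k$ is essentially smooth over $k$). The key bridge is that the stable $\A^1$-homology of $\G_m$-smash-powers computes the stable $\A^1$-cohomology of the point: since $\P^1\simeq S^1_s\wedge\G_m$ one has $\widetilde{C}_*^{\A^1}(\G_m)=\Z_{\A^1}(1)[1]$, hence by monoidality of the $\A^1$-chain functor $\widetilde{C}_*^{st\A^1}(\G_m^{\wedge n})=\Z_{\A^1}(n)[n]$ in $D_{\A^1}(k)$, and, evaluating the associated homology sheaf at $\Spec k$ and using that $\Z_{\A^1}(n)$ is $\otimes$-invertible there, for all $i,n$
$$\mathbf{H}^{st\A^1}_i(\G_m^{\wedge n})(\Spec k)\;\cong\;\Hom_{D_{\A^1}(k)}\bigl(\mathbf{1}_k,\Z_{\A^1}(n)[n-i]\bigr)\;=\;H^{n-i,n}_{st\A^1}(\Spec k,\Z).$$

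First I would feed the hypothesis the smooth affine $k$-scheme $X=\G_m^{\times n}$, which has dimension $n$: the affine stable $\A^1$-Lefschetz vanishing condition then gives $\mathbf{H}^{st\A^1}_i(\G_m^{\times n})_{\Q}=0$ for every $i>n$. By Lemma \ref{lemwedge}, $\Sigma^{\infty}_{\P^1,+}(\G_m^{\wedge n})$ is a direct summand of $\Sigma^{\infty}_{\P^1,+}(\G_m^{\times n})$ in $SH(k)$; applying the additive triangulated Hurewicz functor $SH(k)\to D_{\A^1}(k)$ shows $\widetilde{C}_*^{st\A^1}(\G_m^{\wedge n})$ is a direct summand of $C_*^{st\A^1}(\G_m^{\times n})$, so passing to homology sheaves and then to $\Q$-coefficients realizes $\mathbf{H}^{st\A^1}_i(\G_m^{\wedge n})_{\Q}$ as a direct summand of $\mathbf{H}^{st\A^1}_i(\G_m^{\times n})_{\Q}$. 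Hence $\mathbf{H}^{st\A^1}_i(\G_m^{\wedge n})_{\Q}=0$ for $i>n$ as well.

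Combining these two facts with Theorem \ref{proofofconjst} at $S=\Spec k$, for every $i>n$ (in particular $i\geq 1$) we obtain
$$0\;=\;\mathbf{H}^{st\A^1}_i(\G_m^{\wedge n})_{\Q}(\Spec k)\;\cong\;H^{n-i}_{\sM}(\Spec k,\Q(n))\;\oplus\;H^{-i}_{Nis}(\Spec k,\underline{\mathbf{W}}_{\Q}).$$
The Witt summand vanishes automatically, since $\Spec k$ has Nisnevich cohomological dimension $0$ and $-i<0$; therefore $H^{n-i}_{\sM}(\Spec k,\Q(n))=0$. Letting $i$ run over all integers $>n$ and $n$ over all integers $\geq 0$ (the weights $q<0$ being trivial), this says exactly that $H^p_{\sM}(\Spec k,\Q(q))=0$ for all $p<0$ and all $q$, i.e.\ the Beilinson--Soul\'e vanishing conjecture holds over $k$.

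I do not expect a serious obstacle: the content is entirely carried by Theorem \ref{proofofconjst} and by the hypothesis, and the only points needing (routine) verification are the identification $\widetilde{C}_*^{st\A^1}(\G_m^{\wedge n})\simeq\Z_{\A^1}(n)[n]$ and the compatibility of the Hurewicz functor with formation of homology sheaves and of direct summands. I would only remark that this argument recovers the vanishing in \emph{strictly} negative cohomological degrees; the limiting case $p=0$, $q>0$ is not reached, since the Lefschetz hypothesis is not assumed in the borderline range $i=\dim X$.
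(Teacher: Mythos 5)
Your proposal is correct and follows essentially the same route as the paper's proof: both use Theorem \ref{proofofconjst} to realize rational motivic cohomology of $k$ as a direct summand of the rational stable $\A^1$-homology of $\G_m^{\wedge n}$, use Lemma \ref{lemwedge} to embed this as a summand of the homology of the $n$-dimensional affine scheme $\G_m^{\times n}$, and then invoke the affine stable $\A^1$-Lefschetz vanishing hypothesis. The only difference is presentational: you spell out the identification $\widetilde{C}_*^{st\A^1}(\G_m^{\wedge n})\simeq \Z_{\A^1}(n)[n]$, the evaluation at $\Spec k$, and the range of degrees reached, which the paper leaves implicit.
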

\begin{proof}
We know that if $k$ is a perfect field then Voevodsky's motivic cohomology $H\Q^{-p,-q}(*)$ is a direct summand of $\pi_{p,q}(\mathbf{1}_\Q)$. In terms of stable $\A^1$-homology under the equivalence $SH_\Q \cong D_{\A^1,\Q}$ we have 
$$\pi_{p,q}(\mathbf{1}_\Q) = \mathbf{H}^{st\A^1}_{p-q}(\G_m^{\wedge (-q)})_\Q,$$
which is a direct summand of $\mathbf{H}^{st\A^1}_{p-q}(\G_m^{\times (-q)})_\Q$ by the lemma \ref{lemwedge}. If $k$ satisfies the affine stable $\A^1$-Lefschetzt's vanishing condition, this homology sheaves will vanish for $p > 0$, which implies then the Beilinson-Soule's vanishing conjecture. 
\end{proof}
\begin{rem}{\rm
From this we see that stable $\A^1$-homotopy of $\A^1$-rigid schemes are non-trivial in general. 
}
\end{rem}
We now recall the notion of slice filtration and slice spectral sequence of Voevodsky in \cite{Voe02}. Let $S$ be a Noetherian scheme of finite Krull dimension. There is a filtration on $SH(S)$ 
$$\cdots \inj \Sigma_{\P^1}^{q+1}SH(S)^{eff} \inj \Sigma^q_{\P^1}SH(S)^{eff} \inj \cdots SH(S),$$
where $\Sigma^q_{\P^1}SH(S)^{eff}$ denotes the localizing subcategory generated by the objects $S^{2q,q}\wedge \Sigma^{\infty}_{\P^1,+}(X)$ with $X \in Sm/S$. We write $i_q: \Sigma^q_{\P^1}SH(S)^{eff}$ for the full embedding. It admits a right adjoint $r_q: SH(S) \rightarrow \Sigma^q_{\P^1}SH(S)^{eff}$ with $r_q \circ i_q = \id$ and we define $f_q = i_q \circ r_q$. For a motivic spectrum $E \in SH(S)$, its slice $s_q(E)$ is characterized by the distinguished triangle 
\begin{equation}\label{eqslice1}
f_{q+1}E \rightarrow f_q E \rightarrow s_q E \stackrel{+1}{\rightarrow}.
\end{equation} 
By taking homotopy of the distinguished triangle \ref{eqslice1}, we obtain a long exact sequence 
\begin{equation}\label{eqslice2}
\cdots \rightarrow \pi_{p,n}(f_{q+1} E) \rightarrow \pi_{p,n}(f_q E) \rightarrow \pi_{p,n}(s_q E) \rightarrow \cdots
\end{equation}
We set $D^1_{p,q,n} = \pi_{p,n}(f_q E)$ and $E^1_{p,q,n} = \pi_{p,n}(s_q E)$. Apply the standard method of exact couple to the long exact sequence \ref{eqslice2}, we obtain an exact couple 
\begin{equation}\label{eqslice3}
\xymatrix{D^1_* \ar[rr]^{(0,-1,0)} && D^1_* \ar[ld]^{(0,0,0)} \\ & E^1_* \ar[lu]^{(-1,1,0)} }
\end{equation}
The exact couple \ref{eqslice3} gives rise to the tri-graded slice spectral sequence 
\begin{equation}\label{eqsss}
E^1_{p,q,n} = \pi_{p,n}(s_q E) \Rightarrow \pi_{p,n}(E)
\end{equation}
The differential of the slice spectral sequence \ref{eqsss} 
$$d_1^{p,q,n}: \pi_{p,n}s_q E \rightarrow \pi_{p-1,n}s_{q+1} E$$
is induced by the composition 
$$s_q E \rightarrow S^1_s \wedge f_{q+1} E \rightarrow S^1_s \wedge s_{q+1}E.$$
A motivic spectrum $E \in SH(S)$ is called convergent with respect to the slice filtration, if 
$$\bigcap_{i \geq 0}f_{q+i} \pi_{p,n}f_q E = 0, \quad \forall p,q,n \in \Z,$$
where we write $f_q \pi_{p,n}E = \im (\pi_{p,n}f_q E \rightarrow \pi_{p,n}E)$. We recall now the following result obtained by O. R\"ondigs and P. A. \O stv\ae r:
\begin{thm}\cite{RO13}\label{thmRO1}
Let $k$ be a perfect field of $char(k) \neq 2$, then 
\begin{equation}\label{eqsKO}
s_q(\mathbf{KO}) = \begin{cases}S^{2q,q} \wedge H\Z \vee \bigvee_{i < \frac{q}{2}} S^{2i+q,q} \wedge H\Z/2, \quad q \equiv 0 \mod 2, \\ \bigvee_{i < \frac{q+1}{2}}S^{2i+q,q}H\Z/2, \quad q \equiv 1 \mod 2. \end{cases}
\end{equation}
and 
\begin{equation}\label{eqsKT}
s_q(\mathbf{KT}) = \bigvee_{i \in \Z}S^{2i+q,q}H\Z/2.
\end{equation}
\end{thm}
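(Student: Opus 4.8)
I would deduce both formulas from three ingredients: that $\mathbf{KO}$ is an \emph{effective} $\P^1$-spectrum, so its slices vanish in negative weights and, being modules over $s_0(\mathbf{1})=H\Z$, are objects of $DM(k)$; that the slices of algebraic $K$-theory are $s_q(\mathbf{KGL})=S^{2q,q}\wedge H\Z$ for every $q\in\Z$ (Levine; Gepner--Snaith; Spitzweck--\O stv\ae r), whence $s_q(f_0\mathbf{KGL})=S^{2q,q}\wedge H\Z$ for $q\ge 0$ and vanishes for $q<0$; and the solution of the homotopy limit problem in Hermitian $K$-theory, which over a field of $\mathrm{char}(k)\neq 2$ gives an equivalence $\mathbf{KO}\simeq\mathbf{KGL}^{hC_2}$ for the duality involution, together with the "motivic Wood" cofiber sequence $S^{1,1}\wedge\mathbf{KO}\xrightarrow{\ \eta\ }\mathbf{KO}\to f_0\mathbf{KGL}\xrightarrow{+1}$, the algebraic analogue of $ko/\eta\simeq ku$. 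This is essentially the only place where $\mathrm{char}(k)\neq 2$ and perfectness enter; the rest is bookkeeping.

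\textbf{Step 1: an inductive triangle of motives.} Since $f_q$ is the composite of the inclusion of a localizing subcategory with its right adjoint it is exact, hence so is $s_q$; applying $s_q$ to the Wood cofiber sequence gives, for each $q\ge 0$, a distinguished triangle in $DM(k)$
\[ S^{1,1}\wedge s_{q-1}(\mathbf{KO})\ \xrightarrow{\ s_q(\eta)\ }\ s_q(\mathbf{KO})\ \longrightarrow\ S^{2q,q}\wedge H\Z\ \xrightarrow{\ \partial_q\ }\ S^{2,1}\wedge s_{q-1}(\mathbf{KO}). \]
The induction begins with $s_q(\mathbf{KO})=0$ for $q<0$ (effectivity) and $s_0(\mathbf{KO})=H\Z$: the unit $\mathbf 1\to\mathbf{KO}$ induces $H\Z=s_0(\mathbf 1)\to s_0(\mathbf{KO})$, an isomorphism because $s_{-1}(\mathbf{KO})=0$ forces $s_0(\mathbf{KO})\xrightarrow{\sim}s_0(f_0\mathbf{KGL})$. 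Everything then reduces to identifying the $k$-invariants $\partial_q$ and showing the resulting cones split.

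\textbf{Step 2: the $k$-invariants and the splitting.} Inductively $s_{q-1}(\mathbf{KO})$ is a wedge of shifted copies of $H\Z$ and $H\Z/2$, and every map in the triangle is a map of $H\Z$-modules, so $\partial_q$ is pinned down by its components $S^{2q,q}\wedge H\Z\to S^{2,1}\wedge(\text{summand})$, which live in motivic cohomology groups of the point controlled by the motivic Steenrod algebra. The component landing in a shifted $H\Z$ is reduction mod $2$ up to a unit, and the components landing in the $H\Z/2$'s are the integral motivic square $\beta\circ\mathrm{Sq}^2$ and its shifts; since the relevant $\mathrm{Ext}^1_{DM(k)}$ between motivic Eilenberg--MacLane spectra vanish in the pertinent bidegrees, the cone of $\partial_q$ splits, and matching $\partial_q$ against the group cohomology $H^\ast(C_2;\Z)=(\Z,0,\Z/2,0,\dots)$ versus $H^\ast(C_2;\Z^{-})=(0,\Z/2,0,\Z/2,\dots)$ --- exactly the shadow of $\mathbf{KO}\simeq\mathbf{KGL}^{hC_2}$ applied to $s_q(\mathbf{KGL})=S^{2q,q}\wedge H\Z$ with the involution acting by $(-1)^q$ --- reproduces precisely the two cases $q$ even and $q$ odd of \ref{eqsKO}. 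The point that the $2$-torsion so produced is genuinely killed by $2$ --- equivalently, that the relevant $\tau$- and $\mathrm{Sq}^2$-classes are nonzero --- is Voevodsky's resolution of Milnor's conjecture on quadratic forms $I^n/I^{n+1}\cong K^M_n/2$; indeed the slice computation is essentially equivalent to it. For $\mathbf{KT}$ one uses $\mathbf{KT}=\mathbf{KO}[\eta^{-1}]$, so $s_q(\mathbf{KT})=\operatorname{hocolim}_n\bigl(S^{-n,-n}\wedge s_{q+n}(\mathbf{KO})\bigr)$ along the maps induced by $\eta$: the stable Hopf map annihilates any orientable spectrum, so each $H\Z$-summand of $s_{q+n}(\mathbf{KO})$ dies at the next stage while $\eta$ restricts to an isomorphism between consecutive $H\Z/2$-summands, so the upper truncation in \ref{eqsKO} disappears in the colimit, leaving the two-sided wedge $\bigvee_{i\in\Z}S^{2i+q,q}\wedge H\Z/2$ of \ref{eqsKT}.

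\textbf{Main obstacle.} The hard part is Step 2: proving that each slice really splits as a wedge of motivic Eilenberg--MacLane spectra --- rather than forming a nontrivial extension along, say, a Bockstein --- and controlling the $2$-primary torsion. This forces the motivic Steenrod algebra and Voevodsky's proof of the Milnor conjecture on quadratic forms into the argument, and it is why $k$ must be perfect; the effectivity of $\mathbf{KO}$, the slices of $\mathbf{KGL}$, and the $\eta$-inversion for $\mathbf{KT}$ are comparatively soft. A secondary technical point is that $s_q$ does not literally commute with homotopy fixed points $(-)^{hC_2}$ (a homotopy limit against the colimit implicit in $f_q/f_{q+1}$), so the clean identity $s_q(\mathbf{KO})\simeq\bigl(s_q(\mathbf{KGL})\bigr)^{hC_2}$ has to be justified through a spectral-sequence argument, or bypassed via the Wood cofiber sequence as above.
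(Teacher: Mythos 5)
First, note that the paper does not prove this statement at all: Theorem \ref{thmRO1} is imported verbatim from \cite{RO13}, so there is no internal proof to compare against; your sketch can only be measured against the actual R\"ondigs--\O stv\ae r argument, whose broad outline (the cofiber sequence $\mathbf{KO}\wedge\G_m\xrightarrow{\eta}\mathbf{KO}\to\mathbf{KGL}$ of Theorem \ref{thmRO2}, the known slices $s_q(\mathbf{KGL})=S^{2q,q}\wedge H\Z$, $H\Z$-module structure of slices, motivic Steenrod operations, and Voevodsky's solution of the Milnor conjecture on quadratic forms) you have correctly identified.

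However, your induction has a fatal anchor. Hermitian $K$-theory is $(8,4)$-periodic, hence \emph{not} effective: its slices do not vanish in negative weights, and $s_0(\mathbf{KO})$ is not $H\Z$. The formula \ref{eqsKO} you are trying to prove says this itself: for $q$ even the wedge $\bigvee_{i<q/2}S^{2i+q,q}\wedge H\Z/2$ runs over all $i<q/2$, so every slice (including $q\le 0$) contains infinitely many $H\Z/2$-summands in arbitrarily negative simplicial degrees; these encode the Witt-group (negative hermitian $K$-theory) contributions, which is exactly the ``Milnor conjecture'' content of \cite{RO13}. With the false base case $s_q(\mathbf{KO})=0$ for $q<0$, $s_0(\mathbf{KO})=H\Z$, your inductive triangles would only ever produce finite wedges and hence a statement strictly different from \ref{eqsKO}; the induction has no valid starting point, and the subsequent $\eta$-colimit computation of $s_q(\mathbf{KT})$ inherits the gap (its mechanism is fine, but it needs the correct $\mathbf{KO}$-slices and the identification of the maps $\eta$ induces on them as inputs). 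Two smaller corrections: the cofiber of $\eta$ on $\mathbf{KO}$ is $\mathbf{KGL}$ itself, not $f_0\mathbf{KGL}$ (see \ref{eqRO1}), consistent with the non-effectivity just noted; and the homotopy-fixed-point statement $\mathbf{KO}\simeq\mathbf{KGL}^{hC_2}$ is only known after suitable (2- or $\eta$-) completion, so it can serve at best as heuristic guidance for the $k$-invariants, not as an ingredient of the proof.
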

Moreover, in \cite{RO13}, they also show a conjecture of J. Hornbostel in \cite{Horn05}:
\begin{thm}\cite[Thm. 4.4]{RO13}\label{thmRO2}
Let $S$ be a Noetherian regular scheme of finite Krull dimension such that $\frac{1}{2} \in \Gamma(S,\sO_S)$. There is a distinguished triangle in $SH(S)$ 
\begin{equation}\label{eqRO1}
\mathbf{KO} \wedge \G_m \stackrel{\eta}{\rightarrow} \mathbf{KO} \stackrel{f}{\rightarrow} \mathbf{KGL} \stackrel{+1}{\rightarrow},
\end{equation}
where $\eta$ is the stable Hopf-map and $f$ denotes the forgetful map. 
\end{thm}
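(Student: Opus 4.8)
The plan is to show that the forgetful map $f\colon\mathbf{KO}\to\mathbf{KGL}$ exhibits $\mathbf{KGL}$ as the cofiber of $\eta$ acting on $\mathbf{KO}$. First I would take $f$ as given --- it comes from the geometric models of Hermitian $K$-theory, is defined over $\Spec\Z[\tfrac12]$, hence over every regular $S$ with $\tfrac12\in\Gamma(S,\sO_S)$ by pullback, and it is a morphism of $\mathbf{1}$-modules. Then I would check that $f$ annihilates $\eta$: the self-map $\eta\wedge\mathbf{KGL}\colon\mathbf{KGL}\wedge\G_m\to\mathbf{KGL}$ acts on each homotopy sheaf $\underline{\pi}_{p,q}$ by composition with the class $\eta_{\mathbf{KGL}}\in\underline{\pi}_{1,1}(\mathbf{KGL})(S)=\mathbf{KGL}^{-1,-1}(S)$, which vanishes by Bott periodicity and regularity of $S$ (it lies in $K_{-1}(S)=0$); so $\eta\wedge\mathbf{KGL}$ is zero on homotopy sheaves, hence nullhomotopic, and by naturality of $\eta\wedge(-)$ we get $f\circ(\eta\wedge\mathbf{KO})=(\eta\wedge\mathbf{KGL})\circ(f\wedge\G_m)=0$. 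Consequently $f$ factors canonically through $\bar f\colon\mathbf{KO}/\eta\to\mathbf{KGL}$, where $\mathbf{KO}/\eta$ is the cofiber of $\eta\wedge\mathbf{KO}\colon\mathbf{KO}\wedge\G_m\to\mathbf{KO}$; producing the asserted triangle now reduces to proving that $\bar f$ is an isomorphism in $SH(S)$.

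For that I would work with bigraded homotopy sheaves. Since $SH(S)$ is generated by the spectra $S^{p,q}\wedge\Sigma^{\infty}_{\P^1}(U_+)$ with $U\in Sm/S$, it suffices that $\bar f$ induce an isomorphism on every $\underline{\pi}_{p,q}$. On the target, Bott periodicity identifies $\underline{\pi}_{p,q}(\mathbf{KGL})$ with the Nisnevich sheafification of $U\mapsto K_{2q-p}(U)$. On the source, the defining triangle
\[ \mathbf{KO}\wedge\G_m\stackrel{\eta}{\rightarrow}\mathbf{KO}\rightarrow\mathbf{KO}/\eta\stackrel{+1}{\rightarrow}, \]
together with $\underline{\pi}_{p,q}(\mathbf{KO}\wedge\G_m)\cong\underline{\pi}_{p-1,q-1}(\mathbf{KO})$, produces a long exact sequence presenting $\underline{\pi}_{p,q}(\mathbf{KO}/\eta)$ as an extension of a kernel of $\eta$ by a cokernel of $\eta$ on the bigraded homotopy sheaves of $\mathbf{KO}$. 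Those sheaves, and the $\eta$-multiplication on them, are exactly what the Bott--Karoubi exact sequences of Hermitian $K$-theory (forgetful, hyperbolic, and the $\eta$-fundamental sequence --- for instance in Schlichting's description of the Grothendieck--Witt spectrum) govern; I would use them to match the above long exact sequence with the classical sequence relating consecutive Grothendieck--Witt and Witt groups to algebraic $K$-theory, and to check that the comparison induced by $f$ is an isomorphism onto $K_{2q-p}$. Cellularity of $\mathbf{KO}$ (Theorem \ref{thmKO}), hence of $\mathbf{KGL}$ and $\mathbf{KO}/\eta$ (Proposition \ref{propcell}), enters in reducing the general regular base to a base field via the pullback-compatibility of all objects, and in justifying the universal-coefficient manipulations passing from homotopy groups of points to homotopy sheaves.

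The hard part will be that matching step: identifying the motivic cofiber long exact sequence with the classical exact sequences of Hermitian $K$-theory, naturally in $S$ and compatibly with the \emph{represented} forgetful map, which calls on the representability theorem of Schlichting--Tripathi, Karoubi's periodicity theorems, and a careful bookkeeping of the $\eta$-action (the relation $\eta h=0$ in Milnor--Witt $K$-theory, with $h$ the hyperbolic form, being its algebraic shadow). A second route, closer to the rest of this section, would be via slices: $\mathbf{KO}/\eta$ and $\mathbf{KGL}$ are cellular, connective and slice-convergent, so it is enough that $\bar f$ be an equivalence on each slice $s_q$; one has $s_q(\mathbf{KGL})\simeq S^{2q,q}\wedge H\Z$ (Levine, Voevodsky), and from the cofiber triangle and the identity $s_q(\Sigma^{1,1}E)\simeq\Sigma^{1,1}s_{q-1}(E)$ one gets a triangle $\Sigma^{1,1}s_{q-1}(\mathbf{KO})\stackrel{s_q(\eta)}{\rightarrow}s_q(\mathbf{KO})\rightarrow s_q(\mathbf{KO}/\eta)\stackrel{+1}{\rightarrow}$; plugging in the computation of $s_{\ast}(\mathbf{KO})$ and of $\eta$ on it (Theorem \ref{thmRO1}) makes the $H\Z/2$-summands cancel and leaves precisely $S^{2q,q}\wedge H\Z$. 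But the computation of $s_{\ast}(\mathbf{KO})$ itself rests on Milnor's conjecture on quadratic forms together with an analysis of the slice differentials, which is the genuinely difficult ingredient. So on either route the crux is a substantial input beyond formal homotopy theory; the formal part --- constructing $\bar f$, checking $f\eta=0$, reducing to slices or to $\underline{\pi}_{\ast,\ast}$ and invoking convergence --- is routine.
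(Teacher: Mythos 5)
First, a point of comparison: the paper does not prove this statement at all --- it is imported verbatim from R\"ondigs--\O stv\ae r \cite{RO13}, so there is no internal proof to measure your proposal against; the relevant benchmark is the proof in the cited source. Judged on that basis, your first route is essentially the right one and matches the literature: construct $\bar f$ on the cofiber of $\eta$ and identify it with $\mathbf{KGL}$ by means of Karoubi periodicity and the forgetful--hyperbolic fibration sequences of hermitian $K$-theory, available motivically through the representability results of Hornbostel \cite{Horn05} and Schlichting--Tripathi \cite{ST13}. Your preliminaries are sound up to one logical slip: ``zero on homotopy sheaves, hence nullhomotopic'' is not a valid inference in general; what you actually need (and already have) is that $\mathbf{KGL}\wedge\eta$ factors, via the module structure, through smashing with the class $u\circ\eta\in\mathbf{KGL}^{-1,-1}(S)\cong K_{-1}(S)=0$ ($S$ regular), which gives the nullhomotopy directly. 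Also, ``isomorphism on all $\underline{\pi}_{p,q}$ implies equivalence'' needs Nisnevich descent plus finite Krull dimension (a convergent descent spectral sequence) to pass from homotopy sheaves to the generating homotopy presheaves, and the suggested reduction to a base field via cellularity is neither available (a regular $S$ with $\tfrac12\in\Gamma(S,\sO_S)$, e.g.\ $\Spec \Z[\tfrac12]$, need not live over a field) nor needed, since the Karoubi-type sequences hold over such $S$ directly.

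The genuine gap sits exactly where you place it: the identification of the long exact sequence of the cofiber triangle with the classical hermitian $K$-theory sequences, i.e.\ the statement that multiplication by $\eta$ on the bigraded homotopy of $\mathbf{KO}$ coincides with the relevant connecting (forgetful--hyperbolic, Karoubi-periodicity) maps. That identification \emph{is} the content of the theorem, and your proposal only gestures at it, so as written this is a strategy outline rather than a proof. Your second route is worse than merely hard: in \cite{RO13} the slice computation recorded here as Theorem \ref{thmRO1} is itself deduced from this very triangle, so deriving the triangle from Theorem \ref{thmRO1} is circular unless one supplies an independent computation of $s_*(\mathbf{KO})$, which is not on offer. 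In short: correct skeleton, same approach as the cited source on route one, but the decisive step is missing and route two should be dropped.
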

We show 
\begin{thm}\label{thmsssKO}
Let $k$ be a perfect field of $char(k) \neq 2$. The slice spectral sequence 
$$E^1_{p,q,n} = \pi_{p,n}(s_q \mathbf{KO}_{\Q+}) \Rightarrow \pi_{p,n}(\mathbf{KO}_{\Q+})$$
is strongly convergent and degenerates at $E^1$-terms. 
\end{thm}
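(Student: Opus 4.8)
The plan is to compute the slices of $\mathbf{KO}_{\Q+}$ from the R\"ondigs--\O stv\ae r formula (Theorem \ref{thmRO1}) and to work throughout inside $SH(k)_{\Q+}\simeq DM(k)_{\Q}$, where Morel's splitting identifies the unit $\mathbf{1}_{\Q+}=H_B$ with the rational motivic cohomology spectrum $H\Q$ and $S^{2q,q}\wedge H\Q$ with the Tate object $\Q(q)[2q]$. The slice functors $s_q$ commute with rationalisation and with the idempotent splitting $SH(k)_\Q=SH(k)_{\Q+}\times SH(k)_{\Q-}$ (being built from filtered homotopy colimits and retracts, which $s_q$ preserves), so applying $s_q$ to \ref{thmRO1} and discarding the $2$-torsion summands $S^{2i+q,q}\wedge H\Z/2$ gives $s_q(\mathbf{KO}_\Q)=S^{2q,q}\wedge H\Q$ for $q$ even and $0$ for $q$ odd. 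Since $H\Q$ is oriented, $\eta$ acts by $0$ on every $H\Q$-module, so $S^{2q,q}\wedge H\Q$ already lies in $SH(k)_{\Q+}$; hence $s_q(\mathbf{KO}_{\Q-})=0$ for all $q$ (as it must, $\mathbf{1}_{\Q-}$ being $\eta$-periodic, cf.\ Corollary \ref{corsplitKT}) and $s_q(\mathbf{KO}_{\Q+})=\Q(q)[2q]$ for $q$ even, $0$ for $q$ odd.

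For strong convergence, note that $\Q(2j)[4j]=S^{4j,2j}\wedge H\Q$ lies in $SH(k)_{\geq 2j}$ (as $H\Q\in SH(k)_{\geq 0}$ by Morel's connectivity theorem), so $f_q\mathbf{KO}_{\Q+}\in SH(k)_{\geq q}$ and $\pi_{p,n}(f_q\mathbf{KO}_{\Q+})=0$ whenever $q>p-n$; this bounds the spectral sequence above at every $(p,n)$. On the other hand Lemma \ref{lemcell} exhibits $\mathbf{KO}$, hence $\mathbf{KO}_{\Q+}$, as a homotopy colimit of objects already contained in the stages $\Sigma^{-q}_{\P^1}SH(k)^{eff}$ of the slice filtration, so $\operatorname{hocolim}_{q\to-\infty}f_q\mathbf{KO}_{\Q+}=\mathbf{KO}_{\Q+}$ and the filtration is exhaustive. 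The same bookkeeping as in Proposition \ref{propssPostnikov} then yields strong convergence.

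The degeneration rests on the computation, valid for $a\leq b$ and $m\in\Z$, that $\Hom_{DM(k)_\Q}(\Q(a)[2a],\Q(b)[2b][m])=H^{2(b-a)+m,\,b-a}_M(k,\Q)$, which vanishes whenever $m\geq 1$ since then the cohomological degree $2(b-a)+m$ exceeds the weight $b-a$. The differential $d_1$ is zero because no two consecutive slices of $\mathbf{KO}_{\Q+}$ are both nonzero, and, inductively, once $d_1=\dots=d_{r-1}=0$ the differential $d_r\colon\pi_{p,n}(s_q\mathbf{KO}_{\Q+})\to\pi_{p-1,n}(s_{q+r}\mathbf{KO}_{\Q+})$ is induced by a morphism $s_q\mathbf{KO}_{\Q+}\to s_{q+r}\mathbf{KO}_{\Q+}[1]$ in $SH(k)$ (the standard description of a higher differential of a tower once the lower ones vanish); for $r$ odd a slice is zero, and for $r$ even this morphism lies in $\Hom_{SH(k)_{\Q+}}(\Q(q)[2q],\Q(q+r)[2q+2r+1])=H^{2r+1,r}_M(k,\Q)=0$. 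Hence $d_r=0$ for all $r$ and the spectral sequence degenerates at $E^1$. The step I expect to be most delicate is precisely this last claim — that, after the lower differentials vanish, $d_r$ is realised by a single morphism of the stated bidegree rather than merely by a secondary operation with indeterminacy — together with the bookkeeping for the two-sided infinite slice tower.

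To remove that subtlety, and to obtain Corollary \ref{corsssKO} along the way, I would instead upgrade the slice computation to a genuine splitting. Rationally and on the $\Q+$-part the distinguished triangle of Theorem \ref{thmRO2} reads $\mathbf{KO}_{\Q+}\wedge\G_m\xrightarrow{\,\eta=0\,}\mathbf{KO}_{\Q+}\to\mathbf{KGL}_{\Q+}\xrightarrow{+1}$, and it splits because $\eta=0$ on the $H\Q$-module $\mathbf{KO}_{\Q+}$; hence $\mathbf{KO}_{\Q+}$ is a direct summand of $\mathbf{KGL}_{\Q+}$, which coincides with $\mathbf{KGL}_\Q\simeq\bigoplus_{n\in\Z}\Q(n)[2n]$ (rational Chern character, $\mathbf{KGL}$ being oriented). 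As $\Hom_{DM(k)_\Q}(\Q(n)[2n],\Q(m)[2m])$ equals $\Q$ for $n=m$ and vanishes otherwise, the endomorphism ring of $\bigoplus_n\Q(n)[2n]$ is $\prod_n\Q$, whose idempotents correspond to subsets of $\Z$; so $\mathbf{KO}_{\Q+}\simeq\bigoplus_{n\in S}\Q(n)[2n]$ for some $S\subseteq\Z$, and comparison with the slices computed above forces $S=2\Z$, i.e.\ $\mathbf{KO}_{\Q+}\simeq\bigoplus_{j\in\Z}\Q(2j)[4j]$. For this split spectrum the slice filtration is visibly split, so its slice spectral sequence degenerates at $E^1$; and each group $\pi_{p,n}(\mathbf{KO}_{\Q+})(X)$ is a finite direct sum of motivic cohomology groups of $X$, which is the asserted strong convergence.
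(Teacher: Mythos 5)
Your proposal is correct, and while the slice computation itself ($s_q(\mathbf{KO}_{\Q+})=S^{2q,q}\wedge H\Q$ for $q$ even, $0$ for $q$ odd, via Theorem \ref{thmRO1}, commutation of $s_q$ with rationalisation and the $\pm$-splitting, and the vanishing of $s_q(\mathbf{KT}_\Q)$) coincides with the paper's, the rest of your argument takes a genuinely different route. For convergence the paper smashes the triangle of Theorem \ref{thmRO2} with $\mathbf{1}_{\Q+}$, notes $\eta=0$ there, and deduces strong convergence from the known strong convergence of the slice spectral sequence of $\mathbf{KGL}_\Q$; you instead argue directly that $f_q\mathbf{KO}_{\Q+}\in SH(k)_{\geq q}$ (Morel connectivity, since $\Sigma^q_{\P^1}SH(k)^{eff}\subset SH(k)_{\geq q}$ over a perfect field) and that the filtration is exhaustive via Lemma \ref{lemcell} --- both valid, and arguably more self-contained. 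For the vanishing of the higher differentials the paper invokes the Landweber-exactness exact sequence of \cite{NSO09} to kill $H\Q^{2r+1,r}(H\Q)$, whereas you identify the obstruction group as $\Hom_{DM(k)_\Q}(\Q(q)[2q],\Q(q+r)[2q+2r+1])=H^{2r+1,r}_M(k,\Q)=0$ (degree above weight over a field), which is more elementary; the subtlety you flag --- that $d_r$ is only realised by some lift of $s_q\mathbf{KO}_{\Q+}\to f_{q+1}\mathbf{KO}_{\Q+}[1]$ once the lower differentials vanish --- is genuine but harmless here, since every step of the inductive lifting has its obstruction in one of these vanishing groups, so any choice of lift gives $d_r=0$; note the paper glosses over exactly the same point by writing $d_r$ as if it had the shape of $d_1$. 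Your second route reverses the paper's logical order: you split the Wood triangle on the $+$-part ($\eta=0$ on the $H\Q$-module $\mathbf{KO}_{\Q+}$), realise $\mathbf{KO}_{\Q+}$ as a summand of $\mathbf{KGL}_\Q\simeq\bigvee_n\Q(n)[2n]$, use that the endomorphism ring $\prod_n\Q$ has only diagonal idempotents, and pin down the summand as $\bigvee_j\Q(2j)[4j]$ by comparing slices --- this proves Corollary \ref{corsssKO} first and gets degeneration for free, at the price of two external inputs the paper does not use (the rational splitting of $\mathbf{KGL}$ and the concentration of the slices of $H\Q$ in degree $0$), but with the benefit of not needing the cellularity of $\mathbf{KO}$ (Theorem \ref{thmKO}), which the paper requires to deduce the corollary from the theorem.
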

\begin{proof}
By smashing $-\wedge \mathbf{1}_{\Q+}$ with the distinguished triangle \ref{eqRO1} in theorem \ref{thmRO2}, we obtain a distinguished triangle 
$$\mathbf{KO}_{\Q+} \wedge \G_m \stackrel{0}{\rightarrow} \mathbf{KO}_{\Q+} \rightarrow \mathbf{KGL}_\Q \stackrel{+1}{\rightarrow},$$
where $\eta$ becomes $0$ in $SH(k)_{\Q+}$. So we have a monomorphism $\mathbf{KO}_{\Q+} \inj \mathbf{KGL}_\Q = \mathbf{KGL}_{\Q+}$. So the strongly convergence of the slice spectral sequence for $\mathbf{KGL}_\Q$ (cf. \cite{Voe02a} and \cite{Lev08}) implies the strongly convergence of the slice spectral sequence for $\mathbf{KO}_{\Q+}$. Apply now the equation \ref{eqsKO} in theorem \ref{thmRO1} we have $s_q(\mathbf{KO}) \wedge \mathbf{1}_\Q = S^{2q,q} \wedge H\Q$ for $q \equiv 0 \mod 2$ and $0$ else. Since we may write 
$$\mathbf{1}_\Q = hocolim(\mathbb{S}^0 \stackrel{2}{\rightarrow} \mathbb{S}^0 \stackrel{3}{\rightarrow} \mathbb{S}^0 \stackrel{4}{\rightarrow} \cdots),$$
we must have $s_q(\mathbf{KO}) \wedge \mathbf{1}_\Q = s_q(\mathbf{KO} \wedge \mathbf{1}_\Q)$, because $s_q$ commutes with homotopy colimits for $\forall q \in \Z$ (see e.g. \cite[Lem. 5.5]{RO13}). But we also have 
$$s_q(\mathbf{KO} \wedge \mathbf{1}_\Q) = s_q(\mathbf{KO}_{\Q+} \vee \mathbf{KO}_{\Q-}) = s_q(\mathbf{KO}_{\Q+}) \vee s_q(\mathbf{KO}_{\Q-}) = s_q(\mathbf{KO}_{\Q+}) \vee s_q(\mathbf{KT}_\Q).$$ 
Apply now the equation \ref{eqsKT} in the theorem \ref{thmRO1}, we see $s_q(\mathbf{KO}_{\Q-}) = 0$ for all $q \in \Z$. So we obtain $s_q(\mathbf{KO}_{\Q+}) = S^{2q,q}\wedge H\Q$ for $q \equiv 0 \mod 2$ and $0$ else. So the $E^1$-terms of the slice spectral sequence for $\mathbf{KO}_{\Q+}$ are given by 
$$E^1_{p,q,n} = \begin{cases} [S^{p,n},S^{2q,q} \wedge H\Q] = H\Q^{2q-p,q-n}, \quad q \equiv 0 \mod 2, \\ 0, \quad q \equiv 1 \mod 2. \end{cases}$$
The differential $d_1^{p,q,n}$ are induced by the composition: 
$$s_q(\mathbf{KO}_{\Q+}) \rightarrow S^1_s \wedge f_{q+1}(\mathbf{KO}_{\Q+}) \rightarrow S^1_s \wedge s_{q+1}(\mathbf{KO}_{\Q+}).$$
Since $s_q(\mathbf{KO}_{\Q+}) = 0$ for $q \equiv 1 \mod 2$, so the differentials $d_1$ vanish for all $p,q,n \in \Z$. On the other hand, from the work of Naumann, Spitzweck and \O stv\ae r on motivic Landweber exactness \cite{NSO09}, we know that $H\Q$ is the Landweber exact spectrum associated with the additive formal group law over $\Q$. There is a short exact sequence \cite[Thm. 9.5]{NSO09}
\begin{multline}\label{eqLandweber1}
0 \rightarrow Ext^{1,(p-1,q)}_{MGL_{*,*}}(MGL_{*,*}H\Q,H\Q_{*,*}) \rightarrow H\Q^{p,q}H\Q \rightarrow  \\ \rightarrow  \Hom^{p,q}_{MGL_{*,*}}(MGL_{*,*}H\Q,H\Q_{*,*}) \rightarrow 0,
\end{multline}
where we denote by $MGL$ the Voevodsky's algebraic cobordism spectrum. The higher differentials $d_r$ of the slice spectral sequence are given by 
$$\pi_{p,n}(s_q \mathbf{KO}_{\Q+}) \rightarrow \pi_{p-1,n}(s_{q+r} \mathbf{KO}_{\Q+}),$$
which are induced by $s_q(\mathbf{KO}_{\Q+}) = S^{2q,q} \wedge H\Q \rightarrow S^1_s \wedge S^{2q+2r,q+r} \wedge H\Q = S^1_s \wedge s_{q+r}(\mathbf{KO}_{\Q+})$. Apply now the exact sequence \ref{eqLandweber1} we see that all the higher differentials $d_r$ vanish. Thus the slice spectral sequence degenerates at $E^1$-terms.        
\end{proof}
Consequently, we obtain the following splitting of $\mathbf{KO}_{\Q+}$, which is a result of F. Morel: 
\begin{cor}\label{corsssKO}
Let $k$ be a perfect field of $char(k) \neq 2$. There is a splitting in $SH(k)_\Q$ 
$$\mathbf{KO}_{\Q+} = \bigvee_{m \in \Z} \P^{1 \wedge 2m}_{\Q+}.$$
\end{cor}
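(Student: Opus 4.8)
The plan is to deduce the splitting directly from the slice tower, treating Theorem \ref{thmsssKO} as the essential input. First I would record the two facts proved there: for $k$ perfect with $char(k)\neq 2$ one has
$$s_q(\mathbf{KO}_{\Q+}) = \begin{cases} \P^{1\wedge q}_{\Q+}, & q \text{ even}, \\ 0, & q \text{ odd}, \end{cases}$$
and the associated slice spectral sequence $\pi_{p,n}(s_q\mathbf{KO}_{\Q+})\Rightarrow\pi_{p,n}(\mathbf{KO}_{\Q+})$ is strongly convergent and degenerates at $E^1$. This reduces the corollary to showing that the slice tower of $\mathbf{KO}_{\Q+}$ splits, i.e. that the successive extensions in the tower
$$f_{q+1}(\mathbf{KO}_{\Q+}) \longrightarrow f_q(\mathbf{KO}_{\Q+}) \longrightarrow s_q(\mathbf{KO}_{\Q+}) \stackrel{+1}{\longrightarrow}$$
are all trivial, and that the resulting stagewise splittings assemble along the (bi-infinite) tower to an equivalence with $\bigvee_{m\in\Z}\P^{1\wedge 2m}_{\Q+}$.

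To see the extensions are trivial I would pass to $SH(k)_{\Q+}\simeq DM_B(k)\simeq DM(k)_\Q$ of Cisinski--D\'eglise, under which $\P^{1\wedge n}_{\Q+}$ becomes the Tate object $\Q(n)[2n]$ (recall $\mathbf 1_{\Q+}=H_B$), and compute
$$\Hom_{SH(k)_{\Q+}}\bigl(\P^{1\wedge a}_{\Q+},\,\P^{1\wedge b}_{\Q+}[m]\bigr) = H^{2(b-a)+m}_{\sM}\bigl(k,\Q(b-a)\bigr),$$
which vanishes for every $m\geq 1$ and all $a,b$: if $b-a\geq 0$ the cohomological degree $2(b-a)+m$ exceeds the weight $b-a$, and if $b-a<0$ the weight is negative. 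In particular all the connecting maps $s_q(\mathbf{KO}_{\Q+})\to f_{q+1}(\mathbf{KO}_{\Q+})[1]$ vanish once $f_{q+1}(\mathbf{KO}_{\Q+})$ has been identified with a wedge of slices (using compactness of the Tate summands to turn $\Hom$ into an infinite wedge back into a product of vanishing groups). This both re-proves the degeneration and shows, by descending induction on $q$, that each $f_q(\mathbf{KO}_{\Q+})$ is non-canonically the wedge of the slices $s_{q'}(\mathbf{KO}_{\Q+})$ with $q'\geq q$.

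The hard part will be the convergence of the bi-infinite slice tower: the vanishing of the $\Hom$-groups only splits each individual extension, the tower has no top stage at which to anchor the induction, and one must still control the $\lim^1$-terms in both directions before gluing the stagewise splittings. I would handle this exactly as for algebraic $K$-theory. Smashing the triangle \ref{eqRO1} of R\"ondigs--\O stv\ae r with $\mathbf 1_{\Q+}$ kills $\eta$ and yields a split distinguished triangle $\mathbf{KO}_{\Q+}\wedge\G_m\stackrel{0}{\rightarrow}\mathbf{KO}_{\Q+}\rightarrow\mathbf{KGL}_\Q\stackrel{+1}{\rightarrow}$, so the forgetful map exhibits $\mathbf{KO}_{\Q+}$ as a direct summand of $\mathbf{KGL}_\Q$ compatibly with slice filtrations, inducing on slices an isomorphism $s_q(\mathbf{KO}_{\Q+})\stackrel{\cong}{\rightarrow}s_q(\mathbf{KGL}_\Q)=\P^{1\wedge q}_{\Q+}$ for $q$ even and the zero map for $q$ odd (the odd slices of $\mathbf{KGL}_\Q$ coming from the complementary summand $\P^1_{\Q+}\wedge\mathbf{KO}_{\Q+}$). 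Since the rational slice tower of $\mathbf{KGL}_\Q$ is classically known to converge and to split as $\bigvee_{q\in\Z}\P^{1\wedge q}_{\Q+}$ (see \cite{NSO09}, with convergence of the slice spectral sequence for $\mathbf{KGL}_\Q$ as in \cite{Voe02a, Lev08}), the even summands recombine to give the claimed decomposition of $\mathbf{KO}_{\Q+}$; alternatively the $\P^{1\wedge 4}$-periodicity of $\mathbf{KO}$ reduces the bi-infinite tower to a single finite window up to periodicity. Either way, once convergence is secured, the decomposition is forced by the slice computation and degeneration already in hand.
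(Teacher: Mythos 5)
Your proposal is correct, but it assembles the pieces differently from the paper. The paper's proof of this corollary is exactly the two inputs it cites: the strong convergence and $E^1$-degeneration of the slice spectral sequence for $\mathbf{KO}_{\Q+}$ (Theorem \ref{thmsssKO}), which identifies $\pi_{*,*}(\mathbf{KO}_{\Q+})$ with $\pi_{*,*}$ of $\bigvee_{m\in\Z}\P^{1\wedge 2m}_{\Q+}$, together with the cellularity of $\mathbf{KO}$ (Theorem \ref{thmKO}); cellularity is precisely what lets one promote a bigraded $\pi_{*,*}$-isomorphism to an equivalence in $SH(k)_\Q$, in the spirit of \cite{DI05}. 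You never invoke cellularity of $\mathbf{KO}$: instead you work in $SH(k)_{\Q+}\simeq DM(k)_\Q$ (via $\mathbf 1_{\Q+}=H_B$, \cite{CD10}), kill the extensions in the slice tower by the vanishing of $H^p_{\sM}(k,\Q(q))$ for $p>q$ or $q<0$, and resolve the genuine convergence issue of the bi-infinite tower by exhibiting $\mathbf{KO}_{\Q+}$ as a direct summand of $\mathbf{KGL}_\Q$ via the triangle \ref{eqRO1} with $\eta$ annihilated --- the same summand observation the paper makes inside the proof of Theorem \ref{thmsssKO}, but used there only for convergence of the spectral sequence. Your second route in fact gives the cleanest complete argument and deserves to be spelled out: once $\mathbf{KO}_{\Q+}$ is a summand of $\mathbf{KGL}_\Q\simeq\bigvee_{q\in\Z}\P^{1\wedge q}_{\Q+}$ (known rationally, cf. \cite{NSO09}), the same weight/degree vanishing gives $\Hom_{SH(k)_{\Q+}}(\P^{1\wedge a}_{\Q+},\P^{1\wedge b}_{\Q+})=0$ for $a\neq b$, so by compactness of the Tate summands the endomorphism ring of the wedge is the product $\prod_q\Q$ of diagonal entries, every idempotent is diagonal, the summand is a sub-wedge, and the slice computation $s_q(\mathbf{KO}_{\Q+})=\P^{1\wedge q}_{\Q+}$ for $q$ even, $0$ for $q$ odd, identifies it as the even part. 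What each approach buys: yours makes the splitting essentially formal inside Beilinson motives and is independent of the cellularity machinery, while the paper's is immediate once Theorems \ref{thmKO} and \ref{thmsssKO} are in place and needs neither the idempotent analysis nor the external rational splitting of $\mathbf{KGL}_\Q$.
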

\begin{proof}
This follows immediately from the degeneration of the slice spectral sequence in theorem \ref{thmsssKO} and the cellularity of $\mathbf{KO}$ in theorem \ref{thmKO}.
\end{proof}
\section{Twisted Chow-Witt correspondences, Chow-Wit- and Witt-motives}
We recall the intersection theory in Chow-Witt rings in \cite{Fas07} and \cite{Fas08}. Let $k$ be a field of characteristic different from $2$, $X \in Sm/k$ be a smooth $k$-scheme and $\sL$ a line bundle on $X$. The $i$-th Chow-Witt complex twisted by $\sL$ is defined to be the fiber product complex \cite[Def. 3.21]{Fas07}
\begin{equation*}
\xymatrix{C^*(X,G^i,\sL) \ar[d] \ar[r] & C^*_{FS}(X,I^i,\sL) \ar[d] \\ C^*(X,K^M_i) \ar[r] & C^*_{FS}(X,I^i/I^{i+1})}
\end{equation*}  
This means that one has a short exact sequence of complexes 
$$0 \rightarrow C^*(X,G^i,\sL) \rightarrow C^*_{FS}(X,I^i,\sL) \oplus C^*(X,K^M_i) \rightarrow C^*_{FS}(X,I^i/I^{i+1}) \rightarrow 0.$$
Here $C^*(X,K^M_i)$ is the Gersten-Milnor complex 
$$0 \rightarrow K_i^M(k(X)) \rightarrow \bigoplus_{x \in X^{(1)}}K_{i-1}^M(\kappa(x)) \rightarrow \cdots $$ 
$C^*_{FS}(X,I^i,\sL)$ is the twisted by $\sL$ complex of fundamental ideal of the Witt ring of modules of finite length in Fasel's notation. The $i$-th twisted by $\sL$ Chow-Witt group $\widetilde{\CH}^i(X,\sL)$ is defined as $H^i(X,C^*(X,G^i,\sL))$. There is a product (cf. \cite[Rem. 6.2]{Fas07})
$$\cdot : H^i(C^*(X,G^i,\sL)) \otimes H^j(C^*(X,G^j,\sL')) \rightarrow H^{i+j}(C^*(X,G^{i+j},\sL\otimes \sL')), $$
which is not commutative in general. This product defines then the intersection product $\cdot$ on the twisted Chow-Witt groups $\CH^*(X,\sL)$. For $\alpha \in \CH^i(X,\sL)$ and $\beta \in \CH^j(X,\sL')$, we define their left resp. right intersection product as
\begin{align*} 
\alpha \cdot_l \beta = \alpha \cdot \beta, \\ 
\alpha \cdot_r \beta = \beta \cdot \alpha.
\end{align*}
Recall the following proper base change result of J. Fasel: 
\begin{lem}\label{lembasechangeCW} \cite[Prop. 4.2.2]{AH11}
Given a cartesian square $\Box$ of smooth $k$-schemes 
\begin{equation*} 
\xymatrix{X' \ar[d]^g \ar[r]^q & X \ar[d]^f \\ Y' \ar[r]^p & Y} 
\end{equation*}
where $f$ is proper. Then one has $p^{!}f_* = g_*q^!$. 
\end{lem}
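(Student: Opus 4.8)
The plan is to prove this by the cycle-theoretic route, following the template of base change for Chow groups (Fulton's intersection theory, or Rost's compatibility results for cycle modules), now with Milnor--Witt coefficients and line-bundle twists. Recall that $\widetilde{\CH}^i(X,\sL)$ is computed by the Gersten--Rost--Schmid complex of the Milnor--Witt $K$-theory sheaf twisted by $\sL$ --- equivalently Fasel's fibre-product complex $C^*(X,G^i,\sL)$ --- and that both the proper pushforward $f_*$ and the refined Gysin pullback $p^!$ between smooth $k$-schemes are already defined on these complexes; so it suffices to compare the two composites as maps of graded groups (keeping track of the twists, which I suppress below). First I would reduce to two special cases by factoring $p\colon Y'\to Y$ through its graph $Y'\xrightarrow{\gamma_p}Y'\times_k Y\xrightarrow{\pr_2}Y$: since $Y'$ and $Y$ are smooth over $k$, $\gamma_p$ is a regular closed immersion and $\pr_2$ is smooth, and $p^!=\gamma_p^!\circ\pr_2^!$. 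Pulling the given cartesian square back along this factorisation splits it into two cartesian squares, one with smooth (hence flat) lower horizontal arrow and one with a regular closed immersion as lower horizontal arrow, so it is enough to establish the formula in each.

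In the flat case $p^!=p^*$ and $q^!=q^*$ are flat pullbacks, and I would verify $p^*f_*=g_*q^*$ directly on cycles: for a point $x\in X$ with image $y=f(x)$, the fibre $X'\times_X\{x\}$ is finite over $\kappa(y)$, and the identity reduces to the compatibility, in Milnor--Witt $K$-theory, of the transfer (norm) maps for finite field extensions with residue maps and with flat localisation of the ambient rings --- precisely the cycle-module axioms. The genuinely new point compared with the Chow-group case is the twist: $f_*$ introduces a twist by the relative canonical bundle $\omega_{X/Y}\cong\omega_{X/k}\otimes f^*\omega_{Y/k}^{\vee}$, and I would use the canonical isomorphism $g^*\omega_{X/Y}\cong\omega_{X'/Y'}$ coming from the cartesian square to match the twisted groups on both sides. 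In the regular-immersion case $p=\gamma$ has normal bundle $N=N_{Y'/Y}$ and $\gamma^!$ is the specialisation map: pull back to the deformation space $M^{\circ}_{Y'/Y}$, restrict to the special fibre $N$, and identify $\widetilde{\CH}^*$ of $N$ with that of $Y'$ by $\A^1$-homotopy invariance (the Thom isomorphism). Since deformation to the normal cone is functorial for the square --- the same construction deforms $X'\hookrightarrow X$ to $N_{X'/X}\cong g^*N$ --- the formula then follows from the flat case applied to the projection $N\to Y'$ together with the compatibility of $f_*$ with restriction to the special fibre. Combining the two cases yields $p^!f_*=g_*q^!$.

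The main obstacle, I expect, will be two points of bookkeeping rather than any single deep fact. First, carrying the line-bundle twists coherently through both reductions: checking that the canonical isomorphisms of relative canonical bundles behave well under the graph factorisation and under deformation to the normal cone, so that the refined pullbacks and pushforwards really land in the groups the formula claims. Second, proving that proper pushforward commutes with the specialisation map defining $\gamma^!$; this is where the geometry of the deformation space and its functoriality under the cartesian square is actually used, and it is the analogue of the corresponding step in Rost's treatment of compatibilities. Granting the twisted Milnor--Witt cycle-module package (Fasel; see also the Milnor--Witt cycle modules of Feld), the remaining arguments are the same diagram chases as in the classical theory, and that is how I would organise the proof.
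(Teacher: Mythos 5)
The paper does not prove this statement at all: it is quoted verbatim as \cite[Prop.~4.2.2]{AH11}, and Asok--Haesemeyer in turn defer to Fasel's base-change results for (twisted) Chow--Witt groups in \cite{Fas08}. So your proposal cannot match "the paper's proof"; what it does is reconstruct, in outline, exactly the argument of the cited sources: compute $\widetilde{\CH}^*(-,\sL)$ by Fasel's fibre-product (Gersten/Rost--Schmid) complexes, factor $p$ as the regular closed immersion $\gamma_p\colon Y'\to Y'\times_k Y$ followed by the smooth projection, treat the flat case by a local cycle-module-style compatibility of finite transfers in Milnor--Witt $K$-theory with residues and flat pullback, and treat the regular-immersion case by deformation to the normal cone, homotopy invariance, and compatibility of proper pushforward with the specialisation map. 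This is the right route and is essentially how Fasel proves it (the twist bookkeeping via $\omega_{X/Y}\cong\omega_{X/k}\otimes f^*\omega_{Y/k}^{\vee}$ and $g^*\omega_{X/Y}\cong\omega_{X'/Y'}$ is also handled there), so as a plan it is sound; the paper simply buys all of this by citation, while you would be re-proving the black box.

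Two points in your sketch deserve sharpening. First, after pulling the square back along the graph factorisation, the immersion you must compare with $\gamma_p$ is $X'\hookrightarrow X\times_k Y'$ (not $X'\to X$, which is just $q$ and is no immersion); you need that this square has no excess, i.e.\ $N_{X'/X\times_k Y'}\cong g^*N_{Y'/Y'\times_k Y}$. This is automatic here because all four schemes are smooth and both normal bundles are pullbacks of $T_Y$ (codimension $=\dim Y$ on both sides), but it is exactly the place where a general "refined Gysin vs.\ pushforward" statement would need a hypothesis, so it should be stated rather than absorbed into "functoriality of the deformation space". Second, in the flat case the local statement is not about the fibre $X'\times_X\{x\}$ being finite over $\kappa(y)$; the points contributing to $f_*$ are those $x$ with $[\kappa(x):\kappa(f(x))]$ finite, and the identity to check is a transfer/base-change formula for the semilocal ring $\kappa(x)\otimes_{\kappa(y)}\sO_{Y',\,p^{-1}(y)}$ with the correct multiplicities (in the Milnor--Witt setting, with the correct twists and forms, not just lengths). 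That is precisely the cycle-module-type compatibility established by Fasel, so your appeal to it is legitimate, but as phrased the reduction is garbled and would need to be rewritten before it counts as a proof.
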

\begin{rem}{ \rm 
$p^! = p^*$ and $q^! = q^*$, if they are flat (see \cite[Prop. 7.4]{Fas07}). 
}
\end{rem}
\begin{cor}\label{corprojform}\cite[Cor. 4.2.3 and Rem. 4.2.4]{AH11}
Let $f: X \rightarrow Y$ be a proper morphism of smooth $k$-schemes. Then one has projection formulas for both left and right intersection products with arbitrary twists  
$$f_*(f^! \alpha \cdot_l \beta) = \alpha \cdot_l f_* \beta,$$
and 
$$f_*(f^! \alpha \cdot_r \beta) = \alpha \cdot_r f_* \beta.$$
\end{cor}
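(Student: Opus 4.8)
The plan is to reduce the whole statement to the base change isomorphism of Lemma~\ref{lembasechangeCW} by means of the graph construction. Two inputs will be used. First, the (non-commutative) product of \cite[Rem.~6.2]{Fas07} is obtained by pulling back the exterior product along the diagonal $\Delta_X\colon X\to X\times X$: for classes $\xi,\zeta$ on $X$ one has $\xi\cdot_l\zeta=\Delta_X^!(\xi\times\zeta)$ and $\xi\cdot_r\zeta=\zeta\cdot_l\xi=\Delta_X^!(\zeta\times\xi)$. Second, the exterior product is compatible with proper pushforward in each variable separately, i.e. $\gamma\times g_*\delta=(\id\times g)_*(\gamma\times\delta)$ and $g_*\delta\times\gamma=(g\times\id)_*(\delta\times\gamma)$. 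These, together with functoriality of the Gysin maps (\cite[Prop.~7.4]{Fas07}) and Lemma~\ref{lembasechangeCW}, should turn the proof into a formal diagram chase.

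For the left projection formula I would work with the cartesian square
\begin{equation*}
\xymatrix{X \ar[d]_f \ar[r]^{\gamma} & Y\times X \ar[d]^{\id_Y\times f} \\ Y \ar[r]_{\Delta_Y} & Y\times Y}
\end{equation*}
where $\gamma\colon x\mapsto(f(x),x)$: it is cartesian because the fibre product of $\id_Y\times f$ and $\Delta_Y$ consists precisely of the points $(f(x),x)$ and is therefore $X$ via $\gamma$, and $\id_Y\times f$ is proper since $f$ is. Starting from the right-hand side, the identity to establish reads
\begin{align*}
\alpha\cdot_l f_*\beta
&= \Delta_Y^!\bigl(\alpha\times f_*\beta\bigr)
= \Delta_Y^!(\id_Y\times f)_*\bigl(\alpha\times\beta\bigr) \\
&= f_*\,\gamma^!\bigl(\alpha\times\beta\bigr)
= f_*\,\Delta_X^!\,(f\times\id_X)^!\bigl(\alpha\times\beta\bigr) \\
&= f_*\,\Delta_X^!\bigl(f^!\alpha\times\beta\bigr)
= f_*\bigl(f^!\alpha\cdot_l\beta\bigr),
\end{align*}
where the third equality is Lemma~\ref{lembasechangeCW} applied to the square, the fourth uses the factorization $\gamma=(f\times\id_X)\circ\Delta_X$ and functoriality of the Gysin maps, and the fifth is the compatibility $(f\times\id_X)^!(\alpha\times\beta)=f^!\alpha\times\beta$. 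The right projection formula is then obtained by running the same argument with the two exterior factors transposed: one uses instead the cartesian square built from $f\times\id_Y\colon X\times Y\to Y\times Y$ and the graph $x\mapsto(x,f(x))$, together with $\alpha\cdot_r f_*\beta=\Delta_Y^!(f_*\beta\times\alpha)$ and $f^!\alpha\cdot_r\beta=\Delta_X^!(\beta\times f^!\alpha)$. This interchange of factors is the only place where the non-commutativity of the Chow-Witt product enters.

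The step I expect to require genuine care, and hence the main obstacle, is the bookkeeping of the line-bundle twists --- which is exactly why the statement is phrased ``with arbitrary twists''. One must check that the twist of $f^!\alpha\times\beta$ on $X\times X$, the contribution $\det N_{\Delta_X}^\vee=\omega_X$ introduced by $\Delta_X^!$, the twists transported through $(\id_Y\times f)_*$ and $\gamma^!$, and the twist of $\alpha\times f_*\beta$ on $Y\times Y$ together with the $\omega_Y$ coming from $\Delta_Y^!$, all agree around the diagram. The reconciliation should follow from cartesianity of the square --- so that the normal bundle of $\gamma$ is the pullback along $f$ of the normal bundle of $\Delta_Y$ --- combined with the additivity $\omega_{X/k}=\omega_{X/Y}\otimes f^*\omega_{Y/k}$; this is the content of \cite[Rem.~4.2.4]{AH11}. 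Once Lemma~\ref{lembasechangeCW} and the compatibility of proper pushforward with exterior products are in place, everything else is routine.
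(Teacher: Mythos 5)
The paper offers no proof of this corollary at all --- it is quoted directly from \cite[Cor.~4.2.3 and Rem.~4.2.4]{AH11} --- so there is no internal argument to compare against; your derivation via the graph construction (the cartesian square with $\gamma=(f,\mathrm{id})$, $\Delta_Y$, and $\mathrm{id}_Y\times f$, whose excess bundle vanishes since $N_\gamma\cong f^*N_{\Delta_Y}$) together with Lemma~\ref{lembasechangeCW} is precisely the standard argument, and it is the one used in the cited source; your transposed version for $\cdot_r$ is also correct given the paper's convention $\alpha\cdot_r\beta=\beta\cdot_l\alpha$. The only caveat is that the inputs you assume --- compatibility of the exterior product with proper pushforward and with Gysin pullbacks in each variable, functoriality of Gysin maps, and the twist identifications coming from $N_\gamma\cong f^*N_{\Delta_Y}$ and $\omega_{X/k}\cong\omega_{X/Y}\otimes f^*\omega_{Y/k}$ --- are themselves nontrivial results in the twisted Chow--Witt setting, so they should be cited explicitly from \cite{Fas07} and \cite{Fas08} rather than from the flatness remark following Lemma~\ref{lembasechangeCW}; with those references in place your argument is complete.
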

 Let $X,Y \in SmProj/k$. We define the group of Chow-Witt correspondence between $X$ and $Y$ as $\widetilde{CH}^{d_X}(X\times Y, p^{XY*}_X\omega_{X/k})$, where $p^{XY}_X$ denotes the projection $X\times Y \rightarrow X$, $\omega_{X/k}$ is the canonical line bundle on $X$ and $d_X = \dim(X)$.  
\begin{prop}\label{propCW}
Let $X,Y,Z \in SmProj/k$, where $k$ is a field of $char(k) \neq 2$. If $\alpha \in \widetilde{\CH}^{d_X}(X\times Y,p^{XY*}_X \omega_{X/k})$ and $\beta \in \widetilde{\CH}^{d_Y}(Y\times Z,p^{YZ*}_Y\omega_{Y/k})$, then their composition will land in 
$$\beta \circ \alpha \stackrel{def}{=} p^{XYZ}_{XZ*}(p^{XYZ*}_{XY}(\alpha)\cdot_l p^{XYZ*}_{YZ}(\beta)) \in \widetilde{\CH}^{d_X}(X\times Z,p^{XZ*}_X\omega_{X/k}) $$ 
Moreover, this composition is associative and satisfies the unit axiom.  
\end{prop}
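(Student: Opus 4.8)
\emph{Plan.} This is the Chow--Witt analogue of the construction of the category of correspondences in \cite[\S~20]{MVW06}, and the plan is to follow that pattern; the only genuinely new features are that every group carries a twisting line bundle which must be matched exactly at each step, and that the intersection product $\cdot_l$ is noncommutative. The whole argument rests on Fasel's intersection formalism (\cite{Fas07,Fas08}): flat, and more generally lci, pullback $f^!$; proper pushforward $f_*$; their mutual compatibility and compatibility with $\cdot_l$; functoriality $f_*g_*=(fg)_*$ and $g^!f^!=(fg)^!$; the proper base change of Lemma~\ref{lembasechangeCW}; the two-sided projection formula of Corollary~\ref{corprojform}; and associativity of $\cdot_l$.

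\emph{Well-definedness.} First one tracks degrees and twists through the three operations defining $\beta\circ\alpha$. Since $p^{XYZ}_{XY}$ is flat, $p^{XYZ*}_{XY}$ preserves cohomological degree and pulls back the twist, and because $p^{XY}_X\circ p^{XYZ}_{XY}=p^{XYZ}_X$ one gets $p^{XYZ*}_{XY}(\alpha)\in\widetilde{\CH}^{d_X}(X\times Y\times Z,\,p^{XYZ*}_X\omega_{X/k})$ and likewise $p^{XYZ*}_{YZ}(\beta)\in\widetilde{\CH}^{d_Y}(X\times Y\times Z,\,p^{XYZ*}_Y\omega_{Y/k})$; hence the $\cdot_l$-product lies in $\widetilde{\CH}^{d_X+d_Y}(X\times Y\times Z,\,p^{XYZ*}_X\omega_{X/k}\otimes p^{XYZ*}_Y\omega_{Y/k})$. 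Now $p^{XYZ}_{XZ}$ is proper of relative dimension $d_Y$, and being the base change of $Y\to\Spec k$ it has relative canonical bundle $p^{XYZ*}_Y\omega_{Y/k}$; combining this with the canonical splittings $\omega_{X\times Y\times Z/k}=p^{XYZ*}_X\omega_{X/k}\otimes p^{XYZ*}_Y\omega_{Y/k}\otimes p^{XYZ*}_Z\omega_{Z/k}$ and $\omega_{X\times Z/k}=p^{XZ*}_X\omega_{X/k}\otimes p^{XZ*}_Z\omega_{Z/k}$, the proper pushforward formula sends the class into $\widetilde{\CH}^{d_X}(X\times Z,\,p^{XZ*}_X\omega_{X/k})=\Hom_{CHW(k)}(X,Z)$, as asserted.

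\emph{Associativity.} For $\alpha\colon X\to Y$, $\beta\colon Y\to Z$, $\gamma\colon Z\to W$ the plan is to show that $(\gamma\circ\beta)\circ\alpha$ and $\gamma\circ(\beta\circ\alpha)$ both equal
$$p^{XYZW}_{XW*}\bigl(p^{XYZW*}_{XY}(\alpha)\cdot_l p^{XYZW*}_{YZ}(\beta)\cdot_l p^{XYZW*}_{ZW}(\gamma)\bigr),$$
the triple product being unambiguous by associativity of $\cdot_l$. To reduce $\gamma\circ(\beta\circ\alpha)$ to this, first apply Lemma~\ref{lembasechangeCW} to the cartesian square with corners $X\times Z$, $X\times Y\times Z$, $X\times Z\times W$, $X\times Y\times Z\times W$ whose two parallel edges forget the $Y$-factor and are therefore proper; this rewrites $p^{XZW*}_{XZ}\,p^{XYZ}_{XZ*}$ as $g_*q^*$ with $g,q$ the evident forgetful maps, and since $q^*$ commutes with $\cdot_l$ one obtains $p^{XZW*}_{XZ}(\beta\circ\alpha)=g_*\bigl(p^{XYZW*}_{XY}(\alpha)\cdot_l p^{XYZW*}_{YZ}(\beta)\bigr)$. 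Then the (right) projection formula of Corollary~\ref{corprojform} applied to $g$, using $p^{XZW}_{ZW}\circ g=p^{XYZW}_{ZW}$, absorbs $p^{XZW*}_{ZW}(\gamma)$ into $g_*$, and $p^{XZW}_{XW*}\circ g_*=p^{XYZW}_{XW*}$ gives the displayed formula. The computation of $(\gamma\circ\beta)\circ\alpha$ is strictly parallel, now through the cartesian square with corners $Y\times W$, $X\times Y\times W$, $Y\times Z\times W$, $X\times Y\times Z\times W$ whose two parallel edges forget the $Z$-factor, and using the \emph{left} projection formula of Corollary~\ref{corprojform}, since here it is $\alpha$, on the left outside the pushforward, rather than $\gamma$ on the right. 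The only point that differs from the classical argument is the need to keep left and right products straight and to invoke the appropriate half of the projection formula at each step; both halves are available, so nothing is lost.

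\emph{Unit, and the main obstacle.} The identity of $X$ will be $\Delta_X=(\delta_X)_*\langle1\rangle$, where $\delta_X\colon X\hookrightarrow X\times X$ is the diagonal with its canonical orientation; this is legitimate because $\omega_{X/(X\times X)}\cong\omega_{X/k}^{\vee}$, so $\delta_{X*}$ carries $\widetilde{\CH}^0(X,\sO_X)$ into $\widetilde{\CH}^{d_X}(X\times X,\,p^{XX*}_X\omega_{X/k})$. To prove $\Delta_Y\circ\alpha=\alpha$, write $j\colon X\times Y\hookrightarrow X\times Y\times Y$ for the closed immersion doubling the last $Y$-factor and $p$ for the projection onto the first two factors; by base change the pullback of $\Delta_Y$ occurring in $\Delta_Y\circ\alpha$ is $j_*\langle1\rangle$, the projection formula of Corollary~\ref{corprojform} rewrites the $\cdot_l$-product as $j_*\bigl(j^!p^*(\alpha)\bigr)$, and since $p\circ j=\id$ and lci pullbacks are functorial this equals $j_*(\alpha)$; as the outer projection composed with $j$ is again the identity, $\Delta_Y\circ\alpha=\alpha$. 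The identity $\alpha\circ\Delta_X=\alpha$ is the mirror image. I expect the main obstacle to be twofold: matching the twisting bundles \emph{on the nose} at every pullback and pushforward --- which is exactly what forces the systematic use of the splitting of $\omega$ for products and of the relative canonical bundles of the projections --- and pinning down the canonical orientation of the diagonal so that the unit constants come out to be $\langle1\rangle$ and not some other square class, which is the delicate point where Fasel's sign conventions for Gysin maps must be handled carefully. Granting that formalism, all three assertions are then routine.
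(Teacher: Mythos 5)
Your proposal is correct and follows essentially the same route as the paper: the same twist bookkeeping via $\omega_{X\times Y\times Z/k}=p^{XYZ*}_X\omega_{X/k}\otimes p^{XYZ*}_Y\omega_{Y/k}\otimes p^{XYZ*}_Z\omega_{Z/k}$ for well-definedness, and the same Fulton-style associativity argument combining the base change of Lemma \ref{lembasechangeCW}, the projection formula of Corollary \ref{corprojform}, functoriality of pullback/pushforward, and associativity of $\cdot_l$. The only minor deviation is the unit axiom, where the paper applies the projection formula to the flat projection and identifies $p^{XYY}_{XY*}(p^{XYY*}_{YY}\Delta_Y)=1_{X\times Y}$, then invokes Fasel's two-sided unit property, whereas you apply base change and the projection formula to the diagonal immersion $j=\id_X\times\delta_Y$; both arguments rest on exactly the same inputs and are equally valid.
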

\begin{proof}
The projection $p^{XYZ}_{XY}: X \times Y \times Z \rightarrow X \times Y$ induces a functorial homomorphism (cf. \cite[Thm. 3.27]{Fas07} or \cite[Chap. 10]{Fas08})
$$p^{XYZ*}_{XY}: \widetilde{\CH}^{d_X}(X \times Y,p^{XY*}_X \omega_{X/k}) \rightarrow \widetilde{\CH}^{d_X}(X\times Y \times Z, p_{XY}^{XYZ*}\circ p^{XY*}_X \omega_{X/k}).$$
The right hand-side is nothing but $\widetilde{\CH}^{d_X}(X\times Y \times Z,p^{XYZ*}_X \omega_{X/k})$. Analogously, we have a functorial homomorphism on $Y \times Z$ factor.  Consider the intersection product defined as pullback of the exterior product along the diagonal (cf. \cite[Def. 6.1]{Fas07}) 
\begin{multline*}
\cdot \, \, : \widetilde{\CH}^{d_X}(X\times Y \times Z,p^{XYZ*}_X \omega_{X/k}) \otimes \widetilde{\CH}^{d_Y}(X\times Y \times Z,p^{XYZ*}_Y \omega_{Y/k}) \rightarrow \\ \widetilde{\CH}^{d_X+d_Y}(X\times Y \times Z, p^{XYZ*}_X \omega_{X/k} \otimes_{\sO_{X\times Y \times Z}} p^{XYZ*}_Y\omega_{Y/k}). 
\end{multline*}
In general, the intersection product defined on $H^j(C^*(X,G^j,\sL))$ is not commutative (not even anti-commutative), see \cite[Rem. 6.7]{Fas07}. So we define the composition $\beta \circ \alpha$ via the left intersection product. 
$$\beta \circ \alpha \stackrel{def}{=} p^{XYZ}_{XZ*}(p^{XYZ*}_{XY}(\alpha)\cdot_l p^{XYZ*}_{YZ}(\beta)) $$
Since 
$$\omega_{X\times Y \times Z /k} = p^{XYZ*}_X\omega_{X/k} \otimes p^{XYZ*}_Y \omega_{Y/k} \otimes p^{XYZ*}_Z \omega_{Z/k},$$
we may rewrite 
\begin{multline*}
\widetilde{\CH}^{d_x+d_Y}(X\times Y \times Z,  p^{XYZ*}_X \omega_{X/k} \otimes_{\sO_{X\times Y \times Z}} p^{XYZ*}_Y\omega_{Y/k}) = \\  \widetilde{\CH}^{d_X+d_Y}(X\times Y \times Z,\omega_{X\times Y \times Z} \otimes (p^{XYZ*}_Z \omega_{Z/k})^{\vee}). 
\end{multline*}
But $(p^{XYZ*}_Z \omega_{Z/k})^{\vee} \cong p^{XYZ*}_Z (\omega_{Z/k}^{\vee})$. Apply now the pushforward $p^{XYZ}_{XZ*}$, we have  
\begin{multline*}
p^{XYZ}_{XZ*}: \widetilde{\CH}^{d_X+d_Y}(X \times Y \times Z, \omega_{X \times Y \times Z/k} \otimes p^{XYZ*}_{XZ} \circ p^{XZ*}_Z (\omega_{Z/k}^{\vee})) \rightarrow \\ \widetilde{\CH}^{d_X}(X\times Z,\omega_{X \times Z} \otimes p^{XZ*}_Z(\omega_{Z/k}^{\vee})) = \widetilde{\CH}^{d_X}(X \times Z, p^{XZ*}_X \omega_{X/k}), 
\end{multline*}
which gives us the composition $\beta \circ \alpha$ as expected. Now we check the unit axiom. Given $X,Y \in SmProj/k$ and $\alpha \in \widetilde{\CH}^{d_X}(X \times Y, p^{XY*}_X \omega_{X/k})$. Apply the definition of composition we have 
$$\Delta_Y \circ \alpha = p^{XYY}_{XY*}(p^{XYY*}_{XY}(\alpha) \cdot_l p^{XYY*}_{YY}(\Delta_Y)). $$
By projection formula for left intersection product \ref{corprojform}, the right hand side is nothing but $\alpha \cdot_l p^{XYY}_{XY*}(p^{XYY*}_{YY}(\Delta_Y))$. We write $p^{XYY}_{YY} = \id_X \times p^{YY}_Y$, so we have then 
$$p^{XYY}_{XY*}(p^{XYY*}_{YY}(\Delta_Y)) = p^{XYY}_{XY*}(1_X \times p^{YY*}_Y\Delta_Y) = 1_{X \times Y} \in \widetilde{CH}^0(X \times Y) \subset GW(k(X \times Y)),$$ 
where $GW(k(X \times Y))$ denotes the Grothendieck-Witt ring. It follows that $\Delta_Y \circ \alpha = \alpha \cdot_l 1_{X \times Y}$. Symmetrically, we have $\alpha \circ \Delta_X = 1_{X \times Y} \cdot_l \alpha$. Now the unit axiom follows from the fact that $1_{X \times Y}$ is a left and right unit (\cite[Prop. 6.8]{Fas07}). 
To check the associativity of the composition, one translates word by word from \cite[Prop. 16.1.1]{Ful98}, where one has to use \ref{lembasechangeCW}, functorialities of pullback and pushforward and the fact that the intersection product on twisted Chow-Witt groups is associative \cite[Prop. 6.6]{Fas07}. Indeed, given $X,Y,Z,W \in SmProj/k$ and $\alpha, \beta, \gamma$ correspondences from $X$ to $Y$ resp. $Y$ to $Z$ resp. $Z$ to $W$, we have then  
\begin{align*} 
\gamma \circ (\beta \circ \alpha) = p^{XZW}_{XW*}(p^{XZW*}_{XZ}(p^{XZW}_{XZ*}(p^{XYZ*}_{XY}\alpha \cdot_l p^{XYZ*}_{YZ} \beta)) \cdot_l p^{XZW*}_{ZW} \gamma)  \\ 
= p^{XZW}_{XW*}(p^{XYZW}_{XZW*}(p^{XYZW*}_{XYZ}(p^{XYZ*}_{XY}\alpha \cdot_l p^{XYZ*}_{YZ}\beta)) \cdot_l p^{XZW*}_{ZW} \gamma) \\  = p^{XZW}_{XW*}(p^{XYZW}_{XZW*}((p^{XZYW*}_{XY}\alpha \cdot_l p^{XYZW*}_{YZ}\beta)\cdot_l p^{XYZW*}_{XZW}p^{XZW*}_{ZW}\gamma)) \\ = p^{XYZW}_{XW*}((p^{XYZW*}_{XY}\alpha \cdot_l p^{XYZW*}_{YZ}\beta) \cdot_l p^{XYZW*}_{ZW} \gamma) \\ = p^{XYZW}_{XW*}(p^{XYZW*}_{XY} \alpha \cdot_l (p^{XYZW*}_{YZ}\beta \cdot_l p^{XYZW*}_{ZW}\gamma)).
\end{align*} 
Symmetrically, one can bring $(\gamma \circ \beta) \circ \alpha$ also into this form, which finishes the proof of our proposition. 
\end{proof}
From \ref{propCW} we thus may define the category of Chow-Witt correspondences $CHW(k)$. 
\begin{defn}\label{defCHW}{\rm
The category of Chow-Witt correspondences $CHW(k)$ over a field of characteristic unequal $2$ is given by $Obj(CHW(k)) = Obj(SmProj/k)$ and 
$$\Hom_{CHW(k)}(X,Y) = \widetilde{\CH}^{\dim(X)}(X\times Y, p^{XY*}_X \omega_{X/k}),$$
where composition of morphisms are defined with respect to the left intersection product. 
The category of effective Chow-Witt motives $\widetilde{CHW}^{eff}(k)$ is defined by taking the pseudo-abelian completion $CHW(k)^{\#}$. There is a canonical decomposition $\P^1 = \Spec k \oplus \mathbb{L}$. We define the category of Chow-Witt motives as $\widetilde{CHW}(k) = \widetilde{CHW}(k)^{eff}(k)[\mathbb{L}^{-1}]$. }  
\end{defn}
\begin{rem}{\rm
Instead of considering the Chow-Witt groups, one can also take the cohomology of twisted Witt-sheaf $H^{d_Y}_{Nis}(X \times_k Y, \underline{\mathbf{W}}(p_Y^{XY*}\omega_{Y/k}))$ and build then the category of geometric Witt motives, which we denote by $WM_{gm}(k)$. }
\end{rem}
Working with $\Q$-coefficient we have a splitting: 
\begin{prop}\label{propsplitCHW}
Let $k$ be a field of $char(k) \neq 2$. There is a splitting 
$$\widetilde{CHW}(k)_\Q = \underline{Chow}(k)_\Q \times WM_{gm}(k)_\Q.$$
\end{prop}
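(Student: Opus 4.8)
The plan is to realise the asserted product decomposition as the splitting cut out by a pair of orthogonal central idempotents coming from the classical rational decomposition of the Grothendieck--Witt ring, and then to identify the two resulting factors with $\underline{Chow}(k)_\Q$ and $WM_{gm}(k)_\Q$.

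I would begin from the fact that for $k$ with $\Char(k)\neq 2$ the rank homomorphism together with the canonical surjection $GW(k)\surj W(k)$ assemble, after inverting $2$, into a ring isomorphism $GW(k)_\Q\xrightarrow{\ \sim\ }\Q\times W(k)_\Q$: each graded piece $I^n/I^{n+1}$ of the fundamental-ideal filtration is isomorphic to $K^M_n(k)/2$, hence is $2$-torsion, so $\mathbf I^n\otimes\Q\cong\mathbf W\otimes\Q$ for all $n$ and $\mathbf K^{MW}_n\otimes\Q\cong(\mathbf K^M_n\otimes\Q)\oplus(\mathbf W\otimes\Q)$. The first substantive step is to propagate this through Fasel's fiber-product complex: in the cartesian square defining $C^*(X,G^i,\sL)$ the lower-right corner $C^*_{FS}(X,I^i/I^{i+1})$ is $2$-torsion, hence acyclic after $-\otimes\Q$, so the defining short exact sequence of complexes degenerates into a canonical direct-sum decomposition
$$C^*(X,G^i,\sL)_\Q\ \cong\ C^*(X,K^M_i)_\Q\ \oplus\ C^*_{FS}(X,I^i,\sL)_\Q\ \cong\ C^*(X,K^M_i)_\Q\ \oplus\ C^*_{FS}(X,\mathbf W,\sL)_\Q,$$
where the last identification uses $\mathbf I^i\otimes\Q\cong\mathbf W\otimes\Q$. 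Taking cohomology in degree $i$ yields a natural isomorphism $\widetilde{\CH}^i(X,\sL)_\Q\cong\CH^i(X)_\Q\oplus H^i_{Nis}(X,\underline{\mathbf W}(\sL))_\Q$, which is precisely the splitting of the Chow--Witt Hom-groups we are after.

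Next I would check that this Hom-group splitting is compatible with the composition law of Proposition \ref{propCW}, i.e.\ that it is induced by a central idempotent. The category $CHW(k)$ is $GW(k)$-linear, since $\mathrm{End}_{CHW(k)}(\Spec k)=\widetilde{\CH}^0(\Spec k)=GW(k)$, and every operation appearing in the formula $\beta\circ\alpha=p^{XYZ}_{XZ*}(p^{XYZ*}_{XY}\alpha\cdot_l p^{XYZ*}_{YZ}\beta)$ --- flat pullback, proper pushforward (Lemma \ref{lembasechangeCW} and Corollary \ref{corprojform}), and the left intersection product of \cite{Fas07} --- is a morphism of sheaves of $\mathbf K^{MW}_*$-modules, hence respects the ring decomposition $GW(k)_\Q\cong\Q\times W(k)_\Q$ naturally in $X$, $Y$, $Z$. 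Therefore the two central idempotents $e_{Chow},e_W\in GW(k)_\Q$ act on each Hom-group compatibly with composition, so $CHW(k)_\Q$ splits as a product of the full subcategories they cut out; since both pseudo-abelianization and formal inversion of $\mathbb L$ commute with finite products of additive categories (with $\mathbb L$ corresponding to a pair of Lefschetz objects, one in each factor), the decomposition passes to $\widetilde{CHW}(k)_\Q$.

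Finally one identifies the two factors. On the $e_{Chow}$-component the Hom-groups become $\CH^{d_X}(X\times Y)_\Q$ with the usual composition of algebraic cycles, so this factor is $\underline{Chow}(k)_\Q$; on the $e_W$-component the Hom-groups become the twisted Witt-cohomology groups $H^{d_X}_{Nis}(X\times Y,\underline{\mathbf W}(p^{XY*}_X\omega_{X/k}))_\Q$ with the induced composition, which is by construction $WM_{gm}(k)_\Q$. The only genuinely non-formal point --- and the one I expect to be the main obstacle --- is the compatibility asserted in the preceding paragraph: one has to verify that the rational splitting of $GW$ is respected by proper pushforward and by the \emph{non-commutative} left intersection product with arbitrary line-bundle twists. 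This reduces to the $\mathbf K^{MW}_*$-linearity of all of Fasel's operations together with the explicit, product-compatible splitting $\mathbf K^{MW}_n\otimes\Q\cong(\mathbf K^M_n\otimes\Q)\oplus(\mathbf W\otimes\Q)$; granting it, the arithmetic content of the proposition is nothing more than the $2$-torsion vanishing of $C^*_{FS}(X,I^i/I^{i+1})$ after $-\otimes\Q$, which turns Fasel's fiber product into a direct product.
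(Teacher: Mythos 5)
Your proposal is correct and follows essentially the same route as the paper: the whole point is that $I^n/I^{n+1}$ (identified with $K^M_n/2$ via the Milnor conjecture) is $2$-torsion, so after $\otimes\,\Q$ Fasel's fiber-product square defining the twisted Chow--Witt complexes degenerates into a direct sum, giving $\widetilde{\CH}^i(X,\sL)_\Q\cong \CH^i(X)_\Q\oplus H^i_{Nis}(X,\underline{\mathbf{W}}(\sL))_\Q$. The paper's proof stops at this Hom-group splitting, whereas you additionally spell out the compatibility with Fasel's pullbacks, pushforwards and the left intersection product (via the central idempotents of $GW(k)_\Q$) and the passage through pseudo-abelianization and inversion of $\mathbb{L}$ --- details the paper leaves implicit, and which are verified exactly as you indicate.
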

\begin{proof}
By definition, for any line bundle $\sL$ we have a pullback square of complexes  
\begin{equation*}
\xymatrix { C^*(-,K^{MW}_n,\sL) \ar[r] \ar[d] & C^*(-,K^M_n) \ar[d] \\ C^*(-,I^n,\sL) \ar[r] & C^*(-,I^n/I^{n-1}) }
\end{equation*}
Apply the Milnor's conjecture \cite{Voe03} we have $C^*(-,I^n/I^{n-1}) \cong C^*(-,K_n^M/2)$. By taking $\Q$-coefficient we obtain immediately that for any smooth projective $k$-variety $X \in SmProj/k$ and for any $\sL$ line bundle on $X$, there is a canonical splitting 
$$H^p_{Nis}(X,\underline{\mathbf{K}}^{MW}_q(\sL))_\Q = H^p_{Nis}(X,\underline{\mathbf{K}}^M_q)_\Q \oplus H^p_{Nis}(X,\underline{\mathbf{I}}^q(\sL))_\Q.$$
\end{proof}
\section{Proof of theorem \ref{mainthm}}
\subsection{Isomorphisms between the $\Hom$-groups}
Let $k$ be an infinite perfect field of characteristic unequal $2$. In this section we prove that one has a fully faithful embedding 
$$\widetilde{CHW}(k)_{\Q}^{op} \rightarrow D_{\A^1,gm}(k)_{\Q},$$ 
Remark that one has the equivalences of categories: 
$$\bold{StHo}_{\A^1,S^1}(k)_{\Q} \cong D_{\A^1}^{eff}(k)_{\Q}, \quad \bold{StHo}_{\A^1,\P^1}(k)_{\Q} \cong D_{\A^1}(k)_{\Q}.$$
By a theorem of F. Morel (cf. \cite[Thm. 4.1.8]{AH11}), one has a quasi-isomorphism of untwisted complexes
$$C^*(-,K^{MW}_i) \simeq C^*(-,K^M_i) \times_{C^*_{AH}(-,I^i/I^{i+1},\sO_X)} C^*_{AH}(-,I^i,\sO_X) ,$$
where for a line bundle $\sL$ over a smooth k-scheme $X \in Sm/k$ the complex $C^*_{AH}(-,I^i,\sL)$ in degree $j$ in Asok-Haesemeyer's notation is given by \cite[Def. 4.1.4]{AH11}
$$C^j_{AH}(X,I^i,\sL) = \bigoplus_{x \in X^{(j)}}I^i_{fl}(\sO_{X,x},\sL_x). $$ 
Hence we may redefine the twisted $i$-th Chow-Witt complex of sheaves as the fiber product complex (cf. \cite[Def. 4.1.9]{AH11})
\begin{equation*}
\xymatrix{C^*(-,K^{MW}_i, \sL) \ar[d] \ar[r] & C^*_{AH}(-,I^i,\sL) \ar[d] \\ C^*(-,K^{M}_i) \ar[r] & C^*_{AH}(-,I^i/I^{i+1})}
\end{equation*} 
The only non-zero cohomology sheaf of $C^*(-,K^{MW}_i,\sL)$ occurs in degree $0$ \cite[Thm. 4.1.10]{AH11}, and so one defines the twisted Milnor-Witt sheaf on a smooth scheme $X$ with a line bundle $\sL$ as (\cite[Def. 4.1.11]{AH11}) 
$$\bold{K}^{MW}_i(\sL) \stackrel{def}{=} H^0C^*(-,K^{MW},\sL). $$
One has obviously a canonical isomorphism from \cite[Thm. 4.1.10]{AH11} 
\begin{equation}\label{eqtwist}
H^i_{Nis}(X,\bold{K}^{MW}_i(\sL)) \cong \widetilde{\CH}^i(X,\sL).
\end{equation}
We recall in the following the result on Thom isomorphism:  
\begin{thm}(Thom isomorphism) \cite[Thm. 4.2.7]{AH11}\label{thmthomiso}
Let $X \in Sm/k$ a smooth $k$-scheme over a field $k$ of $char(k) \neq 2$ and $E/X$ be a vector bundle of rank $r$. Then one has the Thom isomorphism 
\begin{equation}\label{eqthomiso}
H^p_{Nis}(X,\bold{K}_q^{MW}(\det E)) \cong H^{p+r}_{Nis}(Th(E),\bold{K}^{MW}_{q+r}). 
\end{equation}
\end{thm}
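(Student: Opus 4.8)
The plan is to prove the Thom isomorphism \ref{eqthomiso} by the standard two-step reduction: first treat a trivial bundle $E = \sO_X^r$, and then descend to an arbitrary $E$ over a trivializing Nisnevich (in fact Zariski) cover, showing that the only obstruction to patching the local Thom isomorphisms is a $\G_m$-valued \v{C}ech $1$-cocycle representing $\det E$.

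For the trivial case I would argue as follows. By Proposition \ref{propthom}(2) one has $Th(\sO_X^r) \cong \Sigma^r_T X_+$ in $\bold{Ho}_{\A^1,+}(k)$, with $T \cong S^1_s \wedge \G_m$, and $\det \sO_X^r = \sO_X$ is trivial, so the assertion becomes a $2r$-fold suspension isomorphism
\[
H^{p+r}_{Nis}(\Sigma^r_T X_+,\bold{K}^{MW}_{q+r}) \cong H^{p}_{Nis}(X,\bold{K}^{MW}_{q}).
\]
Since $\bold{K}^{MW}_*$ underlies a homotopy module --- by Morel's computation $\underline{\Hom}(\G_m,\bold{K}^{MW}_{n+1}) \cong \bold{K}^{MW}_n$, compatibly with $\underline{H}^p(\Z_{\A^1}(p)) \cong \bold{K}^{MW}_p$ from Proposition \ref{propunstcoh}(3) --- a $\G_m$-suspension raises the weight by one and an $S^1_s$-suspension raises the cohomological degree by one, and strict $\A^1$-invariance (Remark \ref{remHI}) makes these shifts exact on Nisnevich cohomology; iterating $r$ times gives the claim. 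A variant that is more robust for the twisted case is to run everything through the twisted Rost--Schmid complex computing $H^*_{Nis}(-,\bold{K}^{MW}_*(-))$ together with the localization sequence for the zero section $X \inj E$: the residue along $X$ identifies the complex computing $H^{*+r}_{Nis}(Th(E),\bold{K}^{MW}_{*+r})$ with the one computing $H^{*}_{Nis}(X,\bold{K}^{MW}_{*}(\det N_{X/E}))$, and $N_{X/E} \cong E$.

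For the descent step I would pick a cover $\{U_i\}$ trivializing $E$ with transition functions $g_{ij} \in \GL_r(\sO(U_{ij}))$, apply the trivial case on each $U_i$, and compare the resulting isomorphisms on overlaps. They differ by the automorphism of $Th(\sO^r|_{U_{ij}})$ induced by $g_{ij}$, and the crucial point is that $\mathrm{SL}_r$ acts trivially: elementary matrices are $\A^1$-homotopic through bundle automorphisms to the identity, hence act trivially on $\A^1$-invariant cohomology, and $\mathrm{SL}_r$ is generated over a field by elementary matrices. Hence the action factors through $\det\colon \GL_r \to \G_m$ and is multiplication by the unit $\langle \det g_{ij}\rangle$ on $\bold{K}^{MW}$, so the discrepancy cocycle is precisely the transition cocycle of $\det E$ and the local isomorphisms glue to a global one carrying the twist $\det E$. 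Making this rigorous is a matter of comparing the two Mayer--Vietoris (\v{C}ech) spectral sequences, or the two twisted Rost--Schmid complexes, term by term.

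The hardest part will be this orientation bookkeeping, which is where Milnor--Witt $K$-theory genuinely departs from Milnor $K$-theory: one must verify that $g_{ij}$ acts on the Thom class exactly by $\langle\det g_{ij}\rangle$ --- with the correct sign/unit conventions of $\bold{K}^{MW}$ --- equivalently that $\mathrm{SL}_r$ acts trivially, so that the twist is governed by $\det E$ and nothing finer. I expect to settle this either by invoking the explicit $\GL_r$-action on the twisted Rost--Schmid complex in the sense of Fasel and Morel, or by the homotopy purity theorem together with the known structure of self-maps of Thom spaces in the unstable $\A^1$-homotopy category; the rest is formal.
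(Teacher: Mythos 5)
The paper itself gives no proof of Theorem \ref{thmthomiso}: it is imported verbatim from \cite[Thm.~4.2.7]{AH11}. The proof given there (following Morel, \cite[\S 4--5]{Mor12}) is exactly what you relegate to a ``variant'': one identifies $H^{p+r}_{Nis}(Th(E),\mathbf{K}^{MW}_{q+r})$ with the cohomology of $E$ with supports in the zero section (excision/localization for $Th(E)=E/(E\setminus X)$), and then identifies the Rost--Schmid complex of $E$ with supports in $X$ with the Rost--Schmid complex of $X$ shifted by $r$ and twisted by the determinant of the normal bundle $N_{X/E}\cong E$, i.e.\ by $\det E$ (\cite[Lem.~4.35]{Mor12}); this is the same mechanism the present paper uses later in the proof of Lemma \ref{lemmainthm1}. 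So your secondary route is sound and is essentially the proof of record, and your untwisted suspension computation for $\sO_X^r$ is also fine (compare Proposition \ref{propmore}).

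Your primary trivialize-and-glue route, as written, has two genuine gaps. First, the transition functions $g_{ij}$ lie in $\GL_r(\sO(U_{ij}))$ and the overlaps are not spectra of fields: generation of $\mathrm{SL}_r$ by elementary matrices over a field does not give $\mathrm{SL}_r(\sO(U_{ij}))={\rm E}_r(\sO(U_{ij}))$ (the obstruction is $SK_1$, which need not vanish for smooth affine algebras), so triviality of the $\mathrm{SL}_r$-action does not follow from the elementary-matrix homotopies. What is actually needed is that the automorphism of $H^{*}_{Nis}(Th(\sO^r_U),\mathbf{K}^{MW}_{*})$ induced by an arbitrary $g$ depends only on $\det g$ --- equivalently, that the purity isomorphism depends canonically on the determinant alone --- and that is precisely the content of Morel's construction of the twists and of \cite[Lem.~4.35]{Mor12}; so settling it by ``invoking the explicit $\GL_r$-action on the twisted Rost--Schmid complex in the sense of Fasel and Morel'' comes close to citing the theorem you are trying to prove. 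Second, even granting the cocycle computation, isomorphisms of Nisnevich cohomology groups given over a cover do not glue: cohomology is not a sheaf, and comparing two \v{C}ech/Mayer--Vietoris spectral sequences ``term by term'' requires a globally defined comparison morphism to induce the comparison in the first place. One must first produce a global map --- a morphism of twisted Gersten/Rost--Schmid complexes of sheaves, or cup product with a Thom class in the $\det E$-twisted group --- and then check it is an isomorphism locally; constructing that map with the correct twist is again exactly the supports/Rost--Schmid step. I would therefore promote your ``variant'' to the actual proof and keep the descent picture only as motivation.
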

Before we keep continuing, we will need to recall some notions and results of F. Morel. 
\begin{defn} \label{defnMor} \cite[Def. 4.7, Rem. 4.13]{Mor12}
{\rm
Let $X/k$ be an essentially smooth scheme over $k$ and $M$ be a strongly $\A^1$-invariant sheaf. Let $x \in X^{(n)}$ be a point of codimension $n$. We define 
$$C^n_{RS}(X,M) = \bigoplus_{x \in X^{(n)}} M_{(-n)}(\kappa(x), \Lambda^X_x), $$
where $\Lambda^X_x = \bigwedge_{\kappa(x)}^{top}(T_x X)$, here we write $T_x X = (\mathfrak{m}_x / \mathfrak{m}_x^2)^{\vee}$ for the tangent space at $x$ and we set 
$$M_{(-n)}(\kappa(x),\Lambda^X_x) = M_{(-n)}(\kappa(x)) \otimes_{[\kappa(x)^{\times}]} (\Lambda^X_x)^{\times}   $$ 
For any line bundle $\sL$ over $X$, we define its twisted version as 
$$C^n_{RS}(X,\sL,M_{-1}) = \bigoplus_{x \in X^{(n)}}M_{-(n+1)}(\kappa(x), \Lambda^X_x \otimes \sL_x) $$

}
\end{defn}
\begin{thm}\label{thmMor}\cite[Thm. 4. 31]{Mor12}
Let $X$ be an essentially smooth $k$-scheme, then the Rost-Schmid complex $C^*_{RS}(X,M)$ is a complex, where the boundary is defined in \cite[Defn. 4.11]{Mor12} 
\end{thm}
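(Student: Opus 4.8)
\emph{Proof sketch.} The assertion ``$C^*_{RS}(X,M)$ is a complex'' has two parts: each $d^n$ is a well-defined map (independent of the auxiliary choices of uniformizers), and $d^{n+1}\circ d^n=0$. The first part is built into the twisted formulation, because under a change of uniformizer $\pi\mapsto u\pi$ the residue maps $\partial^v_\pi$ and the specialization maps $s^v_\pi$ change exactly by the action of the unit class of $u$ in $GW$, which is precisely the ambiguity that the twist by $\Lambda^X_x$ absorbs. So the content is the relation $d\circ d=0$, which I would prove following Rost's strategy for cycle complexes, in the non-oriented Milnor--Witt refinement. The composite $d^{n+1}\circ d^n$ respects the point-wise decomposition of $C^*_{RS}$: its component between $x\in X^{(n)}$ and $z\in X^{(n+2)}$ (with $z$ in the closure of $x$) is a finite sum over the codimension-one points of $\overline{\{x\}}$ through which $x$ specializes to $z$. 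Restricting to a neighbourhood of $z$ and bringing the relevant subvarieties into good position --- where the transfer structure carried by strongly $\A^1$-invariant sheaves along finite field extensions is used, since closures of points need not be smooth, so one must normalize and push transfers back down --- the problem reduces to a purely local statement in dimension two: for every essentially smooth local $k$-scheme $A$ of dimension $2$, with closed point $z$, the composite $M(\mathrm{Frac}(A),\Lambda)\to\bigoplus_y M_{-1}(\kappa(y),\Lambda)\to M_{-2}(\kappa(z),\Lambda)$ vanishes, where $y$ runs over the height-one primes of $A$ and the $\Lambda$'s are the top exterior powers of the relevant tangent spaces.

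A second round of reductions --- functoriality of the Rost--Schmid differential under smooth morphisms, together with a geometric presentation lemma of the type used in proofs of the Gersten conjecture --- brings the two-dimensional local problem into a standard shape: $A$ the henselization, or the completion, of $\A^2_F$ at a rational point, with its two transverse residues. The inputs available here are exactly the structural facts about strongly $\A^1$-invariant sheaves recalled above: the decomposition $M(\G_m\times U)=M(U)\oplus M_{-1}(U)$ underlying the contraction functor $M\mapsto M_{-1}$, homotopy invariance of $M$, and the compatibility of $\partial$ and of the specialization maps with smooth base change.

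The heart of the matter is then the local identity over a two-dimensional regular local ring: fixing a regular system of parameters, one forms the two iterated residues of an element of the fraction field in both orders and shows that they cancel, with the expected sign. Two algebraic ingredients drive this: (i) the change-of-uniformizer behaviour of a single residue --- $\partial^v_{u\pi}$ and $\partial^v_\pi$ differ by the action of the unit class of $u$, a consequence of the $\mathbf{K}^{MW}_*$-module structure carried by $M$ and its contractions, and the reason the $\Lambda$-twist produces a canonical, choice-free map; and (ii) the one-variable reciprocity identity over $\A^1_F$ --- the non-oriented analogue of Rost's reciprocity axiom, equivalently the compatibility between the two natural (geometric and canonical) transfer structures on $M$ --- which one proves by reducing to the universal case $M=\mathbf{K}^{MW}_*$ and verifying the identity on the generators $[u]$ and $\eta$ of Milnor--Witt $K$-theory. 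Assembling (i) and (ii) over $A$ then proceeds as in Rost's treatment, by deforming to the split situation $F[[t_1,t_2]]$ where the residues are computed directly. I expect the main obstacle to be ingredient (ii) together with the sign- and twist-bookkeeping in (i): over Milnor $K$-theory this is the classical Rost--Kato computation, but the Witt-theoretic summand forces one to carry signs through the action of $GW$ and to check that the twists $\Lambda^X_x\otimes\mathcal L_x$ propagate consistently along both residue filtrations --- which is, in the end, exactly why the Rost--Schmid complex is set up with these twists. Once the two-dimensional local identity is in place, $d\circ d=0$ follows globally by reassembling the point-wise components.
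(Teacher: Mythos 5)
Note first that the paper does not contain a proof of this statement at all: Theorem \ref{thmMor} is quoted from \cite[Thm.\ 4.31]{Mor12} purely as a recalled input, so there is no in-paper argument to measure your sketch against; the relevant comparison is with Morel's own proof. At the level of architecture your outline does track that proof: the twist by $\Lambda^X_x$ absorbing the change-of-uniformizer ambiguity, the reduction of $d\circ d=0$ to a local statement in codimension two, the need to normalize closures of points and push back along finite extensions via transfers, and the use of homotopy invariance together with a geometric presentation lemma are all genuine features of \cite{Mor12}.

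However, as a proof the proposal has a real gap: the two ingredients on which everything rests are only named, not established, and one of the proposed shortcuts is not valid. You suggest proving the one-variable reciprocity (equivalently, the compatibility of the geometric and canonical transfer structures) ``by reducing to the universal case $M=\mathbf{K}^{MW}_*$ and verifying the identity on the generators $[u]$ and $\eta$.'' For an arbitrary strongly $\A^1$-invariant sheaf $M$ this reduction does not exist: $M$ is not determined by, nor a module over, $\mathbf{K}^{MW}_*$ in any a priori sense (only the contractions $M_{-n}$ acquire such a structure, and constructing transfers on $M_{-1}$ and proving their reciprocity is precisely the content of an entire chapter of Morel's book), so a generators-and-relations check in the universal example proves nothing about general $M$. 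Likewise the ``two-dimensional local identity'' and the sign/twist bookkeeping that you flag as the expected obstacles are exactly where the work lies; deferring them means the argument, as written, establishes the statement only modulo the hardest parts of \cite{Mor12}. In short: an accurate roadmap of Morel's proof, but not a self-contained proof of $d\circ d=0$.
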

\begin{prop}\label{propMor}\cite[Cor. 4.43, 4.44]{Mor12}
Let $X$ be an essentially smooth $k$-scheme and $M$ be a strongly $\A^1$-invariant sheaf of abelian groups. One has a canonical isomorphism 
$$H^*(C^*_{RS}(X,M)) \cong H^*_{Nis}(X,M) \cong H^*_{Zar}(X,M). $$
There is moreover a canonical isomorphism between the Rost-Schmid complex and Gersten complex, which is compatible with pullback morphisms through smooth morphisms.  
\end{prop}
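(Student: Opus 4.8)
The plan is to realize $C^*_{RS}(X,M)$ as the complex of global sections of a flasque (Cousin-type) resolution of the Nisnevich sheaf $M$, and then read the cohomology off. The engine is Morel's theorem that strongly $\A^1$-invariant sheaves are \emph{unramified}, so the first step is to establish that: for $A$ the local ring of a smooth $k$-scheme at a point (or a henselization thereof), $M(A)\hookrightarrow M(\mathrm{Frac}\,A)$ is injective and $M(A)=\bigcap_{\mathrm{ht}\,\mathfrak p=1}M(A_{\mathfrak p})$; and for a discrete valuation ring $A$ essentially smooth over $k$, with fraction field $E$ and residue field $\kappa$, there is a short exact residue sequence $0\to M(A)\to M(E)\xrightarrow{\partial}M_{-1}(\kappa)\to 0$, where $\partial$ depends on a uniformizer but becomes canonical after the orientation twist, i.e. as a map into $M_{-1}(\kappa,\Lambda)$ in the notation of Definition~\ref{defnMor}. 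This uses the contraction construction $M\mapsto M_{-1}$, the fact that contractions of strongly $\A^1$-invariant sheaves are again strongly $\A^1$-invariant, and -- crucially -- the computation of $M$ on the henselization of $\A^1_k$ at a rational point, which is where homotopy invariance really enters.

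Granting Theorem~\ref{thmMor}, so that $C^*_{RS}(X,M)$ is a complex, I would sheafify it: $U\mapsto C^*_{RS}(U,M)$ defines a complex $\underline{C}^{\,*}$ of Nisnevich (equivalently Zariski) sheaves whose degree-$n$ term is $\bigoplus_{x\in X^{(n)}}(i_x)_*\,M_{-n}(\kappa(x),\Lambda^X_x)$, a flasque sheaf, with an evident augmentation $M\to\underline{C}^{\,0}$. The claim is that $\underline{C}^{\,*}$ is a resolution of $M$. One checks this on stalks: at a point $z$ of codimension $d$ one runs the coniveau filtration and, by induction on $d$, reduces exactness to the codimension-one residue sequence of the first step (plus the identities relating the iterated contractions and the twists). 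Since the residue sequence is exact already over henselian local rings, the same computation shows that $\underline{C}^{\,*}$ resolves $M$ in the Nisnevich topology as well. Therefore $H^n\bigl(\Gamma(X,\underline{C}^{\,*})\bigr)=H^n_{Zar}(X,M)=H^n_{Nis}(X,M)$, and the left-hand side is by construction $H^n(C^*_{RS}(X,M))$.

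For the remaining assertion I would write down the comparison with the classical Gersten (Cousin) complex of $M$: both complexes have the same terms up to the orientation twists and both boundary maps are assembled from residues along codimension-one specializations, so a local calculation on essentially smooth henselian discrete valuation rings identifies them canonically once the $\Lambda^X_x$-bookkeeping is fixed. Compatibility with pullback along a smooth morphism $f\colon Y\to X$ then follows formally: $f$ is flat, so residues commute with base change; the points of $Y$ over a codimension-$n$ point of $X$ are accounted for correctly; and the twists match because $\Lambda^Y_y$ differs from $f^*\Lambda^X_{f(y)}$ only by the relative tangent bundle, which the contraction functors absorb.

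The step I expect to be the real obstacle is the exactness of $\underline{C}^{\,*}$ -- i.e. the ``Gersten resolution'' property for an arbitrary strongly $\A^1$-invariant sheaf. The delicate input is not the formal homological algebra but the geometry behind the codimension-one case: the surjectivity and the uniformizer-independence of the twisted residue $M(E)\to M_{-1}(\kappa,\Lambda)$ for henselian discrete valuation rings essentially smooth over $k$, and the Gabber/Quillen-type geometric presentation lemmas needed to propagate this to higher codimension. All of this genuinely requires Morel's $\A^1$-homotopy machinery (strict $\A^1$-invariance of $M$, the structure of $M$ along $\G_m$, the contraction theorem), and cannot be obtained by purely formal manipulation of the complex.
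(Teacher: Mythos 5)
The paper does not actually prove this proposition---it is imported verbatim from \cite[Cor.~4.43, 4.44]{Mor12}---and your sketch reproduces exactly the strategy of Morel's proof there: unramifiedness of strongly $\A^1$-invariant sheaves, the twisted residue sequence for essentially smooth discrete valuation rings, sheafification of the Rost--Schmid complex into a flasque Cousin-type resolution whose exactness on local rings is the hard step (handled by induction using Gabber/Quillen-style presentation lemmas and the contraction machinery), and a local comparison with the Gersten (coniveau) complex that also yields compatibility with smooth pullback. The only caveat worth recording is that Morel establishes these results over a perfect field, which is the setting in which the paper ultimately applies them.
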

We prove firstly 
\begin{lem}\label{lemmainthm1} 
Let $X, Y \in Sm/k$ be smooth $k$-schemes, where $k$ is an infinite perfect field of $char(k) \neq 2$. Denote by $d_Y = \dim(Y)$ and $n_Y$ the rank of the vector bundle $V_Y/Y$ (see \ref{ThmVoeThom}). We write $\sE = 0_X \times V_Y/X \times Y$. For a natural number $m >> 0$ big enough, there is then a canonical isomorphism 
\begin{equation*} 
H^{2(d_Y+n_Y+m)}_{\A^1}(Th(\sE)\wedge \P^{1 \wedge m},\Q_{\A^1}(d_Y+n_Y+m)) \cong H^{d_Y + n_Y+m}_{Nis}(Th(\sE) \wedge \P^{1 \wedge m},\bold{K}^{MW}_{d_Y + n_Y+m})_{\Q}
\end{equation*}
\end{lem}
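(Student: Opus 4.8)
The plan is to identify both sides with a twisted Chow--Witt group of $X\times Y$ by means of Thom isomorphisms, and then to match them through the rational splitting of Theorem~\ref{proofofconjst}. To begin, I would simplify the space: since $\sE=0_X\times V_Y/X\times Y$ is the pullback $p^{XY*}_Y V_Y$ and $\P^{1\wedge m}\simeq T^{\wedge m}=Th(\sO^m_{\Spec k})$, Proposition~\ref{propthom} gives an $\A^1$-weak equivalence $Th(\sE)\wedge\P^{1\wedge m}\simeq Th(F/X\times Y)$ with $F:=p^{XY*}_Y V_Y\oplus\sO^m_{X\times Y}$, a vector bundle of rank $r:=n_Y+m$ over the smooth $k$-scheme $X\times Y$ of dimension $d_X+d_Y$. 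From $V_Y\oplus T_Y=\sO_Y^{n_Y+d_Y}$ in $K_0(Y)$ (Theorem~\ref{ThmVoeThom}) one gets $\det F=p^{XY*}_Y\det V_Y=p^{XY*}_Y\omega_{Y/k}$. Put $q:=d_Y+n_Y+m=d_Y+r$.

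For the right-hand side I would apply the Chow--Witt Thom isomorphism of Theorem~\ref{thmthomiso} to $F$, which yields
$$H^{q}_{Nis}(Th(F),\bold{K}^{MW}_{q})\;\cong\;H^{d_Y}_{Nis}\!\big(X\times Y,\bold{K}^{MW}_{d_Y}(p^{XY*}_Y\omega_{Y/k})\big)\;=\;\widetilde{\CH}^{d_Y}(X\times Y,p^{XY*}_Y\omega_{Y/k}),$$
the last equality by \ref{eqtwist}. Tensoring with $\Q$ and using the splitting $\bold{K}^{MW}_{d_Y}(\sL)_\Q\cong(\bold{K}^M_{d_Y})_\Q\oplus\bold{I}^{d_Y}(\sL)_\Q$ from (the proof of) Proposition~\ref{propsplitCHW} --- with $\bold{I}^{d_Y}(\sL)_\Q\cong\underline{\bold{W}}_\Q(\sL)$ and no twist surviving on the orientable factor $\bold{K}^M_{d_Y}$ --- the right-hand side becomes $\CH^{d_Y}(X\times Y)_\Q\oplus H^{d_Y}_{Nis}(X\times Y,\underline{\bold{W}}_\Q(p^{XY*}_Y\omega_{Y/k}))$.

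For the left-hand side I would first pass to $\P^1$-stable $\A^1$-cohomology. Because $Th(\sE)\wedge\P^{1\wedge m}$ is already $\P^1$-suspended, Proposition~\ref{propstabilize} realizes $H^{2q}_{\A^1}(Th(\sE)\wedge\P^{1\wedge m},\Q_{\A^1}(q))$ as a colimit over further $\P^1$-suspensions; rationally this colimit stabilizes, so for $m\gg0$ the natural map $H^{2q}_{\A^1}\to H^{2q}_{st\A^1}$ is an isomorphism --- and this is exactly where the hypothesis $m\gg0$, together with the restriction to $\Q$-coefficients compensating the absence of an integral cancellation theorem for $D^{eff}_{\A^1}(k)$, is used. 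In $D_{\A^1}(k)_\Q\simeq SH(k)_\Q$ I would then invoke Theorem~\ref{proofofconjst}: the unit decomposes as $\mathbf{1}_{\Q}=H_B\vee H\mathbf{W}_\Q$, so $H^{2q,q}_{st\A^1}(Th(F))$ splits into an $H_B$-part and an $H\mathbf{W}_\Q$-part. The $H_B$-part obeys the usual oriented Thom isomorphism for $F$, giving $H^{2d_Y}_{\sM}(X\times Y,\Q(d_Y))=\CH^{d_Y}(X\times Y)_\Q$; the spectrum $H\mathbf{W}_\Q$ is $\G_m$-periodic and only $SL$-oriented, so its Thom isomorphism for $F$ produces the $\det F$-twist, giving $H^{d_Y}_{Nis}(X\times Y,\underline{\bold{W}}_\Q(\det F))=H^{d_Y}_{Nis}(X\times Y,\underline{\bold{W}}_\Q(p^{XY*}_Y\omega_{Y/k}))$. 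Hence the left-hand side equals $\CH^{d_Y}(X\times Y)_\Q\oplus H^{d_Y}_{Nis}(X\times Y,\underline{\bold{W}}_\Q(p^{XY*}_Y\omega_{Y/k}))$, termwise equal to the right-hand side; chasing Thom classes shows the identification is the canonical one.

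I expect the real difficulties to be two. First, the effective-to-stable comparison: because $D^{eff}_{\A^1}(k)$ has no integral cancellation theorem, the identification $H^{2q}_{\A^1}\cong H^{2q}_{st\A^1}$ is available only rationally and only in the stable range $m\gg0$, and must be squeezed out of the colimit formula of Proposition~\ref{propstabilize}. Second, the bookkeeping of twists: one must verify that the $SL$-oriented stable Thom isomorphism and the Chow--Witt Thom isomorphism of Theorem~\ref{thmthomiso} produce the same twisting line bundle $\det F=p^{XY*}_Y\omega_{Y/k}$ and are compatible with the decomposition $\mathbf{1}_{\Q}=H_B\vee H\mathbf{W}_\Q$, so that the motivic and the Witt pieces on the two sides genuinely correspond.
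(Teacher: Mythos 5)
Your reduction of the right-hand side is fine: applying Theorem \ref{thmthomiso} to $F=p^{XY*}_YV_Y\oplus\sO^m$ with $\det F=p^{XY*}_Y\omega_{Y/k}$ and invoking \ref{eqtwist} is exactly how the paper itself handles that side later (steps (7)--(9) of Proposition \ref{propembedd}). The genuine gap is on the left-hand side, at the sentence ``rationally this colimit stabilizes, so for $m\gg0$ the natural map $H^{2q}_{\A^1}\to H^{2q}_{st\A^1}$ is an isomorphism.'' Proposition \ref{propstabilize} only expresses the stable Hom-group as a colimit of unstable groups under $-\otimes\Z_{\A^1}(1)[2]$; it says nothing about the transition maps becoming isomorphisms at any finite stage, and the absence of a cancellation theorem for $D^{eff}_{\A^1}(k)$ (which you yourself cite) is precisely the reason this cannot be asserted for free, even rationally. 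In other words, the unstable-to-stable comparison in this particular bidegree is the actual content of the lemma, and your argument assumes it rather than proving it; the rest of your chain (splitting $\mathbf{1}_\Q=H_B\vee H\mathbf{W}_\Q$, oriented and $SL$-oriented Thom isomorphisms) sits downstream of that unproved step. A secondary point: the twisted Thom isomorphism for $H\mathbf{W}_\Q$ ($SL$-orientation producing the $\det F$-twist) is not established anywhere in the paper, so if you want to use it you must extract it from Theorem \ref{thmthomiso} together with the rational splitting of $\bold{K}^{MW}$, which you only gesture at.

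For contrast, the paper proves the unstable statement directly: it runs the Nisnevich hypercohomology spectral sequence for $\Z_{\A^1}(d_Y+n_Y+m)$ on $Th(\sE)\wedge\P^{1\wedge m}$, uses Proposition \ref{propunstcoh} (vanishing of $\underline{H}^p(\Z_{\A^1}(q))$ for $p>q$ and identification of the top cohomology sheaf with $\bold{K}^{MW}_q$) to produce the canonical edge map to $H^{d_Y+n_Y+m}_{Nis}(-,\bold{K}^{MW}_{d_Y+n_Y+m})$, and then shows its kernel and cokernel vanish rationally: by purity and Morel's Rost--Schmid complexes with supports these error terms are built from contractions $\underline{H}^i(\Z_{\A^1}(q))_{-(2q-i)}$ with $i<q$, whose values on finitely generated separable fields are, for $m\gg0$, stable $\A^1$-cohomology groups of fields that vanish rationally by Theorem \ref{proofofconjst}. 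So the stable comparison is only ever used over fields, where connectivity and cohomological dimension make it legitimate --- exactly the device you would need to import to close the hole in your first step. As written, your proposal has a real gap at its central point.
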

\begin{proof} 
Consider the hypercohomology spectral sequence 
$$E^{p,q}_2 = H^p_{Nis}(Th(\sE) \wedge \P^{1 \wedge m},\underline{H}^q(\Z_{\A^1}(n_Y+d_Y+m))) \Rightarrow H^{p+q}_{\A^1}(Th(\sE) \wedge \P^{1 \wedge m},\Z_{\A^1}(n_Y+d_Y+m)).$$
As $\underline{H}^q(\Z_{\A^1}(n_Y+d_Y+m)) = 0$ for $q > n_Y + d_Y+m$ (a consequence of Morel's stable $\A^1$-connectivity result \cite{Mor05}), we have a canonical homomorphism 
$$H^{2(d_Y+n_Y+m)}_{\A^1}(Th(\sE) \wedge \P^{1 \wedge m},\Z(n_Y+d_Y+m)) \rightarrow H^{d_Y+n_Y+m}(Th(\sE) \wedge \P^{1 \wedge m},\bold{K}^{MW}_{n_Y+d_Y+m}),$$
whose kernel and cokernel are built out of the groups 
$$H^{2(d_Y+n_Y+m)-i}_{Nis}(Th(\sE) \wedge \P^{1 \wedge m},\underline{H}^i(\Z_{\A^1}(n_Y+d_Y+m)))$$ and 
$$H^{2(n_Y+d_Y+m)-i+1}_{Nis}(Th(\sE)\wedge \P^{1 \wedge m},\underline{H}^i(\Z_{\A^1}(n_y+d_Y+m))),$$ 
where $i<n_Y+d_Y+m$. By purity, we have  
\begin{multline}\label{eqpur} 
H^{2(d_Y+n_Y+m)-i}(Th(p_Y^{XY*}V_Y) \wedge \P^{1 \wedge m},\underline{H}^i(\Z_{\A^1}(d_Y+n_Y+m))) \cong \\ H^{2(d_Y+n_Y+m)-i}_{X\times Y}(p_Y^{XY*}(V_Y \oplus [m]),\underline{H}^i(\Z_{\A^1}(d_Y+n_Y+m))), 
\end{multline} 
where we think $\P^{1 \wedge m}$ as the Thom space of the rank $m$ trivial bundle $[m]$ on $\Spec k$. The last group in the isomorphism \ref{eqpur} could be computed by using Rost-Schmid complexes \cite{Mor12}. Indeed, for a smooth $k$-scheme $X$ let $C^*_{RS}(X,M)$ denote the Rost-Schmid complex defined in \cite[Def. 4.11]{Mor12}, where $M$ is a strictly (strongly) $\A^1$-invariant sheaf. By \cite[Lem. 4.35]{Mor12}, one has a canonical isomorphism of complexes 
\begin{equation*}
C^{*-n_Y}_{RS}(X \times Y, \det (p_Y^* (V_Y \oplus [m])),M_{-(n_Y)}) \cong C^*_{RS, X \times Y}(p_Y^* V_Y,M), 
\end{equation*}
where we identify the vector bundle $p_Y^* (V_Y\oplus [m])$ with the normal bundle $N_{X\times Y, p_Y^* (V_Y \oplus [m])}$ and the left hand side denotes the twisted Rost-Schmid complex (see \cite[Rem. 4.13]{Mor12}). By definition we have
\begin{equation}\label{eqrs-term}
C^{j}_{RS}(X \times Y, \det (p_Y^* (V_Y\oplus [m])), M_{-(n_Y)}) = \bigoplus_{x \in (X \times Y)^{j}}M_{-(j+n_Y)}(\kappa (x), \Lambda^{X \times Y}_x \otimes \omega_{Y,p_Y(x)}), 
\end{equation}
where $\Lambda^{X \times Y}_x = \bigwedge^{top}_{\kappa (x)}(-T_{X \times Y,x})$. So by definition (see \cite[p. 139]{Mor12}), the direct summand in \ref{eqrs-term} is nothing but $M_{-(m+2n_Y)}(\kappa (x)) \otimes_{[\kappa(x)^{\times}]} \Z[(\omega_{X,p_X(x)})^{\times}]$. So apply this for $M = \underline{H}^i(\Z_{\A^1}(n_Y+d_Y+m))$, so we see that to prove the vanishing of \ref{eqpur} for $i < n_Y+d_Y+m$, it is enough to prove the vanishing of the sheaf $\underline{H}^i(\Z_{\A^1}(d_Y+n_Y+m))_{-(2d_Y+2n_Y+2m-i)}$ for $i  < n_Y+d_Y+m$. For any finitely generated separable field $L$, the evaluation of this sheaf at $L$ is the cohomology group
\begin{equation}\label{eqhomL} 
\H^i_{Nis}(\Spec L_+ \wedge \G_m^{\wedge 2(d_Y+n_Y+m)-i},\Z_{\A^1}(d_Y+n_Y+m))  
\end{equation}
By our assumption that $m >>0$ big enough, the cohomology \ref{eqhomL} is the stable cohomology 
\begin{equation}\label{eqhomL1}
H^{2(i-(n_Y+d_Y+m))}_{st\A^1}(L,\Z(i-(n_Y+d_Y+m)))
\end{equation}
We already see that the cohomology \ref{eqhomL1} vanishes for $i < n_Y+d_Y+m$ up to torsion (from Thm. \ref{proofofconjst}), so the lemma follows. 
\end{proof}
The lemma will imply the following
\begin{cor}\label{lemmainthm2}
Under the assumptions as above, one has a canonical isomorphism
\begin{equation*}
H^{2(d_Y+n_Y)}_{st\A^1}(X_+ \wedge Th(V_Y/Y),\Q_{\A^1}(d_Y+n_Y)) \cong H^{d_Y+n_Y}_{Nis}(X_+ \wedge Th(V_Y/Y),\mathbf{K}^{MW}_{d_Y+n_Y})_\Q \\ 
\end{equation*}
\end{cor}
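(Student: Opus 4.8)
The plan is to deduce the corollary from Lemma \ref{lemmainthm1} in two moves: recognize the $\P^{1\wedge m}$-smash factor on the left as precisely the stabilization defining stable $\A^1$-cohomology, and strip the same factor off the right by the Thom isomorphism. First I would record the elementary identifications of the spaces involved. Since $Th(\sO_X^0)=X_+$ and $Th$ is multiplicative for external sums (Proposition \ref{propthom}), one has $Th(\sE)=Th(0_X\times V_Y/X\times Y)=X_+\wedge Th(V_Y/Y)$; moreover $\P^{1\wedge m}\cong Th(\sO_{\Spec k}^m)$, so $Th(\sE)\wedge\P^{1\wedge m}=Th(\sE\oplus p_Y^{*}\sO^m)$ is the Thom space of a rank $n_Y+m$ bundle on $X\times Y$, with $\det(\sE\oplus p_Y^{*}\sO^m)=\det\sE=p_Y^{*}\det V_Y=p_Y^{*}\omega_{Y/k}$, the last equality because $V_Y\oplus T_Y$ is stably trivial.

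On the left-hand side, by the construction of $D_{\A^1}(k)$ from $D^{eff}_{\A^1}(k)$ by formally inverting $\otimes\Z_{\A^1}(1)$, together with $\widetilde{C}_*^{\A^1}(-\wedge\P^{1\wedge m})=\widetilde{C}_*^{\A^1}(-)\otimes\Z_{\A^1}(m)[2m]$ (cf. Proposition \ref{propstabilize}), the group $H^{2(d_Y+n_Y)}_{st\A^1}(X_+\wedge Th(V_Y/Y),\Q_{\A^1}(d_Y+n_Y))$ is the colimit over $m$ of $H^{2(d_Y+n_Y+m)}_{\A^1}(X_+\wedge Th(V_Y/Y)\wedge\P^{1\wedge m},\Q_{\A^1}(d_Y+n_Y+m))$, and this colimit is already attained for $m\gg 0$, which is exactly the range in which Lemma \ref{lemmainthm1} applies. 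On the right-hand side I would apply the Thom isomorphism (Theorem \ref{thmthomiso}) twice: to the rank $n_Y+m$ bundle $\sE\oplus p_Y^{*}\sO^m$ it identifies $H^{d_Y+n_Y+m}_{Nis}(Th(\sE)\wedge\P^{1\wedge m},\mathbf{K}^{MW}_{d_Y+n_Y+m})_\Q$ with $H^{d_Y}_{Nis}(X\times Y,\mathbf{K}^{MW}_{d_Y}(p_Y^{*}\omega_{Y/k}))_\Q$, since both cohomological degree and weight drop by the rank, leaving $d_Y$; and to $\sE$ itself it identifies $H^{d_Y+n_Y}_{Nis}(X_+\wedge Th(V_Y/Y),\mathbf{K}^{MW}_{d_Y+n_Y})_\Q$ with the same group, which by \ref{eqtwist} is $\widetilde{\CH}^{d_Y}(X\times Y,p_Y^{*}\omega_{Y/k})_\Q$. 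Concatenating the chain ``left side of the corollary $\simeq$ left side of Lemma \ref{lemmainthm1}$\ (m\gg0)$ $\simeq$ right side of Lemma \ref{lemmainthm1} $\simeq$ common Thom-untwisted group $\simeq$ right side of the corollary'' produces the asserted canonical isomorphism.

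Everything here is bookkeeping once Lemma \ref{lemmainthm1} is in hand; the one step that needs genuine care is that the colimit over $m$ defining the stable group is reached at a finite stage for the Thom spaces at hand. I would justify this from the compactness of $\Sigma^{\infty}_{\P^1}(X_+\wedge Th(V_Y/Y))$ in $SH(k)$ (via the zero section and purity it is a retract of suspension spectra of smooth schemes) together with the stable-range vanishing of the sheaves $\underline{H}^i(\Z_{\A^1}(n))$ that is already exploited inside the proof of Lemma \ref{lemmainthm1}; I would also note in passing that the compatibility of the two Thom isomorphisms with adjoining the trivial summand $p_Y^{*}\sO^m$ is immediate from the naturality in Theorem \ref{thmthomiso}.
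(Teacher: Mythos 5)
Your argument is correct and follows the paper's proof in its essential structure: both identify the stable group $H^{2(d_Y+n_Y)}_{st\A^1}(X_+\wedge Th(V_Y/Y),\Q_{\A^1}(d_Y+n_Y))$ with the colimit over $m$ of the unstable groups (Proposition \ref{propstabilize}), apply Lemma \ref{lemmainthm1} in the range $m\gg 0$, and then argue that the resulting Nisnevich groups are independent of $m$. The only difference is in that last step: the paper quotes Proposition \ref{propmore}, the $\P^1$-suspension isomorphism $H^p_{Nis}(\sX,\mathbf{K}^{MW}_p)\cong H^{p+1}_{Nis}(\sX\wedge\P^1,\mathbf{K}^{MW}_{p+1})$ for an arbitrary pointed space, whereas you apply the Thom isomorphism of Theorem \ref{thmthomiso} twice on the smooth scheme $X\times Y$ (to $\sE\oplus\sO^m$ and to $\sE$, using $\det\sE\cong p_Y^{XY*}\omega_{Y/k}$ from $V_Y\oplus T_Y$ being stably trivial), untwisting both groups down to the common group $H^{d_Y}_{Nis}(X\times Y,\mathbf{K}^{MW}_{d_Y}(p_Y^{XY*}\omega_{Y/k}))_\Q\cong\widetilde{\CH}^{d_Y}(X\times Y,p_Y^{XY*}\omega_{Y/k})_\Q$ of \ref{eqtwist}. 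This is a legitimate substitute (and in effect anticipates steps (8)--(9) in the proof of Proposition \ref{propembedd}); Proposition \ref{propmore} has the mild advantage of applying to arbitrary pointed spaces rather than only to Thom spaces of bundles on smooth schemes, but for the Thom space at hand the two routes coincide. Your closing remark on why the colimit is attained at a finite stage is an extra justification the paper leaves implicit, and is harmless.
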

\begin{proof}
By definition we have 
\begin{multline*}
H^{2(d_Y+n_Y)}_{st\A^1}(X_+ \wedge Th(V_Y/Y),\Q_{\A^1}(d_Y+n_Y)) \cong \\ colim_m H^{2(d_Y+n_Y+m)}_{\A^1}(X_+ \wedge Th(V_Y/Y) \wedge \P^{1 \wedge m},\Q(d_Y+n_Y+m)).
\end{multline*}
 So the proof follows immediately from the lemma \ref{lemmainthm1} above and the next proposition. 
\end{proof}
\begin{prop}\label{propmore} \cite[Prop. 3.3.5]{AH11}
For any pointed $k$-space $(\sX,x)$, there is a canonical isomorphism 
$$H^p_{Nis}(\sX,\bold{K}^{MW}_p) \stackrel{\cong}{\rightarrow} H^{p+1}_{Nis}(\sX \wedge \P^1,\bold{K}^{MW}_{p+1}) $$
\end{prop}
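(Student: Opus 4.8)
The plan is to split the $\P^1$-suspension appearing on the right-hand side into a simplicial suspension and a $\G_m$-suspension, using the $\A^1$-equivalence $\P^1 \cong S^1_s \wedge \G_m$ (\cite[Cor. 2.18]{MV01}), and then to treat the two factors by different mechanisms: $S^1_s$ shifts cohomological degree by one, whereas smashing with $\G_m$ corresponds, on coefficients, to Morel's contraction functor $M \mapsto M_{-1}$, under which $\bold{K}^{MW}_{p+1}$ becomes $\bold{K}^{MW}_{p}$. Throughout I write $\tilde H_{Nis}$ for reduced cohomology of a pointed space, which coincides with $H_{Nis}$ in the positive degrees that matter here.

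First I would record that $\bold{K}^{MW}_{q}$ is strictly $\A^1$-invariant (Remark \ref{remHI}, \cite[Def. 5.17]{Mor12}), so that $\sY \mapsto H^{\ast}_{Nis}(\sY,\bold{K}^{MW}_{q})$ descends to $\bold{Ho}_{\A^1,+}(k)$; applied to the equivalence $\sX \wedge \P^1 \simeq S^1_s \wedge (\sX \wedge \G_m)$ this identifies the right-hand side with $\tilde H^{p+1}_{Nis}(S^1_s \wedge (\sX \wedge \G_m),\bold{K}^{MW}_{p+1})$. Next I would apply the simplicial suspension isomorphism for reduced Eilenberg--MacLane cohomology, $\tilde H^{n}_{Nis}(S^1_s \wedge \sY,M) \cong \tilde H^{n-1}_{Nis}(\sY,M)$ (coming from $[S^1_s \wedge \sY,K(M,n)] \cong [\sY,K(M,n-1)]$), which reduces the problem to $\tilde H^{p}_{Nis}(\sX \wedge \G_m,\bold{K}^{MW}_{p+1})$. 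Finally I would invoke the $\G_m$-contraction isomorphism: for a strictly $\A^1$-invariant sheaf $M$ and a pointed space $\sX$, $\tilde H^{p}_{Nis}(\sX \wedge \G_m,M) \cong \tilde H^{p}_{Nis}(\sX,M_{-1})$. On representables $\sX = U_+$ this is the splitting $H^{\ast}_{Nis}(\G_m \times U,M) \cong H^{\ast}_{Nis}(U,M) \oplus H^{\ast}_{Nis}(U,M_{-1})$ given by the projection and unit sections; in general it is the $\A^1$-local identification $\mathbf{R}\underline{\Hom}_{\bullet}(\G_m,K(M,n)) \simeq K(M_{-1},n)$. Specializing to $M = \bold{K}^{MW}_{p+1}$ and using Morel's identity $(\bold{K}^{MW}_{p+1})_{-1} = \bold{K}^{MW}_{p}$ then gives $\tilde H^{p}_{Nis}(\sX,\bold{K}^{MW}_{p}) = H^{p}_{Nis}(\sX,\bold{K}^{MW}_{p})$, which is what we want.

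The hard part will be this last step --- the $\G_m$-loop $=$ contraction statement. It has two ingredients: that the higher $\G_m$-loop sheaves of $K(M,n)$ vanish after $\A^1$-localization, so that $\G_m$-looping shifts only the coefficient sheaf and not the degree (a theorem of Morel about the $(-1)$-construction on strictly $\A^1$-invariant sheaves), and the passage from representable spaces, where the splitting is explicit, to an arbitrary pointed $\sX$. For the latter I would use that the comparison map is natural in $\sX$ and compare the spectral sequences attached to the \v{C}ech nerve or to the skeletal filtration of $\sX$, reducing to the representable case; equivalently one invokes the internal-hom adjunction $[\G_m \wedge \sX,K(M,n)] \cong [\sX,\mathbf{R}\underline{\Hom}_{\bullet}(\G_m,K(M,n))]$ and computes the target $\A^1$-locally. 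A more self-contained route, which also makes the canonicity of the isomorphism manifest, is to quote the Thom isomorphism (Theorem \ref{thmthomiso}) for the trivial line bundle $\sO_X^1$ directly: since $\det \sO_X^1 = \sO_X$ and $Th(\sO_X^1) = \Sigma_T X_+ \cong \P^1 \wedge X_+$ by Proposition \ref{propthom}(2) together with $T \cong \P^1$, the Thom isomorphism reads precisely $H^{p}_{Nis}(X,\bold{K}^{MW}_{p}) \cong H^{p+1}_{Nis}(\P^1 \wedge X_+,\bold{K}^{MW}_{p+1})$, implemented by cup product with the canonical Thom class, and one passes to general $\sX$ by the same colimit argument. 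Either way the resulting isomorphism is natural and independent of choices, which is what the applications require.
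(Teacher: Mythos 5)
The paper gives no proof here --- the proposition is quoted verbatim from \cite[Prop. 3.3.5]{AH11} --- and your main argument (splitting $\P^1 \simeq S^1_s \wedge \G_m$, which is legitimate on cohomology because strict $\A^1$-invariance makes $K(\bold{K}^{MW}_{p+1},p+1)$ $\A^1$-local, then shifting one degree by simplicial suspension, then converting the $\G_m$-smash into Morel's contraction together with $(\bold{K}^{MW}_{p+1})_{-1} \cong \bold{K}^{MW}_{p}$) is precisely the standard argument underlying that citation, so it is correct and essentially the same approach. The only caveat is that your Thom-isomorphism shortcut via Theorem \ref{thmthomiso} applies directly only to representable $\sX = X_+$, so for an arbitrary pointed space the proof still rests on Morel's identification $\mathbf{R}\underline{\Hom}_{\bullet}(\G_m,K(M,n)) \simeq K(M_{-1},n)$ for strictly $\A^1$-invariant $M$ (including the vanishing of the higher $\G_m$-loop sheaves), rather than on a formal colimit reduction.
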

Now we are able to finish the proof of the isomorphism-part in the main theorem. 
\begin{prop}\label{propembedd}
Let $k$ be an infinite perfect field of $char(k) \neq 2$. Let $X, Y \in SmProj/k$ be smooth projective k-schemes. One has a canonical isomorphism 
$$\Hom_{D_{\A^1}(k)_{\Q}}(C^{st\A^1}_*(X),C^{st\A^1}_*(Y)) \cong \widetilde{\CH}^{d_Y}(X\times Y,p^{XY*}_Y\omega_{Y/k})_{\Q} = \Hom_{CHW(k)^{op}_\Q}(X,Y). $$
\end{prop}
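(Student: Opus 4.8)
The plan is to exhibit the isomorphism as a composite of five canonical identifications, the only nontrivial one being the rational cohomological comparison supplied by Corollary \ref{lemmainthm2} (which itself rests on Theorem \ref{proofofconjst}). All the remaining steps are formal manipulations with the duality formalism, Voevodsky's Thom construction, and the Thom isomorphism.

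\emph{Step 1: unwinding the duality.} Since $X,Y\in SmProj/k$, Proposition \ref{propdual} gives that $C_*^{st\A^1}(Y)$ is strongly dualizable with dual $\widetilde{C}_*^{st\A^1}(Th(-T_Y))$, so $\Hom_{D_{\A^1}(k)_\Q}(C_*^{st\A^1}(X),C_*^{st\A^1}(Y))\cong \Hom_{D_{\A^1}(k)_\Q}\big(C_*^{st\A^1}(X)\otimes \widetilde{C}_*^{st\A^1}(Th(-T_Y)),\mathbf{1}_k\big)$. Pick, by Theorem \ref{ThmVoeThom}, a bundle $V_Y/Y$ of rank $n_Y$ with $V_Y\oplus T_Y=\sO_Y^{n_Y+d_Y}$ in $K_0(Y)$, so that $\Sigma^\infty_{\P^1}Th(-T_Y)=\Sigma^\infty_{\P^1}Th(V_Y)\wedge\P^{1\wedge -(n_Y+d_Y)}$. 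Using Proposition \ref{propthom}(1) to identify $X_+\wedge Th(V_Y/Y)=Th(p^{XY*}_Y V_Y/X\times Y)$, together with the invertibility of the Tate object $\Z_{\A^1}(1)$, I would rewrite the above as $\Hom_{D_{\A^1}(k)_\Q}\big(C_*^{st\A^1}(X_+\wedge Th(V_Y/Y)),\Z_{\A^1}(d_Y+n_Y)[2(d_Y+n_Y)]\big)$, which by Definition \ref{defnstA1coh} is exactly $H^{2(d_Y+n_Y)}_{st\A^1}(X_+\wedge Th(V_Y/Y),\Q_{\A^1}(d_Y+n_Y))$.

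\emph{Step 2: cohomological comparison.} Now apply Corollary \ref{lemmainthm2} (and Proposition \ref{propmore} behind it) to identify this with $H^{d_Y+n_Y}_{Nis}(X_+\wedge Th(V_Y/Y),\mathbf{K}^{MW}_{d_Y+n_Y})_\Q$. \emph{Step 3: Thom isomorphism.} Apply Theorem \ref{thmthomiso} to the rank $n_Y$ bundle $E=p^{XY*}_YV_Y$ over $X\times Y$, obtaining $H^{d_Y+n_Y}_{Nis}(Th(p^{XY*}_YV_Y),\mathbf{K}^{MW}_{d_Y+n_Y})_\Q\cong H^{d_Y}_{Nis}(X\times Y,\mathbf{K}^{MW}_{d_Y}(\det p^{XY*}_YV_Y))_\Q$. \emph{Step 4: identifying the twist.} Taking determinants in $\Pic(Y)$ of $V_Y\oplus T_Y\cong\sO_Y^{n_Y+d_Y}$ gives $\det V_Y\cong(\det T_Y)^\vee=\omega_{Y/k}$, hence $\det p^{XY*}_YV_Y\cong p^{XY*}_Y\omega_{Y/k}$. \emph{Step 5: Chow--Witt groups.} Finally equation \eqref{eqtwist} yields $H^{d_Y}_{Nis}(X\times Y,\mathbf{K}^{MW}_{d_Y}(p^{XY*}_Y\omega_{Y/k}))\cong\widetilde{\CH}^{d_Y}(X\times Y,p^{XY*}_Y\omega_{Y/k})$, and rationally, using $X\times Y\cong Y\times X$ and Definition \ref{defCHW}, this is $\Hom_{CHW(k)^{op}_\Q}(X,Y)$. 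Composing the five isomorphisms proves the proposition.

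\emph{Where the difficulty lies.} Steps 1, 3, 4, 5 are pure bookkeeping: strong dualizability, the multiplicativity of Thom spaces, invertibility of Tate twists, and the determinant of the complementary bundle. The real content is Step 2, i.e. Corollary \ref{lemmainthm2}, whose proof (Lemma \ref{lemmainthm1}) requires the vanishing of the higher Rost--Schmid contributions $\underline{H}^i(\Z_{\A^1}(d_Y+n_Y+m))_{-(2d_Y+2n_Y+2m-i)}$ below the diagonal after rationalization — precisely the input provided by Morel's conjecture, Theorem \ref{proofofconjst}. This is the step that forces $\Q$-coefficients (there is no cancellation theorem integrally), and one must also be careful that the stabilization $m\to\infty$ is performed with $m$ large relative to $\dim X$ and $\dim Y$ so that the unstable $\A^1$-cohomology has stabilized. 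A secondary point, presumably addressed later in the section, is that these isomorphisms are natural enough to assemble into a functor compatible with composition of Chow--Witt correspondences; for the present statement only the identification of $\Hom$-groups is needed.
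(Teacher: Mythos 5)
Your proposal follows essentially the same route as the paper's own proof: duality (Proposition \ref{propdual}) plus Voevodsky's Theorem \ref{ThmVoeThom} to reduce to stable $\A^1$-cohomology of $X_+\wedge Th(V_Y)$, then Corollary \ref{lemmainthm2} (resting on Lemma \ref{lemmainthm1} and Theorem \ref{proofofconjst}) for the rational comparison with $\mathbf{K}^{MW}$-cohomology, followed by the Thom isomorphism, the determinant identification $\det V_Y\cong\omega_{Y/k}$, and \eqref{eqtwist}. You also correctly locate the real content (and the reason for $\Q$-coefficients) in the comparison step, so the argument is sound and matches the paper's.
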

\begin{proof} 
Denote by $0_X$ the trivial vector bundle of rank $0$ on a scheme X. Let us denote by $d_X$ resp. $d_Y$ the dimension of $X$ resp. $Y$. We have 
\begin{align*}
\Hom_{D_{\A^1}(k)_{\Q}}(C_*^{st\A^1}(X),C_*^{st\A^1}(Y)) \stackrel{(1)}{\cong} \Hom_{D_{\A^1}(k)_{\Q}}(C_*^{st\A^1}(X) \otimes C_*^{st\A^1}(Y)^{\vee},\Q) \\ \stackrel{(2)}{\cong} \Hom_{D_{\A^1}(k)_{\Q}}(C_*^{st\A^1}(X) \otimes \widetilde{C}_*^{st\A^1}(Th(-T_Y)), \Q) \\ \stackrel{(3)}{\cong} \Hom_{D_{\A^1}(k)_{\Q}}(\widetilde{C}_*^{st\A^1}(Th(0_X)) \otimes \widetilde{C}_*^{st\A^1}(Th(V_Y)),\Q_{\A^1}(d_Y + n_Y)[2(d_Y+n_Y)]) \\ \stackrel{(4)}{\cong} \Hom_{D_{\A^1}(k)_{\Q}}(\widetilde{C}_*^{st\A^1}(Th(0_X) \wedge Th(V_Y)),\Q_{\A^1}(d_Y+n_Y)[2(d_Y+n_Y)]) \\ \stackrel{(5)}{\cong} \Hom_{D_{\A^1}(k)_{\Q}}(\widetilde{C}_*^{st\A^1}(Th(0_X \times V_Y / X \times Y)), \Q_{\A^1}(d_Y + n_Y) [2(d_y+n_Y)]) \\ \stackrel{(6)}{\cong} \Hom_{D^{eff}_{\A^1}(k)}(\widetilde{C}_*^{\A^1}(Th(0_X \times V_Y/ X \times Y),\Q_{\A^1}(  d_Y+n_Y)[2(n_Y+d_Y)]) \\ \stackrel{(7)}{\cong} H^{d_Y+n_Y}_{Nis}(Th(0_X \times V_Y/X \times Y),\bold{K}^{MW}_{d_Y+n_Y})_{\Q} \stackrel{(8)}{\cong} H^{d_Y}_{Nis}(X \times Y, \bold{K}_{d_Y}^{MW}(\det (0_X \times V_Y/ X \times Y)))_{\Q} \\ \stackrel{(9)}{\cong}\widetilde{\CH}^{d_Y}(X \times Y, p^{XY*}_Y \omega_{Y/k})_{\Q} \stackrel{(10)}{=} \Hom_{CHW(k)_{\Q}}(Y,X).
\end{align*}
We explain now all these identifications. $(1)$ follows from $Hom-\otimes$ adjunction isomorphism \ref{eqdual}. The second isomorphism $(2)$ is a consequence of duality formalism in $\P^1$-stable $\A^1$-derived category (Proposition \ref{propdual}). The third isomorphism $(3)$ is a consequence of the Theorem \ref{ThmVoeThom}, which will imply an identification $\Sigma^{\infty}_{\P^1}(Th(V_Y)) \stackrel{\cong}{\rightarrow} \Sigma^{n_Y+d_Y}_{\P^1}\Sigma^{\infty}_{\P^1}(Th(-T_Y))$. (one should beware that the notation here is not a suspension spectrum, see \cite[Exam. 3.1.4]{AH11} for the discussion). The isomorphisms $(4)$ and $(5)$ follows from definitions. $(6)$ is the corollary \ref{lemmainthm2}. The identification $(7)$ is the lemma \ref{lemmainthm1}. The identification $(8)$ is the Thom isomorphism \ref{eqthomiso} in the Theorem \ref{thmthomiso}. $(9)$ follows from the simple observation that 
$$\det (0_X \times V_Y/ X\times Y) \cong \bigwedge_{\sO_{X\times Y}}p^{XY*}_Y V_Y \cong p^{XY*}_Y \bigwedge_{\sO_Y}V_Y \cong p^{XY*}_Y \omega_{Y/k},$$ 
where the last isomorphism follows from the property of determinant. $(9)$ is the identification \ref{eqtwist} between Nisnevich cohomology groups with coefficient as twisted Milnor-Witt $K$-theory sheaves and twisted Chow-Witt groups and $(10)$ is the definition of morphisms in $CHW(k)$.         
\end{proof}
\subsection{Compatibility of composition laws}
In this section, we are dealing with the compatibility of composition laws of the main theorem. We will always work with $\Q$-coefficient. We need to check that 
$$\widetilde{CHW}(k)^{op}_\Q = \underline{Chow}(k)_\Q^{op}\times WM_{gm}(k)_\Q^{op} \rightarrow D_{\A^1,gm}(k)_\Q = DM_{gm}(k)_\Q \times D_{\A^1,gm}(\underline{W}_\Q)$$
is compatible on composition laws and sends identity morphism to identity morphism, where $D_{\A^1,gm}(\underline{W}_\Q)$ denotes the thick subcategory of the homotopy category of modules over the rational $\A^1$-Eilenberg-Maclane spectrum associated to the unramified Wit sheaf $\mathbf{Ho}(H\mathbf{W}_\Q-mod)$ generated by $\Sigma^{\infty}_{\P^1,+}(X) \wedge \mathbf{1}_{\Q-}$ for $X \in SmProj/k$. First of all, $\underline{Chow}(k)^{op} \rightarrow DM_{gm}(k)$ is the Voevodsky's embedding for $k$ a perfect field. The compatibility of the canonical isomorphisms 
$$\CH^n(X) \stackrel{(1)}{\cong} H^n(X,\underline{\mathbf{K}}^M_n) \stackrel{(2)}{\cong} H^{2n}_M(X,\Z(n))$$
with the composition laws in $\underline{Chow}(k)$ follows from the work of F. D\'eglise \cite{Deg02}, where the first isomorphism $(1)$ follows in general from the work of M. Kerz \cite{Ke09}, which is called Bloch's formula and the second isomorphism $(2)$ is due to Voevodsky \cite[Lem. 4.11]{Voe03}. It remains now to check the functoriality of $WM_{gm}(k)_\Q \rightarrow D_{\A^1,gm}(\underline{\mathbf{W}}_\Q)$. We will follow the same strategy as in \cite{Deg02}. Recall that from the proof of the proposition \ref{propembedd} that we have a bunch of isomorphisms 
\begin{multline*}
\Hom_{WM_{gm}(k)_\Q}(X,Y) \stackrel{defn}{=} A^{d_Y}(C^*(X \times_k Y,W_\Q,p_Y^{XY*} \omega_{Y/k})) \stackrel{(a)}{\cong} H^{d_Y}_{Nis}(X \times_k Y, \underline{\mathbf{W}}_\Q(p_Y^{XY*} \omega_{Y/k})) \\ \stackrel{(b)}{\cong} H^{d_Y+n_Y}_{Nis}(Th(p_Y^{XY*}V_Y),\underline{\mathbf{W}}_\Q) \stackrel{(c)}{\cong} \Hom_{SH(k)_{\Q-}}(\Sigma_{\P^1,+}^{\infty}(X) \wedge Th(V_Y/Y), H\mathbf{W}_\Q \wedge S^{d_Y+n_Y}_s) \\ \stackrel{(d)}{\cong} [\Sigma^{\infty}_{\P^1,+}(X),\Sigma^{\infty}_{\P^1,+}(Y)]_{SH(k)_{\Q-}}, 
\end{multline*}     
where we write $A^p$ for the cohomology of complexes. The isomorphism $(b)$ is the Thom isomorphism, the isomorphism $(c)$ is the definition of the $\A^1$-local Eilenberg-Maclane spectrum $H\mathbf{W}$ and the isomorphism $(d)$ follows from duality and theorem \ref{proofofconjst} that $H\mathbf{W}_\Q = \mathbf{1}_{\Q-}$. All these isomorphisms are canonical. We only have to check the compatibility of composition laws for the isomorphism $(a)$, where the twisted Witt sheaf 
$$\underline{\mathbf{W}}(p_Y^{XY*} \omega_{Y/k}) \stackrel{defn}{=} A^0(C^*_{X \times Y}(-,W,p_{Y}^{XY*} \omega_{Y/k}))$$
is obviously a strictly $\A^1$-invariant sheaf. This in turn defines in a canonical way a homotopy module $\underline{\mathbf{W}}(p_Y^{XY*} \omega_{Y/k})_*$ in the heart $\Pi(SH(k))_*$ by setting 
$$\underline{\mathbf{W}}(p_Y^{XY*} \omega_{Y/k})_{-n} = \underline{\mathbf{W}}(p_Y^{XY*} \omega_{Y/k}), \forall n \in \Z.$$ We prove now 
\begin{prop}\label{proppullback}
Let $k$ be a field of $char(k) \neq 2$. Let $f: Y \rightarrow X$ be a flat morphism of smooth $k$-scheme and $\sL/X$ be any line bundle on $X$. For all $p \geq 0$ there is a commutative diagram 
\begin{equation}\label{eqpullback1} 
\xymatrix {A^p(C^*(X,\underline{\mathbf{W}},\sL)) \ar[r]^{f^*_W} \ar[d]^{\cong} & A^p(C^*(Y,\underline{\mathbf{W}},f^* \sL)) \ar[d]^{\cong} \\ H^p_{Nis}(X,\underline{\mathbf{W}}(\sL)) \ar[r]_{f^*_{\A^1}} & H^p_{Nis}(Y,\underline{\mathbf{W}}(f^*\sL))}
\end{equation} 
For $f: Y \rightarrow X$ an arbitrary morphism between smooth $k$-schemes there is a commutative diagram
\begin{equation}\label{eqpullback3} 
\xymatrix {A^p(C^*(X,\underline{\mathbf{W}},\sL)) \ar[r]^{f^!_W} \ar[d]^{\cong} & A^p(C^*(Y,\underline{\mathbf{W}},f^* \sL)) \ar[d]^{\cong} \\ H^p_{Nis}(X,\underline{\mathbf{W}}(\sL)) \ar[r]_{f^!_{\A^1}} & H^p_{Nis}(Y,\underline{\mathbf{W}}(f^*\sL))}
\end{equation} 
\end{prop}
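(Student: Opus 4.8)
The plan is to compute both sides of the two squares in terms of Morel's Rost--Schmid complex, following the strategy used by F.\ D\'evissage... that is, by F.\ D\'eglise \cite{Deg02} for the analogous comparison of Milnor $K$-theory with motivic cohomology. Since the twisted Witt sheaf $\underline{\mathbf{W}}(\sL)$ is strictly $\A^1$-invariant, Morel's theory (\cite{Mor12}, cf.\ Theorem \ref{thmMor} and Proposition \ref{propMor}) identifies Fasel's complex $C^*(X,\underline{\mathbf{W}},\sL)$ with the twisted Rost--Schmid complex of $\underline{\mathbf{W}}$, which is a resolution of $\underline{\mathbf{W}}(\sL)$ by acyclic sheaves computing $H^*_{Nis}(X,\underline{\mathbf{W}}(\sL))$; the left vertical isomorphisms in \eqref{eqpullback1} and \eqref{eqpullback3} are precisely this identification. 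Note also that on the Nisnevich-cohomology side there is only one pullback, namely the one induced by the canonical isomorphism of sheaves $f^{-1}\underline{\mathbf{W}}(\sL)=\underline{\mathbf{W}}(f^*\sL)$, so $f^*_{\A^1}$ and $f^!_{\A^1}$ denote the same morphism there. For \eqref{eqpullback1} I would then argue as follows: Fasel's $f^*_W$ is a morphism of complexes which on the degree-zero (generic point) terms is scalar extension of Witt groups, hence realises the canonical sheaf morphism $f^{-1}\underline{\mathbf{W}}(\sL)\to\underline{\mathbf{W}}(f^*\sL)$ --- here one uses that a morphism between strictly $\A^1$-invariant (unramified) sheaves is determined by its effect on the Witt groups of fields. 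Since any two chain maps between acyclic resolutions agreeing in degree zero are chain homotopic, $f^*_W$ induces on cohomology exactly $f^*_{\A^1}$; this settles \eqref{eqpullback1} for every flat $f$, smooth or not, and in particular for the smooth projections occurring in the composition of correspondences.

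The substance is \eqref{eqpullback3} for the refined Gysin pullback $f^!$, which is \emph{not} induced by any sheaf morphism --- it alters the determinant twist and cannot be read off from generic points --- so the argument above does not apply. Here I would factor $f\colon Y\to X$ as $Y\xrightarrow{\gamma_f}Y\times_k X\xrightarrow{p}X$, with $\gamma_f$ the graph (a regular closed immersion of codimension $\dim X$, since $X/k$ is smooth) and $p=\mathrm{pr}_X$ smooth. Using \eqref{eqpullback1} for $p$, together with the factorizations $f^!_W=\gamma_f^!\circ p^*$ and $f^!_{\A^1}=\gamma_f^*\circ p^*$, the problem reduces to the Gysin map $i^!$ along a regular closed immersion $i\colon Z\hookrightarrow X$ with normal bundle $N=N_{Z/X}$. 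On the Chow--Witt side, $i^!_W$ is the composite of the deformation-to-the-normal-cone specialization to $N$ with the inverse of the d\'evissage (Thom) isomorphism for the zero section of $N$; on the cohomology side one has the parallel construction, using the same deformation space and the Thom isomorphism of Theorem \ref{thmthomiso} applied to $\underline{\mathbf{W}}$. Thus everything reduces to checking, under Morel's identification, that the two specialization homomorphisms agree and that the two Thom isomorphisms agree, with the determinant-line twists carried along consistently (the auxiliary compatibilities of pullback, pushforward and products --- proper base change, Lemma \ref{lembasechangeCW}, and the projection formulas, Corollary \ref{corprojform} --- are available on both sides).

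The step I expect to be the main obstacle is precisely this last identification. Fasel's Gysin map is constructed inside the Balmer--Walter--Fasel Gersten--Witt formalism by d\'evissage along the deformation space, whereas the $\A^1$-homotopical Gysin map is phrased through the Asok--Haesemeyer Thom isomorphism (Theorem \ref{thmthomiso}); although both ultimately rest on Morel's Rost--Schmid complex, one must unwind the two definitions carefully, in particular verifying that the canonical trivialisation of the twist along the zero section of $N$ is the same on both sides and that the boundary/residue maps defining the two specialization homomorphisms coincide. Once these compatibilities are in hand, the proposition follows by assembling the pieces, so that both squares commute.
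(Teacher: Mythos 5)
Your overall strategy coincides with the paper's: handle flat morphisms by comparing Fasel's chain-level pullback with the canonical sheaf morphism in degree zero, and reduce an arbitrary morphism to a closed immersion followed by a smooth (hence flat) projection via the graph factorization, treating the closed immersion by deformation to the normal cone. The flat half of your argument is essentially the paper's: since the twisted Gersten--Witt/Rost--Schmid complex is a complex of acyclic sheaves resolving $\underline{\mathbf{W}}(\sL)$, a chain map compatible with the augmentations represents, in the derived category, the canonical morphism $\underline{\mathbf{W}}_X(\sL)\to Rf_*\underline{\mathbf{W}}_Y(f^*\sL)$, and applying $A^p$ gives \eqref{eqpullback1}.

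At the closed-immersion step, however --- which is the substance of \eqref{eqpullback3} --- your proposal stops short: you reduce everything to the assertion that Fasel's d\'evissage/specialization Gysin map agrees with an $\A^1$-homotopically defined Gysin map built from the Thom isomorphism of Theorem \ref{thmthomiso}, and you explicitly leave this identification as an expected obstacle rather than proving it. That is a genuine gap, and it is also an unnecessary detour: as you yourself observe at the outset, $f^!_{\A^1}$ on Nisnevich cohomology is nothing but the canonical sheaf-theoretic pullback induced by $\underline{\mathbf{W}}_X(\sL)\to Rf_*\underline{\mathbf{W}}_Y(f^*\sL)$; there is no second, independently constructed Gysin map on the cohomology side that would require its own deformation space or Thom isomorphism. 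What actually has to be checked --- and what the paper checks --- is that Fasel's chain-level construction for a closed immersion $i\colon Y\inj X$, namely the specialization map $\sigma\colon C^*(X,W,\sL)\to C^*(N_{Y/X},W,p^*i^*\sL)$ through the deformation space composed with the inverse of the homotopy-invariance quasi-isomorphism $p^*\colon C^*(Y,W,i^*\sL)\to C^*(N_{Y/X},W,p^*i^*\sL)$, is compatible on $H^0=\underline{\mathbf{W}}$ with the ordinary restriction $i^*$. This single degree-zero compatibility identifies Fasel's $i^!$ in the derived category with the canonical morphism $\underline{\mathbf{W}}_X(\sL)\to Ri_*\underline{\mathbf{W}}_Y(i^*\sL)$, and \eqref{eqpullback3} follows by applying $A^p$, with the flat case already covering the smooth projection in the graph factorization. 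To complete your argument you should therefore replace the proposed comparison of two Thom isomorphisms and two specialization maps by this one verification, which is where the actual content lies.
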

\begin{proof}
For any morphism $f: Y \rightarrow X$ between smooth $k$-schemes we have a canonical transformation 
$$\underline{\mathbf{W}}_X(\sL) \rightarrow f_* \underline{\mathbf{W}}_Y(f^* \sL).$$
This induces a canonical morphism in derived category $\underline{\mathbf{W}}_X(\sL) \rightarrow Rf_*\underline{\mathbf{W}}_Y(f^*\sL)$. On the other hand, if $f: Y \rightarrow$ is a flat morphism, there is a morphism of complexes defined in \cite[Chap. 9-10]{Fas08}
$$f_W^*: C^*(X,W,\sL) \rightarrow C^*(Y,W,f^*\sL).$$
This gives us a morphism in the derived category $D(Ab(X_{Nis}))$ of abelian sheaves over $X_{Nis}$: 
$$C_X^*(-,W,\sL) \rightarrow f_* C_Y^*(-,W,f^* \sL) = Rf_* C^*_Y(-,W,f^*\sL).$$
By definition, this gives rise to a commutative diagram 
\begin{equation}\label{eqpullback2}
\xymatrix{\underline{\mathbf{W}}_X(\sL) \ar[d] \ar[r] & Rf_* \underline{\mathbf{W}}_Y(f^*\sL) \ar[d] \\ C_X^*(-,W,\sL) \ar[r] & Rf_* C_Y^*(-,W,f^* \sL) }
\end{equation}
Apply now the cohomology $A^p(-)$ on the diagram \ref{eqpullback2} we are done. For $f: Y \rightarrow X$ an arbitrary morphism of smooth $k$-schemes we can always factor $f$ as a closed immersion followed by a smooth (in particular, flat) morphism. The compatibility of pullbacks for flat morphisms is proved above. It remains to show for closed immersions. Let $f= i: Y \inj X$ be a closed immersion of smooth $k$-schemes. Let $N_{Y/X}$ be normal vector bundle associated to $i$. Let $\nu = i \circ p: N_{Y/X} \stackrel{p}{\rightarrow} Y \stackrel{i}{\rightarrow} X$. Let $U = D(X,Y) \setminus N_{Y/X}$ be the complement of $N_{Y/X}$ in the deformation to the normal cone space. Denote by $\pi: X \times U \rightarrow X$ the projection. Following \cite[\S 5]{Fas07}, there is a morphism of complex 
$$\sigma_YX: C^*(X,W,\sL) \stackrel{\pi^*}{\rightarrow} C^*(X \times U, W, \pi^* \sL) \stackrel{\times t}{\rightarrow} C^*(X \times U, W, \pi^* \sL) \rightarrow C^*(N_{Y/X},(i \circ p)^* \sL).$$
From this we obtain a canonical morphism in the derived category
$$\sigma_i: C^*_X(-,W,\sL) \rightarrow R\nu_* C^*_{N_{Y/X}}(-,W,p^*i^* \sL).$$
Remark that the map $C^*_Y(-,W,\sL) \rightarrow C^*_{N_{Y/X}}(-,W,p^*\sL)$ is a quasi-isomorphism, so we obtain a canonical morphism in the derived category
$$C_X^*(-,W,\sL) \rightarrow Ri_*C^*_Y(-,W,i^* \sL).$$
One has a commutative diagram 
\begin{equation*}
\xymatrix{A^0(C^*_X(-,W,\sL)) = \underline{\mathbf{W}}_X(\sL) \ar[d] \ar[rr]^{i^*} && \underline{\mathbf{W}}_Y(i^* \sL) = A^0(C^*_Y(-,W,i^*\sL)) \ar[d] \\ C^*(X,W,\sL) \ar[r]_{\sigma_YX} & C^*(N_{Y/X},p^*i^* \sL) & \ar[l]^{p^*}  C^*(Y,W,i^* \sL) }
\end{equation*}  
From this we deduce the commutativity of the following diagram 
\begin{equation*}
\xymatrix{\underline{\mathbf{W}}_X(\sL) \ar[r] \ar[d] & Ri_* \underline{\mathbf{W}}_Y(i^* \sL) \ar[d] \\ C^*_X(-,W,\sL) \ar[r] & Ri_*C^*_Y(-,W,i^*\sL) }
\end{equation*}
This finishes the proof of the proposition. 
\end{proof}
\begin{prop}\label{proppushforward}
Let $k$ be a field of $char(k) \neq 2$. Let $f: X \rightarrow Y$ be a proper morphism of smooth $k$-schemes of relative dimension $d$ and $\sL$ be any line bundle on $Y$. For all $p \geq 0$ there is a commutative square
\begin{equation}\label{eqpushforward}
\xymatrix{ A^{p}(C^*(X,W,f^*\sL \otimes \omega_{X/k})) \ar[d]^{\cong} \ar[r]^{f_*^W} & A^{p-d}(C^*(Y,W,\sL \otimes \omega_{Y/k})) \ar[d]^{\cong} \\ H^{p}_{Nis}(X,\underline{\mathbf{W}}(f^*\sL \otimes \omega_{X/k})) \ar[r]_{f^{\A^1}_*} & H^{p-d}_{Nis}(Y, \underline{\mathbf{W}}(\sL \otimes \omega_{Y/k})) }
\end{equation}
\end{prop}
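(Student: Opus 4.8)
The plan is to follow the method of \cite{Deg02}, in parallel with the treatment of pullbacks in Proposition \ref{proppullback}. Write $\phi_X\colon A^*(C^*(X,W,-))\xrightarrow{\cong}H^*_{Nis}(X,\underline{\mathbf{W}}(-))$ for the comparison isomorphism of Proposition \ref{propMor}, which identifies the cohomology of the (twisted) Rost--Schmid--Gersten complex with Nisnevich cohomology of the homotopy module $\underline{\mathbf{W}}_*$. Both the complex-level pushforward $f_*^W$ of \cite[Chap. 9-10]{Fas08} and the Gysin pushforward $f_*^{\A^1}$ on $H^*_{Nis}(-,\underline{\mathbf{W}}(-))$ are functorial for composition; since, after Chow's lemma, any proper morphism of smooth $k$-schemes is dominated by a projective one, and any projective morphism factors as a closed immersion into some $\P(E)$ followed by the structure map $\P(E)\to Y$, I would reduce the commutativity of \ref{eqpushforward} to the two atomic cases: (i) $f$ a closed immersion, and (ii) $f=\pi\colon\P(E)\to Y$ the projection of a projective bundle. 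In both cases one keeps track of the twists via the isomorphism $\omega_{X/k}\cong f^*\omega_{Y/k}\otimes\omega_f$, where $\omega_f$ denotes the relative dualizing line bundle ($\det N_{X/Y}$ for a closed immersion, $\det\Omega^1_{X/Y}$ for a smooth morphism).

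In case (i), for a codimension-$c$ closed immersion $i\colon Z\inj X$ (so $d=-c$ and \ref{eqpushforward} is a shift by $+c$), the pushforward $i_*^W$ is, on Rost--Schmid complexes, induced by the inclusion of point-sets $Z^{(\bullet)}\subseteq X^{(\bullet+c)}$ together with the canonical comparison of orientation data along $Z$, i.e., a twist by $\det N_{Z/X}$; the same description holds for $i_*^{\A^1}$. Hence compatibility with $\phi$ follows directly from the fact that $\phi$ is the comparison of Rost--Schmid presentations (Proposition \ref{propMor}), once one checks that Fasel's and Morel's conventions for the twisted residue/orientation maps coincide. Equivalently, and closer to the proof of Proposition \ref{proppullback}, one may build $i_*^W$ and $i_*^{\A^1}$ from the deformation to the normal cone $D(X,Z)$ and the Witt-sheaf specialization of the Thom isomorphism $H^p_{Nis}(Z,\underline{\mathbf{W}}(\det N_{Z/X}\otimes-))\cong H^{p+c}_{Nis}(Th(N_{Z/X}),\underline{\mathbf{W}}(-))$ of Theorem \ref{thmthomiso}; since this is the same geometric input used there, the commutativity of \ref{eqpushforward} follows by the same argument as for \ref{eqpullback3}.

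In case (ii), $\pi\colon\P(E)\to Y$ (with $E$ of rank $n+1$, so $d=n$) is smooth, so $\pi^*$ is compatible with $\phi$ by Proposition \ref{proppullback}. I would then invoke the projective bundle formula for $H^*_{Nis}(-,\underline{\mathbf{W}}(-))$ — equivalently the explicit form of the Rost--Schmid complex of $\P(E)$ — which writes every class on $\P(E)$ as a sum of $\pi^*$-classes times powers of a twisted hyperplane (Euler) class. By the projection formula (Corollary \ref{corprojform}) the evaluation of $\pi_*^W$ and of $\pi_*^{\A^1}$ is thereby reduced to the pushforward of a single top-degree generator, which on both sides is fixed by the normalization that it restrict to the orientation class of a fibre $\P^n$; this normalization is controlled by the residue maps of the Rost--Schmid complex along the affine cells of the fibre, which $\phi$ matches by construction. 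Thus the two pushforwards agree on the generator, hence everywhere on $A^*(C^*(\P(E),W,-))$.

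The step I expect to be the main obstacle is the passage from projective to general proper $f$: beyond invoking Chow's lemma one must verify that $f_*^W$ and $f_*^{\A^1}$ are independent of the chosen projective resolution, which requires the blow-up and birational-invariance formulas for twisted Witt cohomology (\cite{Fas07}, \cite{Mor12}) applied simultaneously to both rows of \ref{eqpushforward}. A secondary but genuine difficulty — the statement being integral — is the bookkeeping of the units in $\mathbf{W}(k)$ and of the orientation line bundles entering the twisted residues: reconciling Fasel's and Morel's sign conventions is the technical heart of the argument, after which the remainder is a diagram chase assembling \ref{eqpullback1}, \ref{eqpullback3}, the Thom isomorphism and the projection formula.
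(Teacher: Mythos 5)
Your case (i) (closed immersions, treated either through the Rost--Schmid description of $i_*$ or through deformation to the normal cone and the Thom isomorphism of Theorem \ref{thmthomiso}) is reasonable and close in spirit to how the paper handles pullbacks, but the rest of the plan contains a genuine gap. The decisive problem is case (ii): the ``projective bundle formula'' you invoke for $\underline{\mathbf{W}}$-cohomology is false. Unlike Chow groups, $H^*_{Nis}(\P(E),\underline{\mathbf{W}}(-))$ is not a free module over $H^*_{Nis}(Y,\underline{\mathbf{W}}(-))$ on powers of a (twisted) hyperplane or Euler class: already $H^i_{Nis}(\P^2_k,\underline{\mathbf{W}})$ is $W(k)$ for $i=0$ and $0$ for $i=1,2$, and for $\P^1_k$ one has $H^1_{Nis}(\P^1,\underline{\mathbf{W}})=0$ while $H^1_{Nis}(\P^1,\underline{\mathbf{W}}(\sO(-1)))\cong W(k)$. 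In general the answer depends on the parity of the rank of $E$ and on the twist, and the interesting part of the cohomology lives in a twisted summand that is not generated by $\pi^*$-classes multiplied by powers of a single class. Consequently the reduction of the comparison of $\pi^W_*$ and $\pi^{\A^1}_*$ to ``the pushforward of a single top-degree generator,'' and the appeal to the projection formula \ref{corprojform}, do not get started as stated.

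The passage from projective to general proper $f$ is a second gap. Fasel's $f^W_*$ is defined directly on the Gersten--Rost--Schmid complex for an arbitrary proper morphism (via transfers on residue fields), not by choosing a factorization, so what you would actually have to prove is that commutativity of \ref{eqpushforward} descends along a Chow envelope $X'\rightarrow X$; this needs surjectivity (or a natural splitting) of $A^*(C^*(X',W,-))\rightarrow A^*(C^*(X,W,-))$, which is not available integrally, and moreover Chow's lemma produces an $X'$ projective over $Y$ that need not be smooth, so in positive characteristic you would additionally need resolution of singularities. The paper's own proof avoids all of this and uses no factorization of $f$: it writes the base-change squares for both rows --- Lemma \ref{lembasechangeCW} (i.e. \cite[Prop. 4.2.2]{AH11}) for the $A^p$-cohomology of Fasel's complexes, and proper base change for Nisnevich cohomology of $\underline{\mathbf{W}}$ --- and combines them with the pullback comparison of Proposition \ref{proppullback} to identify the two pushforwards. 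If you want to keep your outline, you would have to replace case (ii) by a direct comparison of the two pushforwards for $\P(E)\rightarrow Y$ on the correctly twisted generator (in the spirit of Walter's and Fasel's computations) and replace the Chow's lemma step by an argument intrinsic to proper morphisms, at which point the base-change route of the paper is both shorter and stronger.
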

\begin{proof}
If 
\begin{equation*}
\xymatrix{X' \ar[d]^g \ar[r]^v & X \ar[d]^f \\ Y' \ar[r]_u & Y}
\end{equation*}
is a catersian square of smooth schemes with $f$ proper of relative dimension $d$, then we have a commutative diagram 
\begin{equation*}
\xymatrix{H^p_{Nis}(X,\underline{\mathbf{W}}_X(f^* \sL \otimes \omega_{X/k})) \ar[d]^{v^!} \ar[r]^{f_*} & H^{p-d}_{Nis}(Y,\underline{\mathbf{W}}_Y(\sL \otimes \omega_{Y/k})) \ar[d]^{u^!} \\ H^{p-d}_{Nis}(X',\underline{\mathbf{W}}_{X'}(v^*(f^* \sL \otimes \omega_{X/k}))) \ar[r]_{g_*} & H^{p-d}_{Nis}(Y',\underline{\mathbf{W}}_{Y'}(u^*(\sL \otimes \omega_{Y/k}))) }
\end{equation*}
which follows from the proper base change theorem for Nisnevich cohomology. Now the compatibility of the pushforwards follows from the proper base change theorem for $A^p$-cohomology (see \cite[Prop. 4.2.2]{AH11}) and the proposition \ref{proppullback}.  
\end{proof}
\begin{rem}{\rm
The exterior product $\boxtimes$ defined in $\widetilde{CHW(k)}$ via \cite[Thm. 4.15]{Fas07} is compatible with the one defined in $D_{\A^1}(k)$.
}
\end{rem}
Now we check firstly the identity axiom. Let $\Delta_X \in \widetilde{\CH}^{d_X}(X \times X,p^{XX*}_X \omega_X)$ be the identity morphism $X \rightarrow X$ in $CHW(k)$. We may view $\Delta_X$ as $(\Delta_X)_*(1)$, where $(\Delta_X)_*$ is the push forward $\widetilde{\CH}^0(X) \rightarrow \widetilde{\CH}^{d_X}(X\times X,p_X^{XX*}\omega_{X/k})$. The following proposition follows immediately from the above results: 
\begin{prop}\label{propid} 
Let $X/k$ be a smooth projective variety over $k$, where $k$ is an infinite perfect field of $char(k) \neq 2$. Then one has a commutative diagram 
\begin{equation*}
\xymatrix{\widetilde{\CH}^0(X)_{\Q} \ar[d]^{\cong} \ar[r]^{\Delta_* \quad \quad \quad \quad} & H^{d_X}(X \times X,\bold{K}^{MW}_{d_X}(p_X^{XX*}\omega_X))_{\Q} \ar[d]^{\cong} \\ D_{\A^1}(k,\Q)(C_*^{st\A^1}(X),\bold{1}_k) \ar[r]_{\Delta_* } & D_{\A^1}(k,\Q)(C_*^{st\A^1}(X),C_*^{st\A^1}(X))  } 
\end{equation*}
\end{prop}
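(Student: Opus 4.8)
The plan is to trace the diagonal pushforward through the chain of ten isomorphisms in Proposition \ref{propembedd}. The two vertical arrows of \ref{propid} are the instances of \ref{propembedd} with $Y=\Spec k$ (the left one, for which every duality step collapses and only the identification $H^0_{Nis}(X,\mathbf{K}^{MW}_0)\cong\widetilde{\CH}^0(X)$ survives) and with $Y=X$ (the right one). Unwinding the duality \ref{propdual} and the K\"unneth isomorphism, the bottom horizontal map sends a class $\phi$ to the correspondence action of $(\Delta_X)_*\phi$ on $C_*^{st\A^1}(X)$, i.e.\ it is $\phi\mapsto(\phi\otimes\id)\circ C_*^{st\A^1}(\Delta_X)$, while the top horizontal map is the proper pushforward $(\Delta_X)_*$ along the closed immersion $\Delta_X\colon X\hookrightarrow X\times X$, sending $1\in\widetilde{\CH}^0(X)$ to the class $\Delta_X$. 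Thus \ref{propid} asserts exactly that these two ``diagonal pushforwards'' are matched up by the ten isomorphisms.

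I would verify this stage by stage. The duality steps $(1)$--$(2)$ are natural in the strong-dualizable argument and, $\Delta_X$ being proper, carry $(\Delta_X)_*$ to the dual transfer on $\widetilde{C}_*^{st\A^1}(Th(-T_X))$; step $(3)$, Voevodsky's equivalence \ref{ThmVoeThom}, is functorial and so commutes with this transfer; steps $(4)$--$(6)$ are definitions together with the stabilization isomorphism of \ref{lemmainthm2}; step $(7)$ is the comparison of $\A^1$-cohomology with Nisnevich $\mathbf{K}^{MW}$-hypercohomology extracted from the hypercohomology spectral sequence of \ref{lemmainthm1}, a natural transformation of cohomology theories and hence compatible with $(\Delta_X)_*$; step $(8)$ is the Thom isomorphism \ref{eqthomiso}; and step $(9)$ is the identification \ref{eqtwist} of $H^{\ast}_{Nis}(-,\mathbf{K}^{MW}_{\ast}(\sL))$ with $\widetilde{\CH}^{\ast}(-,\sL)$, which is built to be compatible with proper pushforward. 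The projection maps appearing in the Thom-space manipulations are flat, so their compatibility is exactly Proposition \ref{proppullback}. Pasting these naturality squares together yields \ref{propid}; and specializing to $\phi=1$, whose image under the left vertical is the structure map $\epsilon\colon C_*^{st\A^1}(X)\to\mathbf{1}_k$, gives the identity axiom, since $(\epsilon\otimes\id)\circ C_*^{st\A^1}(\Delta_X)=\id_{C_*^{st\A^1}(X)}$ is the counit identity of the diagonal coalgebra, one of the triangle identities for the strong duality \ref{propdual}.

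The point that requires real work is the compatibility of the Thom isomorphism \ref{eqthomiso} with proper pushforward at $\mathbf{K}^{MW}$-coefficients, since Proposition \ref{proppushforward} is stated only for the Witt sheaf $\underline{\mathbf{W}}$. I would obtain the $\mathbf{K}^{MW}$-version either by repeating its deformation-to-the-normal-cone argument verbatim, using the fiber-product description $C^*(-,K^{MW}_i,\sL)=C^*(-,K^M_i)\times_{C^*_{AH}(-,I^i/I^{i+1})}C^*_{AH}(-,I^i,\sL)$ together with Fasel's Gysin maps on each factor, or, after inverting $2$, by splitting $\mathbf{K}^{MW}_{\ast}$ into its $\mathbf{K}^M$- and $\mathbf{I}^{\bullet}$-summands and combining Proposition \ref{proppushforward} with the classical compatibility of the Thom/Gysin maps in motivic cohomology. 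Hidden inside this is the genuinely geometric statement that the transfer of $\Delta_X$ on Thom spectra, transported through \ref{ThmVoeThom}, coincides with the Gysin pushforward that $(\Delta_X)_*$ becomes under \ref{eqthomiso}--\ref{eqtwist} --- the $\A^1$-homotopical form of ``the diagonal is Poincar\'e-dual to $1$'' --- but this is forced once one knows that the canonical map $\mathbf{1}_k\to\Sigma^{\infty}_{\P^1}Th(-T_X)$ underlying Proposition \ref{propdual} is represented, after the Thom isomorphism, by $1\in\widetilde{\CH}^0(X)$, which is precisely how both sides are constructed. Everything else is then bookkeeping.
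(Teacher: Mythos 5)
Your proposal is correct and follows essentially the same route as the paper: the paper deduces \ref{propid} by viewing $\Delta_X=(\Delta_X)_*(1)$ and combining the identifications of Proposition \ref{propembedd} with the pullback/pushforward compatibilities of Propositions \ref{proppullback} and \ref{proppushforward}, after rationally splitting $\mathbf{K}^{MW}$ into its Milnor and Witt constituents (Proposition \ref{propsplitCHW}) --- which is precisely your second option for handling the Thom-isomorphism versus proper-pushforward issue. Your write-up is in fact more explicit than the paper's brief deduction (and your alternative integral route via deformation to the normal cone for $\mathbf{K}^{MW}$ is not pursued there), but the underlying strategy is the same.
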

Now we turn to the composition. In the rest we consider to composition in $D_{\A^1,gm}(k)$. We show 
\begin{prop}
The composition in $D_{\A^1,gm}(k)$ given by 
\begin{multline*}
\Hom_{D_{\A^1,gm}(k)}(C_*^{st\A^1}(X),C_*^{st\A^1}(Y)) \otimes \Hom_{D_{\A^1,gm}(k)}(C_*^{st\A^1}(Y),C_*^{st\A^1}(Z)) \rightarrow \\ \Hom_{D_{\A^1,gm}(k)}(C_*^{st\A^1}(X),C_*^{st\A^1}(Z))
\end{multline*} 
satisfies the equality $\beta \circ \alpha = p_{13*}(p_{12}^* \alpha \cup p_{23}^* \beta)$. 
\end{prop}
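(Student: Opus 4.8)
The plan is to follow verbatim the strategy of F. D\'eglise \cite{Deg02} for Voevodsky's motivic correspondences, reducing the statement to the compatibility results of the preceding subsection together with the duality formalism. First I would unwind the categorical composition. Write $D(X) = C_*^{st\A^1}(X)$; by Proposition \ref{propdual} this is strongly dualizable with $D(X)^\vee = \widetilde{C}_*^{st\A^1}(Th(-T_X))$, so there are evaluation $\varepsilon_X \colon D(X)\otimes D(X)^\vee \to \mathbf{1}_k$ and coevaluation $\eta_X \colon \mathbf{1}_k \to D(X)^\vee \otimes D(X)$ maps. Under the chain of isomorphisms $(1)$--$(10)$ in the proof of Proposition \ref{propembedd}, a morphism $\alpha \colon D(X) \to D(Y)$ corresponds to a map $\mathbf{1}_k \to D(X)^\vee \otimes D(Y)$, i.e. to a class in $\widetilde{\CH}^{d_Y}(X\times Y, p_Y^{XY*}\omega_{Y/k})_\Q$, and by the general yoga of dualizable objects the composite $\beta\circ\alpha$ of $\alpha \colon D(X)\to D(Y)$ with $\beta\colon D(Y)\to D(Z)$ is obtained by forming the tensor product of the two corresponding maps and contracting the interior factor $D(Y)^\vee\otimes D(Y)$ via $\varepsilon_Y$. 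The content of the proposition is thus to identify this abstract contraction with the geometric operation $p_{13*}(p_{12}^*(-)\cup p_{23}^*(-))$.

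Second I would make the geometric identification explicit. The evaluation $\varepsilon_Y$, transported through the Thom isomorphism of Theorem \ref{thmthomiso}, is represented by the diagonal class $\Delta_Y$ together with the pushforward $p_{Y*}$ to $\Spec k$; this is exactly the identity-axiom computation of Proposition \ref{propid}. Tensoring $\alpha$ (supported on $X\times Y$) with $\beta$ (supported on $Y\times Z$) produces a class on $X\times Y\times Y\times Z$; contracting the interior $Y\times Y$ against $\varepsilon_Y$ amounts, by the definition of the intersection product as the pullback of the exterior product along the diagonal (cf. \cite[Def. 6.1]{Fas07}), to restricting along $X\times\Delta_Y\times Z \hookrightarrow X\times Y\times Y\times Z$, which gives precisely $p_{12}^*\alpha\cup p_{23}^*\beta$ on $X\times Y\times Z$, and then pushing forward along $p_{13}$. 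Chaining these identifications yields the asserted formula $\beta\circ\alpha = p_{13*}(p_{12}^*\alpha\cup p_{23}^*\beta)$, which is the composition law of Proposition \ref{propCW} and Definition \ref{defCHW} (with the left intersection product).

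Third, to upgrade this from a formal statement about dualizable objects to a statement at the level of the comparison isomorphism, I would thread the geometric formula through each of the isomorphisms $(1)$--$(10)$ using the compatibilities already proved: Proposition \ref{proppullback} for flat and arbitrary pullbacks, Proposition \ref{proppushforward} for proper pushforwards, the Remark on compatibility of exterior products, the Thom isomorphism \ref{thmthomiso}, and the projection formula and proper base change of Corollary \ref{corprojform} and Lemma \ref{lembasechangeCW}. Together with the splitting $\widetilde{CHW}(k)_\Q = \underline{Chow}(k)_\Q\times WM_{gm}(k)_\Q$ of Proposition \ref{propsplitCHW} and the known compatibility of Voevodsky's embedding $\underline{Chow}(k)^{op}\to DM_{gm}(k)$ with composition (via Bloch's formula \cite{Ke09} and \cite[Lem. 4.11]{Voe03}, following \cite{Deg02}), this yields functoriality of the whole embedding functor of Theorem \ref{mainthm}.

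I expect the principal obstacle to be the bookkeeping of twists and Thom shifts. Concretely, one must check that the determinant identity $\det(0_X\times V_Y/X\times Y)\cong p_Y^{XY*}\omega_{Y/k}$ used in step $(9)$ of Proposition \ref{propembedd} is compatible, over $X\times Y\times Z$, with the corresponding identities for the bundles $V_X$, $V_Y$, $V_Z$ supplied by Theorem \ref{ThmVoeThom}, and that the Thom isomorphisms used to represent $\varepsilon_Y$ and $\eta_Y$ are mutually inverse after these identifications; this is where a sign or twist discrepancy between the left and right intersection products (\cite[Rem. 6.7]{Fas07}) could surface, so one must verify that the categorical contraction indeed selects the left product $\cdot_l$ stipulated in Definition \ref{defCHW} rather than its opposite.
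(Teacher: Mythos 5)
Your proposal follows essentially the same route as the paper's own proof: it rewrites $\beta\circ\alpha$ via Spanier--Whitehead duality as $(\id\otimes\varepsilon_Y\otimes\id)\circ(\alpha\otimes\beta)$, identifies the evaluation $\varepsilon_Y$ as external product followed by restriction along the (Thom-space) diagonal and pushforward, and concludes that the contraction equals $p_{13*}(p_{12}^*\alpha\cup p_{23}^*\beta)$ — exactly the paper's argument (which cites Levine, Chap.~IV, rather than D\'eglise, for this step). The additional remarks on threading the formula through the isomorphisms $(1)$--$(10)$ and on twist bookkeeping are consistent with, and slightly more explicit than, what the paper records.
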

\begin{proof} 
For the strategy of the proof, we will follow M. Levine \cite[Chap. IV]{Lev98}. By Atiyah-Spanier-Whitehead duality, we can write the composition $\beta \circ \alpha$ as 
\begin{multline*} 
\bold{1}_k \cong \bold{1}_k \otimes \bold{1}_k \stackrel{\alpha \otimes \beta}{\longrightarrow} C_*^{st\A^1}(X)^{\vee} \otimes C_*^{st\A^1}(Y) \otimes C_*^{st\A^1}(Y)^{\vee} \otimes C_*^{st\A^1}(Z) \\ \stackrel{\id \otimes \varepsilon_Y \otimes \id}{\longrightarrow} C_*^{st\A^1}(X)^{\vee} \otimes C_*^{st\A^1}(Z)
\end{multline*}
The co-diagonal is given by $\varepsilon_Y = p_{Y*}\circ \delta_Y^* \circ \boxtimes_{Y,Y}$. So we can rewrite the composition as 
\begin{multline*}\small
\bold{1}_k \cong \bold{1}_k \otimes \bold{1}_k \stackrel{\alpha \otimes \beta}{\longrightarrow}  C_*^{st\A^1}(X)^{\vee} \otimes C_*^{st\A^1}(Y) \otimes C_*^{st\A^1}(Y)^{\vee} \otimes C_*^{st\A^1}(Z) \cong \\ C_*^{st\A^1}(X)^{\vee} \otimes  C_*^{st\A^1}(Y) \otimes C_*^{st\A^1}(Th(V_Y))(-d_Y-n_Y)[-2d_Y-2n_Y] \otimes C_*^{st\A^1}(Z)  \stackrel{\id \otimes \boxtimes_{Y,Y} \otimes \id}{\longrightarrow} \\ C_*^{st\A^1}(X)^{\vee} \otimes C_*(Y_+ \wedge Th(V_Y))(-d_Y-n_Y)[-2d_Y-2n_Y] \otimes C_*^{st\A^1}(Z) \stackrel{\id \otimes \delta^*_Y \otimes \id}{\longrightarrow} \\ C_*^{st\A^1}(X)^{\vee} \otimes C_*^{st\A^1}(Th(V_Y))(-d_Y-n_Y)[-2d_Y-2n_Y] \otimes C_*^{st\A^1}(Z) \stackrel{\id \otimes p_{Y*} \otimes \id}{\longrightarrow} \\ C_*^{st \A^1}(X)^{\vee} \otimes \bold{1}_k \otimes C_*^{st\A^1}(Z)  \cong C_*^{st\A^1}(X)^{\vee} \otimes C_*^{st\A^1}(Z),
\end{multline*}
where the diagonal $\delta_Y: Th(V_Y) \rightarrow Y_+ \wedge Th(V_Y)$ is defined by using the pullback square
\begin{equation}
\xymatrix{ V_Y \ar[d] \ar[r] & V_Y \times 0 \ar[d] \\ Y \ar[r]^{\Delta_Y} & Y \times Y}
\end{equation}
On the other hand, we may rewrite the composition as 
\begin{multline*}
\bold{1}_k = \bold{1}_k \otimes \bold{1}_k \stackrel{\alpha \otimes \beta}{\longrightarrow} C_*^{st\A^1}(X)^{\vee} \otimes C_*^{st\A^1}(Y) \otimes C_*^{st\A^1}(Y)^{\vee} \otimes C_*^{st\A^1}(Z)  \cong \\ C_*^{st\A^1}(Th(V_X))(-d_X-n_X)[-2d_X-2n_X] \otimes  C_*^{st\A^1}(Y) \otimes- \\ -\otimes C_*^{st\A^1}(Th(V_Y))(-d_Y-n_Y)[-2d_Y-2n_Y] \otimes C_*^{st\A^1}(Z) \\ \stackrel{\boxtimes_{X \times Y, Y \times Z}(\alpha \otimes \beta)}{\longrightarrow}  C_*^{st\A^1}(Th(V_X) \wedge Y_+ \wedge Th(V_Y) \wedge Z_+)(-(d_X+d_Y+n_X+n_Y))[-2(d_X+d_Y+n_X+n_Y)] \\ \stackrel{p_{13*} \circ (id_X \times \delta_Y \times \id_Z)^*}{\longrightarrow} C_*^{st\A^1}(Th(V_X) \wedge Z_+)(-d_X-n_X)[-2d_X-2n_X].  
\end{multline*}
But $\boxtimes_{X \times Y, Y \times Z}(\alpha \otimes \beta) = p_{12}^*(\alpha) \cup p_{23}^*(\beta)$, so we are done.
\end{proof}
\thanks{$\bold{Acknowledegements:}$ I am very grateful to both H\'el\`ene Esnault and Marc Levine. I thank H\'el\`ene Esnault for her encouragement and her patience during the years. I thank Marc Levine heartly for reading many earlier versions of this note and for many helpful suggestions and discussions. Without his helps I am not able to finish this project. I thank Jean Fasel for many helpful comments on an earlier version of this note. I thank F. Morel and A. Asok for their answers on my questions. I thank M. Hoyois for very useful conversation.}  
\bibliographystyle{plain}
\renewcommand\refname{References}

\end{document}